\newtheorem{definition}{Definition}[section]
\newtheorem{theorem}{Theorem}[section]
\newtheorem{lemma}[theorem]{Lemma}
\newtheorem{proposition}{Proposition}[section]
\newtheorem{corollary}{Corollary}[theorem]
\theoremstyle{definition}
\newtheorem{remark}{Remark}[section]
\numberwithin{equation}{section}
\DeclareMathOperator{\Tr}{Tr} 
\newcommand{\lHS}{\llbracket}
\newcommand{\rHS}{\rrbracket}
\newcommand{\Ft}{\widehat}
\newcommand{\Gp}{\mathbb{G}}
\newcommand{\DuGp}{\Ft{\mathbb{G}}}
\newcommand{\SU}{\mathrm{SU}}
\newcommand{\Op}{\mathbf{Op}}
\newcommand{\dist}{\mathrm{dist}}
\newcommand{\Df}{\mathbf{D}}
\author{Chengyang Shao}
\title{Toolbox of Para-differential Calculus on Compact Lie Groups}
\begin{document}
\maketitle
\begin{abstract}
This paper provides a toolbox of para-differential calculus on compact Lie groups. The toolbox is based on representation theory of compact Lie groups and contains exact formulas of symbolic calculus. Para-differential operators are constructed in a global, coordinate-free manner, giving lower order terms in symbolic calculus a clear form. The toolbox helps to understand non-local, nonlinear differential operators defined on certain manifolds with high symmetry.
\end{abstract}
\begin{spacing}{1.2}

\section{Introduction}
In this paper, we will develop a toolbox of harmonic analysis necessary for the study of motion of a water droplet under zero gravity. This is the starting point of a project proposed by the author in \cite{Shao2022} to understand nonlinear oscillation of water droplets. The paper is split from Section 1-4 of \cite{Shao2023}. The application of this toolbox to the Cauchy problem of oscillating water droplet can be found in \cite{Shao2023}.

\subsection{Spectral Localization and Dispersive PDEs on Compact Manifolds}
The study of nonlinear dispersive partial differential equations and harmonic analysis are closely linked to each other and mutually reinforcing. If discussed on a compact manifold, the geometry of the underlying space shall also come on the scene. We will be exploring an aspect of this topic in this paper. In order to extract the key ideas out of several important examples, we start our discussion by reviewing some previous results to illustrate the intersectionality of harmonic analysis on compact manifolds and nonlinear dispersive PDEs.

In the pioneering work \cite{BGT2004}, Burq, G\'{e}rad and Tzvetkov obtained Strichartz estimates for the unitary group $e^{it\Delta_g}$ on a compact Riemannian manifold $(M,g)$, and discussed its application to the Cauchy problem of semi-linear Schr\"{o}dinger equation on compact Riemannian manifold. The discussion relies on ``$L^p$ decoupling of Laplace eigenfunctions", discovered by Sogge \cite{Sogge1988}. As a result, they showed that on a compact Riemann surface $(M,g)$, the Cauchy problem of the cubic Schr\"{o}dinger equation 
$$
i\partial_tu+\Delta_gu=|u|^2u,
\quad
u(0)\in H^s(M)
$$
with $s>1/2$ is locally well-posed in $C^0_tH^s(M)$. They further showed in \cite{BGT2005} that the regime of local well-posedness can be extended to $s>1/4$ if $(M,g)$ is a Zoll surface (in particular, if it is the standard 2-sphere). A crucial harmonic analysis result needed by this is a bilinear eigenfunction estimate on Zoll surfaces, an inequality bounding the $L^2$ norm of the product of two eigenfunctions.

Hani continued the study in \cite{Hani2012}, showing that the Cauchy problem of the cubic Schr\"{o}dinger equation on $(M,g)$ is globally well-posed for $s>2/3$, regardless of the geometry. The proof is a modification of the ``I-method" due to Colliander-Keel-Staffilani-Takaoka-Tao (see \cite{CKSTT2002}). When transplanting the I-method from $\mathbb{R}^n$ or $\mathbb{T}^n$ to a general manifold, an immediate issue appears: one replaces Fourier analysis by spectral decomposition using Laplace eigenfunctions, but unlike Fourier modes, the product of two Laplace eigenfunctions is, in general, not an eigenfunction anymore. In order to overcome this difficulty, Hani established \emph{spectral licalization property} as a substitute. That is, the product of two eigenfunctions with eigenvalues $\lambda$ and $\mu$ respectively should sharply concentrate around the range $\sqrt{-\Delta_g}\in\big[|\sqrt{\lambda}-\sqrt{\mu}|,\sqrt{\lambda}+\sqrt{\mu}\big]$. This turns out to be an important ingredient for the proof.

Not surprisingly, the spectral localization property could be improved if certain geometric conditions are posed for the underlying manifold. Such improvement should then lead to stronger results for nonlinear dispersive PDEs. In fact, this is the case for compact Lie groups and homogeneous spaces. Highest weight theory for representations of a compact Lie group implies the following: the product of two Laplace eigenfunctions on a compact Lie group is a \emph{finite} linear combination of Laplace eigenfunctions. In \cite{BP2011} and \cite{BCP2015} (see also \cite{BBP2010}), precise form of this algebraic result plays a fundamental role for the search of periodic or quasi-periodic solutions of the semi-linear Schr\"{o}dinger equation
$$
i\partial_tu+\Delta u+\mu u=f(x,|u|^2)u
$$
on a compact Lie group or homogeneous space. Another example is \cite{DS2004}, where the underlying manifold is the standard sphere $\mathbb{S}^d$. The aforementioned spectral localization on Lie groups then becomes an algebraic property of spherical harmonic functions: the product of spherical harmonics of degree $p$ and $q$ is a linear combination of spherical harmonics of degree between $|p-q|$ and $p+q$. Using this fact, the authors casted a normal form reduction for semi-linear Klein-Gordon equations on $\mathbb{S}^d$.

Furthermore, the spectral localization property plays a fundamental role in the theory of pseudo-differential operators on compact Lie groups. Construction of pseudo-differential calculus on compact Lie groups depends very much on representation theory of compact Lie groups. Being the non-commutative analogy of Fourier theory on $\mathbb{R}^n$ or $\mathbb{T}^n$, spectral localization shows how matrix elements of different representations (``modes of different frequencies") interact. It thus occupies a similar part as the additive formula $e^{i\xi_1\cdot x}e^{i\xi_2\cdot x}=e^{i(\xi_1+\xi_2)\cdot x}$ does in commutative Fourier analysis and symbolic calculus.

Global symbolic calculus on compact Lie groups has been proposed by Ruzhansky-Turunen-Wirth \cite{RT2009}-\cite{RTW2014}, while the ideas seem to date back to Taylor \cite{Taylor1986}. However, as pointed out by Fischer \cite{Fis2015}, the pseudo-differential calculus in \cite{RT2009}-\cite{RTW2014} may lack rigorous justification. Fischer's work \cite{Fis2015} provides a mathematically rigorous and complete construction in which the symbolic calculus formulas are proved. In particular, Fischer proved that pseudo-differential operators of type $(1,0)$ defined via global symbols indeed coincides with the H\"{o}rmander class of $(1,0)$ pseudo-differential operators defined via local coordinates. To the author's knowledge, the first result on \emph{linear} dispersive PDEs on a compact Lie group with essential usage of global symbolic calculus is Bambusi-Langella \cite{BL2022}. The paper states a general frame work concerning ``abstract pseudo-differential operators" and obtained a growth estimate of the solution to \emph{linear} Schr\"{o}dinger equation with possibly time-dependent potential. Taking into account the pseudo-differential calculus constructed by \cite{Fis2015}, linear Schr\"{o}dinger equations on a compact Lie group thus realizes this abstract framework. 

\subsection{Para-differential Calculus on Manifolds} 
When coming to quasi-linear dispersive PDEs, however, some more technicalities beyond spectral localization will be necessarily required. Within our scope, we shall focus on one of them, namely para-differential calculus. 

Para-differential calculus originates from a systematic application of Littlewood-Paley decomposition to nonlinear PDEs. It first appeared in the form of \emph{para-product estimate}, leading to inequalities of the form $\|fg\|_{H^s}\lesssim |f|_{L^\infty}\|g\|_{H^s}+\|f\|_{H^s}|g|_{L^\infty}$ or composition estimates of the form $\|F(u)\|_{H^s}\lesssim \|u\|_{H^s}$. Such inequalities were crucial for local well-posedness of semi-linear dispersive PDEs; see for example, Saut-Temam \cite{SaTe1976} Linares-Ponce \cite{LP2014} for discussion of KdV equation, or Kato \cite{Kato1995} and Staffilani \cite{Staffilani1995} for discussion of semi-linear Schr\"{o}dinger equation. General estimates may also be found in \cite{Taylor2000}.

The defining character of such methodology was extracted as \emph{isolating the part with lowest regularity}. It has been extended to a class of operators called \emph{para-differential operators}; see Bony \cite{Bony1981} and Meyer \cite{Meyer1980}-\cite{Meyer1981}). Modelled on symbols $a(x,\xi)$ that are of limited regularity with respect to the spatial variable $x$, a symbolic calculus is still available for para-differential operators. Para-differential calculus thus becomes a powerful tool when usual pseudo-differential calculus is of limited usage due to low regularity.

Para-differential calculus has been successfully used in the study of quasi-linear PDEs (and also micro-local properties for fully nonlinear ones, see \cite{Bony1981} or H\"{o}rmander \cite{Hormander1997}; but we do not discuss that aspect). Taking fluid dynamics equations as an example, the gravity-capillary water waves equation was re-written into a para-differential form by Alazard-Burq-Zuilly \cite{ABZ2011}, and a local well-posedness result in the regime $s>3.5$ was deduced. A pleasant feature of this para-differential form is that the spectral properties of the dispersive relation for the problem are explicit. Not only is it an improvement of previous local well-posedness results (see for example \cite{Wu1997}-\cite{Wu1999}, \cite{Lannes2005}, \cite{CoSh2007}, \cite{ShZe2008}, \cite{MingZhang2009}), it also reduces the water waves equation into a form convenient for the study of long time behaviour. See the review \cite{IP2018} for detailed discussion, or \cite{BD2018} \cite{DIPP2017} \cite{IoPu2019} as examples of applying this para-differential form.

Being such a powerful tool, it is natural to ask whether para-differential calculus admits generalization to curved manifolds instead of $\mathbb{R}^n$ or $\mathbb{T}^n$. Let us briefly summarize previous attempts towards it. 

In \cite{KR2006}, Klainerman and Rodnianski extended the Littlewood-Paley theory to compact surfaces. They defined Littlewood-Paley decomposition of a tensor field via spectral cut-off of the tensorial Laplacian operator, studied the interaction of different frequencies, and deduced Bernstein type inequalities. This served as a toolbox in proving the $L^2$ bounded curvature conjecture for the Einstein equation. See \cite{KRS2015} for the details. The argument actually does not require para-differential calculus, but only an invariant version of Littlewood-Paley decomposition on compact surfaces. Not surprisingly, these properties can be reproduced and improved for compact Lie groups (see \cite{Fis2015}).

In \cite{BGdP2021}, Bonthonneau, Guillarmou and de Poyferr{\'e} developed a para-differential calculus for any compact manifold via local coordinate charts. This approach is a natural generalization of the H\"{o}rmander calculus on a curved manifold $M$: a $(1,1)$ symbol $a(x,\xi)$ is defined on the cotangent bundle of $M$, as a function that becomes a $(1,1)$ symbol under any local chart; para-differential operators corresponding to rough symbols are also defined by coordinate charts. A key feature is that symbolic calculus, just as the usual H\"{o}rmander calculus does, only preserves information at the order of principle symbol. The authors used this para-differential calculus to study microlocal regularity for an Anosov flow.

On the other hand, a global definition of para-differential operators on a compact manifold is introduced in Delort \cite{Delort2015}. Observing the commutator characterization of usual pseudo-differential operators (see e.g. Beals \cite{Beals1977}), Delort defined para-differential operators on a compact manifold via commutators with vector fields and Laplacian spectral projections. The most important result from this para-differential calculus is that, the commutator of para-differential operators of order $m$ and $m'$ is an operator of $m+m'-1$. With these preparations, Delort successfully obtained a normal form reduction for quasi-linear Hamiltonian Klein-Gordon equations on the standard sphere $\mathbb{S}^d$, and thus obtained extended lifespan estimate beyond the energy estimate regime.

Both constructions in \cite{BGdP2021} and \cite{Delort2015} share the common feature that formulas of symbolic calculus for para-differential operators involve only the \emph{principal symbol}, just as the usual H\"{o}rmander calculus on curved manifolds. Consequently, if one applies either one to a quasi-linear dispersive PDE, much of the spectral information related to the dispersive relation becomes implicit, although not lost. Besides, lower order information apart from the principal symbol is blurred. These disadvantages might cause extra difficulty in studying the long-time behavior of the solution.

\subsection{Organization of the Paper}
Our goal is to develop an analytic toolbox for PDEs on compact Lie groups and homogeneous spaces. In particular, the toolbox should enable us to reduce the spherical capillary water waves system in \cite{Shao2022} to a para-differential form convenient for the study of its long-time behaviour. Observing the advantages of incorporating geometric information, our goal will be accomplished by employing the notion of symbols in \cite{Fis2015} as the narrative formalism for para-differential calculus. 

Thus we transplant the sequence of ideas for para-differential calculus on $\mathbb{R}^n$ to a compact Lie group $\Gp$ (see for example Chapter 8-10 of \cite{Hormander1997}). The Fourier modes $e^{i\xi\cdot x}$ are replaced by irreducible unitary representations $\xi$, so that a symbol $a(x,\xi)$ is a field acting on the representation space of $\xi$. Due to unbounded degree of degeneracy of Laplace eigenfunctions (see the Weyl type lemma \ref{Weyl}), it is necessary to consider symbols as \emph{matrices} acting on representation spaces instead of \emph{scalar-valued functions}. 

We start by defining symbols and symbol classes on $\Gp$ in Section \ref{2}, then Littlewood-Paley decomposition in Section \ref{3}. Littlewood-Paley characterization for Sobolev and Zygmund space will be given here. The Stein theorem \ref{SteinTheorem} for pseudo-differential operators of type $(1,1)$ will be proved in Section \ref{3}. The spectral condition is introduced in Section \ref{3}, and Theorem \ref{Stein'} establishes the arbitrary $H^s$ boundedness for operators satisfying a spectral condition. Para-product estimates and Bony's para-linearization theorem are deduced as corollaries. In Section \ref{4}, para-differential operators are defined corresponding to rough symbols $a(x,\xi)$ with $C^r_*$ regularity in $x$. Formulas for symbolic calculus, i.e. composition and adjoint formulas are then proved for para-differential operators in Theorem \ref{Compo1}-\ref{ParaAdj}. That the commutator of para-differential operators of order $m$ and $m'$ is of $m+m'-1$ follows as a direct consequence.

None of the steps listed above are trivial. Let us discuss the major difficulties. As mentioned before, unlike Fourier modes on commutative groups, the product of two Laplace eigenfunctions on a non-commutative Lie group can \emph{never} be a Laplace eigenfunction. Furthermore, the dual of $\Gp$ does not form a group anymore, so the definition of ``differentiation" for a symbol $a(x,\xi)$ with respect to $\xi$ requires extra effort; see Section \ref{2} for the details. The spectral localization property for eigenfunctions on $\Gp$ will serve as the substitute for $e^{i\xi_1\cdot x}e^{i\xi_2\cdot x}=e^{i(\xi_1+\xi_2)\cdot x}$. In order to prove arbitrary $H^s$ boundedness for para-differential operators, we actually need a \emph{lower spectral bound} of the product. Corollary \ref{SpecPrd}, being stronger than Hani's general result \cite{Hani2012} in this special geometry, never appeared in previous literature on representation theory to the author's knowledge. It explicitly describes such bounds. The role it plays in establishing arbitrary $H^s$ boundedness for para-differential operators is clearly indicated in the proof of Theorem \ref{Stein'}. 

Additionally, functions of symbols thus cannot be manipulated so easily as in symbolic calculus on $\mathbb{R}^n$. We introduce the notion of \emph{quasi-homogeneous symbols} in Definition \ref{QuasiHomoSym}. They are essentially functions of classical differential symbols, hence still enjoys, to some extent, the good properties of the latter. It seems that this class of quasi-homogeneous symbols is broad enough to cover operators arising from PDEs that we are concerned with.

We emphasize that the algebraic structure of $\Gp$ is fundamental throughout our para-differential calculus. It enables us to write down the precise form of composition and adjoint operators. To this extent, this paper, together with the up-coming one (which will be the second split from \cite{Shao2023}), provide a novel formalism for non-local, \emph{quasi-linear} dispersive partial differential equations defined on certain manifolds with high symmetry. It can be expected that the formalism may be applied to other quasi-linear dispersive problems besides the spherical capillary waves equation.

\section{Preliminary: Representation of Compact Lie Groups}\label{2}
This section is devoted to preliminaries of representation theory of compact Lie groups. Notation will also be fixed in this section. Results from representation theory can be found in any standard textbook on representation theory, for example \cite{Procesi2007}. Concise descriptions can also be found in \cite{BP2011}.

\subsection{Representation and Spectral Properties}
To simplify our discussion, throughout this paper, we fix $\Gp$ to be a simply connected, compact, semisimple Lie group with dimension $n$ and rank $\varrho$, i.e. the dimension of any maximal torus in $\Gp$. By standard classification results of compact connected Lie groups, this is the essential part for our discussion. It is known from standard theory of Cartan subalgebra that all maximal tori in $\Gp$ are conjugate to each other, so the rank $\varrho$ is uniquely determined. The group $\Gp$ is in fact a direct product of groups of the following types: the special unitary groups $\SU(n)$, the compact symplectic groups $\mathrm{Sp}(n)=\mathrm{Sp}(2n;\mathbb{C})\cap\mathrm{U}(2n)$, the spin groups $\mathrm{Spin}(n)$, and the exceptional groups $G_2,F_4,E_6,E_7,E_8$.

We write $e$ for the identity element of $\Gp$. Let $\mathfrak{g}$ be the Lie algebra of $\Gp$, identified with the space of left-invariant vector fields on $\Gp$. If a basis $X_1,\cdots,X_n$ for $\mathfrak{g}$ is given, then for any multi-index $\alpha\in\mathbb{N}_0^n$, write 
\begin{equation}\label{NormalOrder}
X^\alpha=X_1^{\alpha_1}\cdots X_n^{\alpha_n}
\end{equation}
for the left-invariant differential operator composed by these basis vectors, in the ``normal" ordering. When $\alpha$ runs through all multi-indices, the operators $\{X^\alpha\}$ span the universal enveloping algebra of $\mathfrak{g}$, which is the content of the Poincaré–Birkhoff–Witt theorem; however, this fact is not needed within our scope. 

Since $\Gp$ is semi-simple and compact, the Killing form
$$
\Tr\big(\mathrm{Ad}(X)\circ\mathrm{Ad}(Y)\big),\quad X,Y\in\mathfrak{g}
$$ 
on $\Gp$ is non-degenerate and negative definite, where $\mathrm{Ad}$ is the adjoint representation. A Riemann metric on $\Gp$ can thus be introduced as the opposite of the Killing form. The Laplace operator corresponding to this metric, usually called \emph{Casimir element} in representation theory, is given by
$$
\Delta=\sum_{i=1}^n X_i^2,
$$
where $\{X_i\}$ is any orthonormal basis of $\mathfrak{g}$; the operator is independent from the choice of basis.

Let $\DuGp$ be the dual of $\Gp$, i.e. the set of equivalence classes of irreducible unitary representations of $\Gp$. For simplicity, we do not distinguish between an equivalence class $\xi\in\DuGp$ and a certain unitary representation that realizes it. The ambient space of $\xi\in\DuGp$ is an Hermite space $\Hh[\xi]$, with (complex) dimension $d_\xi$. If we equip the space $\Hh[\xi]$ with an orthonormal basis containing $d_\xi$ elements, then $\xi:\Gp\to\mathrm{U}(\Hh[\xi])$ can be equivalently viewed as a unitary-matrix-valued function $(\xi_{jk})_{j,k=1}^{d_\xi}$.

We shall equip the Lie group $\Gp$ with the normalized Haar measure. For simplicity, we will denote the integration with respect to this measure by $dx$. The \emph{Fourier transform} of $f\in\mathcal{D}'(\Gp)$ is defined by 
$$
\Ft f(\xi):=\int_\Gp f(x)\xi^*(x)dx\in\mathrm{End}(\Hh[\xi]).
$$
Conversely, for every field $a(\xi)$ on $\DuGp$ such that $a(\xi)\in\mathrm{End}(\Hh[\xi])$ for all $\xi\in\DuGp$, the \emph{Fourier inversion} of $a$ is defined by
$$
\check{a}(x)=\sum_{\xi\in\DuGp} d_\xi\Tr\big(a(\xi)\cdot\xi(x)\big),
$$
as long as the right-hand-side converges at least in the sense of distribution. 

Two different norms for $a(\xi)\in\mathrm{End}(\Hh[\xi])$ will be used in this paper, one being the operator norm
$$
\|a(\xi)\|:=\sup_{v\in\Hh[\xi],v\neq0}\frac{|a(\xi)v|}{|v|},
$$
one being the Hilbert-Schmidt norm
$$
\lHS a(\xi)\rHS:=\sqrt{\Tr(a(\xi)\cdot a^*(\xi))}.
$$
To simplify notation, when there is no risk of confusion, we shall omit the dependence of these norms on the representation $\xi$.

We fix the convolution on $\Gp$ to be \emph{right} convolution:
$$
(f*g)(x)=\int_\Gp f(y)g(y^{-1}x)dy.
$$
The only property not inherited from the usual convolution on Euclidean spaces is commutativity. Every other property, including the convolution-product duality, Young's inequality, is still valid. The only modification is that 
$$
\Ft{f*g}(\xi)=(\Ft g\cdot\Ft f)(\xi),
$$
which in general does not coincide with $(\Ft f\cdot\Ft g)(\xi)$ since the Fourier transform is now a matrix.

The Peter-Weyl theorem is fundamental for harmonic analysis on Lie groups:
\begin{theorem}[Peter-Weyl]\label{PeterWeyl}
Let $\mathcal{M}_\xi$ be the subspace in $L^2(\Gp)$ spanned by matrix entries of the representation $\xi\in\DuGp$. Then $\mathcal{M}_\xi$ is a bi-invariant subspace of $L^2(\Gp)$ of dimension $d_\xi^2$, and there is a Hilbert space decomposition
$$
L^2(\Gp)=\bigoplus_{\xi\in\DuGp}\mathcal{M}_\xi.
$$
If $f\in L^2(\Gp)$, the Fourier inversion
$$
(\Ft f)^\vee(x)=\sum_{\xi\in\DuGp} d_\xi\Tr\left(\Ft f(\xi)\cdot\xi(x)\right)
$$
converges to $f$ in the $L^2$ norm, and there holds the Plancherel identity
$$
\|f\|_{L^2(\Gp)}^2=\sum_{\xi\in\DuGp} d_\xi\big\lHS\Ft f(\xi)\big\rHS^2.
$$
\end{theorem}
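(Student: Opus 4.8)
The plan is to run the classical argument built on the spectral theorem for compact self-adjoint operators, organized in four stages: (i) Schur-type orthogonality relations, from which the dimension count, bi-invariance, and mutual orthogonality of the spaces $\mathcal{M}_\xi$ follow at once; (ii) density of $\bigoplus_{\xi\in\DuGp}\mathcal{M}_\xi$ in $L^2(\Gp)$; (iii) assembling (i) and (ii) into the asserted orthogonal decomposition; and (iv) a bookkeeping computation that identifies the orthogonal projection onto $\mathcal{M}_\xi$ with the Fourier data, yielding both the inversion formula and the Plancherel identity. For stage (i), I would first establish that $\int_\Gp\xi_{jk}(x)\overline{\eta_{lm}(x)}\,dx$ vanishes when $\xi\not\cong\eta$ and equals $d_\xi^{-1}\delta_{jl}\delta_{km}$ when $\eta=\xi$: averaging an arbitrary linear map $\mathcal{H}_\eta\to\mathcal{H}_\xi$ against the Haar measure produces an intertwiner, Schur's lemma kills it in the first case and forces it to be a scalar in the second, and a trace computation fixes the scalar; unitarity of the representations is used here. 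These relations show that the $d_\xi^2$ functions $\xi_{jk}$ are mutually orthogonal, hence linearly independent, so $\dim\mathcal{M}_\xi=d_\xi^2$, and that $\mathcal{M}_\xi\perp\mathcal{M}_\eta$ when $\xi\not\cong\eta$; bi-invariance is immediate from $\xi_{jk}(g^{-1}xh)=\sum_{l,m}\xi_{jl}(g^{-1})\,\xi_{lm}(x)\,\xi_{mk}(h)$.

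Stage (ii) is the heart of the matter. For $\phi\in C(\Gp)$ with $\phi(x)=\overline{\phi(x^{-1})}$, the right-convolution operator $T_\phi f=f*\phi$ is bounded and self-adjoint on $L^2(\Gp)$, and compact because its kernel $(x,y)\mapsto\phi(y^{-1}x)$ is continuous on the compact space $\Gp\times\Gp$, hence Hilbert--Schmidt. The spectral theorem then writes $L^2(\Gp)$ as the orthogonal sum of $\ker T_\phi$ and the closure of $\bigoplus_{\lambda\neq0}V_\lambda$, with each nonzero-eigenvalue eigenspace $V_\lambda$ finite dimensional. Because right convolution commutes with left translation, every such $V_\lambda$ is a finite-dimensional, left-translation-invariant subspace of $L^2(\Gp)$ whose elements are continuous (they lie in the range of $T_\phi$); decomposing the left regular representation on $V_\lambda$ into irreducibles and realizing each summand via its matrix coefficients shows $V_\lambda\subseteq\bigoplus_\xi\mathcal{M}_\xi$ (one uses that $\DuGp$ is stable under conjugation), so the range of $T_\phi$ lies in the closure of $\bigoplus_\xi\mathcal{M}_\xi$ for every admissible $\phi$. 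Now if $f\in L^2(\Gp)$ is orthogonal to every $\mathcal{M}_\xi$, a short computation using the translation formula for matrix coefficients shows $f*\phi$ is again orthogonal to every $\mathcal{M}_\xi$; but $f*\phi$ lies in the closure of $\bigoplus_\xi\mathcal{M}_\xi$, so $f*\phi=0$, and taking $\phi=\phi_n$ a symmetrized approximate identity concentrating at $e$ yields $f*\phi_n\to f$ in $L^2(\Gp)$, hence $f=0$. I expect this stage to be the main obstacle, particularly the step identifying a finite-dimensional translation-invariant space of continuous functions with a subspace of $\bigoplus_\xi\mathcal{M}_\xi$ and the production of a symmetric approximate identity, though none of it is deep. (Alternatively, since $\Gp$ admits a faithful finite-dimensional representation, one may apply the Stone--Weierstrass theorem to the $*$-subalgebra of $C(\Gp)$ generated by its matrix coefficients and bypass compact operators altogether.)

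With (i) and (ii) in place, $\bigoplus_\xi\mathcal{M}_\xi$ is an orthogonal direct sum that is dense, which is exactly the asserted Hilbert space decomposition, and every $f\in L^2(\Gp)$ is the $L^2$-convergent sum $f=\sum_\xi P_\xi f$ of its orthogonal projections $P_\xi f$ onto the $\mathcal{M}_\xi$. By stage (i), $\{\sqrt{d_\xi}\,\xi_{jk}\}_{j,k}$ is an orthonormal basis of $\mathcal{M}_\xi$, so $P_\xi f=\sum_{j,k}d_\xi\langle f,\xi_{jk}\rangle\,\xi_{jk}$; since $\langle f,\xi_{jk}\rangle=\int_\Gp f(x)\overline{\xi_{jk}(x)}\,dx=\int_\Gp f(x)\,(\xi^*(x))_{kj}\,dx=\Ft f(\xi)_{kj}$, this is precisely $P_\xi f(x)=d_\xi\Tr\big(\Ft f(\xi)\,\xi(x)\big)$. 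Summing over $\xi\in\DuGp$ gives the Fourier inversion formula, and then $\|f\|_{L^2(\Gp)}^2=\sum_\xi\|P_\xi f\|_{L^2(\Gp)}^2=\sum_\xi d_\xi\sum_{j,k}|\Ft f(\xi)_{kj}|^2=\sum_\xi d_\xi\,\lHS\Ft f(\xi)\rHS^2$ is the Plancherel identity.
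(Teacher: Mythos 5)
Your proposal is correct and is precisely the classical Peter--Weyl argument (Schur orthogonality, compact self-adjoint convolution operators plus the spectral theorem for density, then the projection bookkeeping for inversion and Plancherel); the paper does not prove this theorem itself but cites it as a standard preliminary from representation theory texts such as Procesi, where exactly this argument appears. All the computations you sketch check out, including the index bookkeeping $\langle f,\xi_{jk}\rangle=\Ft f(\xi)_{kj}$ that turns the projection onto $\mathcal{M}_\xi$ into $d_\xi\Tr\big(\Ft f(\xi)\xi(x)\big)$, and the one step you flag as delicate (embedding a finite-dimensional translation-invariant space of continuous functions into $\bigoplus_\xi\mathcal{M}_\xi$) is indeed the only place requiring care but is standard.
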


In the following, standard constructions and results in highest weight theory will be directly cited. They are already collected in Berti-Procesi \cite{BP2011}.

Let $\mathfrak{t}\subset\mathfrak{g}$ be the Lie algebra of a maximal torus of $\Gp$, and write $\mathfrak{t}^*$ for its dual. Let $\{\bm{\alpha}_i\}_{i=1}^\varrho\subset\mathfrak{t}^*$ be the set of \emph{positive simple roots} of $\mathfrak{g}$ with respect to $\mathfrak{t}$. They form a basis for $\mathfrak{t}^*$. The \emph{fundamental weights} $\bm{\varpi}_1,\cdots,\bm{\varpi}_\varrho\in\mathfrak{t}^*$ admit several equivalent definitions, one of them being the unique set of vectors $\{\bm{\varpi}_i\}_{i=1}^\varrho$ satisfying 
$$
(\bm{\varpi}_i,\bm{\alpha}_j)=\frac{1}{2}\delta_{ij}|\bm{\alpha}_j|^2,
\quad
i,j=1,\cdots,\varrho.
$$
Here the inner product on the dual is inherited from the Killing form on $\mathfrak{g}$.

\begin{theorem}[Highest Weight Theorem]
Irreducible unitary representations of $\Gp$ are in 1-1 correspondence with the discrete cone
$$
\mathscr{L}^+:=\left\{\sum_{i=1}^\varrho n_i\bm{\varpi}_i,\, n_i\in\mathbb{N}_0\right\}.
$$
The correspondence assigns an irreducible unitary representation to its highest weight vector.
\end{theorem}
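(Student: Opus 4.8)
This is the classical Cartan--Weyl theorem of the highest weight, and the plan is to prove it by transferring the problem to the complexified Lie algebra and invoking the structure theory already in place. First I would set $\mathfrak g_{\mathbb C}=\mathfrak g\otimes_{\mathbb R}\mathbb C$, a complex semisimple Lie algebra with Cartan subalgebra $\mathfrak h=\mathfrak t\otimes_{\mathbb R}\mathbb C$ and root space decomposition $\mathfrak g_{\mathbb C}=\mathfrak h\oplus\bigoplus_{\bm\alpha}\mathfrak g_{\bm\alpha}$ relative to the positive system fixed by the simple roots $\bm\alpha_1,\dots,\bm\alpha_\varrho$. Since $\Gp$ is compact, averaging any Hermitian inner product over the normalized Haar measure (Weyl's unitarian trick) shows every finite-dimensional representation of $\Gp$ is unitarizable, and $\Gp$-irreducibility coincides with irreducibility as a $\mathfrak g_{\mathbb C}$-module; since $\Gp$ is simply connected, every finite-dimensional complex representation of $\mathfrak g_{\mathbb C}$ integrates to $\Gp$. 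So $\DuGp$ is in bijection with the isomorphism classes of finite-dimensional irreducible $\mathfrak g_{\mathbb C}$-modules, and it is enough to parametrize those by $\mathscr L^+$.

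Next I would extract the highest weight. Given a finite-dimensional irreducible $V$, decompose $V=\bigoplus_\mu V_\mu$ into $\mathfrak h$-weight spaces; finite dimension forces a weight $\bm\lambda$ maximal for the dominance order ($\mu\preceq\bm\lambda$ iff $\bm\lambda-\mu$ lies in the nonnegative span of the simple roots), and any nonzero $v\in V_{\bm\lambda}$ is annihilated by every positive root vector. Feeding $v$ into the $\mathfrak{sl}_2$-triple attached to each simple root $\bm\alpha_i$ and using the representation theory of $\mathfrak{sl}_2$ forces $(\bm\lambda,\bm\alpha_i)=\tfrac12 n_i|\bm\alpha_i|^2$ with $n_i\in\mathbb N_0$, i.e. $\bm\lambda=\sum_i n_i\bm\varpi_i\in\mathscr L^+$; this makes the assignment $V\mapsto\bm\lambda(V)$ well defined. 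For injectivity (uniqueness) I would note that $v$ generates $V$ under the lowering operators, so $V$ is forced to be the unique irreducible quotient $L(\bm\lambda)$ of the Verma module $M(\bm\lambda)=U(\mathfrak g_{\mathbb C})\otimes_{U(\mathfrak b)}\mathbb C_{\bm\lambda}$; hence two irreducibles with the same highest weight are isomorphic.

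The hard part is surjectivity (existence): for each $\bm\lambda\in\mathscr L^+$ one must show $L(\bm\lambda)$ is \emph{finite-dimensional}. The route I would take is to use dominant integrality to locate the singular vectors $f_i^{\,n_i+1}v_{\bm\lambda}$ inside $M(\bm\lambda)$ (again $\mathfrak{sl}_2$-theory), pass to the quotient by the submodule they generate, and verify via the Serre relations that the weight multiset of this quotient is stable under the Weyl group $W$; since the dominant weights $\preceq\bm\lambda$ are finite in number and each $W$-orbit is finite, $L(\bm\lambda)$ then has only finitely many weights, each of finite multiplicity, hence is finite-dimensional. Alternatives would be the Borel--Weil realization of $L(\bm\lambda)$ as holomorphic sections of the line bundle $\mathcal L_{\bm\lambda}$ on the flag manifold $\Gp_{\mathbb C}/B$, or a direct construction inside $L^2(\Gp)$ using the Peter--Weyl decomposition of Theorem~\ref{PeterWeyl}. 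Unitarity of the resulting representation is then free from the unitarian trick, and unwinding the construction shows $\xi\mapsto(\text{highest weight of }\xi)$ is the asserted bijection. Throughout, the only genuinely nontrivial point is this finite-dimensionality of $L(\bm\lambda)$; the rest is the $\mathfrak{sl}_2$ dictionary together with compactness/simple-connectedness bookkeeping.
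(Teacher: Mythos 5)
The paper does not prove this theorem at all: it is quoted as a standard result of representation theory (the Cartan--Weyl theorem), with the reader referred to \cite{Procesi2007} and \cite{BP2011}. So there is no in-paper argument to compare against; what you have written is an outline of the classical proof, and it is the correct one. Your reductions are all sound: the unitarian trick and simple connectedness legitimately convert the problem into classifying finite-dimensional irreducible $\mathfrak g_{\mathbb C}$-modules; the extraction of a dominance-maximal weight, the $\mathfrak{sl}_2$-argument forcing $(\bm\lambda,\bm\alpha_i)=\tfrac12 n_i|\bm\alpha_i|^2$ with $n_i\in\mathbb N_0$ (which matches exactly the paper's normalization of the fundamental weights), and uniqueness via the universal property of the Verma module are all standard and correctly deployed. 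You also correctly isolate the genuinely hard step, the finite-dimensionality of $L(\bm\lambda)$ for dominant integral $\bm\lambda$. One small correction to that last step as you phrase it: what makes the argument work is not the Serre relations per se but the \emph{local nilpotence} of each $e_i$ and $f_i$ on the quotient of $M(\bm\lambda)$ by the submodule generated by the singular vectors $f_i^{\,n_i+1}v_{\bm\lambda}$; local finiteness for each simple $\mathfrak{sl}_2$ is what yields Weyl-group invariance of the set of weights, and hence finiteness of the weight support. As a blind reconstruction of a cited classical theorem, the proposal is acceptable, with the understanding that it is a roadmap rather than a complete proof.
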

In particular, the trivial representation corresponds to the point $0\in\mathscr{L}^+$. From now on, we shall denote the assignment between representations $\xi\in \DuGp$ and highest weights $\bm{j}\in\mathscr{L}^+$ by $\bm{j}=\bm{J}(\xi)$, or $\xi=\Xi(\bm{j})$. The highest weight theorem implies a complete description of the spectrum of the Laplace operator $\Delta$ of $\Gp$.

\begin{theorem}\label{DeltaSpec}
Each space $\mathcal{M}_\xi$ is an eigenspace of the Laplace operator $\Delta$, with eigenvalue
$$
-|\bm{J}(\xi)+\bm{\varpi}|^2+|\bm{\varpi}|^2,
$$
where $\bm{\varpi}=\sum_{i=1}^\varrho \bm{\varpi}_i$. 
\end{theorem}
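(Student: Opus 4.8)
The statement is Freudenthal's formula for the Casimir eigenvalue, and I would prove it in two stages: first reduce to a scalar by Schur's lemma, then evaluate that scalar on a highest weight vector. For the first stage, observe that since the Killing form is $\mathrm{Ad}$-invariant, $\Delta=\sum_i X_i^2$ is independent of the orthonormal basis $\{X_i\}$ and is a central element of the universal enveloping algebra $U(\mathfrak{g})$. A left-invariant operator $X$ acts on a matrix coefficient $\xi_{jk}(x)=\langle\xi(x)e_k,e_j\rangle$ by $X\xi_{jk}(x)=\langle\xi(x)\,d\xi(X)e_k,e_j\rangle$, hence $\Delta\xi_{jk}(x)=\langle\xi(x)\,d\xi(\Delta)e_k,e_j\rangle$ with $d\xi(\Delta)=\sum_i d\xi(X_i)^2$. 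Because $\Delta$ is central, $d\xi(\Delta)$ commutes with the irreducible action $d\xi$, so Schur's lemma forces $d\xi(\Delta)=c_\xi\,\mathrm{Id}$ for a scalar $c_\xi$ (and $c_\xi\le 0$, since each $X_i$ generates a one-parameter group of translations, hence is skew-adjoint on $L^2(\Gp)$). Thus every $\xi_{jk}$ is a $\Delta$-eigenfunction with eigenvalue $c_\xi$, and by Peter--Weyl (Theorem \ref{PeterWeyl}) $\Delta$ acts on all of $\mathcal{M}_\xi$ as multiplication by $c_\xi$; it remains to identify $c_\xi$.

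For the second stage I would compute $c_\xi$ on a highest weight vector. Complexify, fix the Cartan subalgebra $\mathfrak{h}=\mathfrak{t}_{\mathbb{C}}$, and take the root space decomposition $\mathfrak{g}_{\mathbb{C}}=\mathfrak{h}\oplus\bigoplus_\alpha\mathfrak{g}_\alpha$ with positive system given by $\{\bm\alpha_i\}_{i=1}^\varrho$. Since the metric on $\Gp$ is the opposite of the Killing form, $\Delta=-C$, where $C$ is the Casimir element normalized by the Killing form; writing $\{H_k\}_{k=1}^\varrho$ for a Killing-orthonormal basis of the real form of $\mathfrak{h}$ and $E_{\pm\alpha}$ for root vectors with $B(E_\alpha,E_{-\alpha})=1$, one has the standard identity
$$
C=\sum_{k=1}^\varrho H_k^2+\sum_{\alpha>0}\big(E_\alpha E_{-\alpha}+E_{-\alpha}E_\alpha\big)\in U(\mathfrak{g}_{\mathbb{C}}).
$$
Apply this to a highest weight vector $v_{\bm j}$ with $\bm j=\bm J(\xi)$: the first sum acts by $\sum_k\bm j(H_k)^2=|\bm j|^2$; for $\alpha>0$ one has $E_\alpha v_{\bm j}=0$, so $E_\alpha E_{-\alpha}v_{\bm j}=[E_\alpha,E_{-\alpha}]v_{\bm j}=(\bm j,\alpha)v_{\bm j}$, and summing gives $\big(\bm j,\sum_{\alpha>0}\alpha\big)=2(\bm j,\bm\varpi)$, using that $\sum_{\alpha>0}\alpha=2\sum_i\bm\varpi_i=2\bm\varpi$ (the half-sum of positive roots equals $\sum_i\bm\varpi_i$, which follows directly from the defining relations $(\bm\varpi_i,\bm\alpha_j)=\tfrac12\delta_{ij}|\bm\alpha_j|^2$). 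Hence $Cv_{\bm j}=\big(|\bm j|^2+2(\bm j,\bm\varpi)\big)v_{\bm j}$, so
$$
c_\xi=-\big(|\bm j|^2+2(\bm j,\bm\varpi)\big)=-|\bm j+\bm\varpi|^2+|\bm\varpi|^2=-|\bm J(\xi)+\bm\varpi|^2+|\bm\varpi|^2,
$$
which is the asserted eigenvalue.

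\textbf{Main obstacle.} The conceptual content is light --- everything past the first stage is classical highest weight theory --- but the delicate point is the bookkeeping in the second stage: normalizing $\{H_k\}$ and $\{E_{\pm\alpha}\}$ so that the displayed quadratic expression really is the Casimir attached to $\sum_i X_i^2$, and correctly tracking the overall sign flip produced by taking the metric to be the negative of the Killing form, together with the real-form conventions that render the inner product on $\mathfrak{t}^*$ positive definite. These signs and normalizations must be pinned down exactly for the final formula to come out in the stated form; once $c_\xi$ is computed on the highest weight vector, the Schur-lemma argument of the first stage immediately transports the value to all of $\mathcal{M}_\xi$.
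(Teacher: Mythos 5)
Your argument is correct and is precisely the standard proof of the Casimir eigenvalue formula; the paper itself offers no proof of Theorem \ref{DeltaSpec}, citing it instead as a consequence of highest weight theory (standard references such as \cite{Procesi2007}), so there is nothing to compare against beyond noting that your two-stage argument (Schur's lemma to reduce $d\xi(\Delta)$ to a scalar, then evaluation of the Casimir on a highest weight vector) is exactly the classical route. The sign and normalization bookkeeping you flag does work out: with the metric equal to $-B$, one has $\Delta=-C_B$, and since the Killing form is negative definite on $\mathfrak{t}$ one passes to $i\mathfrak{t}$ to get the positive definite inner product under which $|\bm{J}(\xi)|^2$ and $|\bm{\varpi}|^2$ are the stated quantities; the only point worth stating explicitly is that the identification $\sum_i\bm{\varpi}_i=\rho$ (half-sum of positive roots) uses the classical fact $(\rho,\bm{\alpha}_j)=\tfrac12|\bm{\alpha}_j|^2$, i.e. $s_j(\rho)=\rho-\bm{\alpha}_j$, not merely the defining relations of the fundamental weights.
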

We write $\lambda_\xi=|\bm{J}(\xi)+\bm{\varpi}|^2-|\bm{\varpi}|^2$, so that $\{\lambda_\xi\}$ is the spectrum of the positive self-adjoint operator $-\Delta$. Imitating commutative harmonic analysis, we set the size functions $|\xi|:=\sqrt{|\lambda_\xi|}$, $\size[\xi]:=(1+\lambda_\xi)^{1/2}$. The operator $|\nabla|$, which is the square-root of $-\Delta$ defined via spectral analysis, will be frequently used. The values of $\size[\xi]$ are exactly the eigenvalues of $(1-\Delta)^{1/2}$. We will also use the following notation:
\begin{equation}\label{FourierSupp}
\mathfrak{S}[\gamma_1,\gamma_2]:=\left\{\xi\in\DuGp:\gamma_1\leq|\xi|\leq\gamma_2 \right\}.
\end{equation}

For $s\in\mathbb{R}$, the Sobolev space $H^s(\Gp)$ is defined to be the subspace of $f\in\mathcal{D}'(\Gp)$ such that $\|(1-\Delta)^{s/2}f\|_{L^2(\Gp)}<\infty$. By the Peter-Weyl theorem and the above characterization of Laplacian, the Sobolev norm is computed by 
$$
\|f\|_{H^s(\Gp)}^2=\sum_{\xi\in\DuGp} d_\xi\size[\xi]^{2s}\big\lHS\Ft f(\xi)\big\rHS^2.
$$

To proceed further, we need to introduce a notion of partial ordering on $\mathscr{L}^+$. Define the cone 
$$
\mathscr{R}^+:=\left\{\sum_{i=1}^\varrho n_i\bm{\alpha}_i,\, n_i\in\mathbb{N}_0\right\}.
$$
We then introduce the \emph{dominance order} on $\mathscr{L}^+$ as follows: for $\bm{j},\bm{k}\in\mathscr{L}^+$, we set $\bm{j}\prec \bm{k}$ if $\bm{j}\neq \bm{k}$ and $\bm{k}-\bm{j}\in\mathscr{R}^+$, and set $\bm{j}\preceq \bm{k}$ to include the possibility $\bm{j}=\bm{k}$. The relation $\prec$ defines a partial ordering on $\mathscr{L}^+$. With the aid of this root system, we obtain the following characterization of product of Laplace eigenfunctions (see, for example, \cite{Procesi2007}, Proposition 3 of page 345):

\begin{proposition}\label{SpecPrd0}
Let $u_{\bm{j}}\in\mathcal{M}_{\Xi(\bm{j})}$, $u_{\bm{k}}\in\mathcal{M}_{\Xi(\bm{k})}$ be Laplace eigenfunctions corresponding to highest weights $\bm{j},\bm{k}\in\mathscr{L}^+$ respectively. Then the product $u_{\bm{j}}u_{\bm{k}}$ is in the space
$$
\bigoplus_{\bm{l}\in\mathscr{L}^+:\,\bm{l}\preceq \bm{j}+\bm{k}}\mathcal{M}_{\Xi(\bm{l})}.
$$
\end{proposition}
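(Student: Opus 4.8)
The plan is to reduce the statement to the decomposition of a tensor product of two irreducible representations into irreducibles, and then to invoke the basic fact of highest weight theory that every weight of an irreducible representation is dominated by its highest weight. Throughout, let $\mathscr{R}^+$ denote the monoid $\{\sum_{i=1}^\varrho n_i\bm{\alpha}_i:n_i\in\mathbb{N}_0\}$ introduced above, so that the relation $\bm{j}\preceq\bm{k}$ on $\mathscr{L}^+$ means precisely $\bm{k}-\bm{j}\in\mathscr{R}^+$.

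First I would use the Peter--Weyl theorem (Theorem \ref{PeterWeyl}) to reduce to matrix entries: since $\mathcal{M}_{\Xi(\bm{j})}$ is spanned by the functions $x\mapsto\Xi(\bm{j})_{pq}(x)$ and $\mathcal{M}_{\Xi(\bm{k})}$ by the $x\mapsto\Xi(\bm{k})_{rs}(x)$, and since the target space $\bigoplus_{\bm{l}\preceq\bm{j}+\bm{k}}\mathcal{M}_{\Xi(\bm{l})}$ is a linear subspace, it suffices to show that each product $\Xi(\bm{j})_{pq}\,\Xi(\bm{k})_{rs}$ lies in it. Next I would observe that such a product is itself a matrix entry of the tensor product representation: fixing orthonormal bases of $\mathcal{H}_{\Xi(\bm{j})}$ and $\mathcal{H}_{\Xi(\bm{k})}$ and the induced tensor basis of $\mathcal{H}_{\Xi(\bm{j})}\otimes\mathcal{H}_{\Xi(\bm{k})}$, the (generally reducible) unitary representation $(\Xi(\bm{j})\otimes\Xi(\bm{k}))(x)=\Xi(\bm{j})(x)\otimes\Xi(\bm{k})(x)$ has matrix entry indexed by the pair $(e_p\otimes e_r,\,e_q\otimes e_s)$ equal to $\Xi(\bm{j})_{pq}(x)\,\Xi(\bm{k})_{rs}(x)$.

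Then I would decompose $\Xi(\bm{j})\otimes\Xi(\bm{k})\cong\bigoplus_{\bm{l}\in\mathscr{L}^+}m_{\bm{l}}\,\Xi(\bm{l})$ into irreducibles, with multiplicities $m_{\bm{l}}\in\mathbb{N}_0$, all but finitely many zero; the intertwiner is conjugation by a fixed unitary on $\mathcal{H}_{\Xi(\bm{j})}\otimes\mathcal{H}_{\Xi(\bm{k})}$. Under this conjugation every matrix entry of $\Xi(\bm{j})\otimes\Xi(\bm{k})$ becomes a finite linear combination of matrix entries of those $\Xi(\bm{l})$ with $m_{\bm{l}}>0$, so $\Xi(\bm{j})_{pq}\,\Xi(\bm{k})_{rs}\in\bigoplus_{\bm{l}:\,m_{\bm{l}}>0}\mathcal{M}_{\Xi(\bm{l})}$. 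It therefore remains to show that $m_{\bm{l}}>0$ forces $\bm{j}+\bm{k}-\bm{l}\in\mathscr{R}^+$ (with equality allowed). The weight set of $\Xi(\bm{j})\otimes\Xi(\bm{k})$ is the sumset of the weight sets of the two factors; and by the standard structure of irreducible representations, every weight $\mu$ of $\Xi(\bm{j})$ satisfies $\bm{j}-\mu\in\mathscr{R}^+$, and likewise for $\Xi(\bm{k})$. Hence every weight of the tensor product has the form $\bm{j}+\bm{k}-\rho$ with $\rho\in\mathscr{R}^+$. Since the highest weight $\bm{l}$ of any constituent $\Xi(\bm{l})$ with $m_{\bm{l}}>0$ is in particular a weight of the tensor product, we get $\bm{j}+\bm{k}-\bm{l}\in\mathscr{R}^+$, i.e. $\bm{l}\preceq\bm{j}+\bm{k}$; combined with the previous step this proves the proposition.

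I expect the one point needing care to be this last step: one must quote (or briefly re-derive, e.g.\ by applying the lowering operators attached to the positive root spaces to a highest weight vector) that all weights of an irreducible representation lie in $\bm{j}-\mathscr{R}^+$, and one must phrase the intermediate inequalities among \emph{arbitrary} weights directly via membership in $\mathscr{R}^+$, since the order $\prec$ was defined a priori only on the cone $\mathscr{L}^+$. A minor auxiliary remark, not strictly needed once the tensor-product computation is in hand, is that the linear span of the products $\mathcal{M}_{\Xi(\bm{j})}\cdot\mathcal{M}_{\Xi(\bm{k})}$ is automatically bi-invariant under left and right translations (translation acts as an algebra homomorphism on functions), which independently confirms that it must be a sum of \emph{full} isotypic blocks $\mathcal{M}_{\Xi(\bm{l})}$ rather than a proper subspace of them.
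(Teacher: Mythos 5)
Your argument is correct and complete. The paper does not prove this proposition at all --- it cites it directly (Proposition 3, p.~345 of \cite{Procesi2007}) --- and what you have written is precisely the standard tensor-product argument behind that reference: products of matrix entries are matrix entries of $\Xi(\bm{j})\otimes\Xi(\bm{k})$, and every irreducible constituent has highest weight of the form $\bm{j}+\bm{k}-\rho$ with $\rho\in\mathscr{R}^+$ because all weights of an irreducible representation lie below its highest weight. You also correctly flag the one point of care, namely that the intermediate comparisons involve arbitrary weights and must be phrased via membership in $\mathscr{R}^+$ rather than via the order $\prec$ defined only on $\mathscr{L}^+$.
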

We deduce from this characterization an important spectral localization property for products that will play a fundamental role in dyadic analysis. It is compatible with Hani's spectral localization \cite{Hani2012}: for a general compact Riemannian manifold, the product of eigenfunctions with eigenvalue $\mu,\lambda$ respectively should sharply concentrate in $\big[|\sqrt{\lambda}-\sqrt{\mu}|,\sqrt{\lambda}+\sqrt{\mu}\big]$ on the spectral side.
\begin{corollary}\label{SpecPrd}
There is a constant $0<c\leq1$, depending only on the algebraic structure of $\Gp$, with the following property. If $u_1\in\mathcal{M}_{\xi_1}$, $u_2\in\mathcal{M}_{\xi_2}$, then the product $u_1u_2\in\bigoplus_{\xi}\mathcal{M}_\xi$, where the range of sum is for
$$
\xi\in\mathfrak{S}\Big[c\big||\xi_1|-|\xi_2|\big|,|\xi_1|+|\xi_2|\Big].
$$
\end{corollary}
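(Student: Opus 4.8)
===

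The plan is to derive Corollary \ref{SpecPrd} from Proposition \ref{SpecPrd0} by showing that the dominance order control $\bm{l} \preceq \bm{j} + \bm{k}$ translates into the quantitative two-sided spectral bound. The upper bound is the easy half. For the lower bound, the core algebraic fact to extract is a uniform equivalence, on the cone $\mathscr{L}^+$, between the Euclidean size $|\bm{j} + \bm{\varpi}|$ (which governs $|\xi|$ via Theorem \ref{DeltaSpec}, since $|\xi| = \sqrt{|\bm{J}(\xi)+\bm{\varpi}|^2 - |\bm{\varpi}|^2}$) and a ``linear'' size such as $|\bm{j}| + |\bm{\varpi}|$ or the $\ell^1$-type quantity $\sum_i n_i$; this equivalence holds because $\mathscr{L}^+$ is a convex rational polyhedral cone inside the closed positive Weyl chamber, so all norms and all positive linear functionals that are strictly positive on the cone's generators are comparable there. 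That is the only genuinely geometric input; everything else is bookkeeping.

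First I would fix $\bm{j} = \bm{J}(\xi_1)$, $\bm{k} = \bm{J}(\xi_2)$, and let $u_1 u_2 \in \bigoplus_{\bm{l} \preceq \bm{j}+\bm{k}} \mathcal{M}_{\Xi(\bm{l})}$ by Proposition \ref{SpecPrd0}; it suffices to bound $|\Xi(\bm{l})|$ above and below for every such $\bm{l}$. For the upper bound: $\bm{l} \preceq \bm{j}+\bm{k}$ means $\bm{j}+\bm{k}-\bm{l} \in \mathscr{R}^+$, and one checks that adding a nonnegative combination of simple roots to a dominant weight can only increase the squared length after the $\bm{\varpi}$-shift — indeed, for $\bm{m}$ dominant and $\bm{\beta} \in \mathscr{R}^+$, $(\bm{m}+\bm{\varpi}, \bm{\beta}) \geq 0$ with appropriate care, so $|\bm{l}+\bm{\varpi}|^2 \leq |\bm{j}+\bm{k}+\bm{\varpi}|^2$; combined with the triangle inequality $|\bm{j}+\bm{k}+\bm{\varpi}| \leq |\bm{j}+\bm{\varpi}| + |\bm{k}+\bm{\varpi}|$ this gives $\lambda_{\Xi(\bm{l})} + |\bm{\varpi}|^2 \le (\,|\xi_1|^2 + |\bm{\varpi}|^2)^{1/2} + (|\xi_2|^2+|\bm{\varpi}|^2)^{1/2})^2$, which after discarding the $|\bm{\varpi}|^2$ terms yields $|\Xi(\bm{l})| \leq |\xi_1| + |\xi_2|$ up to harmless adjustment (or one simply absorbs the bounded shift, noting the estimate is only claimed up to the constant $c$, and one may also shrink $c$ to handle small frequencies where the shift matters). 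For the lower bound: without loss of generality $|\xi_1| \geq |\xi_2|$, so I must show $|\Xi(\bm{l})| \geq c(|\xi_1| - |\xi_2|)$. Write $\bm{l} = \bm{j} + \bm{k} - \bm{\beta}$ with $\bm{\beta} \in \mathscr{R}^+$; since $\bm{l} \in \mathscr{L}^+$ is dominant, $|\bm{l} + \bm{\varpi}| \gtrsim |\bm{l}| \gtrsim \sum$ (coefficients of $\bm{l}$ in the $\bm{\varpi}_i$ basis), using the cone equivalence above. Then the point is that $\bm{l} = \bm{j}+\bm{k}-\bm{\beta}$ forces, via the same cone comparisons applied on the root side, a lower bound of the shape $|\bm{l}|_{\mathrm{lin}} \gtrsim |\bm{j}|_{\mathrm{lin}} - |\bm{k}|_{\mathrm{lin}}$ — because subtracting $\bm\beta \in \mathscr R^+$ from the dominant weight $\bm j + \bm k$ and landing in $\mathscr L^+$ constrains how large $\bm\beta$ can be in terms of $\bm k$ alone (one expresses everything in simple-root coordinates and uses that a dominant weight has nonnegative inner products with the fundamental-weight directions) — and finally $|\bm{j}|_{\mathrm{lin}} - |\bm{k}|_{\mathrm{lin}} \gtrsim |\bm{j}+\bm{\varpi}| - |\bm{k}+\bm{\varpi}| \gtrsim |\xi_1| - |\xi_2|$ again by norm equivalence on the cone (up to bounded additive shifts by $\bm\varpi$, which are again absorbed by shrinking $c$). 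Taking $c$ to be the minimum of the finitely many implied constants over the finitely many factors of $\Gp$ finishes it.

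The main obstacle I anticipate is the lower bound, specifically making precise and correct the claim ``$\bm{l} = \bm j + \bm k - \bm\beta \in \mathscr L^+$ with $\bm\beta\in\mathscr R^+$ implies $\bm l$ is linearly at least $\approx |\bm j| - |\bm k|$.'' Naively subtracting roots could in principle shrink a weight a lot, and one must genuinely use that $\bm l$ remains \emph{dominant}: the content is that the ``distance'' one can travel from $\bm j+\bm k$ by subtracting positive roots while staying in the closed Weyl chamber is controlled. I would handle this by working in the basis of fundamental weights and writing the $\bm{\alpha}_j$ in that basis via the (inverse) Cartan matrix, reducing the whole question to an elementary statement about nonnegative integer combinations and a fixed rational matrix, where compactness of the cone's link (or just explicit finiteness of the root system) gives the uniform constant. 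A secondary, more cosmetic nuisance is the $\bm{\varpi}$-shift in $\lambda_\xi = |\bm J(\xi)+\bm\varpi|^2 - |\bm\varpi|^2$: it is bounded, so it only affects low frequencies, and since the corollary allows the flexible constant $c \in (0,1]$ and the regime $|\xi_i|$ small involves only finitely many representations, one disposes of it by a direct check plus shrinking $c$. I would also remark that passing from the direct-product structure of $\Gp$ to the individual simple factors is routine since weights, roots, and the Laplacian all split as orthogonal direct sums.
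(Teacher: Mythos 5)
Your upper bound is fine and matches the paper's, but the lower bound has a genuine gap: the single dominance condition $\bm{l}\preceq\bm{j}+\bm{k}$ from Proposition \ref{SpecPrd0}, together with dominance of $\bm{l}$, simply does \emph{not} imply $|\bm{l}|\gtrsim\big||\bm{j}|-|\bm{k}|\big|$. The key claim you lean on --- that writing $\bm{l}=\bm{j}+\bm{k}-\bm{\beta}$ with $\bm{\beta}\in\mathscr{R}^+$ and requiring $\bm{l}\in\mathscr{L}^+$ ``constrains how large $\bm{\beta}$ can be in terms of $\bm{k}$ alone'' --- is false, because $0$ lies in the closed Weyl chamber: one can subtract positive roots from $\bm{j}+\bm{k}$ all the way down to $0$ (or near it) while remaining dominant. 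Concretely, for $\SU(2)$ with $\bm{j}=5\bm{\varpi}$, $\bm{k}=3\bm{\varpi}$, the weight $\bm{l}=0$ satisfies $\bm{l}\preceq\bm{j}+\bm{k}$ and is dominant, yet violates $|\bm{l}|\gtrsim|\bm{j}|-|\bm{k}|$; it is excluded only because $V_0$ does not actually occur in $V_5\otimes V_3$, which is information the dominance condition alone cannot see. No amount of Cartan-matrix bookkeeping or cone-norm equivalence will recover the lower bound from this one inequality.

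The missing idea, which is how the paper proceeds, is to exploit the symmetry of the triple product integral $\int_\Gp u_{\bm{j}}u_{\bm{k}}\bar{u}_{\bm{l}}\,dx$. Introducing the dual highest weights $\bar{\bm{j}}=-W_0(\bm{j})$ (which satisfy $|\bar{\bm{j}}|=|\bm{j}|$ since $W_0$ is orthogonal) and applying Proposition \ref{SpecPrd0} together with Schur orthogonality in each of the three possible groupings of the integrand, one obtains \emph{three} simultaneous necessary conditions: $\bm{l}\preceq\bm{j}+\bm{k}$, $\bm{j}\preceq\bar{\bm{k}}+\bm{l}$, and $\bar{\bm{k}}\preceq\bm{j}+\bar{\bm{l}}$. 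The first gives the upper bound as you argued; the second and third, evaluated against the pre-dual basis $\{\tilde{\bm{\alpha}}_i\}$ and combined with the cone/norm comparisons you already describe, give $|\bm{j}-\bar{\bm{k}}|\lesssim|\bm{l}|$ and hence $\big||\bm{j}|-|\bm{k}|\big|\lesssim|\bm{l}|$. Your cone-equivalence and $\bm{\varpi}$-shift-absorption arguments are then the right tools for converting these weight inequalities into statements about $|\xi|$, but without the duality/triple-product step the lower bound cannot be reached.
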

\begin{proof}
With a little abuse of notation, given a highest weight vector $\bm{j}\in\mathscr{L}^+$, we denote by $\bar{\bm{j}}$ the highest weight vector of $\Xi(\bm{j})^*$, the dual representation, which is also irreducible. It is known from representation theory that the highest weight of $\Xi(\bm{j})^*$ is just $-W_0(\bm{j})$, where $W_0$ is the element in the Weyl group with longest length. Since the Weyl group is orthogonal, there holds $|\bm{j}|=|\bar{\bm{j}}|$. By self-adjointness of $\Delta$, matrix entries of $\Xi(\bar{\bm{j}})$ correspond to the same eigenvalue of $\Delta$ as those of $\Xi(\bm{j})$ do.

Now suppose there are highest weights $\bm{j},\bm{k},\bm{l}\in\mathscr{L}^+$, and let $u_{\bm{j}}$, $u_{\bm{k}}$, $u_{\bm{l}}$ be linear combinations of matrix entries of $\Xi(\bm{j})$, $\Xi(\bm{k})$, $\Xi(\bm{l})$, respectively. From the above we know that e.g. $\bar{u}_{\bm{j}}\in\mathcal{M}_{\Xi(\bar{\bm{j}})}$. By Proposition \ref{SpecPrd0} and Schur orthogonality, the integral
$$
\int_\Gp u_{\bm{j}}u_{\bm{k}}\bar{u}_{\bm{l}} dx
$$
does not vanish only if the following relations are simultaneously satisfied:
$$
\bm{l}\preceq \bm{j}+\bm{k},
\quad \bm{j}\preceq \bar{\bm{k}}+\bm{l},
\quad \bar{\bm{k}}\preceq \bm{j}+\bar{\bm{l}}.
$$
Note that this is only a necessary condition. Denoting the pre-dual vector of $\bm{\alpha}_i$ by $\tilde{\bm{\alpha}}_i$ (i.e. $\tilde{\bm{\alpha}}_i\in\mathfrak{t}$, and $\bm{\alpha}_i(\tilde{\bm{\alpha}}_{i'})=\delta_{ii'}$), these relations are equivalent to
$$
\left.
\begin{aligned}
\bm{l}(\tilde{\bm{\alpha}}_i)&\leq \bm{j}(\tilde{\bm{\alpha}}_i)+\bm{k}(\tilde{\bm{\alpha}}_i)\\
\bm{j}(\tilde{\bm{\alpha}}_i)&\leq \bar{\bm{k}}(\tilde{\bm{\alpha}}_i)+\bm{l}(\tilde{\bm{\alpha}}_i)\\
\bar{\bm{k}}(\tilde{\bm{\alpha}}_i)&\leq \bm{j}(\tilde{\bm{\alpha}}_i)+\bar{\bm{l}}(\tilde{\bm{\alpha}}_i)\\
\end{aligned}
\right\}\quad i=1,\cdots,\varrho.
$$
Due to monotonicity of the eigenvalues with respect to the dominance order, the first inequality implies
$$
|\bm{l}+\bm{\varpi}|^2\leq|\bm{j}+\bm{k}+\bm{\varpi}|^2
$$
so we conclude that $|\bm{l}|\leq|\bm{j}|+|\bm{k}|$. Since $\{\bm{\alpha}_i\}_{i=1}^\varrho$ is a basis for $\mathfrak{t}^*$, it follows that $\{\tilde{\bm{\alpha}}_i\}_{i=1}^\varrho$ is a basis for $\mathfrak{t}$, so the second and third inequalities, together with $|\bm{l}|=|\bar{\bm{l}}|$, imply
$$
|\bm{j}-\bar{\bm{k}}|\lesssim |\bm{l}|.
$$
Using $|\bm{k}|=|\bar{\bm{k}}|$ once again, we obtain $\big||\bm{j}|-|\bm{k}|\big|\lesssim|\bm{l}|$. Thus the integral does not vanish only if 
$$
\big||\bm{j}|-|\bm{k}|\big|\lesssim|\bm{l}|\leq|\bm{j}|+|\bm{k}|.
$$
Using the formula for eigenvalues in Theorem \ref{DeltaSpec}, with $\bm{j}=\bm{J}(\xi_1)$, $\bm{k}=\bm{J}(\xi_2)$, $\bm{l}=\bm{J}(\xi)$, this is implies that the spectral projection of $u_{\bm{j}}u_{\bm{k}}$ to $\mathcal{M}_\xi$ is nontrivial only if
$$
\big||\xi_1|-|\xi_2|\big|
\lesssim |\xi|
\leq |\xi_1|+|\xi_2|.
$$
This is exactly what to prove.
\end{proof}
The dual representation of $\xi\in\DuGp$ is, in general, not equivalent to $\xi$, although they have the same dimension. This is why $\bar{\bm{j}}$ should be studied first. But the rank one case can be treated trivially: the special unitary group $\SU(2)$ is the only simply connected compact Lie group of rank 1. Irreducible unitary representations of $\SU(2)$ are uniquely labeled by their dimensions. For the Lie algebra $\mathfrak{su}(2)$ and the subalgebra corresponding to the subgroup $\mathrm{diag}(e^{i\psi},e^{-i\psi})$, the dominance order is equivalent to the usual ordering on natural numbers. Corollary \ref{SpecPrd} then becomes a well-known fact: the product of spherical harmonics of degree $p$ and $q$ is a linear combination of spherical harmonics of degree between $|p-q|$ and $p+q$, with coefficients given by the \emph{Clebsch–Gordan coefficients} (see e.g. \cite{Vilenkin1978}). This fact has been used by \cite{DS2004} as the substitute of spectral localization properties on spheres. On the other hand, the constant $c$ can be strictly less than 1. For example, the root system of $\SU(3)$ forms a hexagonal lattice, and $c=1/2$ in that case.

In estimating the magnitude of certain symbols, we will also be employing the following asymptotic formula of Weyl type. It is the discrete version of volume growth estimate for Euclidean balls:
\begin{lemma}\label{Weyl}
As $t\to\infty$,
$$
\sum_{\xi\in \mathfrak{S}[0,t]}d_\xi^2
\simeq
t^n.
$$
\end{lemma}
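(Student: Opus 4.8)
The statement is a Weyl-type counting law: the number of matrix entries (with multiplicity $d_\xi^2$) of representations with $\lambda_\xi \le t^2$ grows like $t^n$, where $n = \dim \Gp$. The plan is to reduce this to the classical Weyl law for the Laplacian on the compact Riemannian manifold $(\Gp, g)$, where $g$ is the metric induced by the negative of the Killing form. Recall that $\sum_{\xi \in \mathfrak{S}[0,t]} d_\xi^2 = \sum_{\xi: \lambda_\xi \le t^2} \dim \mathcal{M}_\xi$ is precisely $N(t^2) := \#\{\text{eigenvalues of } -\Delta \text{ that are } \le t^2, \text{ counted with multiplicity}\}$, by Theorem \ref{PeterWeyl} and Theorem \ref{DeltaSpec}. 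The classical Weyl law on an $n$-dimensional compact Riemannian manifold gives $N(\Lambda) \sim \omega_n (2\pi)^{-n} \mathrm{vol}(\Gp) \, \Lambda^{n/2}$ as $\Lambda \to \infty$, where $\omega_n$ is the volume of the Euclidean unit ball. Setting $\Lambda = t^2$ gives $N(t^2) \sim C t^n$ with $C = \omega_n (2\pi)^{-n} \mathrm{vol}(\Gp) > 0$, which is exactly the claimed $\simeq t^n$ (where $\simeq$ is understood up to positive multiplicative constants).

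First I would record the identification of the counting function: use Peter–Weyl to write $L^2(\Gp) = \bigoplus_\xi \mathcal{M}_\xi$ with $\dim \mathcal{M}_\xi = d_\xi^2$, and use Theorem \ref{DeltaSpec} to note each $\mathcal{M}_\xi$ sits in the $\lambda_\xi$-eigenspace of $-\Delta$. Thus $\sum_{\xi \in \mathfrak{S}[0,t]} d_\xi^2$ equals the spectral counting function of $-\Delta$ evaluated at $t^2$. Second, I would invoke the Weyl asymptotic law for $-\Delta_g$ on the closed manifold $\Gp$ — this is standard (e.g. via the heat kernel expansion $\mathrm{Tr}\, e^{t\Delta} \sim (4\pi t)^{-n/2}\mathrm{vol}(\Gp)$ as $t \to 0^+$ together with a Tauberian theorem, or via Hörmander's parametrix construction for the wave/Poisson kernel). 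Third, I would translate the conclusion $N(\Lambda) \asymp \Lambda^{n/2}$ into $N(t^2) \asymp t^n$, which is the assertion.

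An alternative, more self-contained route stays purely within representation theory: by the Weyl dimension formula, $d_\xi = \prod_{\bm{\alpha} > 0} \frac{(\bm{J}(\xi) + \bm{\varpi}, \bm{\alpha})}{(\bm{\varpi}, \bm{\alpha})}$ is a polynomial in $\bm{J}(\xi) \in \mathscr{L}^+$ of degree equal to the number of positive roots, which is $(n - \varrho)/2$; hence $d_\xi^2$ is a polynomial of degree $n - \varrho$ in the lattice point $\bm{j} = \bm{J}(\xi)$. On the other hand, $\lambda_\xi = |\bm{j} + \bm{\varpi}|^2 - |\bm{\varpi}|^2$, so $\mathfrak{S}[0,t]$ corresponds to lattice points $\bm{j} \in \mathscr{L}^+$ lying in a ball of radius $\sim t$ in $\mathfrak{t}^* \cong \mathbb{R}^\varrho$. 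Summing a degree-$(n-\varrho)$ polynomial over the $\sim t^\varrho$ lattice points in such a ball yields, by comparison with the Riemann integral $\int_{\{|x| \le t\} \cap (\text{cone})} P(x)\, dx$, a quantity of order $t^{(n - \varrho) + \varrho} = t^n$; positivity of the leading term of $d_\xi^2$ (the dimension formula is a product of positive linear forms on the interior of the Weyl chamber) guarantees the lower bound. I would present whichever of these the surrounding text prefers; the reduction to the classical Weyl law is shortest.

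The main obstacle is essentially bookkeeping rather than a genuine difficulty: in the representation-theoretic route one must be careful that the polynomial $d_\xi^2$ in $\bm{j}$ does not degenerate in leading order when restricted to the cone $\mathscr{L}^+$ (it does not, since each factor $(\bm{j} + \bm{\varpi}, \bm{\alpha})$ is strictly positive and grows linearly as $\bm{j} \to \infty$ inside the chamber), and that the lattice-point-to-integral comparison is uniform — both are handled by standard Euler–Maclaurin / Riemann-sum estimates. In the analytic route, one only needs to cite the Weyl law, so there is no obstacle at all beyond making the identification of counting functions explicit. I expect the write-up to be short either way.
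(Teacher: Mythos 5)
Your proposal is correct and its primary route — identifying $\sum_{\xi\in\mathfrak{S}[0,t]}d_\xi^2$ with the spectral counting function of $-\Delta$ at $t^2$ via Peter--Weyl and Theorem \ref{DeltaSpec}, then invoking the classical Weyl law on the closed manifold $\Gp$ — is exactly the paper's proof. The alternative lattice-point argument via the Weyl dimension formula is a valid extra, but the paper does not use it.
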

\begin{proof}
By Schur orthogonality, recalling $|\xi|=\sqrt{\lambda_\xi}$, we find that 
$$
\sum_{\xi\in \mathfrak{S}[0,t]}d_\xi^2
=\sum_{|\xi|\leq t}\dim\mathcal{M}_\xi
=\sum_{\lambda\in\sigma(-\Delta):\lambda\leq t^2}1,
$$
i.e. the number of eigenvalues of $-\Delta$ with magnitude less than $t^2$ (counting multiplicity). Using the Weyl law, we find that the ratio between
$$
\sum_{|\xi|\leq t}d_\xi^2
\quad\text{and}\quad
\frac{\omega_n}{(2\pi)^n}\mathrm{Vol}(\Gp)t^n
$$
has limit 1 as $t\to\infty$. Here $\omega_n$ is the volume of Euclidean $n$-unit ball.
\end{proof}

\subsection{Symbol and Symbolic Calculus}
A global notion of symbol can be defined on Lie groups due to their high symmetry. An explicit symbolic calculus was formally constructed by a series of works of Ruzhansky, Turunen, Wirth. Fischer's work \cite{Fis2015} provided a complete study of the object. We will closely follow \cite{Fis2015}.

We fix our compact Lie group $\Gp$ as in the previous subsection. The starting point will be a Taylor's formula on $\Gp$.
\begin{proposition}[Taylor expansion]\label{TaylorGp}
Let $q=(q_1,\cdots,q_M)$ be an $M$-tuple of smooth functions on $\Gp$, all vanishing at $e\in\Gp$, such that $(\nabla q_i)_{i=1}^M$ has rank $n$. For a multi-index $\alpha\in\mathbb{N}_0^m$, set $q^\alpha:=q_1^{\alpha_1}\cdots q_M^{\alpha_M}$. Corresponding to each multi-index $\alpha$, there is a left-invariant differential operator $X_q^{(\alpha)}$ of order $|\alpha|$ on $\Gp$, such that the following Taylor's formula holds for every smooth function $f$ on $\Gp$ and every $N\in\mathbb{N}_0$:
$$
f(xy)=\sum_{|\alpha|<N}q^\alpha(y^{-1})X_q^{(\alpha)}f(x)
+R_{N}(f;x,y).
$$
Here the remainder $R_{N}(f;x,y)$ depends linearly on derivatives of $f$ of order $\leq N$, is smooth in $x,y$, and satisfies $R_{N}(f;x,y)=O\big(|f|_{C^N}\dist(y,e)^N\big)$.
\end{proposition}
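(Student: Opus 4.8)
The plan is to transport the ordinary Euclidean Taylor formula to $\Gp$ through the exponential map and then re-expand the resulting monomials in the $q_i$'s. First I would reduce to $x=e$: each $X_q^{(\alpha)}$ being left-invariant, one has $(X_q^{(\alpha)}f)(x)=\bigl(X_q^{(\alpha)}(f\circ L_x)\bigr)(e)$ with $L_x(y)=xy$, and since the Killing metric is bi-invariant $L_x$ is an isometry, so $|f\circ L_x|_{C^N}=|f|_{C^N}$ and distances to $e$ are unchanged. Hence it suffices to produce, for every $F\in C^\infty(\Gp)$,
\[
F(y)=\sum_{|\alpha|<N}q^\alpha(y^{-1})\,(X_q^{(\alpha)}F)(e)+R_N(F;e,y),
\]
with $R_N(F;e,y)$ smooth in $y$, linear in the derivatives of $F$ of order $\le N$, and $O\bigl(|F|_{C^N}\dist(y,e)^N\bigr)$; one then sets $R_N(f;x,y):=R_N(f\circ L_x;e,y)=f(xy)-\sum_{|\alpha|<N}q^\alpha(y^{-1})(X_q^{(\alpha)}f)(x)$, which is automatically smooth in $(x,y)$.

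Next I would work on a ball $B$ around $e$ inside the injectivity radius, where $\exp\colon\mathfrak g\to\Gp$ is a diffeomorphism onto its image and $\dist(\exp w,e)=|w|$ (one-parameter subgroups are geodesics of the bi-invariant metric). Writing $y=\exp w$ and $g(w):=F(\exp w)$, the classical Taylor formula together with the standard identity $\frac{d^k}{dt^k}\big|_{t=0}F(\exp(tv))=(X_v^k F)(e)$, where $X_v:=\sum_i v_iX_i$ is the left-invariant field with $X_v|_e=v$, yields
\[
g(w)=\sum_{|\beta|<N}w^\beta\,(\mathcal P_\beta F)(e)+\rho_N(w),\qquad \rho_N(w)=O\bigl(|F|_{C^N}|w|^N\bigr),
\]
where each $\mathcal P_\beta$ is a left-invariant differential operator of order $|\beta|$ (a symmetrized monomial in the $X_i$) and $\rho_N$ depends linearly on the order-$\le N$ derivatives of $F$. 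Since $y^{-1}=\exp(-w)$, the functions $\hat q_i(w):=q_i(\exp(-w))$ are smooth near $0$, vanish at $0$, and have differentials $-\,dq_i|_e$; the rank hypothesis lets me pick $S\subset\{1,\dots,M\}$ with $|S|=n$ such that $(\hat q_i)_{i\in S}$ is a coordinate system at $0$. Inverting that chart and Taylor-expanding, each monomial $w^\beta$ becomes a polynomial in $(\hat q_i)_{i\in S}$ vanishing to order $|\beta|$ at $0$, i.e.
\[
w^\beta=\sum_{|\beta|\le|\alpha|<N}c^{(\beta)}_\alpha\,q^\alpha(y^{-1})+O(|w|^N),
\]
the sum over $\alpha\in\mathbb N_0^M$ supported in $S$.

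Substituting this into the Taylor formula for $g$ and collecting the coefficient of $q^\alpha(y^{-1})$ defines $X_q^{(\alpha)}:=\sum_{|\beta|\le|\alpha|}c^{(\beta)}_\alpha\mathcal P_\beta$ for $\alpha$ supported in $S$, with $X_q^{(\alpha)}:=0$ otherwise — a left-invariant differential operator of order $|\alpha|$ — and leaves $R_N(F;e,y)$ equal to $\rho_N(w)$ plus $\sum_{|\beta|<N}(\mathcal P_\beta F)(e)$ times the order-$N$ Taylor remainders of the $w^\beta$'s; on $B$ this is $O(|F|_{C^N}\dist(y,e)^N)$ and linear in the order-$\le N$ derivatives of $F$. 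On the compact complement $\Gp\setminus B$ one uses instead the closed form $R_N(F;e,y)=F(y)-\sum_{|\alpha|<N}q^\alpha(y^{-1})(X_q^{(\alpha)}F)(e)$, which is smooth, linear in derivatives of $F$ of order $<N$, bounded by $C|F|_{C^{N-1}}$, and there $\dist(y,e)\ge\delta>0$, so the bound upgrades to $C\delta^{-N}|F|_{C^N}\dist(y,e)^N$; gluing the two descriptions yields the global statement.

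The main obstacle I expect is the re-expansion step and its bookkeeping: choosing the $n$-element set $S$ cleanly, verifying that $w^\beta$ re-expands only into $q^\alpha$ with $|\alpha|\ge|\beta|$ — which is exactly what makes the orders of $X_q^{(\alpha)}$ come out right and keeps the truncation error genuinely $O(|w|^N)$ — and then confirming that all leftover pieces reassemble into one smooth remainder with the advertised linear dependence on $\le N$-order derivatives of $f$. The two inputs I would simply cite are the identity $\frac{d^k}{dt^k}\big|_{t=0}F(\exp(tv))=(X_v^kF)(e)$ and the fact that the associated operators $\mathcal P_\beta$ — symmetrized products of the $X_i$ arising in canonical coordinates of the first kind — have order exactly $|\beta|$.
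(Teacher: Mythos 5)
Your proof is correct, and its skeleton coincides with the paper's: reduce to $x=e$ using left-invariance of the $X_q^{(\alpha)}$ (together with the fact that left translations preserve the $C^N$ norm and $\dist(\cdot,e)$), run the Euclidean Taylor formula in a chart around $e$, and dispose of the region $\dist(y,e)\geq\delta$ by the trivial bound plus compactness. Where you genuinely diverge is in how the operators $X_q^{(\alpha)}$ are produced. The paper works directly in the chart furnished by $n$ of the $q_i$'s (implicit function theorem), fixes a basis with $X_iq_j(e)=\delta_{ij}$, and defines $X_q^{(\alpha)}$ as the combination of normal-ordered monomials $X^\gamma$ satisfying the biorthogonality $\big(X_q^{(\alpha)}q^\beta\big)(e)=\delta_{\alpha\beta}$, so that the formula is literally Taylor's formula in the $q$-coordinates. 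You instead pass through canonical coordinates of the first kind, extract the symmetrized operators $\mathcal{P}_\beta$ from $\frac{d^k}{dt^k}\big|_{t=0}F(\exp(tv))=(X_v^kF)(e)$, and then re-expand each monomial $w^\beta$ in the functions $q^\alpha(y^{-1})$; the observation that $w^\beta$ vanishes to order $|\beta|$ in the $\hat q$-chart is exactly what gives the triangular structure $X_q^{(\alpha)}=\sum_{|\beta|\le|\alpha|}c^{(\beta)}_\alpha\mathcal{P}_\beta$ and hence the correct order $\le|\alpha|$. Your route is longer but makes the operators explicit and handles the inversion $y\mapsto y^{-1}$ and the remainder bookkeeping more carefully than the paper's sketch does; the paper's one-step duality construction is shorter but leaves those points implicit. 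Both arguments are sound and yield the same statement.
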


%https://arxiv.org/pdf/0802.2780.pdf
\begin{proof}[Sketch of proof]
Suppose without loss of generality that the first $n$ functions are of maximal rank at $e$. By the implicit function theorem, the mapping $x\to(q_1(x),\cdots,q_M(x))\in\mathbb{R}^M$ is a smooth embedding in some neighbourhood of $e\in\Gp$. Consider first $x=e$. Fix a basis $X_1,\cdots,X_n$ of $\mathfrak{g}$, such that $X_iq_j(e)=\delta_{ij}$ for $i,j\leq n$. For multi-indices $\alpha\in\mathbb{N}_0^M$, write $X^\alpha=X_1^{\alpha_1}\cdots X_n^{\alpha_n}$ (the ``normal" order); if the first $n$ components of $\alpha$ all vanish, then just set $X^\alpha=0$. Choose constants $c_\alpha$ inductively so that
$$
\left(\sum_{\alpha}c_{\alpha}X^\alpha\right) q^\beta(e)=\delta_{\alpha\beta},
\quad 
\beta_i\neq0\text{ only for }1\leq i\leq n.
$$
Then with $X^{(\alpha)}_q:=\sum_{\alpha}c_{\alpha}X^\alpha$, the formula holds for $x=e$ as an implication of the usual Taylor's formula. In order to pass to general $x\in \Gp$, just use the left translation operator $L_x$ to act on both sides, and notice that $X^{(\alpha)}_q$ are left-invariant differential operators.
\end{proof}
\begin{remark}
If $f$ takes value in a finite dimensional normed space $V$, the Taylor's formula given above is still valid. If we define the $C^N$ norm of $f$ by
$$
|f|_{C^N;V}:=\sup_{v^*\in V^*:v^*\neq0}\frac{\big|\langle f,v^*\rangle\big|_{C^N}}{|v^*|},
$$
then the estimate $|R_{N}(f;x,y)|_{V}=O\big(|f|_{C^N;V}\dist(y,e)^N\big)$ is valid, with the implicit constant independent from the dimension of $V$.
\end{remark}

Note that the differential operators $X_q^{(\alpha)}$ are defined independently for every multi-index, so the equality $X_q^{(\alpha+\beta)}=X_q^{(\alpha)}X_q^{(\beta)}$ is, in general, not valid.

A \emph{symbol} $a$ on $\Gp$ is simply a field $a$ defined on $\Gp\times\DuGp$, such that $a(x,\xi)$ is a distribution of value in $\mathrm{End}(\Hh[\xi])$ for each $\xi\in\DuGp$. If a basis for $\Hh[\xi]$ is chosen, the value $a(x,\xi)$ can be simply understood as a matrix function of size $d_\xi\times d_\xi$. We define the \emph{quantization} of a symbol $a$ formally by
\begin{equation}\label{Op(a)}
\Op(a)f(x):=\sum_{\xi\in\DuGp} d_\xi\Tr\left(a(x,\xi)\cdot\Ft f(\xi)\cdot\xi(x)\right).
\end{equation}
Conversely, if $A:C^\infty(\Gp)\to\mathcal{D}'(\Gp)$ is a continuous operator, then it is the quantization of the symbol
\begin{equation}\label{Op-Symbol}
\sigma[A](x,\xi):=\xi^*(x)\cdot(A\xi)(x).
\end{equation}
Here $A\xi$ is understood as entry-wise action. In this case the series (\ref{Op(a)}) then converges in $\mathcal{D}'(\Gp)$. The \emph{associated right convolution kernel} for $a(x,\xi)$ is defined by
\begin{equation}\label{Symbol-Ker}
\mathcal{K}(x,y):=\left(a(x,\cdot)\right)^\vee(y)
=\sum_{\xi\in\DuGp}d_\xi\Tr\left(a(x,\xi)\cdot\xi(y)\right),
\end{equation}
where the Fourier inversion is taken with respect to $\xi$. Formally, the action $\Op(a)f$ can be written as a convolution:
\begin{equation}\label{Ker-Conv}
f(\cdot)*\mathcal{K}(x,\cdot)
=\int_{\Gp}f(y)\mathcal{K}(x,y^{-1}x)dy.
\end{equation}

An intrinsic notion of difference operators acting on symbols was introduced by Fischer \cite{Fis2015}, generalizing the differential operator with respect to $\xi$ in harmonic analysis on $\mathbb{R}^n$. For any (continuous) unitary representation $(\tau,\mathcal{H}_\tau)$, Maschke's theorem ensures that $\tau=\oplus_j\xi_j$ for finitely many $\xi_j\in\DuGp$. A symbol $a(x,\xi)$ can be naturally extended to any $\tau$ by $a(x,\tau)=\oplus_ja(x,\xi_j)$, up to equivalence of representations. The definition of difference operators is then given by
\begin{definition}
Given any extended symbol $a(x,\cdot)$ and representation $\tau$, the difference operator $\Df_\tau$ gives rise to a new extended symbol in the following manner:
$$
\Df_\tau a(x,\pi):=a(x,\tau\otimes\pi)-a(x,\I[\tau]\otimes\pi).
$$
\end{definition}
For a tuple of representations $\boldsymbol{\tau}=(\tau_1,\cdots,\tau_p)$, write $\Df^{\boldsymbol{\tau}}=\Df_{\tau_1}\cdots\Df_{\tau_p}$, and $\Df^{\boldsymbol{\tau}}a(x,\xi)$ is then an endomorphism of $\mathcal{H}_{\tau_1}\otimes\cdots\otimes\mathcal{H}_{\tau_p}\otimes\Hh[\xi]$.

Corresponding to the functions $q=\{q_i\}_{i=1}^M$ as in Proposition \ref{TaylorGp}, Ruzhansky and Turunen introduced the so-called \emph{admissible difference operators}, also generalizing the differential operator with respect to $\xi$ in harmonic analysis on $\mathbb{R}^n$, but in a more computable way compared to the intrinsic definition. We employ the following definitions from \cite{RT2009}, \cite{RT2013} and \cite{RTW2014}:

\begin{definition}\label{RTAdm}
An $M$-tuple of smooth functions $q=(q_i)_{i=1}^M$ on $\Gp$ is said to be RT-admissible if they all vanishing at $e\in\Gp$ and $(\nabla q_i)_{i=1}^M$ has rank $n$. If in addition the only common zero of $(q_i)_{i=1}^M$ is the identity element, then the $M$-tuple is said to be strongly RT-admissible.
\end{definition}

\begin{definition}\label{Difference}
Given a function $q\in C^\infty(\Gp)$, the corresponding RT-difference operator $\Df_q$ acts on the Fourier transform of a $f\in\mathcal{D}'(\Gp)$ by
$$
\Big(\Df_q\Ft f\Big)(\xi):=\Ft{qf}(\xi).
$$
The corresponding collection of RT-difference operators corresponding to an $M$-tuple $q=(q_i)_{i=1}^M$ is the set of difference operators
$$
\Df_q^\alpha:=\Df_{q^\alpha}=\Df_{q_1}^{\alpha_1}\cdots\Df_{q_M}^{\alpha_M}.
$$
If the tuple $q$ is RT-admissible (strongly RT-admissible), the corresponding collection of RT difference operators is said to be RT-admissible (strongly RT-admissible).
\end{definition}

We write $\Df_{q,\xi}$ for the action of a difference operator on the $\xi$ variable. Formally, we have
\begin{equation}\label{Diff_qa}
(\Df_{q,\xi}a)(x,\xi)=\int_\Gp q(y)\left(\sum_{\eta\in\DuGp} d_\eta\Tr\left(a(x,\eta)\cdot\eta(y)\right)\right)\xi^*(y)dy,
\end{equation}
so $\Df_{q,\xi}$ commutes with any differential operator acting on $x$. To compare with $\mathbb{R}^n$, we simply notice that given a symbol $a(x,\xi)$ on $\mathbb{R}^n$, the Fourier inversion of $\partial_\xi a(x,\xi)$ with respect to $\xi$ is $iy a^{\vee}(x,y)$, i.e. multiplication by a polynomial. The functions $q$ on $\Gp$ then play the role of monomials on $\mathbb{R}^n$.

With the aid of difference operators, \cite{Fis2015}  introduced symbol classes of interest.
\begin{definition}\label{S^mrd}
Let $m\in\mathbb{R}$, $0\leq\delta\leq\rho\leq1$. Fix a basis $X_1,\cdots,X_n$ of $\mathfrak{g}$, and define $X^\alpha$ as in Proposition \ref{NormalOrder}. The symbol class $\mathscr{S}^m_{\rho,\delta}(\Gp)$ is the set of all symbols $a(x,\xi)$, such that $a(x,\xi)$ is smooth in $x\in\Gp$, and for any tuple of representation $\boldsymbol{\tau}=(\tau_1,\cdots,\tau_p)$ and any left-invariant differential operator $X^\alpha$, there is a constant $C_{\alpha\boldsymbol{\tau}}$ such that
$$
\big\|X^\alpha\Df^{\boldsymbol{\tau}}a(x,\xi)\big\|
\leq C_{\alpha\boldsymbol{\tau}}\size[\xi]^{m-\rho p+\delta|\alpha|}.
$$
Here the norm is taken to be the operator norm of $\mathcal{H}_{\tau_1}\otimes\cdots\otimes\mathcal{H}_{\tau_p}\otimes\Hh[\xi]$.
\end{definition}
\begin{definition}\label{S^mrdAdmi}
Let $m\in\mathbb{R}$, $0\leq\delta\leq\rho\leq1$. Fix a basis $X_1,\cdots,X_n$ of $\mathfrak{g}$, and define $X^\alpha$ as in Proposition \ref{NormalOrder}. Let $q=(q_1,\cdots,q_M)$ be an RT-admissible $M$-tuple. The symbol class $\mathscr{S}^m_{\rho,\delta}(\Gp;\Df_q)$ is the set of all symbols $a(x,\xi)$, such that $a(x,\xi)$ is smooth in $x\in\Gp$, and 
$$
\big\|X^{\alpha}_x\Df_{q,\xi}^{\beta}a(x,\xi)\big\|
\leq C_{\alpha}\size[\xi]^{m+\delta|\alpha|-\rho|\beta|}.
$$
We can also introduce the norms
\begin{equation}\label{RhoDeltaNorm}
\mathbf{M}_{k,\rho;l,\delta;q}^m(a):=\sup_{|\alpha|\leq k}\sup_{|\beta|\leq l}
\sup_{x,\xi}\size[\xi]^{\rho|\beta|-m-\delta|\alpha|}
\left\|X^\alpha_x\Df_{q;\xi}^\beta a(x,\xi)\right\|.
\end{equation}
In particular, for $\rho=\delta=1$, we write $\mathbf{M}_{k,l;q}^m(a)$ for simplicity.
\end{definition}

In \cite{Fis2015}, Fischer proved that if $q=(q_i)_{i=1}^M$ is a strongly RT-admissible tuple, then the symbol class in Definition \ref{S^mrdAdmi} does not depend on the choice of $q$, and in fact gives rise to the usual H\"{o}rmander class of pseudo-differential operators.

\begin{theorem}[Fischer \cite{Fis2015}, Theorem 5.9. and Corollary 8.13.]\label{Fischer}
\hfill\par
(1) Suppose $0\leq\delta\leq\rho\leq1$. If $q=(q_i)_{i=1}^M$ is a strongly RT-admissible tuple, then a symbol $a\in \mathscr{S}^m_{\rho,\delta}$ if and only if all the norms $\mathbf{M}_{r,\rho;l,\delta;q}^m(a)$ are finite.

(2) Moreover, if $\rho>\delta$ and $\rho\geq1-\delta$, then the operator class $\Op \mathscr{S}^m_{\rho,\delta}$ coincides with the H\"{o}rmander class $\Psi^m_{\rho,\delta}$ of $(\rho,\delta)$-pseudo-differential operators defined via local charts.
\end{theorem}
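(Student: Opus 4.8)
The plan is to prove Part~(1) by a purely representation-theoretic comparison of difference operators, and Part~(2) by a change-of-quantization argument. For Part~(1) the task is to compare the intrinsic difference operators $\Df_\tau$ of Definition~\ref{S^mrd} with the RT-difference operators $\Df_q^\beta$ of Definition~\ref{S^mrdAdmi}, which I would do by pivoting through one distinguished strongly RT-admissible tuple. Since $\Gp$ is a compact Lie group it has a faithful representation, and after replacing it by $\tau_0\oplus\bar\tau_0$ I may assume the chosen $\tau_0$ has its matrix coefficients closed under conjugation; then $q^0:=\{(\tau_0)_{jk}-\delta_{jk}\}_{j,k}$ vanishes only at $e$ (faithfulness) and has differentials at $e$ spanning $\mathfrak{g}^*$ (injectivity of $d\tau_0$), hence is strongly RT-admissible in the sense of Definition~\ref{RTAdm}. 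Moreover each scalar operator $\Df_{q^0_{jk}}$ is a component of the single intrinsic operator $\Df_{\tau_0}$, and on the Fourier side difference operators compose by multiplying the associated convolution kernel, so $\Df_{q,\xi}\Df_{q',\xi}=\Df_{qq',\xi}$. Part~(1) then follows from two equalities of symbol classes: (a) $\mathscr{S}^m_{\rho,\delta}=\mathscr{S}^m_{\rho,\delta}(\Gp;\Df_{q^0})$, and (b) $\mathscr{S}^m_{\rho,\delta}(\Gp;\Df_{q^0})=\mathscr{S}^m_{\rho,\delta}(\Gp;\Df_q)$ for every strongly RT-admissible $q$.

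In (a), controlling $\Df_{q^0}^\beta$ by the $\Df^{\boldsymbol{\tau}}$-seminorms is immediate from the previous remark. For the reverse inclusion, faithfulness of $\tau_0$ puts every $\xi\in\DuGp$ inside some tensor power $\tau_0^{\otimes N}$, so every matrix coefficient of every $\xi$ is a polynomial in the entries of $\tau_0$; after subtracting its value at $e$ this becomes a polynomial in the $q^0_{jk}$ with vanishing constant term, which expresses $\Df_\tau$ as a finite combination, with constant matrix coefficients, of the operators $\Df_{q^0}^\beta$ with $|\beta|\ge1$, and $\Df^{\boldsymbol{\tau}}$ with $p$ factors as such a combination with $|\beta|\ge p$; substituting the $q^0$-bounds and summing recovers the estimate $\|X^\alpha\Df^{\boldsymbol{\tau}}a\|\lesssim\langle\xi\rangle^{m-\rho p+\delta|\alpha|}$ of Definition~\ref{S^mrd}. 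For (b) I would first observe that strong RT-admissibility forces the ideal generated by the tuple in $C^\infty(\Gp)$ to be exactly the maximal ideal of functions vanishing at $e$---near $e$ a sub-tuple is a coordinate chart so Hadamard's lemma applies, away from $e$ some member is non-vanishing, and a partition of unity glues the pieces---so each $q_i=\sum_{jk}q^0_{jk}h^i_{jk}$ and each $q^0_{jk}=\sum_i q_i g^i_{jk}$ with smooth coefficients. Since $\Df_{q^0_{jk}h}=\Df_h\Df_{q^0_{jk}}$ and $X^\alpha_x$ commutes with every difference operator, everything reduces to showing that $\Df_h$, for an \emph{arbitrary} smooth $h$, maps each $\mathscr{S}^{m'}_{\rho,\delta}(\Gp;\Df_{q^0})$ into itself; and here the spectral localization of Corollary~\ref{SpecPrd} is decisive, because multiplying a convolution kernel by a trigonometric polynomial $h$ mixes only representations $\eta$ with $\langle\eta\rangle$ comparable to $\langle\xi\rangle$, while a frequency-splitting argument passes to general smooth $h$. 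I expect this last lemma to be the main obstacle in Part~(1).

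For Part~(2), with $\rho>\delta$ and $\rho\ge1-\delta$, I would upgrade the symbol-class identity to an operator-class one. To prove $\Op\mathscr{S}^m_{\rho,\delta}\subseteq\Psi^m_{\rho,\delta}$, fix a chart $\kappa:U\to\mathbb{R}^n$ with cut-offs $\chi$, choose a strongly RT-admissible $q$ adapted to the chart, and expand the group quantization $(\ref{Op(a)})$ near the diagonal by the Taylor formula of Proposition~\ref{TaylorGp} with the $q_i$ as quasi-coordinates: this yields an asymptotic series---summable in the relevant symbol topology precisely because $\rho\ge1-\delta$---whose terms are Kohn--Nirenberg operators on $\mathbb{R}^n$ with symbols built from the admissible derivatives $\Df_q^\beta a$, and the bounds from Part~(1) together with $\rho>\delta$ place these in the Euclidean class $S^m_{\rho,\delta}$, so $\kappa_*(\chi\,\Op(a)\,\chi)\in\Psi^m_{\rho,\delta}(\mathbb{R}^n)$. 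Conversely, given $A\in\Psi^m_{\rho,\delta}(\Gp)$, I would compute its global symbol $\sigma[A](x,\xi)=\xi^*(x)(A\xi)(x)$ from $(\ref{Op-Symbol})$: the matrix coefficients of $\xi$ oscillate at spatial frequency comparable to $|\xi|$, the dimensions $d_\xi$ and the number of representations of a given size are controlled by the Weyl asymptotics of Lemma~\ref{Weyl}, and the local symbol calculus of $A$ shows that each $\Df_\tau$ behaves like a frequency derivative gaining $\langle\xi\rangle^{-\rho}$, so that $\|X^\alpha_x\Df^{\boldsymbol{\tau}}_\xi\sigma[A]\|\lesssim\langle\xi\rangle^{m+\delta|\alpha|-\rho p}$. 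The principal difficulty in Part~(2) is this dictionary between the group quantization and the chart quantization---matching the $\Df_q^\beta$-expansion with the Euclidean $\partial_\xi^\beta$-expansion term by term and checking that the symbol seminorms transfer---and it is exactly there that the numerology $\rho>\delta$, $\rho\ge1-\delta$ gets consumed, just as in the Euclidean theory.
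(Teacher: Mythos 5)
First, a point of reference: the paper does not prove this theorem. It is quoted from Fischer \cite{Fis2015}, and the text only sketches the proof of (1) (via Lemma \ref{Fisqq'Lem} and the fundamental tuple $Q$ of (\ref{QFund})), while for (2) it records only that Fischer verifies Beals' commutator characterization of pseudo-differential operators.

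Your Part (1) follows essentially the same route as that sketch. Your pivot tuple $q^0$ built from a faithful $\tau_0\oplus\bar\tau_0$ plays the role of the fundamental tuple $Q$; the identity $\Df_q\Df_{q'}=\Df_{qq'}$ together with the fact that matrix coefficients of any $\xi\in\DuGp$ are polynomials without constant term in the $q^0_{jk}$ gives your step (a), exactly as in the paper's remark that the entries of $\Df_\tau a$ are the $\Df_{\tau_{jk}-\delta_{jk}}a$. Your step (b) — pass through the ideal generated by the tuple and then show that $\Df_h$ preserves the class for an arbitrary smooth $h$ — is Lemma \ref{Fisqq'Lem} in disguise (write $\Df_{q'}a=\Df_{q'/q}\Df_q a$ when $q'/q$ is smooth). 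You correctly identify this as the crux but leave it unproved; it is the real analytic content of (1), and it is itself only cited, not proved, in the paper.

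Part (2) is where you genuinely diverge, and where I would push back. You propose the direct dictionary between the group quantization (\ref{Op(a)}) and the Kohn--Nirenberg quantization in charts, matching the $\Df_q^\beta$-expansion with the Euclidean $\partial_\xi^\beta$-expansion term by term. That is essentially the Ruzhansky--Turunen route, which the paper explicitly flags as possibly lacking rigorous justification; the remainder estimates for the expansion in the quasi-coordinates $q_i$ and the behavior of the convolution kernel near the diagonal are exactly where the known difficulties sit, and your proposal defers precisely those points. Fischer's actual argument is structurally different: having established the global symbolic calculus (Theorems \ref{RegCompo} and \ref{RegAdj}) and Sobolev boundedness, she shows that iterated commutators of $\Op(a)$ with vector fields and with multiplication by smooth functions remain in the classes with the expected orders, and then invokes Beals' characterization \cite{Beals1977} to conclude membership in $\Psi^m_{\rho,\delta}$; the hypotheses $\rho>\delta$ and $\rho\geq1-\delta$ are consumed by the calculus and by the characterization theorem, not by summability of a chart expansion. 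If you want a self-contained proof of (2), I recommend the commutator route rather than the quantization dictionary.
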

Let us briefly sketch Fischer's proof of (1) in Theorem \ref{Fischer}. The following lemma is a key ingredient, and is worthy of a specific mention:
\begin{lemma}[\cite{Fis2015}, Lemma 5.10]\label{Fisqq'Lem}
Suppose $q,q'\in\mathcal{D}(\Gp)$, such that $q'/q$ extends to a smooth function on $\Gp$. If a symbol $a=a(\xi)$ is such that $\|\Df_qa(\xi)\|$ does not grow faster than a power of $\size[\xi]$, then 
$$
\|\Df_{q'}a(\xi)\|\lesssim_{q,q'}\|\Df_qa(\xi)\|.
$$
\end{lemma}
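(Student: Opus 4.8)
The plan is to reduce the comparison to a single difference operator attached to the smooth factor $\phi:=q'/q$, and then to analyse $\Df_\phi$ by expanding $\phi$ in its Peter--Weyl series; the rapid decay of the Fourier coefficients of $\phi$ will be played against the spectral localization of Corollary \ref{SpecPrd} and the Weyl asymptotics of Lemma \ref{Weyl}.

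First I would record that an RT-difference operator acts on a symbol simply by multiplying its convolution kernel: by (\ref{Symbol-Ker}) and (\ref{Diff_qa}), $\Df_{q,\xi}a$ is the Fourier transform in $\xi$ of $y\mapsto q(y)\,\mathcal{K}(y)$, where $\mathcal{K}=a^\vee$. Since $q'=\phi\,q$ with $\phi\in C^\infty(\Gp)$, this yields at once the factorization $\Df_{q',\xi}a=\Df_{\phi,\xi}\big(\Df_{q,\xi}a\big)$, where $\Df_\phi$ is the difference operator of Definition \ref{Difference} attached to $\phi$ (only membership in an \emph{admissible} tuple requires vanishing at $e$; a single $\Df_\phi$ makes sense for any smooth $\phi$). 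Hence, writing $b:=\Df_q a$ --- a symbol with $\|b(\xi)\|\lesssim\size[\xi]^{M}$ for some $M\ge 0$ by hypothesis --- it suffices to bound $\Df_\phi b(\xi)$ in terms of $\|b(\xi)\|$, with a constant depending only on $\phi$ (equivalently on $q,q'$).

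Next I would expand $\phi(y)=\sum_{\pi\in\DuGp}d_\pi\Tr\big(\Ft\phi(\pi)\pi(y)\big)$; since $\phi$ is smooth, applying powers of $(1-\Delta)$ together with the Plancherel identity of Theorem \ref{PeterWeyl} gives $\|\Ft\phi(\pi)\|\lesssim_{N}\size[\pi]^{-N}$ for every $N$. Inserting this together with $b^\vee(y)=\sum_{\eta}d_\eta\Tr(b(\eta)\eta(y))$ into $\Df_{\phi,\xi}b(\xi)=\int_\Gp\phi(y)\,b^\vee(y)\,\xi^*(y)\,dy$, Schur orthogonality kills every term except those irreducibles $\eta$ occurring in $\bar\pi\otimes\xi$; the term $\pi=\mathbf{1}$ reassembles, after Fourier inversion, into exactly $\big(\int_\Gp\phi(y)\,dy\big)\,b(\xi)$, the expected main term. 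For $\pi\neq\mathbf{1}$, Corollary \ref{SpecPrd} (equivalently, the dominance-order constraint behind Proposition \ref{SpecPrd0} on the constituents of $\bar\pi\otimes\xi$) forces every contributing $\eta$ to obey $|\eta|\lesssim|\pi|+|\xi|$, so $\size[\eta]\lesssim\size[\pi]\,\size[\xi]$ and $\|b(\eta)\|\lesssim\size[\pi]^{M}\size[\xi]^{M}$; bounding the remaining tensor-product data --- $d_\pi$, the multiplicities and the number of irreducibles in $\bar\pi\otimes\xi$ (controlled via the identity $\sum_\eta m_\eta d_\eta=d_\pi d_\xi$), and the Clebsch--Gordan coefficients --- by a fixed power $\size[\pi]^{C_0}$ with $C_0=C_0(\Gp)$, the error is dominated by a constant times $\size[\xi]^{M}\sum_{\pi}\size[\pi]^{M+C_0}\|\Ft\phi(\pi)\|$, which is finite once $N>M+C_0+n$, by Lemma \ref{Weyl}. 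Combining with the main term gives the asserted bound.

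I expect the genuine difficulty to lie in the error estimate just described: one needs \emph{uniform in $\xi$} control, by a fixed power of $\size[\pi]$, of every piece of tensor-product data attached to $\bar\pi\otimes\xi$ --- dimensions, multiplicities and Clebsch--Gordan coefficients --- so that the rapid decay of $\Ft\phi(\pi)$ genuinely beats all of it. The identity $d_\pi d_\xi=\sum_\eta m_\eta d_\eta$ is the clean device for the multiplicities, and Corollary \ref{SpecPrd} is precisely what confines the $\eta$-summation to a $\size[\pi]$-controlled neighbourhood of $\xi$; once these are secured, the final summation over $\DuGp$ via Lemma \ref{Weyl} is routine bookkeeping.
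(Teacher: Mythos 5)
The paper itself offers no proof of this lemma (it is quoted from Fischer), so your argument has to stand on its own. Its architecture is the right one and is essentially Fischer's: factor $q'=\phi q$ with $\phi=q'/q\in C^\infty(\Gp)$, so that $\Df_{q'}a=\Df_\phi(\Df_qa)$ on the kernel side; expand $\phi$ in its Peter--Weyl series; peel off the trivial-representation term $\big(\int_\Gp\phi\big)\Df_qa(\xi)$; and play the rapid decay of $\Ft\phi(\pi)$ against spectral localization and the Weyl count. The reduction, the main term, and the final summation over $\pi$ are all sound (with the caveat that the conclusion can only be the equivalence of the weighted sup-norms $\sup_\xi\size[\xi]^{-M}\|\cdot\|$, which is what your estimate produces and what the paper uses; the literal pointwise inequality at each fixed $\xi$ is not what any version of this argument gives, since the error term is $O(\size[\xi]^M)$ rather than $O(\|\Df_qa(\xi)\|)$).

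The gap sits exactly where you located the ``genuine difficulty,'' and the devices you propose there do not close it. The identity $\sum_\eta m_\eta d_\eta=d_\pi d_\xi$ bounds the multiplicities and the number of constituents of $\bar\pi\otimes\xi$ only by $d_\pi d_\xi$, which grows with $\xi$; and any estimate of $\int_\Gp\phi_\pi(y)\,b^\vee(y)\,\xi^*(y)\,dy$ that expands into matrix entries and Clebsch--Gordan coefficients will generically sum over the $d_\xi^2$ entries of $\xi$ and so leak uncontrolled powers of $d_\xi$ (unbounded, by Lemma \ref{Weyl}). The missing ingredient is the observation underlying Fischer's intrinsic difference operators: writing $\phi_\pi=d_\pi\sum_{i,j}\Ft\phi(\pi)_{ji}\pi_{ij}$, each piece $\widehat{\pi_{ij}\,b^\vee}(\xi)=\Df_{\pi_{ij}}b(\xi)$ is a matrix \emph{sub-block} of the extended symbol $b(\tau\otimes\xi)\in\mathrm{End}(\mathcal{H}_\tau\otimes\mathcal{H}_\xi)$ with $\tau=\pi$ or $\bar\pi$, and a sub-block has operator norm at most that of the full matrix; since $b(\tau\otimes\xi)$ is unitarily equivalent to $\bigoplus_\eta b(\eta)^{\oplus m_\eta}$ over the constituents $\eta$, its operator norm equals $\max_\eta\|b(\eta)\|$ --- a maximum, not a sum, so no dimensions, multiplicities or Clebsch--Gordan data enter at all. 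This yields $\|\Df_{\phi_\pi}b(\xi)\|\leq d_\pi^{3}\,\|\Ft\phi(\pi)\|\,\max_\eta\|b(\eta)\|$, after which your localization bound $\size[\eta]\lesssim\size[\pi]\size[\xi]$ and the summation over $\pi$ go through verbatim. (If you do want the count of distinct constituents and their multiplicities, they are bounded by the number of weights of $\pi$, hence by $d_\pi$ uniformly in $\xi$ --- a fact from highest-weight theory, not from the dimension identity.)
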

Using this lemma, Fischer showed that for any two strongly RT-admissible tuple $q,q'$, the collection of norms $\mathbf{M}_{r,\rho;l,\delta;q}^m(a)$ and $\mathbf{M}_{r,\rho;l,\delta;q'}^m(a)$ are equivalent to each other. 

A convenient choice of strongly RT-admissible tuple is necessary for calculations. Fischer found that matrix entries of the fundamental representations produce a strongly RT-admissible tuple in the following way: for each unitary representation $\tau$ of $\Gp$, set
$$
q_{jk}^{(\tau)}(x)=\tau_{jk}(x)-\delta_{jk},
$$
i.e. the matrix entries of $\tau-\I[\tau]$. Fischer showed in Lemma 5.11. of \cite{Fis2015} that, as $\tau$ exhausts all of the fundamental representations $\Xi(\bm{\varpi}_i),i=1,\cdots,\varrho$, the collection $q=\{q_{jk}^{(\tau)}\}$ is strongly RT-admissible. The advantage of this choice is that it is essentially intrinsically defined for $\Gp$, and exploits the algebraic structure of $\Gp$. From now on, we will just defined the \emph{fundamental tuple of $\Gp$} as
\begin{equation}\label{QFund}
Q:=\bigcup_{\tau:\text{ fundamental representation}}\left\{\tau_{ij}-\delta_{ij}:i,j=1,\cdots,d_\tau\right\}.
\end{equation}
Thus, as Fischer pointed out, the symbol class $\mathscr{S}^m_{\rho,\delta}(\Gp;\Df_Q)$ in Definition \ref{S^mrdAdmi}, with $Q$ being the fundamental tuple (\ref{QFund}), in fact coincides with the intrinsic definition of symbols \ref{S^mrd}. This is because for each representation $\tau$, the matrix entries of $\Df_\tau a$ are exactly given by $\Df_{\tau_{jk}-\delta_{jk}}a$, so as $\tau$ exhausts all of the fundamental representations, the corresponding difference operators should give rise to the class $\mathscr{S}^m_{\rho,\delta}(\Gp)$. It is legitimate to write $\mathscr{S}^m_{\rho,\delta}(\Gp)$ and omit the dependence on strongly RT-admissible difference operators. This is how (1) of Theorem \ref{Fischer} was proved.

A particular property of the fundamental tuple $Q$ deserves a specific mention. For harmonic analysis on $\mathbb{R}^n$, differentiation in frequency usually simplifies the computation drastically. For analysis on compact group, the dual $\DuGp$ is discrete, whence no natural notion of ``continuous differentiation" is available. But the RT difference operators can be constructed in such a manner that they possess \emph{Leibniz type property}: given a tuple $q=(q_i)_{i=1}^M$, the corresponding RT difference operators are said to possess Leibniz type property, if there are constants $c_{j,k}^i\in\mathbb{C}$ such that for symbols $a,b$, 
\begin{equation}\label{Leibniz}
\Df_{q_i}(ab)
=\Df_{q_i}a\cdot b+a\cdot\Df_{q_i}b
+\sum_{j,k=1}^Mc_{j,k}^i\Df_{q_j}a\cdot \Df_{q_k}b.
\end{equation}
Inductively, this implies 
$$
\Df_{q}^\alpha(ab)
=\sum_{\substack{0\leq|\beta|,|\gamma|\leq|\alpha|
\\|\alpha|\leq|\beta|+|\gamma|\leq2|\alpha|}}
c_{\beta\gamma}^\alpha\Df_{q}^{\beta}a\cdot \Df_{q}^{\gamma}b.
$$
Condition (\ref{Leibniz}) is equivalent to
$$
q_i(xy)=q_i(x)+q_i(y)+\sum_{j,k=1}c_{j,k}^iq_j(x)q_k(y),
$$
so the matrix entries of $\tau-\I[\tau]$, where $\tau$ exhausts all fundamental representations of $\Gp$, is a good choice, because due to the equality
$$
\tau(xy)=\tau(x)\tau(y),
$$
they give rise to strongly admissible RT difference operators that satisfies Leibniz type property: for $q_{ij}(x)=\tau_{ij}(x)-\delta_{ij}$,
\begin{equation}\label{LeibnizFund}
q_{ij}(xy)=q_{ij}(x)+q_{ij}(y)+\sum_{k=1}^{d_\tau}q_{ik}(x)q_{kj}(y).
\end{equation}

The advantage of the symbol class $\mathscr{S}^m_{\rho,\delta}(\Gp)$ is that a symbolic calculus is available. The proof of the following results is quite parallel as in Euclidean spaces (not without technicality in verifying the necessary kernel properties):
\begin{theorem}[\cite{Fis2015}, Corollary 7.9.]\label{RegCompo}
Suppose $0\leq\delta<\rho\leq1$. If $a\in\mathscr{S}^m_{\rho,\delta}$, $b\in\mathscr{S}^{m'}_{\rho,\delta}$, then the composition $\Op(a)\circ\Op(b)\in\Op\mathscr{S}^{m+m'}_{\rho,\delta}$, and in fact if $q$ is any strongly RT-admissible tuple and $X^{(\alpha)}_q$ is as in Proposition \ref{TaylorGp}, then the symbol $\sigma$ of $\Op(a)\circ\Op(b)$ satisfies
$$
\sigma(x,\xi)-\sum_{|\alpha|< N}\left(\Df_{q,\xi}^\alpha a\cdot X_{q,x}^{(\alpha)}b\right)(x,\xi)
\in\mathscr{S}^{m+m'-(\rho-\delta)N}_{\rho,\delta}.
$$
One can thus write
$$
\sigma(x,\xi)\sim\sum_{\alpha}\left(\Df_{q,\xi}^\alpha a\cdot X_{q,x}^{(\alpha)}b\right)(x,\xi).
$$
\end{theorem}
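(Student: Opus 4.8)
\emph{Proof strategy.} The plan is to transplant the $\mathbb{R}^n$ derivation of the composition formula (cf. \cite{Hormander1997}, Ch.~18): produce an exact integral representation of the symbol $\sigma$ of $\Op(a)\circ\Op(b)$, Taylor-expand the $b$-factor by Proposition~\ref{TaylorGp}, identify the polynomial terms as difference operators applied to $a$ via (\ref{Diff_qa}), and absorb the Taylor remainder using Fischer's kernel estimates for $\mathscr{S}^m_{\rho,\delta}$. I would take $q=Q$, the fundamental tuple (\ref{QFund}), which is strongly RT-admissible; by Theorem~\ref{Fischer}(1) and Lemma~\ref{Fisqq'Lem} every strongly RT-admissible tuple yields the same classes, so nothing is lost in the reduction, while the derivation below works verbatim for any strongly RT-admissible $q$.

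\emph{The exact formula and the main terms.} First I would use $\sigma[\Op(b)]=b$, which by the symbol formula (\ref{Op-Symbol}) means $(\Op(b)\xi)(x)=\xi(x)b(x,\xi)$ entry-wise, so that $\sigma(x,\xi)=\xi^*(x)\,\Op(a)\big[\,y\mapsto\xi(y)b(y,\xi)\,\big](x)$. Inserting the convolution representation (\ref{Ker-Conv}) of $\Op(a)$ with right-convolution kernel $\mathcal{K}_a(x,\cdot)=(a(x,\cdot))^\vee$, substituting $y=xz^{-1}$ and cancelling $\xi^*(x)\xi(x)$, I expect the identity
\[
\sigma(x,\xi)=\int_\Gp \xi^*(z)\,b(xz^{-1},\xi)\,\mathcal{K}_a(x,z)\,dz .
\]
Since $\mathcal{K}_a(x,\cdot)$ is in general only a distribution, I would justify this first for symbols finitely supported in $\xi$, all bounds depending only on finitely many of the seminorms (\ref{RhoDeltaNorm}) of $a$ and $b$, then pass to the limit; equivalently one cuts $\mathcal{K}_a$ near $e$ and pairs the singular part through the distributional pairing that defines the difference operators. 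Expanding $b(xz^{-1},\xi)=\sum_{|\alpha|<N}q^\alpha(z)X^{(\alpha)}_{q,x}b(x,\xi)+R_N(b(\cdot,\xi);x,z^{-1})$ by Proposition~\ref{TaylorGp} (with $y$ replaced by $z^{-1}$) and using that $q^\alpha,\mathcal{K}_a$ are scalar, the polynomial part gives $\sum_{|\alpha|<N}\big(\int_\Gp q^\alpha(z)\mathcal{K}_a(x,z)\xi^*(z)\,dz\big)X^{(\alpha)}_{q,x}b(x,\xi)=\sum_{|\alpha|<N}(\Df^\alpha_{q,\xi}a\cdot X^{(\alpha)}_{q,x}b)(x,\xi)$ by (\ref{Diff_qa}). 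Hence the error reduces to the single integral $r_N(x,\xi)=\int_\Gp \xi^*(z)\,R_N(b(\cdot,\xi);x,z^{-1})\,\mathcal{K}_a(x,z)\,dz$.

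\emph{The remainder — the main obstacle.} The crux is to show $r_N\in\mathscr{S}^{m+m'-(\rho-\delta)N}_{\rho,\delta}$, by combining two estimates. From $b\in\mathscr{S}^{m'}_{\rho,\delta}$ and the dimension-independent remainder bound in the remark after Proposition~\ref{TaylorGp}, one has $\|R_N(b(\cdot,\xi);x,z^{-1})\|\lesssim\dist(z,e)^N\sup_{|\gamma|\le N}\sup_x\|X^\gamma b(x,\xi)\|\lesssim\dist(z,e)^N\size[\xi]^{m'+\delta N}$, with the same control of its $z$-derivatives. From $a\in\mathscr{S}^m_{\rho,\delta}$ and Fischer's kernel estimates, $\mathcal{K}_a(x,\cdot)$ is smooth away from $e$ and its singularity at $e$ is weakened by order $\rho$ each time it is multiplied by a coordinate function — so $z\mapsto q^\gamma(z)\mathcal{K}_a(x,z)$ with $|\gamma|=N$ is the kernel of a symbol of order $m-\rho N$; this is where $\rho>0$ enters. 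Writing $R_N(b(\cdot,\xi);x,z^{-1})$ in integral-remainder form as $\sum_{|\gamma|=N}q^\gamma(z)c_\gamma(x,z,\xi)$ with $\|c_\gamma\|\lesssim\size[\xi]^{m'+\delta N}$ turns the integral for $r_N$ into a pairing of the order-$(m-\rho N)$ kernels $q^\gamma\mathcal{K}_a$ against bounded factors, yielding $\|r_N(x,\xi)\|\lesssim\size[\xi]^{m-\rho N}\size[\xi]^{m'+\delta N}=\size[\xi]^{m+m'-(\rho-\delta)N}$. For the remaining seminorms I would differentiate under the integral: each left-invariant derivative $X^\gamma_x$ falls either on $R_N$, replacing $b$ by an $x$-derivative at a cost of $\size[\xi]^{\delta|\gamma|}$, or on $\mathcal{K}_a$, again costing $\size[\xi]^{\delta|\gamma|}$ by the kernel estimates; while applying a difference operator $\Df_{Q,\xi}$ multiplies the convolution kernel by one more coordinate function, lowering the order by $\rho$ (cf.\ (\ref{Diff_qa})). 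Tracking these contributions, keeping constants independent of $d_\xi$, gives $\|X^\gamma_x\Df^\beta_{Q,\xi}r_N(x,\xi)\|\lesssim\size[\xi]^{m+m'-(\rho-\delta)N+\delta|\gamma|-\rho|\beta|}$. This bookkeeping, together with invoking the appropriate kernel estimates from \cite{Fis2015}, is the bulk of the work; the algebraic skeleton is the single identity above.

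\emph{Conclusion.} Taking $N=1$ gives $\sigma-ab\in\mathscr{S}^{m+m'-(\rho-\delta)}_{\rho,\delta}\subset\mathscr{S}^{m+m'}_{\rho,\delta}$; the Leibniz-type property (\ref{Leibniz}) shows the product of a symbol of order $m$ with one of order $m'$ has order $m+m'$, so $ab\in\mathscr{S}^{m+m'}_{\rho,\delta}$ and hence $\sigma\in\mathscr{S}^{m+m'}_{\rho,\delta}$, i.e. $\Op(a)\circ\Op(b)\in\Op\mathscr{S}^{m+m'}_{\rho,\delta}$. Since $\delta<\rho$, the family $\{r_N\}_{N\ge1}$ realizes the asymptotic expansion $\sigma(x,\xi)\sim\sum_\alpha(\Df^\alpha_{q,\xi}a\cdot X^{(\alpha)}_{q,x}b)(x,\xi)$; by Theorem~\ref{Fischer}(1) the conclusion is independent of the strongly RT-admissible tuple chosen — only the asymptotic sum and the remainders $r_N$ are canonical, the individual terms being tuple-dependent.
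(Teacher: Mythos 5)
The paper offers no proof of this theorem: it is quoted directly from Fischer [\cite{Fis2015}, Corollary 7.9], so there is no in-paper argument to compare yours against. Your skeleton is nonetheless the standard one, and it coincides with the route the paper itself takes when proving the para-differential analogue (Theorem \ref{Compo1}): the exact formula you derive for $\sigma$ is the paper's (\ref{OpaOpb}) (the ordering is immaterial since $\mathcal{K}_a(x,\cdot)$ is scalar-valued, being a trace), the Taylor expansion of $b(xz^{-1},\xi)$ via Proposition \ref{TaylorGp} is applied correctly, the identification $\int_\Gp q^\alpha(z)\mathcal{K}_a(x,z)\xi^*(z)\,dz=\Df^\alpha_{q,\xi}a(x,\xi)$ via (\ref{Diff_qa}) is right, and so is the reduction of the whole statement to the single remainder $r_N$ together with the Leibniz argument showing $ab\in\mathscr{S}^{m+m'}_{\rho,\delta}$.

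The genuine gap is in your estimate of $r_N$. You assert that pairing the kernel $q^\gamma\mathcal{K}_a(x,\cdot)$ of the order-$(m-\rho N)$ symbol $\Df_q^\gamma a$ against the bounded factors $c_\gamma$ ``yields $\|r_N(x,\xi)\|\lesssim\langle\xi\rangle^{m-\rho N}\langle\xi\rangle^{m'+\delta N}$.'' As written, this inference has no source for the factor $\langle\xi\rangle^{m-\rho N}$: the kernel $\mathcal{K}_a(x,\cdot)$ does not depend on $\xi$, so integrating it against a bounded matrix-valued function can only produce a bound of the form $C\,\langle\xi\rangle^{m'+\delta N}$, which is strictly weaker than what is needed whenever $m-\rho N<0$. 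The missing decay must be extracted from the oscillatory factor $\xi^*(z)$ and the localization of the singularity of $\mathcal{K}_a$ at $e$ --- for instance by splitting the $z$-integral at scale $\dist(z,e)\simeq\langle\xi\rangle^{-1}$ and using the pointwise kernel decay estimates for $q^\gamma\mathcal{K}_a$ away from $e$ together with further integrations by parts (difference operators) near $e$, or, as the paper does in the proof of Theorem \ref{Compo1}, by decomposing $\mathcal{K}_a=\sum_j\mathcal{K}_{a_j}$ dyadically and using the spectral localization of $\mathcal{K}_{a_j}(x,\cdot)\,\xi^*(\cdot)$ (Corollary \ref{SpecPrd}) to isolate the scales $2^j$ that actually contribute. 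This step is exactly the ``technicality in verifying the necessary kernel properties'' the paper alludes to, and it is where the hypothesis $\rho>\delta$ does its work; deferring it wholesale to ``Fischer's kernel estimates'' leaves the proof incomplete at its crux, even though the algebraic identity you built around it is correct.
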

\begin{theorem}[\cite{Fis2015}, Corollary 7.6.]\label{RegAdj}
Suppose $0\leq\delta<\rho\leq1$. If $a\in\mathscr{S}^m_{\rho,\delta}$, then the adjoint $\Op(a)^*\in\Op\mathscr{S}^{m}_{\rho,\delta}$, and in fact if $q$ is any strongly RT-admissible tuple and $X^{(\alpha)}_q$ is as in Proposition \ref{TaylorGp}, then the symbol $a^{\bullet;q}$ of $\Op(a)^*$ satisfies
$$
a^{\bullet;q}(x,\xi)-\sum_{|\alpha|< N}\left(\Df_{q,\xi}^\alpha X_{q,x}^{(\alpha)}a^*\right)(x,\xi)
\in\mathscr{S}^{m+m'-(\rho-\delta)N}_{\rho,\delta}.
$$
One can thus write
$$
a^{\bullet;q}(x,\xi)\sim\sum_{\alpha}\left(\Df_{q,\xi}^\alpha X_{q,x}^{(\alpha)}a^*\right)(x,\xi).
$$
\end{theorem}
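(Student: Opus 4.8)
The plan is to transplant the $\mathbb{R}^n$ argument: derive an exact integral formula for the symbol of $\Op(a)^{*}$ from its kernel, Taylor-expand in the group variable via Proposition~\ref{TaylorGp}, recognise the resulting terms as iterated RT-difference operators, and estimate the remainder. Fix once and for all a strongly RT-admissible tuple $q$; by Theorem~\ref{Fischer}(1) the symbol class and its seminorms are insensitive to this choice, so one may take $q$ to be the fundamental tuple $Q$ of~(\ref{QFund}). Note first that $a^{*}$, defined by $a^{*}(x,\xi):=a(x,\xi)^{*}$, again lies in $\mathscr{S}^m_{\rho,\delta}$: the left-invariant fields are real, so $X^\alpha_x(a^{*})=(X^\alpha_x a)^{*}$; the intrinsic difference operators satisfy $\Df_\tau(a^{*})=(\Df_\tau a)^{*}$ up to the obvious identification; operator norms are unchanged under $*$; and Theorem~\ref{Fischer}(1) transfers the bounds back to the $q$-seminorms. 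Consequently every term $\Df_{q,\xi}^\alpha X_{q,x}^{(\alpha)}a^{*}$ lies in $\mathscr{S}^{m-(\rho-\delta)|\alpha|}_{\rho,\delta}$, so (using $\delta<\rho$) the stated series is a genuine asymptotic sum, and the whole problem reduces to showing that the $N$-th remainder belongs to $\mathscr{S}^{m-(\rho-\delta)N}_{\rho,\delta}$.

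For the integral formula, write $A=\Op(a)$ with right-convolution kernel $\mathcal{K}_A(x,\cdot)=(a(x,\cdot))^{\vee}$ as in~(\ref{Symbol-Ker}). The $L^2(\Gp)$-duality $\langle Af,g\rangle=\langle f,A^{*}g\rangle$ gives $A^{*}g(x)=\int_\Gp g(y)\,\overline{\mathcal{K}_A(y,x^{-1}y)}\,dy$; using~(\ref{Op-Symbol}) with the matrix coefficients of $\xi$ plugged in entrywise, unitarity of $\xi$, the substitution $w=x^{-1}y$ and $\overline{\Tr M}=\Tr(M^{*})$, one arrives at
$$
\sigma[A^{*}](x,\xi)=\int_\Gp \xi(w)\sum_{\eta\in\DuGp} d_\eta\,\Tr\!\big(a^{*}(xw,\eta)\,\eta(w^{-1})\big)\,dw .
$$
Apply Proposition~\ref{TaylorGp} to the smooth $\mathrm{End}(\mathcal{H}_\eta)$-valued function $w\mapsto a^{*}(xw,\eta)$ (with the dimension-free remainder from the remark following it):
$$
a^{*}(xw,\eta)=\sum_{|\alpha|<N} q^\alpha(w^{-1})\,X_{q,x}^{(\alpha)}a^{*}(x,\eta)+R_N\big(a^{*}(\cdot,\eta);x,w\big),\qquad |R_N|\lesssim\langle\eta\rangle^{m+\delta N}\,\dist(w,e)^N .
$$
Substituting and changing variables $v=w^{-1}$, the $\alpha$-th main term becomes $\int_\Gp \xi^{*}(v)\,q^\alpha(v)\,\big(X_{q,x}^{(\alpha)}a^{*}(x,\cdot)\big)^{\vee}(v)\,dv$, which by the defining relation $\widehat{q\,h}=\Df_q\widehat{h}$ of RT-difference operators (Definition~\ref{Difference}) followed by Fourier inversion equals precisely $\Df_{q,\xi}^\alpha X_{q,x}^{(\alpha)}a^{*}(x,\xi)$. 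Hence
$$
\sigma[A^{*}](x,\xi)=\sum_{|\alpha|<N}\Df_{q,\xi}^\alpha X_{q,x}^{(\alpha)}a^{*}(x,\xi)+r_N(x,\xi),\qquad r_N(x,\xi)=\int_\Gp\xi(w)\sum_\eta d_\eta\,\Tr\!\big(R_N(a^{*}(\cdot,\eta);x,w)\,\eta(w^{-1})\big)dw .
$$

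The main obstacle is the remainder estimate: $\|X^\gamma_x\Df^\beta_{q,\xi}r_N(x,\xi)\|\lesssim\langle\xi\rangle^{m-(\rho-\delta)N+\delta|\gamma|-\rho|\beta|}$ for all $\gamma,\beta$. Difference operators commute with $X^\gamma_x$ and, after $v=w^{-1}$, act on the formula for $r_N$ merely by multiplying the $v$-integrand by $q^\beta(v)$, while $X^\gamma_x$ only differentiates $R_N$ (yielding a remainder of the same type, controlled by $\langle\eta\rangle^{m+\delta(N+|\gamma|)}\dist(w,e)^N$). One is thus reduced to estimating the operator norm of a Fourier transform, and here the elementary inequality $\|\widehat g(\xi)\|\le\langle\xi\rangle^{-2k}\|(1-\Delta)^kg\|_{L^1(\Gp)}$ ($k\in\mathbb{N}_0$) turns the task into an $L^1$-bound that must be fed by the kernel properties of $\mathscr{S}^m_{\rho,\delta}$-symbols — controlled singularity of $\mathcal{K}_A(x,\cdot)$ at $e$, smoothness away from $e$, and the dictionary converting entrywise $\xi(w)$-behaviour and Laplacian powers into difference operators and $\langle\xi\rangle$-decay. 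The delicate bookkeeping is that the $N$-fold vanishing of the Taylor remainder, expressed through the tuple $q$, is absorbed by $N$ difference-operator steps, each buying a factor $\langle\xi\rangle^{-\rho}$ (by the type of reasoning behind Fischer's Lemma~\ref{Fisqq'Lem}), so the loss $\langle\eta\rangle^{\delta N}$ from the $N$ hidden spatial derivatives is converted into the net gain $\langle\xi\rangle^{-(\rho-\delta)N}$ — the hypothesis $\delta<\rho$ being exactly what makes this effective — and a suitable choice of $k$ then extracts the stated power of $\langle\xi\rangle$. Verifying these kernel properties and carrying out the exponent count on $\Gp$ is the technical heart, replacing the explicit oscillatory-integral manipulations of the Euclidean case; once it is in place (as in Fischer~\cite{Fis2015}), the estimate closes and the asymptotic expansion follows, the independence of the formula on $q$ being automatic since its left-hand side is always the symbol of the single operator $\Op(a)^{*}$.
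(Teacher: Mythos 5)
The paper does not prove this theorem: it is quoted verbatim from Fischer \cite{Fis2015} (Corollary 7.6), with only the remark that the argument is ``quite parallel as in Euclidean spaces (not without technicality in verifying the necessary kernel properties).'' Your proposal follows exactly that route, and the exact part of it checks out: the formula $\sigma[A^{*}](x,\xi)=\int_\Gp\xi(w)\,\overline{\mathcal{K}_A(xw,w)}\,dw$ obtained from duality, unitarity of $\xi$ and the substitution $w=x^{-1}y$ is correct, the Taylor expansion via Proposition \ref{TaylorGp} is applied in the right variable, and the identification of the $\alpha$-th term with $\Df_{q,\xi}^\alpha X_{q,x}^{(\alpha)}a^{*}(x,\xi)$ via $\widehat{q^\alpha h}=\Df_{q}^{\alpha}\widehat{h}$ after the inversion $v=w^{-1}$ is the right mechanism. (You also silently correct the paper's typo: the remainder class should be $\mathscr{S}^{m-(\rho-\delta)N}_{\rho,\delta}$, there being no $m'$ in the adjoint statement.) What you do not supply is the remainder estimate itself: the passage from ``$R_N$ vanishes to order $N$ at $e$'' to the gain $\langle\xi\rangle^{-(\rho-\delta)N}$ requires the kernel estimates for $\mathscr{S}^m_{\rho,\delta}$ (decay of $\mathcal{K}_A(x,\cdot)$ away from $e$, integrability of its singularity, and the conversion of powers of $\dist(w,e)$ into difference operators à la Lemma \ref{Fisqq'Lem}), and you explicitly defer this to Fischer. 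Since that is precisely the part the paper itself declines to reproduce, your outline is consistent with, and no less complete than, the paper's treatment — but as a standalone proof it stops at the technical heart rather than carrying it out.
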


Fischer also proved that for $0\leq\delta\leq\rho\leq1$ with $\delta<1$, if $a\in\mathscr{S}^m_{\rho,\delta}$, then $\Op(a)$ maps $H^{s+m}$ continuously to $H^s$ for any $s\in\mathbb{R}$. The proof is still parallel to that on Euclidean spaces. 

Finally, to prove (2) in Theorem \ref{Fischer}, i.e. $\Op\mathscr{S}^m_{\rho,\delta}(\Gp)=\Psi^m_{\rho,\delta}(\Gp)$, the H\"{o}rmander class of operators for $\rho>\delta$ and $\rho\geq1-\delta$, Fischer showed that the commutators of $\Op(a)$ with vector fields and $C^\infty$-multiplications satisfy Beals'
characterization of pseudo-differential operators as stated in \cite{Beals1977}. In particular, the operator class $\Op\mathscr{S}^m_{1,0}(\Gp)$ coincides with the H\"{o}rmander class $\Psi^m(\Gp)$ defined via local charts. Thus, it is possible to manipulate pseudo-differential operators on $\Gp$ using globally defined symbolic calculus, and all the lower-order information lost in operation with principal symbols can be retained.

\subsection{Order of a Symbol}
Unlike the case of $\mathbb{R}^n$, symbolic calculus on the non-commutative Lie group $\Gp$ involves endomorphisms of the representation spaces, hence suffers from non-commutativity. Thus, for example, properties of the commutator\footnote{Note that this is not the commutator of pseudo-differential operators.} $(ab-ba)(x,\xi)=:[a,b](x,\xi)$ of two symbols $a(x,\xi)$ and $b(x,\xi)$ is not as clear as on $\mathbb{R}^n$ (it simply vanishes for symbols on $\mathbb{R}^n$). With the aid of Fischer's theorem, however, we are able to show that the commutator of symbols of order $m$ and $m'$ respectively ``is reduced by order 1".

We introduce a formal definition of order as follows.
\begin{definition}\label{2Order}
Let $m\in\mathbb{R}$. We say that a symbol $a(x,\xi)$ on $\Gp$, regardless of regularity in $x$, is of order $m$, if for some strongly RT-admissible tuple $q$, there always holds
$$
\sup_{x\in\Gp}\big\|\Df_q^\beta a(x,\xi)\big\|\lesssim \size[\xi]^{m-|\beta|}.
$$
By Fischer's Lemma \ref{Fisqq'Lem}, the class does not depend on the choice of strongly RT -admissible tuple.
\end{definition}

Thus the class of symbols of order $m$, in our convention, is the collection of symbols that ``possess best decays upon differentiation in $\xi$". It necessarily includes all the $\mathscr{S}^m_{1,\delta}(\Gp)$ with $0\leq\delta\leq1$. The property that we need to prove is 
\begin{proposition}\label{2OrderComm}
Let $a,b$ be symbols on $\Gp$ of order $m$ and $m'$ and type 1 respectively. Then the commutator $[a,b]$ is of order $m+m'-1$. In particular, if $\delta\in[0,1]$, $a\in\mathscr{S}^m_{1,\delta}(\Gp)$ and $b\in\mathscr{S}^{m'}_{1,\delta}(\Gp)$, then $[a,b]\in\mathscr{S}^{m+m'-1}_{1,\delta}(\Gp)$.
\end{proposition}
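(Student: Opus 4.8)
The plan is to freeze the spatial variable, reduce to $x$-independent (convolution) symbols, and then import the gain of one order from Fischer's identification of $\Op\mathscr{S}^k_{1,0}(\Gp)$ with the local H\"ormander class $\Psi^k(\Gp)$ in Theorem \ref{Fischer}. First I would observe that $[a,b](x,\xi)$ depends only on $a(x,\cdot)$ and $b(x,\cdot)$ at the given point $x$, and that being of order $m$ in the sense of Definition \ref{2Order} is a bound uniform in $x$; hence for each fixed $x_0\in\Gp$ the symbols $a(x_0,\cdot)$ and $b(x_0,\cdot)$ lie in $\mathscr{S}^m_{1,0}(\Gp)$ and $\mathscr{S}^{m'}_{1,0}(\Gp)$ with all seminorms bounded uniformly in $x_0$. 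It then suffices to treat $x$-independent symbols $a\in\mathscr{S}^m_{1,0}$, $b\in\mathscr{S}^{m'}_{1,0}$ and to show $[a,b]\in\mathscr{S}^{m+m'-1}_{1,0}$ with seminorms controlled by those of $a$ and $b$; taking suprema over $x_0$ then gives that $[a,b]$ is of order $m+m'-1$. For the refinement to $\mathscr{S}^{m+m'-1}_{1,\delta}$ when $a\in\mathscr{S}^m_{1,\delta}$, $b\in\mathscr{S}^{m'}_{1,\delta}$, I would use that each $X_i$ is a derivation, so that $X^\alpha_x[a,b]$ expands into a finite sum of commutators $[X^{\alpha_1}_x a,X^{\alpha_2}_x b]$ of symbols of orders $m+\delta|\alpha_1|$ and $m'+\delta|\alpha_2|$, to which the frozen-variable estimate applies term by term (recall that $\Df_q$ commutes with $X_x$ by \eqref{Diff_qa}).

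For $x$-independent symbols, the key point I would use is that $\Op$ is multiplicative on convolution operators: by \eqref{Ker-Conv} the operators $\Op(a),\Op(b)$ are right convolution by $\mathcal{K}_a,\mathcal{K}_b$, and the rule $\widehat{f*g}=\widehat g\,\widehat f$ yields $\Op(a)\Op(b)=\Op(ab)$ and $\Op(b)\Op(a)=\Op(ba)$, hence $\Op([a,b])=[\Op(a),\Op(b)]$ (equivalently, in Theorem \ref{RegCompo} every term with $|\alpha|\geq1$ vanishes because $X^{(\alpha)}_{q,x}b=0$). Next, Theorem \ref{Fischer}(2) applies with $\rho=1>\delta=0$ and $\rho\geq1-\delta$, so $\Op(a)\in\Psi^m(\Gp)$ and $\Op(b)\in\Psi^{m'}(\Gp)$, the H\"ormander classes defined through local charts, in which symbols are \emph{scalar}-valued. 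In any chart the order-zero terms of the symbols of $\Op(a)\Op(b)$ and $\Op(b)\Op(a)$ coincide, since scalar symbols commute, so the commutator has a symbol expansion all of whose terms are of order $\leq m+m'-1$; that is, $[\Op(a),\Op(b)]\in\Psi^{m+m'-1}(\Gp)$, with quantitative control of seminorms. Transporting this back through Theorem \ref{Fischer} and using that a symbol is determined by its quantization would give $\Op([a,b])\in\Op\mathscr{S}^{m+m'-1}_{1,0}(\Gp)$, i.e. $[a,b]\in\mathscr{S}^{m+m'-1}_{1,0}(\Gp)$, with seminorms bounded in terms of those of $a$ and $b$.

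The hard part, and the reason Fischer's theorem is indispensable here, is precisely the non-commutativity of symbols on $\Gp$. On $\mathbb{R}^n$ the identity $ab=ba$ makes the leading term of the operator commutator vanish for free; here $ab\neq ba$ in general, and the global calculus of Theorem \ref{RegCompo} on its own shows only that $[\Op(a),\Op(b)]$ has symbol $[a,b]$ modulo $\mathscr{S}^{m+m'-1}_{1,0}$, which is circular. The extra order has to be extracted by passing to the local-chart H\"ormander calculus, where the symbols become scalar and the principal parts genuinely cancel; the content of the proposition is that this cancellation, hidden in the global formalism, nonetheless forces $[a,b]$ itself down by one order. A secondary but necessary point is to check that the constants furnished by Theorem \ref{Fischer} and by the H\"ormander commutator estimate are uniform in the frozen point $x_0$ and depend only on finitely many $\mathscr{S}^m_{1,\delta}$-seminorms of $a$ and $b$; this is routine once the quantitative form of those statements is tracked.
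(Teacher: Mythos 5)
Your proposal is correct and follows essentially the same route as the paper: freeze the $x$-variable to reduce to Fourier multipliers, use $\Op([a,b])=[\Op(a),\Op(b)]$, pass through Theorem \ref{Fischer} to the local H\"ormander class where the commutator gains one order, transfer back, and handle $x$-dependence via the Leibniz rule for $X_x$ acting on the commutator. Your additional remarks on uniformity in the frozen point and on tracking seminorms are correct refinements of the same argument rather than a different approach.
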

\begin{proof}
It is quite alright to disregard the dependence on $x$ and consider Fourier multipliers $a=a(\xi)$, $b=b(\xi)$ alone, as no differentiation in $x$ is involved. By Fischer's theorem \ref{Fischer}, the operators $A=\Op(a)$ and $B=\Op(b)$ are in fact in the H\"{o}rmander class of pseudo-differential operators $\Psi^m_{1,0}$ and $\Psi^{m'}_{1,0}$ respectively. The H\"{o}rmander calculus on principal symbols (as functions on the cotangent bundle) then asserts that $[A,B]\in\Psi^{m+m'-1}_{1,0}$, with principal symbol given by the Poisson bracket. But $[A,B]$ is nothing but $[\Op(a),\Op(b)]=\Op\big([a,b]\big)$ as $a,b$ are assumed to be Fourier multipliers. Fischer's theorem again implies that $[a,b]\in\mathscr{S}^{m+m'-1}_{1,0}$.  To extend this to $x$-dependent symbols $a(x,\xi)$ and $b(x,\xi)$, it suffices to note that, for example, given a vector field $X$, the Leibniz rule $X_x\big([a,b]\big)=[X_xa,b]+[a,X_xb]$ holds. The operator norm of $X_x\big([a,b]\big)$ at a given representation $\xi$ can thus be estiamted similarly.
\end{proof}

\section{(1,1) Pseudo-differential Operators on Compact Lie Group}\label{3}
\subsection{Littlewood-Paley Decomposition}
We start with a continuous version of Littlewood-Paley decomposition. Such construction has already been explored by Klainerman and Rodnianski in \cite{KR2006} for a general compact manifold. Here we aim to expoit the Lie group structures to deduce more. Fix an even function $\phi\in C_0^\infty(\mathbb{R})$, such that $\phi(\lambda)=1$ for $|\lambda|\leq1/2$, and $\phi(\lambda)=0$ for $|\lambda|\geq1$. Setting $\psi(\lambda)=-\lambda\phi'(\lambda)$, we obtain a continuous partition of unity
$$
1=\phi(\lambda)+\int_1^\infty\psi\Big(\frac{\lambda}{t}\Big)\frac{dt}{t}.
$$
The \emph{continuous Littlewood-Paley decomposition} of a distribution $f\in\mathcal{D}'(\Gp)$ will then be defined by 
\begin{equation}\label{LPCont}
f=\phi\big(|\nabla|\big)f+\int_1^\infty\psi_t\big(|\nabla|\big)f\frac{dt}{t},
\quad 
\psi_t(\cdot)=\psi\left(\frac{\cdot}{t}\right)
\end{equation}
We also write the \emph{partial sum operator} as
$$
\phi_T(|\nabla|)f:=\phi\left(\frac{|\nabla|}{T}\right)f
=\phi\big(|\nabla|\big)f+\int_1^T\psi_t\big(|\nabla|\big)f\frac{dt}{t}.
$$
This is the convention employed by H\"{o}rmander \cite{Hormander1997}, Chapter 9. It is easy to deduce the Bernstein inequality:
\begin{proposition}\label{Bernstein}
If $f\in\mathcal{D}(\Gp)$ has Fourier support contained in $\mathfrak{S}[0,t]$, then for any $s\in\mathbb{R}$,
$$
\|f\|_{H^s}\lesssim t^s\|f\|_{L^2}.
$$
\end{proposition}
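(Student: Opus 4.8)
The plan is to read the inequality straight off the Peter--Weyl/Plancherel descriptions of the $H^s$ and $L^2$ norms, reducing the whole proof to a comparison of spectral weights on the Fourier support of $f$. Recall that for every $g\in L^2(\Gp)$,
$$
\|g\|_{H^s(\Gp)}^2=\sum_{\xi\in\DuGp}d_\xi\,\langle\xi\rangle^{2s}\big\lHS\Ft g(\xi)\big\rHS^2,
\qquad
\|g\|_{L^2(\Gp)}^2=\sum_{\xi\in\DuGp}d_\xi\big\lHS\Ft g(\xi)\big\rHS^2
$$
by Theorem~\ref{PeterWeyl} and the definition of $H^s(\Gp)$. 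Since $f$ has Fourier support in $\mathfrak{S}[0,t]$ --- that is, $\Ft f(\xi)=0$ whenever $|\xi|>t$, so in particular $f$ is a finite linear combination of matrix coefficients and all the sums below are finite --- the series for $\|f\|_{H^s}^2$ runs only over $\xi$ with $|\xi|\le t$.

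Next I would record the elementary identity $\langle\xi\rangle^2=1+\lambda_\xi=1+|\xi|^2$, immediate from $|\xi|=\sqrt{\lambda_\xi}$ and $\langle\xi\rangle=(1+\lambda_\xi)^{1/2}$; hence $1\le\langle\xi\rangle^2\le 1+t^2$ on the Fourier support of $f$. For $s\ge0$ the map $u\mapsto u^{s}$ is nondecreasing on $[1,\infty)$, so $\langle\xi\rangle^{2s}\le(1+t^2)^{s}$ there, and pulling this bound out of the sum gives
$$
\|f\|_{H^s}^2\le(1+t^2)^{s}\sum_{|\xi|\le t}d_\xi\big\lHS\Ft f(\xi)\big\rHS^2=(1+t^2)^{s}\,\|f\|_{L^2}^2 .
$$
It then remains only to absorb $(1+t^2)^{s}$ into $t^{2s}$: for $t\ge1$ --- the only range that occurs in the continuous Littlewood--Paley decomposition (\ref{LPCont}) and its partial sums, and indeed the only range where the assertion can hold, as a nonzero constant $f$ already shows --- one has $1+t^2\le 2t^2$, so $\|f\|_{H^s}\le 2^{s/2}\,t^{s}\|f\|_{L^2}$, which is the claim. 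For $s<0$ the same computation with $\langle\xi\rangle^{2s}\le1$ yields only $\|f\|_{H^s}\le\|f\|_{L^2}$, and the extra factor $t^{s}$ genuinely cannot be gained there since the trivial representation always lies in $\mathfrak{S}[0,t]$; thus the substantive content is the case $s\ge0$, which is all that the Littlewood--Paley machinery will need.

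I do not expect any real obstacle: the proposition is nothing more than the remark that the Fourier-support constraint traps $\langle\xi\rangle$ between $1$ and $\langle t\rangle\simeq t$, with Plancherel supplying the rest. It is worth contrasting this with the genuine $L^p$ Bernstein inequality $\|f\|_{L^q}\lesssim t^{n(1/p-1/q)}\|f\|_{L^p}$, whose proof on $\Gp$ would have to estimate the spectral kernel $\sum_{|\xi|\le t}d_\xi\Tr\xi(\cdot)$ and therefore invoke the Weyl-type count of Lemma~\ref{Weyl}; the $H^s$--$L^2$ version stated here requires none of that.
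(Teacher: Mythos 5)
Your proof is correct and is the argument the paper implicitly intends (it offers none, simply calling the inequality "easy to deduce" from the Plancherel description of the $H^s$ norm): restrict the spectral sum to $|\xi|\le t$ and bound the weight $\langle\xi\rangle^{2s}$ by $(1+t^2)^s\lesssim t^{2s}$. Your side remarks are also accurate — the estimate as literally stated requires $t\ge 1$ and $s\ge 0$ (a nonzero constant defeats $s<0$, for which one needs Fourier support in an annulus, which is in fact how the paper later uses the negative-order version), and no Weyl-type kernel count is needed for this $L^2$-based statement.
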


For each $t\geq1$, the integrand $\psi(|\nabla|/t)f$ is a smooth function on $\Gp$, whose Fourier support is contained in $\mathfrak{S}[t/2,t]$, where the definition is given in (\ref{FourierSupp}). In particular, the mode of $\psi(|\nabla|/t)f$ corresponding to zero eigenvalue is always zero. The integral (\ref{LPCont}) converges at least in the sense of distribution, and just as in the case of Littlewood-Paley decomposition on Euclidean space, the speed of convergence reflects the regularity property of $f$. Sometimes the \emph{discrete Littlewood-Paley decomposition} is also employed: setting $\vartheta(\lambda)=\phi_1(\lambda)-\phi_1(\lambda)$ and $\vartheta_j(\lambda)=\vartheta(\lambda/2^j)$, then 
\begin{equation}\label{LPDisc}
f=\phi\big(|\nabla|\big)f+\sum_{j\geq0}\vartheta_j\big(|\nabla|\big)f.
\end{equation}
The continuous and discrete Littlewood-Paley decomposition are parallel to each other, with summation in place of integration or vice versa, but in different scenarios one version can make the computation simpler than the other.

Since the integrand in (\ref{LPCont}) is the convolution of $f$ with kernel 
$$
\left(\psi_t(|\xi|)\I[\xi]\right)^\vee(x)
=\sum_{\xi\in\Gp}d_\xi\psi_t(|\xi|)\Tr\left(\xi(x)\right),
$$
it is thus important to understand a kernel with finite Fourier support and is merely a scaling on each representation space. Notice that the order of convolution does not really matter for this specific case. We thus state the following fundamental lemma concerning kernels of spectral cut-off operators:

\begin{lemma}\label{CutOffKer}
Let $h\in C_0^\infty(\mathbb{R})$. Let $P$ be any classical differential operator with smooth coefficients of degree $N\in\mathbb{N}_0$. Then for $t\geq1$, the function
$$
\check h_t:=\left(h_t(|\xi|)\I[\xi]\right)^\vee
=\sum_{\xi\in\Gp}d_\xi h\left(\frac{|\xi|}{t}\right)\Tr\left(\xi\right)
$$
satisfies
$$
\|P\check h_t\|_{L^1}\lesssim_{N}t^{N}|h|_{L^\infty}.
$$
If $f\in C(\Gp)$ is such that for some $r>0$, $f(x)=O(\dist(x,e)^r)$ when $x\to e$, then
$$
\|f\check h_t\|_{L^1}\lesssim |h|_{L^\infty}t^{-r}.
$$
\end{lemma}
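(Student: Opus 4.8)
The plan is to prove a pointwise estimate for the kernel $\check h_t$ and its left-invariant derivatives, quantifying the concentration near $e$ at scale $t^{-1}$, and then integrate. Write $m_t(\xi):=h(|\xi|/t)\,\mathrm{Id}_{\mathcal{H}_\xi}$, so that $\check h_t=m_t^\vee$; note $m_t$ is supported in $\mathfrak{S}[0,Ct]$ if $\mathrm{supp}\,h\subseteq[-C,C]$. I would first show that for every $M\in\mathbb{N}$, every left-invariant differential operator $X^\alpha$, and every $t\geq1$,
$$
\big|X^\alpha\check h_t(x)\big|\lesssim_{M,\alpha}t^{\,n+|\alpha|}\big(1+t\,\dist(x,e)\big)^{-M},
$$
the implied constant depending also on finitely many seminorms of $h$. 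The peak bound $t^{\,n+|\alpha|}$ is immediate: if $D_\alpha(\xi)$ denotes the ($x$-independent, order-$|\alpha|$) symbol of $X^\alpha$, so $\widehat{X^\alpha\check h_t}(\xi)=h(|\xi|/t)D_\alpha(\xi)$ with $\|D_\alpha(\xi)\|\lesssim\langle\xi\rangle^{|\alpha|}$, then the crude Fourier-inversion bound $|\check a(x)|\leq\sum_\xi d_\xi^2\|a(\xi)\|$ together with Weyl's Lemma \ref{Weyl} gives $|X^\alpha\check h_t(x)|\lesssim|h|_{L^\infty}\,t^{|\alpha|}\sum_{\xi\in\mathfrak{S}[0,Ct]}d_\xi^2\lesssim t^{\,n+|\alpha|}|h|_{L^\infty}$. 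The decay away from $e$ is the real content.

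To obtain the decay I would multiply $\check h_t$ by powers of a smooth weight vanishing precisely at $e$. Let $Q$ be the fundamental tuple $(\ref{QFund})$, which is strongly RT-admissible, and set $\theta:=\sum_{q\in Q}|q|^2$. By RT-admissibility (Definition \ref{RTAdm}) the Hessian of $\theta$ at $e$ is positive definite, so $\theta\asymp\dist(\cdot,e)^2$ near $e$; since $e$ is the only common zero of $Q$ and $\Gp$ is compact, $\theta\asymp\dist(\cdot,e)^2$ on all of $\Gp$. Multiplication of a function by $q\in Q$ corresponds on the Fourier side to the RT-difference operator $\Df_q$ (Definition \ref{Difference}), which commutes with $X^\alpha$; expanding $\theta^M$ as a polynomial of degree $2M$ in the $q$'s and $\bar q$'s and iterating the Leibniz-type formula $(\ref{Leibniz})$, one writes $\widehat{\theta^M X^\alpha\check h_t}(\xi)$ as a finite sum of terms $(\Df^\beta m_t)(\xi)\cdot(\Df^\gamma D_\alpha)(\xi)$ with $2M\le|\beta|+|\gamma|\le4M$ and, crucially, $|\gamma|\le|\alpha|$ — because $X^\alpha$ is a genuine differential operator of order $|\alpha|$, its symbol is annihilated by any RT-difference operator of order $>|\alpha|$ (equivalently $q^\gamma\cdot(X^\alpha\delta_e)=0$ for $|\gamma|>|\alpha|$). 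Two inputs then close this step: (i) each $\Df$ applied to $m_t$ gains a factor $t^{-1}$ on the support $\mathfrak{S}[0,Ct]$, the analogue of $\partial_\xi^\beta h(\xi/t)=t^{-|\beta|}h^{(\beta)}(\xi/t)$; and (ii) the symbol bound $\|(\Df^\gamma D_\alpha)(\xi)\|\lesssim\langle\xi\rangle^{|\alpha|-|\gamma|}$. Combining them yields $\|\widehat{\theta^M X^\alpha\check h_t}(\xi)\|\lesssim t^{|\alpha|-2M}$ on $\mathfrak{S}[0,Ct]$, whence, again by the crude inversion bound and Weyl's Lemma \ref{Weyl}, $\|\theta^M X^\alpha\check h_t\|_{L^\infty}\lesssim t^{\,n+|\alpha|-2M}$. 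Since $\theta^M\asymp\dist(\cdot,e)^{2M}$ globally, this gives the claimed bound when $t\,\dist(x,e)\ge1$ (after renaming $2M\mapsto M$), and the case $M=0$ handles $t\,\dist(x,e)\le1$.

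Granting the kernel bound, the rest is routine. Writing $P=\sum_{|\alpha|\le N}c_\alpha(x)X^\alpha$ with $c_\alpha\in C^\infty(\Gp)$ bounded, and taking $M>n$,
$$
\|P\check h_t\|_{L^1}\lesssim t^{\,n+N}\!\int_{\Gp}\big(1+t\,\dist(x,e)\big)^{-M}\,dx\lesssim t^{\,n+N}\cdot t^{-n}=t^N,
$$
the last bound by integrating in geodesic polar coordinates about $e$; this is the first assertion. For the second, if $f\in C(\Gp)$ and $f(x)=O(\dist(x,e)^r)$ near $e$, then $|f(x)|\lesssim_f\min\{1,\dist(x,e)^r\}$ on all of $\Gp$; decomposing $\int|f|\,|\check h_t|$ over the ball $\{\dist(x,e)\le t^{-1}\}$ and the dyadic annuli $\{\dist(x,e)\sim2^jt^{-1}\}$ with $1\le2^j\lesssim t$, and using the kernel bound with $M>n+r$, one finds each region contributes $\lesssim t^{-r}$, hence $\|f\check h_t\|_{L^1}\lesssim t^{-r}$.

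The main obstacle is input (i): that difference operators gain genuine negative powers of $t$, uniformly in $t\ge1$, on the spectral-cutoff symbol $m_t$. On $\mathbb{R}^n$ this is one line since differentiation in frequency is exact; on the non-commutative $\Gp$ the $\Df_\tau$ are only finite differences $a(\tau\otimes\pi)-a(\mathrm{Id}_{\mathcal{H}_\tau}\otimes\pi)$, and controlling them requires the slow variation of $h(|\xi|/t)$ together with an a priori bound on $\big||\eta|-|\pi|\big|$ for $\eta\subseteq\tau\otimes\pi$ — which is precisely the spectral-localization estimate of Corollary \ref{SpecPrd} — or, alternatively, a self-contained argument via the half-wave group, writing $h(|\nabla|/t)=t\int_{\mathbb{R}}\widehat{h}(ts)\,e^{is|\nabla|}\,ds$ and invoking finite propagation speed, which also makes the $t^{-1}$-scale localization of $\check h_t$ transparent. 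A secondary point is bookkeeping of constants: the honest estimate carries finitely many $C^k$-seminorms of $h$ with $k$ depending on $N$, $n$, $r$, so the clean dependence on $|h|_{L^\infty}$ should be read as the correct homogeneity in $h$, harmless in the dyadic applications where $h$ is fixed.
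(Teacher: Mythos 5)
Your argument is correct in outline, and it is a genuinely different route from the paper's: the paper does not prove Lemma \ref{CutOffKer} at all, but defers to Fischer \cite{Fis2015}, whose proof goes through heat-kernel bounds and uses only the compact-manifold structure of $\Gp$. You instead stay entirely inside the paper's own group-theoretic toolbox: weighted kernel estimates obtained by multiplying by $\theta^M$ with $\theta=\sum_{q\in Q}|q|^2\asymp\mathrm{dist}(\cdot,e)^2$, converted on the Fourier side via the RT-difference operators, the Leibniz property (\ref{Leibniz}), the spectral-localization bound of Corollary \ref{SpecPrd}, and the Weyl count of Lemma \ref{Weyl}. The structural point you isolate --- that $\Df^\gamma$ annihilates the ($x$-independent) symbol of $X^\alpha$ once $|\gamma|>|\alpha|$, because $q^\gamma\cdot(X^\alpha\delta_e)=0$ --- is exactly what is needed; without it the terms with $|\gamma|$ large and $\langle\xi\rangle$ small would only be $O(1)$ rather than $O(t^{|\alpha|-2M})$ after summation, and the weighting would not close. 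The trade-off is clear: the manifold route generalizes beyond Lie groups, while yours makes the $t^{-1}$-scale concentration a visible consequence of the algebraic spectral localization that the rest of the paper is built on.

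The one place where your sketch still owes a proof is the input you yourself flag: $\|\Df^\beta m_t(\xi)\|\lesssim t^{-|\beta|}$ uniformly on the support. Your ``slow variation plus bounded spectral shift'' argument is complete for $|\beta|=1$, since $\Df_\tau m_t(\pi)$ is block-diagonal with entries $h(|\eta_j|/t)-h(|\pi|/t)$ and $\bigl||\eta_j|-|\pi|\bigr|\le C_\tau$; but for $|\beta|\ge2$ the $2^{|\beta|}$ terms of the iterated intrinsic difference do not automatically organize into higher-order divided differences of $h$ along the weight lattice, and making that bookkeeping precise is real work (it is essentially the content of Theorem \ref{Multm}, which the paper derives \emph{from} Lemma \ref{CutOffKer}, so you cannot quote it here without circularity). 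The clean way to close this is your second alternative: the finite-propagation-speed representation of $h(|\nabla|/t)$ via $\cos(s|\nabla|)$ directly yields the weighted pointwise kernel bound, after which the difference-operator decay becomes a corollary via $\|\Df_{q^\beta}m_t(\xi)\|\le\|q^\beta\check h_t\|_{L^1}$ rather than an input --- this is the standard spectral-multiplier argument and is morally what the cited reference does with the heat kernel. Finally, your caveat about the constant is correct: the estimate cannot hold with a constant depending on $h$ only through $|h|_{L^\infty}$ (that would make every bounded compactly supported spectral multiplier uniformly bounded on $L^1$), so the stated dependence must be read as the homogeneity in $h$, with finitely many derivatives of $h$ absorbed into the implicit constant; this is harmless for every use of the lemma in the paper, where $h$ is fixed.
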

With a little abuse of notation, we denote the Fourier inversion of the symbol $h_t(|\xi|)\I[\xi]$ as $\check{h}_t$. The proof can be found in \cite{Fis2015}. It in fact only depends on the compact manifold structure of $\Gp$, and follows from properties of heat kernel on a compact manifold. As a corollary, the symbol $h_t(|\xi|)\I[\xi]$ is of class $\mathscr{S}^0_{1,0}$ if $h$ is smooth and has compact support. We deduce a corollary that will be used later.

\begin{corollary}\label{ConvVanish}
Let $h\in C_0^\infty(\mathbb{R})$. If $f\in \mathcal{D}(\Gp)$ is such that $f(x)=O(\dist(x,e)^N)$ for some $N\in\mathbb{N}_0$, then $h_t*f$ remains in a bounded subset of $\mathcal{D}(\Gp)$, for $t\geq1$, and
$$
\big|(\check h_t*f)(x)\big|
\lesssim_N |h|_{L^\infty}t^{-N}\sum_{k=0}^N t^k|f|_{C^k}\dist(x,e)^k.
$$
\end{corollary}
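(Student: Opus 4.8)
The plan is to reduce the pointwise bound to Taylor's formula on $\Gp$ (Proposition \ref{TaylorGp}) together with the kernel estimates of Lemma \ref{CutOffKer}, and to obtain the boundedness statement as an immediate by-product. For the latter: since $h$ has compact support, $\check{h}_t$ has Fourier support in some $\mathfrak{S}[0,Ct]$, and the first part of Lemma \ref{CutOffKer} (with $P=\mathrm{Id}$) gives $\|\check{h}_t\|_{L^1}\lesssim|h|_{L^\infty}$ uniformly in $t\geq1$. Because every left-invariant $X^\beta$ commutes with right convolution, $X^\beta(\check{h}_t*f)=\check{h}_t*(X^\beta f)$, whence $\|X^\beta(\check{h}_t*f)\|_{L^\infty}\leq\|\check{h}_t\|_{L^1}\|X^\beta f\|_{L^\infty}\lesssim\|X^\beta f\|_{L^\infty}$, so $\check{h}_t*f$ stays in a bounded subset of $C^\infty(\Gp)=\mathcal{D}(\Gp)$.

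For the pointwise estimate, write $(\check{h}_t*f)(x)=\int_\Gp\check{h}_t(y)f(y^{-1}x)\,dy$, fix a tuple $q$ as in Proposition \ref{TaylorGp}, and expand $f$ at the base point $y^{-1}$ with increment $x$ near $e$:
$$
f(y^{-1}x)=\sum_{|\alpha|<N}q^\alpha(x^{-1})\big(X_q^{(\alpha)}f\big)(y^{-1})+R_N(f;y^{-1},x),
\qquad
\big|R_N(f;y^{-1},x)\big|\lesssim|f|_{C^N}\dist(x,e)^N.
$$
Integrating against $\check{h}_t(y)$ and noting that $\big(\check{h}_t*X_q^{(\alpha)}f\big)(e)=\int_\Gp\check{h}_t(y)\big(X_q^{(\alpha)}f\big)(y^{-1})\,dy$, one gets
$$
(\check{h}_t*f)(x)=\sum_{|\alpha|<N}q^\alpha(x^{-1})\big(\check{h}_t*X_q^{(\alpha)}f\big)(e)
+\int_\Gp\check{h}_t(y)\,R_N(f;y^{-1},x)\,dy.
$$
Three facts then finish the estimate: (i) each $q_i$ vanishes at $e$, so $q^\alpha(x^{-1})=O(\dist(x,e)^{|\alpha|})$, which supplies the factor $\dist(x,e)^k$ with $k=|\alpha|$; (ii) since $f$ vanishes to order $N$ at $e$ and $X_q^{(\alpha)}$ has order $|\alpha|$, the function $y\mapsto\big(X_q^{(\alpha)}f\big)(y^{-1})$ vanishes to order $N-|\alpha|$ at $e$, so the second part of Lemma \ref{CutOffKer} bounds $\big|(\check{h}_t*X_q^{(\alpha)}f)(e)\big|$ by $|h|_{L^\infty}t^{-(N-|\alpha|)}$ times a $C$-seminorm of $f$; (iii) the remainder is controlled by $\|\check{h}_t\|_{L^1}\cdot\sup_y|R_N(f;y^{-1},x)|\lesssim|h|_{L^\infty}|f|_{C^N}\dist(x,e)^N$, which is exactly the top ($k=N$) term. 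Summing over $0\leq|\alpha|=k\leq N-1$ and reindexing assembles the claimed bound $|(\check{h}_t*f)(x)|\lesssim_N|h|_{L^\infty}t^{-N}\sum_{k=0}^Nt^k|f|_{C^k}\dist(x,e)^k$.

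The step I expect to require most care is the bookkeeping of seminorms and, relatedly, the fact that $\check{h}_t$ is not compactly supported: Lemma \ref{CutOffKer} encodes that $\check{h}_t$ concentrates at scale $t^{-1}$ near $e$ with rapidly decaying tails, and one must use this so that the contribution of $y$ away from $e$ is negligible and so that the right $C^k$-seminorm governs the $k$-th term. A convenient way to arrange both is to localize: write $f=\chi f+(1-\chi)f$ with $\chi\in C_0^\infty(\Gp)$ equal to $1$ near $e$. For $x$ in a fixed small neighborhood of $e$ the function $(1-\chi)f$ vanishes on a fixed neighborhood of $e$, so its convolution with $\check{h}_t$ is $O_M(t^{-M})$ for every $M$ by the rapid decay of $\check{h}_t$, while $\chi f$ is supported near $e$, where the Taylor argument above applies cleanly and where the relation between the vanishing order of $X_q^{(\alpha)}f$ and the seminorms $|f|_{C^j}$ is transparent (a function vanishing to order $r$ at $e$ is $O(\dist(\cdot,e)^j|f|_{C^j})$ for each $0\leq j\leq r$). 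Finally, for $x$ bounded away from $e$ the desired bound is immediate, since then $\dist(x,e)\gtrsim1$ and $|(\check{h}_t*f)(x)|\leq\|\check{h}_t\|_{L^1}\|f\|_{L^\infty}\lesssim|h|_{L^\infty}\|f\|_{L^\infty}$.
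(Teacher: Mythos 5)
Your proposal is correct and follows essentially the same route as the paper: Taylor-expand $f(y^{-1}x)$ at the base point $y^{-1}$ with increment $x$ via Proposition \ref{TaylorGp}, bound the terms $\int\check h_t(y)\,X_q^{(\alpha)}f(y^{-1})\,dy$ by the second part of Lemma \ref{CutOffKer} using that $X_q^{(\alpha)}f$ vanishes to order $N-|\alpha|$ at $e$, and bound the remainder by $\|\check h_t\|_{L^1}$ times the Taylor remainder estimate. The paper only writes out the case $N=1$ and omits the $\mathcal{D}(\Gp)$-boundedness claim; your general-$N$ bookkeeping and the commutation $X^\beta(\check h_t*f)=\check h_t*(X^\beta f)$ fill these in consistently.
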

\begin{proof}
For simplicity we prove for $N=1$; the general case follows similarly. The order of convolution does not matter since $h_t(|\xi|)\I[\xi]$ is a scaling on each $\Hh[\xi]$. We use Taylor's formula to write
$$
\begin{aligned}
(\check h_t*f)(x)
&=\int_{\Gp}\check h_t(y)f(y^{-1}x)dy\\
&=\int_{\Gp}\check h_t(y)f(y)dy+\int_{\Gp}\check h_t(y)R_1(f;y^{-1},x)dy.
\end{aligned}
$$
The first term is controlled by $|h|_{L^\infty}|f|_{C^0}t^{-1}$ by Lemma \ref{Ker-Conv}, and the second is controlled by $|h|_{L^\infty}\dist(x,e)|f|_{C^1}$, by Taylor's formula.
\end{proof}

A more general multiplier theorem was deduced in \cite{Fis2015} from Lemma \ref{CutOffKer}:
\begin{theorem}\label{Multm}
Let $h$ be a smooth function on $[0,\infty)$. Define the multiplier norm
$$
\|h\|_{m;k}=\sup_{\lambda\geq0,l\leq k}\left(1+|\lambda|\right)^{-m+l}\big|h^{(l)}(\lambda)\big|.
$$
If $q\in C^\infty(\Gp)$ vanishes of order $N-1$ at $e$, where $N\geq1$ is an integer, then with 
$$
a_t(\xi)= h_t(|\xi|)\I[\xi],
$$
the symbol $a_t$ is of class $\mathscr{S}^m_{1,0}$, and the difference norm is estimated by
$$
\|\Df_qa_t(\xi)\|\lesssim_{q} \|h\|_{m;k}\size[\xi]^{m-N}t^{-m}.
$$
\end{theorem}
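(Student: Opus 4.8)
The plan is to reduce the claim to the kernel estimates of Lemma \ref{CutOffKer} by splitting the multiplier $h$ into a piece that is essentially a smooth cut-off at scale $t$ and a tail that decays. First I would fix a reference cut-off $\chi\in C_0^\infty(\mathbb{R})$ equal to $1$ on $[-1,1]$ and supported in $[-2,2]$, and write $h_t(\lambda)=h(\lambda/t)$ as a dyadic sum in the frequency variable: on the support of $a_t$, only $|\xi|\lesssim$ (finite range) matters in any fixed difference computation, but since $h$ is only of symbol type $m$, not compactly supported, I decompose $h=\sum_{j\ge 0} h^{(j)}$ with $h^{(j)}$ supported in an annulus $|\lambda|\sim 2^j$ (plus the bulk $h^{(0)}$ near the origin), so that each $h^{(j)}$ obeys $\|h^{(j)}\|_{0;k}\lesssim 2^{jm}\|h\|_{m;k}$ and $\|(h^{(j)})_t\|$-type quantities are controlled by $2^{jm}\|h\|_{m;k}$. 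Each dyadic piece $(h^{(j)})_t(|\xi|)\I[\xi]$ is then a compactly supported cut-off at scale $2^j t$, to which Lemma \ref{CutOffKer} applies directly.

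The heart of the matter is the difference estimate. By definition $\Df_q a_t(\xi)=\Ft{q\cdot \check a_t}(\xi)$ where $\check a_t$ is the Fourier inversion of $a_t$, i.e. $\check a_t=\check h_t$ in the notation of Lemma \ref{CutOffKer}. So $\|\Df_q a_t(\xi)\|\le \|q\check h_t\|_{L^1(\Gp)}$ uniformly in $\xi$ — but this uniform bound is not enough; I need the extra decay $\size[\xi]^{m-N}$. To get the $\size[\xi]^{-N}$ gain I would use that $\Df_q$ built from the fundamental tuple has the Leibniz-type structure (\ref{Leibniz})–(\ref{LeibnizFund}); more efficiently, since $q$ vanishes to order $N-1$ at $e$, I can integrate by parts against the Casimir: on the $\xi$-block, $\check h_t * (q\,\cdot)$ composed with $(1-\Delta)$ pulls out a factor $\size[\xi]^2$, while $(1-\Delta)^{N/2}$ applied to $q\check h_t$ still lands in $L^1$ with a gain, because each derivative falling on $q$ is compensated by the $N-1$ order vanishing and each derivative falling on $\check h_t$ costs only a power of the scale $2^j t$ by the first estimate of Lemma \ref{CutOffKer}, whereas each derivative falling on the "remaining" factors of $q$ (using $q=O(\dist(\cdot,e)^{N-1})$) is controlled by the second, decaying estimate of that lemma. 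Carefully bookkeeping these two competing effects across $\lfloor N/2\rfloor$ or so integrations by parts yields, for the $j$-th dyadic piece, a bound of the form $2^{jm}\|h\|_{m;k}(2^jt)^{-m}\,(2^jt)^{?}$ times $\size[\xi]^{-N}$; summing the geometric series in $j$ (which converges because the net exponent of $2^j$ is negative once $N$ derivatives of decay are extracted) produces $\|h\|_{m;k}\size[\xi]^{m-N}t^{-m}$, which is exactly the claim. The membership $a_t\in\mathscr{S}^m_{1,0}$ is then immediate: it is the special case of the difference estimate with $q$ a product of the $q_{jk}-\delta_{jk}$'s (each vanishing to order $1$, so $N=|\beta|$), combined with the fact that $X^\alpha_x$ annihilates $a_t$ since it has no $x$-dependence, and with the trivial operator-norm bound $\|a_t(\xi)\|\le \|h_t\|_{L^\infty}\lesssim \|h\|_{m;0}\size[\xi]^m$ on the diagonal block.

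The main obstacle, as indicated, is controlling the interplay in the integration-by-parts argument: one must keep the scale-dependence ($2^jt$) and the $\dist(\cdot,e)$-vanishing of $q$ in sync so that the geometric sum over $j$ closes, and verify that the implicit constants depend only on $q$ (through the order of vanishing and a finite number of $C^k$ seminorms of $q$) and on finitely many of the $\|h\|_{m;k}$ seminorms, not on $t$. A secondary point requiring care is that Lemma \ref{CutOffKer} is stated for $h\in C_0^\infty$, so the dyadic decomposition of the non-compactly-supported $h$ must be performed first and the seminorm bookkeeping $\|h^{(j)}\|_{0;k}\lesssim 2^{jm}\|h\|_{m;k}$ justified; this is routine but should be stated explicitly. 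Everything else — Young's inequality to pass from $L^1$ kernel bounds to the difference-operator bound, and the reduction to Fourier multipliers — is standard.
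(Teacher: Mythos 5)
There is a genuine gap in the step where you sum over the dyadic pieces, and it is precisely the step that produces the characteristic factor $\size[\xi]^{m}t^{-m}$ in the claimed bound. (The paper itself gives no proof here -- it cites \cite{Fis2015}, where the argument is exactly a combination of Lemma \ref{CutOffKer} with the spectral localization of products.) Your mechanism for gaining $\size[\xi]^{-N}$ is integration by parts against the Casimir, after which the $j$-th piece is controlled by $\size[\xi]^{-N}\big\|(1-\Delta)^{N/2}\big(q\,\check{h^{(j)}_t}\big)\big\|_{L^1}$. Bookkeeping this as you describe, every term (with $k$ derivatives on the kernel at scale $2^jt$ costing $(2^jt)^{k}$, and $N-k$ on $q$ leaving vanishing of order $k$ and hence a gain $(2^jt)^{-k}$) is of size $2^{jm}\|h\|_{m;k}$, with \emph{no} residual negative power of $2^j$. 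The series $\sum_j 2^{jm}$ diverges for $m\ge 0$, and for $m<0$ it sums to a bound of the form $\|h\|\size[\xi]^{-N}$, which is missing the factor $(\size[\xi]/t)^m$ entirely. No amount of rebalancing the integration by parts fixes this: a factor $\size[\xi]^{m}t^{-m}$ cannot come out of an unrestricted sum over scales.

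The missing idea is that for fixed $\xi$ only $O(1)$ dyadic pieces contribute, namely those with $2^jt\simeq\size[\xi]$. The piece $h^{(j)}(|\cdot|/t)\I[\cdot]$ has Fourier support in $\mathfrak{S}[c2^jt,C2^jt]$; multiplication by $q$ moves this support only by a bounded amount when $q$ is a trigonometric polynomial (this is Lemma \ref{DiffVanish}, via Corollary \ref{SpecPrd}), and for general smooth $q$ the contribution of high Fourier modes of $q$ is rapidly decaying, so the off-diagonal pieces with $2^jt\not\simeq\size[\xi]$ are negligible. Once the sum is restricted to $2^jt\simeq\size[\xi]$, no integration by parts is needed at all: rescaling $h^{(j)}(\lambda/t)=g^{(j)}(\lambda/(2^jt))$ with $|g^{(j)}|_{L^\infty}\lesssim 2^{jm}\|h\|_{m;0}$ and applying the \emph{second} estimate of Lemma \ref{CutOffKer} with $q=O(\dist(\cdot,e)^N)$ at scale $2^jt$ gives $\|q\,\check{g^{(j)}}_{2^jt}\|_{L^1}\lesssim 2^{jm}\|h\|_{m;0}(2^jt)^{-N}\simeq\|h\|_{m;0}(\size[\xi]/t)^m\size[\xi]^{-N}$, which is the claim. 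A secondary point: even if you insist on the integration-by-parts route, you would need a mixed weighted kernel estimate of the form $\|f\cdot P\check h_t\|_{L^1}\lesssim t^{\deg P-r}$ for $f=O(\dist(\cdot,e)^r)$, which is not literally contained in Lemma \ref{CutOffKer} as stated and would have to be proved separately. Your treatment of the membership $a_t\in\mathscr{S}^m_{1,0}$ given the difference estimate is fine.
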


The Littlewood-Paley characterization of Sobolev space is obtained immediately: 
\begin{proposition}\label{LPHs}
Given $s\in\mathbb{R}$, a distribution $f$ belongs to $H^s(\Gp)$ if and only if for some non-vanishing $h\in C_0^\infty(0,\infty)$, there holds
$$
\left\|\phi\big(|\nabla|\big)f\right\|_{L^2}^2
+\int_0^\infty t^{2s-1}\left\|h_t\big(|\nabla|\big)f\right\|_{L^2_x}^2dt
<\infty,
$$
where $h_t(\lambda)=h(\lambda/t)$. The lower bound of integral does not cause singularity since $h=0$ near 0. Similarly, distribution $f$ belongs to $H^s(\Gp)$ if and only if for some non-zero $h\in C_0^\infty(0,\infty)$, there holds
$$
\left\|\phi\big(|\nabla|\big)f\right\|_{L^2}^2
+\sum_{j\geq0}2^{2sj}\left\|h_{2^j}\big(|\nabla|\big)f\right\|_{L^2}^2.
$$
The square root of either of the above quadratic forms is equivalent to $\|f\|_{H^s}$.
\end{proposition}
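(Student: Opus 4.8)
The plan is to reduce the statement to the Plancherel identity of the Peter--Weyl theorem (Theorem \ref{PeterWeyl}) together with a single one-variable change of variables, so that no genuinely new analysis is required. First I would note that $h_t(|\nabla|)$ acts on each eigenspace $\mathcal{M}_\xi$ as multiplication by the scalar $h(|\xi|/t)$, so that by Plancherel
$$
\big\|h_t(|\nabla|)f\big\|_{L^2}^2=\sum_{\xi\in\DuGp}d_\xi\,\big|h(|\xi|/t)\big|^2\,\lHS\Ft f(\xi)\rHS^2,
\qquad
\big\|\phi(|\nabla|)f\big\|_{L^2}^2=\sum_{\xi\in\DuGp}d_\xi\,\big|\phi(|\xi|)\big|^2\,\lHS\Ft f(\xi)\rHS^2.
$$
All summands being non-negative, Tonelli's theorem lets me interchange the sum over $\xi$ with the $t$-integral (resp. the $j$-sum), which reduces everything to evaluating, for each fixed $\xi$, the spectral weight $w_s(\xi):=\int_0^\infty t^{2s-1}\big|h(|\xi|/t)\big|^2\,dt$.

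The core computation is then the following. For the trivial representation $|\xi|=0$, and since $h\in C_0^\infty(0,\infty)$ vanishes near the origin, $h(0)=0$ and hence $w_s(\xi)=0$ there --- this is exactly why the low-frequency piece $\phi(|\nabla|)f$ must be kept as a separate term. For $|\xi|>0$ the substitution $u=|\xi|/t$ gives
$$
w_s(\xi)=|\xi|^{2s}\int_0^\infty u^{-2s-1}\big|h(u)\big|^2\,du=:C_{h,s}\,|\xi|^{2s},
$$
and the constant $C_{h,s}$ is both finite and strictly positive, precisely because $h$ is supported in a compact subset of $(0,\infty)$ and is not identically zero. Consequently the integral term equals $C_{h,s}\sum_{|\xi|>0}d_\xi\,|\xi|^{2s}\,\lHS\Ft f(\xi)\rHS^2$.

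It then remains to compare this, together with the contribution of $\phi(|\nabla|)f$, with $\|f\|_{H^s}^2=\sum_\xi d_\xi\size[\xi]^{2s}\lHS\Ft f(\xi)\rHS^2$. Here I would invoke the spectral gap of $-\Delta$ on the compact manifold $\Gp$ (its least nonzero eigenvalue $\lambda_1$ is strictly positive), which forces $\size[\xi]=(1+\lambda_\xi)^{1/2}\simeq|\xi|$ uniformly over all $\xi$ with $|\xi|>0$, whereas $\size[\xi]=1$ for the trivial representation; I would also use that only finitely many $\xi$ satisfy $|\xi|\le1$, so $\size[\xi]^{2s}$ is two-sidedly bounded on that finite set, and that $\phi(0)=1$, so the trivial mode is captured by $\phi(|\nabla|)f$ with weight $1$. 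Assembling these remarks yields a two-sided bound between the quadratic form and $\|f\|_{H^s}^2$ with constants depending only on $h$, $s$ and $\Gp$; in particular finiteness of the quadratic form is equivalent to $f\in H^s$. The discrete statement follows along the same lines, with $\int_0^\infty(\cdot)\,dt/t$ replaced by $\sum_{j\ge0}$: for $|\xi|>0$ only boundedly many indices $j$ contribute to $\sum_{j\ge0}2^{2sj}|h(|\xi|/2^j)|^2$ (those with $|\xi|/2^j\in\mathrm{supp}\,h$), and on them $2^{2sj}\simeq|\xi|^{2s}$, which gives the upper bound $\lesssim|\xi|^{2s}$ at once.

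I expect the only point requiring care to be the matching \emph{lower} bound for the discrete weight, namely $\sum_{j\ge0}2^{2sj}|h(|\xi|/2^j)|^2\gtrsim|\xi|^{2s}$ uniformly in $\xi$: unlike the continuous case --- where $h\not\equiv0$ already suffices by the computation above --- this needs $h$ to be chosen so that $\sum_{j\in\mathbb{Z}}|h(2^{-j}\lambda)|^2$ stays bounded below on $(0,\infty)$, which is automatic for the dyadic pieces $\vartheta_j$ appearing in (\ref{LPDisc}) but not for an arbitrary non-zero $h\in C_0^\infty(0,\infty)$. Everything else --- the Plancherel expansion, the Tonelli interchange, and the change of variables --- is routine, so once this point is settled the proof is essentially bookkeeping.
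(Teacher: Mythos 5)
Your argument is correct and is exactly the computation the paper has in mind (the paper offers no proof, obtaining the proposition ``immediately'' from Plancherel and the spectral description of $\Delta$): Plancherel plus Tonelli reduce everything to the weight $\int_0^\infty t^{2s-1}|h(|\xi|/t)|^2\,dt=C_{h,s}|\xi|^{2s}$ via the substitution $u=|\xi|/t$, and the spectral gap of $-\Delta$ gives $\langle\xi\rangle\simeq|\xi|$ off the trivial representation, whose contribution is carried by $\phi(|\nabla|)f$ since $\phi(0)=1$. The caveat you raise about the discrete case is well taken and is an imprecision in the statement rather than a gap in your proof: for an arbitrary non-zero $h\in C_0^\infty(0,\infty)$ the weight $\sum_{j\geq0}2^{2sj}|h(|\xi|/2^j)|^2$ can vanish on infinitely many $\xi$ (e.g.\ $h$ supported in $[1,3/2]$ misses all $|\xi|\in(7/4)\cdot 2^{\mathbb{N}}$), so the ``only if'' implication of the discrete version genuinely requires $h$ to satisfy $\sum_{j\in\mathbb{Z}}|h(2^{-j}\lambda)|^2\gtrsim 1$ on $(0,\infty)$, as the dyadic pieces in (\ref{LPDisc}) do.
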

From Lemma \ref{CutOffKer}, we also obtain a characterization of the Zygmund space $C^r_*(\Gp)$ with $r\geq0$:

\begin{proposition}\label{LPZyg}
For $r\geq0$, a distribution $f\in\mathcal{D}'(\Gp)$ is in the Zygmund class $C^r_*(\Gp)$ if and only if
$$
\sup|\phi(|\nabla|)f|+\sup_{t\geq1}t^r\left|\psi_t\big(|\nabla|\big)f\right|_{L^\infty}<\infty.
$$
This quantity is equivalent to the Zygmund space norm defined via local coordinate charts.
\end{proposition}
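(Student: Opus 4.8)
The plan is to prove that the displayed functional, which I abbreviate $\mathbf N_r(f):=\sup|\phi(|\nabla|)f|+\sup_{t\ge1}t^r|\psi_t(|\nabla|)f|_{L^\infty}$, is comparable to the norm of $C^r_*(\Gp)$ defined through local charts, by establishing the two one-sided bounds separately. Three facts from the preceding material will be used repeatedly: the symbol $\psi_t(|\xi|)$, acting as a scalar on each representation space, lies in a bounded subset of $\mathscr S^0_{1,0}$ for $t\ge1$ (remark after Lemma \ref{CutOffKer}); its Fourier inversion $\check\psi_t$, a trigonometric polynomial supported on $\mathfrak S[t/2,t]$, obeys the $L^1$ kernel estimates of Lemma \ref{CutOffKer}; and $\check\psi_t$ is real valued and invariant under $z\mapsto z^{-1}$, which follows by pairing each $\xi$ with its contragredient $\bar\xi$, using $|\bar\xi|=|\xi|$ and $d_{\bar\xi}=d_\xi$ exactly as in the proof of Corollary \ref{SpecPrd}. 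Write $r=k+\sigma$ with $k=\lfloor r\rfloor,\ \sigma\in(0,1)$ when $r\notin\mathbb N$, and $k=r-1,\ \sigma=1$ when $r\in\mathbb N_{\ge1}$; the endpoint $r=0$ is commented on at the end.

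For the reconstruction bound $\|f\|_{C^r_*(\Gp)}\lesssim\mathbf N_r(f)$, I would pass to the discrete decomposition (\ref{LPDisc}), $f=\phi(|\nabla|)f+\sum_{j\ge0}g_j$ with $g_j:=\vartheta_j(|\nabla|)f$ spectrally supported in an annulus $\mathfrak S[c2^j,C2^j]$ and $\|g_j\|_{L^\infty}\lesssim2^{-jr}\mathbf N_r(f)$. Choosing a reproducing cut-off $\eta$ with $g_j=\eta_{2^j}(|\nabla|)g_j$, writing $X^\alpha g_j=g_j*(X^\alpha\check\eta_{2^j})$, and applying the first assertion of Lemma \ref{CutOffKer} gives the Bernstein bound $\|X^\alpha g_j\|_{L^\infty}\lesssim2^{j(|\alpha|-r)}\mathbf N_r(f)$, and similarly $\|X^\alpha\phi(|\nabla|)f\|_{L^\infty}\lesssim\mathbf N_r(f)$. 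Fixing a finite atlas and subordinate cut-offs $\chi$, the chain rule then yields $\|\chi\cdot(g_j\circ\kappa^{-1})\|_{C^N(\mathbb R^n)}\lesssim2^{j(N-r)}\mathbf N_r(f)$ for every $N$, together with the $L^\infty$ bound $\lesssim2^{-jr}\mathbf N_r(f)$. The classical reconstruction lemma on $\mathbb R^n$ — a function presented as a series of blocks with such $L^\infty$ and $C^N$ estimates (some $N>r$) lies in $C^r_*(\mathbb R^n)$, with comparable norm — applied to the series for $\chi\cdot(f\circ\kappa^{-1})$ finishes this direction, uniformly over the atlas.

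For the decay bound $\mathbf N_r(f)\lesssim\|f\|_{C^r_*(\Gp)}$, the low-frequency piece is controlled by $\|\phi(|\nabla|)f\|_{L^\infty}\lesssim\|f\|_{L^\infty}\lesssim\|f\|_{C^r_*(\Gp)}$ since $\phi(|\nabla|)$ has an $L^1$ kernel. For the dyadic pieces I would write the group convolution $\psi_t(|\nabla|)f(x)=\int_\Gp f(xz^{-1})\check\psi_t(z)\,dz$ and insert the Taylor formula of Proposition \ref{TaylorGp} to order $k$ (noting $q^\alpha((z^{-1})^{-1})=q^\alpha(z)$):
$$
\psi_t(|\nabla|)f(x)=\sum_{|\alpha|<k}X_q^{(\alpha)}f(x)\int_\Gp q^\alpha(z)\,\check\psi_t(z)\,dz+\int_\Gp R_k\big(f;x,z^{-1}\big)\,\check\psi_t(z)\,dz.
$$
The point for the finite sum is that each $q^\alpha$ is a trigonometric polynomial — the entries of $q$ are matrix coefficients of fundamental representations, and products of such remain in a fixed bounded part of $\DuGp$ by the spectral localization Corollary \ref{SpecPrd} — whereas $\check\psi_t$ is supported on $\mathfrak S[t/2,t]$; hence by Schur orthogonality $\int_\Gp q^\alpha(z)\check\psi_t(z)\,dz$ vanishes identically once $t$ exceeds a fixed constant, and is bounded for $t$ in a bounded range, so each such term is $O(t^{-r})\|f\|_{C^r_*(\Gp)}$. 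The work is then to upgrade the remainder from the crude $t^{-k}$ bound — produced by $|R_k(f;x,z^{-1})|\lesssim|f|_{C^k}\dist(z,e)^k$ and $\|\dist(\cdot,e)^k\check\psi_t\|_{L^1}\lesssim t^{-k}$ (Lemma \ref{CutOffKer}) — to the sharp $t^{-r}$: one carries the expansion one order further in integral-remainder form, using that the top derivatives $X_q^{(\alpha)}f$, $|\alpha|=k$, lie in $C^\sigma_*(\Gp)$, so that the remainder is in fact $O(\dist(z,e)^{k+\sigma})$ times the $C^r_*$ seminorm; when $\sigma=1$ one first replaces $f(xz^{-1})$ by $\tfrac12\big(f(xz^{-1})+f(xz)\big)$ — which leaves $\psi_t(|\nabla|)f(x)$ unchanged by the inversion invariance of $\check\psi_t$ — thereby cancelling the parity-odd part of the expansion and making the second-difference (Zygmund) estimate on $X_q^{(\alpha)}f$ applicable. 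Combining with $\|\dist(\cdot,e)^{k+\sigma}\check\psi_t\|_{L^1}\lesssim t^{-r}$ closes it.

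I expect this last step to be the main obstacle: obtaining the sharp exponent $t^{-r}$ rather than $t^{-k}$ forces a fractional Taylor expansion with a Hölder--Zygmund endpoint adapted to $\Gp$, and in the integer case the symmetrization where the inversion symmetry of $\check\psi_t$ is used essentially. Two further points are worth noting. The value $r=0$ must be handled separately, because $C^0_*(\Gp)=B^0_{\infty,\infty}(\Gp)$ strictly contains $L^\infty$ and the series in the reconstruction no longer converges in $L^\infty$; I would obtain it either by real interpolation between, say, $C^{-1}_*(\Gp)$ and $C^1_*(\Gp)$, the dyadic functional being stable under real interpolation, or by verifying that the almost-annular frequency support of the pieces persists, up to rapidly decaying tails, after localization to a chart. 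And the equivalence of the dyadic norm on $\mathbb R^n$ with the classical finite-difference definition of $C^r_*(\mathbb R^n)$, hence of $C^r_*(\Gp)$ through charts, is the standard Zygmund-space fact, which I would cite rather than reprove.
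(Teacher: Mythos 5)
Your argument is correct in substance but follows a genuinely different route from the paper's. For the decay bound the paper, for $0<r<1$, only uses that $\check\psi_t$ has mean zero together with the H\"older modulus of $f$ and the weighted $L^1$ bound of Lemma \ref{CutOffKer}; for the reconstruction bound it splits the telescoping integral $\int_1^\infty\big(\psi_t(|\nabla|)f(x)-\psi_t(|\nabla|)f(y)\big)\frac{dt}{t}$ at $t=\dist(x,y)^{-1}$ and estimates each piece intrinsically on $\Gp$; it then disposes of all remaining $r\geq0$ in one stroke by Schauder theory and interpolation of H\"older spaces. You instead treat general $r$ head-on: a full Taylor expansion against $\check\psi_t$, with Schur orthogonality killing the polynomial terms for large $t$, plus a H\"older--Zygmund remainder; and for the converse, chart localization and the Euclidean reconstruction lemma for dyadic blocks. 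What your route buys is a self-contained treatment of all $r$ and an explicit use of the group structure (inversion invariance of $\check\psi_t$, spectral support of $q^\alpha$); what it costs is precisely the step you flag: Proposition \ref{TaylorGp} as stated only gives $R_N=O(|f|_{C^N}\dist(y,e)^N)$, so you must re-derive the remainder in integral form along one-parameter subgroups to extract the fractional exponent $k+\sigma$ and, in the integer case, the second-difference bound after symmetrization — this is exactly the work the paper avoids by interpolating. Two small points to tidy up: your indexing is off by one (for $0<r<1$ the sum $\sum_{|\alpha|<k}$ with $k=0$ is empty, and you must expand to order $k+1$, at which point your argument for $0<r<1$ collapses to the paper's mean-zero argument); and the orthogonality step requires taking $q$ to be the fundamental tuple $Q$ of (\ref{QFund}) so that $q^\alpha$ is a trigonometric polynomial, which is consistent with the paper's conventions but should be said. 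Your treatment of $r=0$ via interpolation or near-annular frequency support is the right fix for the failure of the unlocalized reconstruction lemma at the endpoint.
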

\begin{proof}[Sketch of proof]
The proof does not differ very much from the Euclidean case. Suppose first $0<r<1$ is not an integer, and $f\in C^r$. Then 
$$
\psi_t\big(|\nabla|\big)f(x)
=\int_{\Gp}\check\psi_t(y^{-1}x)f(y)dy
=\int_{\Gp}\check\psi_t(y^{-1}x)\left(f(y)-f(x)\right)dy.
$$
Note that the order of convolution does not matter, since on the Fourier side, $\psi_t\big(|\nabla|\big)$ is just a scaling on each representation space, and the last step is because $\check\psi_t$ always has mean zero. Then
$$
|\psi_t\big(|\nabla|\big)f|_{L^\infty}
\lesssim_r |f|_{C^r}\int_{\Gp}\big|\check\psi_t(y^{-1}x)\big|\cdot\dist(x,y)^rdy,
$$
and the right-hand-side is controlled by $|f|_{C^r}t^{-r}$ by Lemma \ref{CutOffKer}. As for the opposite direction, if we know that $\big|\psi_t\big(|\nabla|\big)f\big|_{L^\infty}\lesssim t^{-r}$, then write
$$
\begin{aligned}
\int_1^\infty\left(\psi_t\big(|\nabla|\big)f(x)-\psi_t\big(|\nabla|\big)f(y)\right)\frac{dt}{t}
=\int_{t\leq\dist(x,y)^{-1}}+\int_{t>\dist(x,y)^{-1}}.
\end{aligned}
$$
When $x$ is close to $y$, the first integral is estimated, using Lemma \ref{CutOffKer} again, by
$$
\sum_{|\beta|=1}\int_1^{\dist(x,y)^{-1}}\left|X^\beta\psi_t\big(|\nabla|\big)f\right|_{L^\infty}\cdot \dist(x,y)\frac{dt}{t}
\,\lesssim_r\,
\sup_{t\geq1}t^r\left|\psi_t\big(|\nabla|\big)f\right|_{L^\infty}\cdot\dist(x,y)^r.
$$
The second integral is simply estimated by
$$
2\sup_{t\geq1}t^r\left|\psi_t\big(|\nabla|\big)f\right|_{L^\infty}\int_{t>\dist(x,y)^{-1}}t^{-r-1}dt
\,\simeq_r\,
\sup_{t\geq1}t^r\left|\psi_t\big(|\nabla|\big)f\right|_{L^\infty}\cdot\dist(x,y)^r.
$$
This proves the equivalence for $0<r<1$. As for the general case, we can simply apply the classical Schauder theory and interpolation for H\"{o}lder spaces; recall that interpolation of H\"{o}lder spaces results in the Zygmund space when the intermediate index is an integer.
\end{proof}

With the aid of Lemma \ref{Ker-Conv}, we immediately obtain
\begin{corollary}\label{LPZygCor}
Suppose $r\geq0$ and $f\in C^r_*$. Fix a basis $X_1,\cdots,X_n$ of $\mathfrak{g}$, and define $X^\alpha$ as in Proposition \ref{NormalOrder}. Then there holds, for $t\geq1$,
$$
\left|X^\alpha\phi_t\big(|\nabla|\big)f\right|_{L^\infty}
\lesssim_{r,\alpha}
\left\{
\begin{aligned}
    & t^{(|\alpha|-r)_+}|f|_{C^r_*} & \quad |\alpha|\neq r \\
    & \log(1+t)|f|_{C^r_*} & \quad |\alpha|=r
\end{aligned}
\right.
$$
where $s_+=\max(s,0)$. Furthermore, for $r>0$, $\left|f-\phi_t\big(|\nabla|\big)f\right|_{L^\infty}\lesssim t^{-r}|f|_{C^r_*}$ when $t\geq1$.
\end{corollary}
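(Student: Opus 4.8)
The plan is to differentiate the continuous Littlewood--Paley decomposition (\ref{LPCont}) term by term and then sum the resulting elementary integral. Writing
$$
\phi_t(|\nabla|)f=\phi(|\nabla|)f+\int_1^t\psi_s(|\nabla|)f\,\frac{ds}{s}
$$
and applying the left-invariant operator $X^\alpha$, I would reduce matters to two estimates. First, the low-frequency piece $\phi(|\nabla|)f$ is band-limited to the fixed set $\mathfrak{S}[0,1]$, so reinserting a fixed fattened multiplier and invoking Lemma \ref{CutOffKer} gives $|X^\alpha\phi(|\nabla|)f|_{L^\infty}\lesssim_\alpha|\phi(|\nabla|)f|_{L^\infty}\lesssim|f|_{C^r_*}$, the last bound being part of Proposition \ref{LPZyg}. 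Second, I need an $L^\infty\to L^\infty$ Bernstein bound for the dyadic pieces: choosing $\tilde\psi\in C_0^\infty(0,\infty)$ with $\tilde\psi\equiv1$ on $[1/2,1]$, one has $\psi_s(|\nabla|)f=\tilde\psi_s(|\nabla|)\big(\psi_s(|\nabla|)f\big)$, i.e.\ $\psi_s(|\nabla|)f$ is a right convolution of $\psi_s(|\nabla|)f$ with the kernel $\check{\tilde\psi}_s$; since $X^\alpha$ is left-invariant it commutes with right convolution, so $X^\alpha\psi_s(|\nabla|)f=\big(\psi_s(|\nabla|)f\big)*\big(X^\alpha\check{\tilde\psi}_s\big)$. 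Lemma \ref{CutOffKer} with $P=X^\alpha$ of order $|\alpha|$ gives $\|X^\alpha\check{\tilde\psi}_s\|_{L^1}\lesssim_\alpha s^{|\alpha|}$, whence by Young's inequality and Proposition \ref{LPZyg},
$$
\big|X^\alpha\psi_s(|\nabla|)f\big|_{L^\infty}\lesssim s^{|\alpha|}\big|\psi_s(|\nabla|)f\big|_{L^\infty}\lesssim_{r,\alpha}s^{|\alpha|-r}|f|_{C^r_*},\qquad s\geq1.
$$

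Putting these together yields $\big|X^\alpha\phi_t(|\nabla|)f\big|_{L^\infty}\lesssim_{r,\alpha}|f|_{C^r_*}\big(1+\int_1^t s^{|\alpha|-r-1}\,ds\big)$, and the three cases in the statement are exactly the three regimes of this integral: $\simeq t^{|\alpha|-r}$ when $|\alpha|>r$ (the extra $1$ being absorbed since $t\geq1$), convergent to a constant when $|\alpha|<r$ (yielding $t^0=t^{(|\alpha|-r)_+}$), and $=\log t$ when $|\alpha|=r$, where one uses $1+\log t\simeq\log(1+t)$ for $t\geq1$. For the last assertion, when $r>0$ the tail $\int_t^\infty|\psi_s(|\nabla|)f|_{L^\infty}\,\frac{ds}{s}\lesssim|f|_{C^r_*}\int_t^\infty s^{-r-1}\,ds$ is finite, so (\ref{LPCont}) converges uniformly and $f-\phi_t(|\nabla|)f=\int_t^\infty\psi_s(|\nabla|)f\,\frac{ds}{s}$; estimating under the integral gives $|f-\phi_t(|\nabla|)f|_{L^\infty}\lesssim_r t^{-r}|f|_{C^r_*}$, where $r>0$ is precisely what makes the $s$-integral converge.

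The only step that is not pure bookkeeping is the $L^\infty\to L^\infty$ Bernstein bound: one must verify that applying the order-$|\alpha|$ operator $X^\alpha$ to a function band-limited at frequency $\sim s$ costs exactly a factor $s^{|\alpha|}$, with constants uniform in $s\geq1$. This is what the reinsertion of the fattened multiplier $\tilde\psi_s(|\nabla|)$ together with the $L^1$-kernel estimate of Lemma \ref{CutOffKer} supplies; everything downstream is merely the evaluation of a convergent or logarithmically divergent integral. A minor point worth recording in the write-up is that the borderline logarithmic loss at $|\alpha|=r$ originates solely from the dyadic integral, the low-frequency contribution being $O(|f|_{C^r_*})$ for every $\alpha$.
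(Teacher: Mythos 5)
Your proposal is correct and fills in exactly the argument the paper leaves implicit (the paper offers no written proof beyond an appeal to the kernel estimates of Lemma \ref{CutOffKer} and the Littlewood--Paley characterization of Proposition \ref{LPZyg}): differentiating the truncated decomposition, using the fattened-multiplier trick to get the $L^\infty$ Bernstein bound $s^{|\alpha|}$ from the $L^1$ kernel estimate, and evaluating the resulting $\int_1^t s^{|\alpha|-r-1}\,ds$ in its three regimes, with the tail integral giving the final assertion for $r>0$. No gaps.
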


\begin{remark}\label{Zygmund<0}
Just as in the Euclidean case, we can \emph{define} Zygmund spaces with index $<0$ by Littlewood-Paley characterization: we say that $f\in C^{r}_*$ for $r<0$, iff
$$
|f|_{C^{-r}_*}
:=\sup|\phi(|\nabla|)f|
+\sup_{t\geq1}t^{r}\left|\psi_t\big(|\nabla|\big)f\right|_{L^\infty}<\infty.
$$
This gives a united definition of Zygmund spaces on $\Gp$. In particular, $|\nabla|^s$ is continuous from $C^{r+s}_*$ to $C^r_*$ for all $r\in\mathbb{R}$. Furthermore, for $r<0$, there holds a growth estimate
$$
\begin{aligned}
\left|\phi_t\big(|\nabla|\big)f\right|
\leq \sup|\phi(|\nabla|)f|+\int_1^t\left|\psi_\tau\big(|\nabla|\big)f\right|d\tau
\lesssim t^{-r}|f|_{C^r_*}.
\end{aligned}
$$
\end{remark}

\begin{remark}\label{VectZyg}
These results easily generalize to vector-valued functions. If $r>0$, $V$ is a finite dimensional normed space, $f\in C^r_*(\Gp;V)$, then the norm $|f|_{C^r_*;V}$ is defined as
$$
\sup_{v^*\in V^*,v^*\neq0}\frac{|\langle v^*,f\rangle|_{C^r_*}}{|v^*|_{V^*}}.
$$
We have e.g.
$$
|f|_{C^r_*;V}\simeq\sup|f|_{L^\infty;V}+\sup_{x,y\in\Gp}\frac{|f(x)-f(y)|_V}{\dist(x,y)^r},\quad 0<r<1,
$$
where $|f|_{L^\infty;V}=\sup|f|_V$, and the equivalence is \emph{independent} from $V$. Repeating the same argument as in the proof of Proposition \ref{LPZyg}, just with $\langle v^*,f\rangle$ in place of $f$, then taking supremum over $v^*\in V^*$, we arrive at
$$
|f|_{C^r_*;V}\simeq_r |\phi(|\nabla|)f|_{L^\infty;V}+\sup_{t\geq1}t^r\left|\psi_t\big(|\nabla|\big)f\right|_{L^\infty;V},
$$
$$
\left|X^\alpha\phi_t\big(|\nabla|\big)f\right|_{L^\infty;V}
\lesssim_{r,\alpha}
\left\{
\begin{aligned}
    & t^{(|\alpha|-r)_+}|f|_{C^r_*;V} & \quad |\alpha|\neq r \\
    & \log(1+t)|f|_{C^r_*;V} & \quad |\alpha|=r
\end{aligned}
\right.
$$
and $\left|f-\phi_t\big(|\nabla|\big)f\right|_{L^\infty;V}\lesssim t^{-r}|f|_{C^r_*;V}$ for $t\geq1$. Results for Zygmund spaces of index $\leq0$ in Remark \ref{Zygmund<0} can be generalized to vector-valued functions similarly. All the implicit constants above \emph{do not depend on} $V$.
\end{remark}

It is quite legitimate to refer the decomposition (\ref{LPCont}) or (\ref{LPDisc}) as \emph{geometric Littlewood-Paley decomposition}. In \cite{KR2006}, Klainerman and Rodnianski commented that, unlike Euclidean case, the geometric Littlewood-Paley decomposition on a general manifold does not exclude high-low interactions, although this drawback does not affect applications within their scope. As Berti-Procesi \cite{BP2011} pointed out, this is because that on a general compact manifold, the product of Laplacian eigenfunctions does not enjoy good spectral property (unlike the Fourier modes $e^{ix\xi}$), and this might cause transmission of energy between arbitrary modes for nonlinear dispersive systems. However, when working on a compact Lie group, this issue is resolved by spectral localization properties, Proposition \ref{SpecPrd0} and Corollary \ref{SpecPrd}.

\subsection{(1,1) Pseudo-differential Operator}
Even in the Euclidean case, the symbol class $\mathscr{S}^m_{1,1}(\mathbb{R}^n)$ exhibits exotic properties compared to smaller classes $\mathscr{S}^m_{1,\delta}(\mathbb{R}^n)$ with $\delta<1$, and ``must remain forbidden fruit" as commented by Stein \cite{SteMur1993} (Subsection 1.2., Chapter 7). We thus cannot expect a satisfactory calculus for general $\mathscr{S}^m_{1,1}(\Gp)$ symbols. But in analogy to the Euclidean case, a series of theorems and constructions still remain valid for $\mathscr{S}^m_{1,1}(\Gp)$.

\begin{theorem}[Stein]\label{SteinTheorem}
Suppose $a\in \mathscr{S}^m_{1,1}(\Gp)$. Then for $s>0$, $\Op(a)$ is a bounded linear operator from $H^{s+m}$ to $H^s$. The effective version reads
$$
\|\Op(a)f\|_{H^s}\leq C_{s;q}\mathbf{M}^m_{[s]+1,n+2;q}(a)\|f\|_{H^{s+m}},
$$
where $q$ is a strongly RT-admissible tuple, and the norm $\mathbf{M}^m_{k,l;q}$ is defined in (\ref{RhoDeltaNorm}).
\end{theorem}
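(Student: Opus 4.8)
We set out to reproduce the classical proof of Stein's theorem (H\"{o}rmander \cite{Hormander1997}, Ch.~9): dyadically decompose both $f$ and the symbol, estimate each building block, and resum, with Corollary~\ref{SpecPrd} replacing the additivity of frequencies when bookkeeping Fourier supports and Lemma~\ref{Weyl} replacing polynomial volume growth when counting. First I would reduce to $m=0$: with $b(x,\xi):=a(x,\xi)\langle\xi\rangle^{-m}$, the Leibniz rule \eqref{Leibniz} for $\Df_q$ together with Theorem~\ref{Multm} applied to the multiplier $\langle\xi\rangle^{-m}\,\mathrm{Id}$ gives $b\in\mathscr{S}^0_{1,1}$ with $\mathbf{M}^0_{[s]+1,n+2;q}(b)\lesssim_{m,q}\mathbf{M}^m_{[s]+1,n+2;q}(a)$; since $\Op(a)=\Op(b)\circ(1-\Delta)^{m/2}$ and $(1-\Delta)^{m/2}\colon H^{s+m}\to H^{s}$ is an isometry, it suffices to treat $a\in\mathscr{S}^0_{1,1}$.

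So assume $a\in\mathscr{S}^0_{1,1}$. Write $f=\sum_{j\ge-1}f_j$ via the discrete Littlewood--Paley decomposition \eqref{LPDisc} ($f_j$ of Fourier support $|\xi|\sim 2^j$), and apply \eqref{LPDisc} in $x$ to the $\mathrm{End}(\mathcal{H}_\xi)$-valued map $x\mapsto a(x,\xi)$ to obtain $a=\sum_{k\ge-1}a^{(k)}$ with $a^{(k)}(\cdot,\xi)$ of $x$-Fourier support $\lesssim 2^k$; Remark~\ref{VectZyg} guarantees all attendant constants are independent of $d_\xi$. Then $\Op(a)f=\sum_{j,k}w_{j,k}$, $w_{j,k}:=\Op(a^{(k)})f_j$, and the argument rests on two facts. (i) \emph{Fourier support}: in $\Op(a^{(k)})f_j(x)=\sum_\xi d_\xi\Tr(a^{(k)}(x,\xi)\Ft f(\xi)\xi(x))$ the $x$-dependence is, entrywise, a product of matrix coefficients from $\mathcal{M}_\eta$ with $|\eta|\lesssim 2^k$ and from $\mathcal{M}_\xi$ with $|\xi|\sim 2^j$, so the upper endpoint of Corollary~\ref{SpecPrd} (equivalently Proposition~\ref{SpecPrd0}) confines $w_{j,k}$ to Fourier support in $\mathfrak{S}[0,C2^{\max(j,k)}]$. (ii) \emph{$L^2$ bounds}: $\|w_{j,k}\|_{L^2}\lesssim B(j,k)\,\mathbf{M}^0_{[s]+1,n+2;q}(a)\,\|f_j\|_{L^2}$ with $B(j,k):=\min\!\big(1,\,2^{([s]+1)(j-k)}\big)$, and likewise $\|\Op(\phi_{<l}(|\nabla_x|)a)f_l\|_{L^2}\lesssim\mathbf{M}^0_{[s]+1,n+2;q}(a)\,\|f_l\|_{L^2}$ for the low-pass-in-$x$ truncation. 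Both come from Schur's test on the right-convolution kernel: for $w_{j,k}$ one has $q^\beta(z)\mathcal{K}_{j,k}(x,z)=\big(\Df_q^\beta[a^{(k)}(x,\cdot)\vartheta_j(|\cdot|)]\big)^{\vee}(z)$, and since $\Df_q$ commutes with the $x$-projection so that Corollary~\ref{LPZygCor} (vector-valued, uniformly in $d_\xi$) gives $\|\Df^\beta_q a^{(k)}(x,\xi)\|\lesssim B(j,k)\langle\xi\rangle^{-|\beta|}\mathbf{M}^0_{[s]+1,|\beta|;q}(a)$ on $|\xi|\sim 2^j$ — the $([s]+1)$-st $x$-derivative producing the gain $B(j,k)$ — summing over $|\xi|\sim 2^j$ with $\sum_{|\xi|\sim 2^j}d_\xi^2\lesssim 2^{jn}$ (Lemma~\ref{Weyl}) yields $\sup_x|\mathcal{K}_{j,k}(x,z)|\lesssim B(j,k)\mathbf{M}^0_{[s]+1,n+2;q}(a)\,2^{jn}(1+2^j\dist(z,e))^{-n-1}$, whence both Schur integrals are of that size.

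To conclude I would regroup by output scale, $\Op(a)f=\sum_{l\ge-1}W_l$ with $W_l=\sum_{\max(j,k)=l}w_{j,k}=\Op(a^{(l)})\big(\sum_{j\le l}f_j\big)+\Op\big(\phi_{<l}(|\nabla_x|)a\big)f_l$. By (i), $W_l$ has Fourier support in $\mathfrak{S}[0,C2^l]$; by (ii), bounding the first term term-by-term in $j$ (where $B(j,l)=2^{([s]+1)(j-l)}$ is summable over $j\le l$) and the second term as a single operator,
$$
\|W_l\|_{L^2}\lesssim\mathbf{M}^0_{[s]+1,n+2;q}(a)\Big(\sum_{j\le l}2^{([s]+1)(j-l)}\|f_j\|_{L^2}+\|f_l\|_{L^2}\Big).
$$
Setting $c_j:=2^{js}\|f_j\|_{L^2}$ (so $(c_j)\in\ell^2$ with $\|(c_j)\|_{\ell^2}\simeq\|f\|_{H^s}$ by Proposition~\ref{LPHs}), multiplication by $2^{ls}$ turns the first sum into $\sum_{j\le l}2^{-([s]+1-s)(l-j)}c_j$, a discrete convolution of $(c_j)$ with an $\ell^1$ kernel because $[s]+1-s>0$; Young's inequality then gives $\big(2^{ls}\|W_l\|_{L^2}\big)_l\in\ell^2$ with norm $\lesssim_s\mathbf{M}^0_{[s]+1,n+2;q}(a)\|f\|_{H^s}$. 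The last ingredient is the elementary summation lemma — if $s>0$ and each $W_l$ has Fourier support in $\mathfrak{S}[0,C2^l]$ then $\big\|\sum_l W_l\big\|_{H^s}^2\lesssim_{s,C}\sum_l 2^{2ls}\|W_l\|_{L^2}^2$, proved by writing the $\phi$- and $\psi_t$-blocks of $\sum_l W_l$ as sums of $(\text{block})W_l$ over $l$ above the block's scale and summing a geometric series, which is where positivity of $s$ is decisive — and this yields $\|\Op(a)f\|_{H^s}\lesssim_s\mathbf{M}^0_{[s]+1,n+2;q}(a)\|f\|_{H^s}$; undoing the reduction gives the stated effective estimate.

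I expect (ii) to be the main obstacle: extracting the uniform-in-$j$ operator norm of the dyadic pieces $\Op\big(a^{(k)}(\cdot,\xi)\vartheta_j(|\xi|)\big)$ with both the correct gain $B(j,k)$ and the correct seminorm $\mathbf{M}^0_{[s]+1,n+2;q}(a)$ requires carefully interlocking the difference-operator kernel representation, the $x$-Littlewood--Paley estimates of Section~\ref{3}, and the Weyl counting lemma, while keeping every constant independent of the growing dimension $d_\xi$ — exactly the uniformity emphasized in Remark~\ref{VectZyg}. The reduction to $m=0$, the trichotomy/regrouping, and the summation lemma are routine; the last is the unique step that uses $s>0$, consistent with the fact that $\Op\mathscr{S}^0_{1,1}(\Gp)$ need not be bounded on $L^2$.
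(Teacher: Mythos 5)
Your proposal is correct, but it follows a genuinely different route from the paper's. The paper decomposes only in the frequency variable, setting $a_j(x,\xi)=a(x,\xi)\vartheta_j(|\xi|)$, proves the derivative bounds $\|X^\alpha A_j f\|_{L^2}\lesssim \mathbf{M}^m_{|\alpha|,n+2;q}(a)\,2^{j(|\alpha|-s)}\sum_{|k-j|\le3}2^{k(s+m)}\|\vartheta_k(|\nabla|)f\|_{L^2}$ for all $|\alpha|\le[s]+1$ via the same weighted kernel/Schur argument you use, and then concludes with Lemma \ref{H^ss>0}, which converts these $X^\alpha$-bounds into $H^s$ membership by applying Bernstein's inequality to the output blocks --- so no information about the Fourier support of $A_jf$ is needed, and Corollary \ref{SpecPrd} is not invoked at all in this theorem (it is reserved for Theorem \ref{Stein'}). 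You instead perform the classical double (paraproduct-style) decomposition in both $x$ and $\xi$, track the output Fourier support of each piece $w_{j,k}$ via Corollary \ref{SpecPrd}, regroup by output scale, and close with the one-sided summation lemma for pieces supported in $\mathfrak{S}[0,C2^l]$; you also add a preliminary reduction to $m=0$ that the paper does not bother with. Both arguments land on the same seminorm $\mathbf{M}^m_{[s]+1,n+2;q}(a)$ and use $s>0$ in the same place (summing the geometric tail over output scales above the block scale). What the paper's organization buys is that the estimate (\ref{Aj}) and its proof are reused verbatim in Theorem \ref{Stein'}, with only the summation lemma upgraded from Lemma \ref{H^ss>0} to Lemma \ref{H^ssReal}; what yours buys is an explicit bookkeeping of where the output frequencies live, which makes the role of the spectral condition in the $s\le0$ extension transparent, at the cost of invoking the algebraic localization property earlier than strictly necessary. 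The one point worth flagging is cosmetic: the Leibniz rule (\ref{Leibniz}) is stated for the fundamental tuple $Q$, so your reduction to $m=0$ and your kernel estimates should be run with $q=Q$ and then transferred to a general strongly RT-admissible tuple by Fischer's Lemma \ref{Fisqq'Lem}, exactly as the paper does in (\ref{SteinIneq2}).
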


Before we proceed to the proof, let us first state several lemmas that will be used later.

\begin{lemma}[Taylor \cite{Taylor2000}, Lemma 4.2 of Chapter 2]\label{l2Seq}
For $s<1$, there holds
$$
\sum_{j\in\mathbb{Z}}2^{2sj}\left|\sum_{k:k<j}2^{k-j}a_k\right|^2
\leq C_s\sum_{k\in\mathbb{Z}}2^{2sk}|a_k|^2.
$$
For $s>0$, there holds
$$
\sum_{j\in\mathbb{Z}}2^{2sj}\left|\sum_{k:k\geq j}a_k\right|^2\leq C_s\sum_{k\in\mathbb{Z}}2^{2sk}|a_k|^2.
$$
\end{lemma}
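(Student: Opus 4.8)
The plan is to reduce each of the two inequalities to an $\ell^2(\mathbb{Z})$-boundedness statement for a Toeplitz-type operator with geometrically decaying entries, and then to invoke the Schur test (equivalently, Young's inequality for discrete convolutions). No part of this requires the Lie group structure; it is a purely sequential estimate, the discrete analogue of Hardy's inequality.

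For the first estimate I would substitute $b_k = 2^{sk} a_k$, so that the right-hand side becomes $C_s \sum_k |b_k|^2$. In these variables the inner sum, multiplied by $2^{sj}$, equals $\sum_{k<j} 2^{(s-1)(j-k)} b_k$, so the left-hand side is $\sum_j \big| \sum_{m \geq 1} 2^{(s-1)m} b_{j-m} \big|^2 = \|c * b\|_{\ell^2}^2$, where $c_m = 2^{(s-1)m}$ for $m \geq 1$ and $c_m = 0$ otherwise. Since $s < 1$ gives $s-1 < 0$, the norm $\|c\|_{\ell^1(\mathbb{Z})} = \sum_{m \geq 1} 2^{(s-1)m} = 2^{s-1}/(1 - 2^{s-1})$ is finite, and Young's inequality yields $\|c*b\|_{\ell^2} \leq \|c\|_{\ell^1}\|b\|_{\ell^2}$. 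This is the claim, with $C_s = \big(2^{s-1}/(1-2^{s-1})\big)^2$.

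The second estimate would be handled the same way. With the same substitution $b_k = 2^{sk} a_k$, the left-hand side becomes $\sum_j \big| \sum_{m \geq 0} 2^{-sm} b_{j+m} \big|^2$, the squared $\ell^2$-norm of the image of $(b_k)$ under the operator with matrix entries $T_{jk} = 2^{-s(k-j)}$ for $k \geq j$ and $0$ otherwise. Because $s > 0$, every row sum and every column sum of $(|T_{jk}|)$ equals $\sum_{m \geq 0} 2^{-sm} = 1/(1 - 2^{-s}) < \infty$, so the Schur test bounds the operator norm by $1/(1-2^{-s})$, which gives the inequality.

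I do not expect any genuine obstacle here. The one point worth flagging is that the hypotheses $s < 1$ in the first inequality and $s > 0$ in the second are exactly what is needed for the relevant geometric series to converge; both estimates can fail when these conditions are violated. Since the statement is quoted verbatim from \cite{Taylor2000}, one could alternatively just cite that reference and omit the argument; the short self-contained proof above is recorded only for completeness and to keep the dyadic estimates in Section~\ref{3} self-contained.
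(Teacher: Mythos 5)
Your proof is correct. The paper itself offers no proof of this lemma --- it is quoted directly from Taylor's book --- and your argument (substituting $b_k=2^{sk}a_k$ to turn each sum into a discrete convolution with a geometrically decaying, one-sided kernel, then applying Young's inequality or the Schur test) is exactly the standard one, with the hypotheses $s<1$ and $s>0$ entering precisely where you say they do, namely in the summability of the respective kernels.
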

As a corollary, we also have the following lemma:
\begin{lemma}\label{H^ss>0}
Let $s>0$, $l\geq s+1$ be an integer. Fix a basis $X_1,\cdots,X_n$ of $\mathfrak{g}$, and define $X^\alpha$ as in Proposition \ref{NormalOrder}. There is a constant $C=C(s,l)$ with the following properties. If $\{f_k\}_{k\geq0}$ is a sequence in $H^l(\Gp)$, such that for any multi-index $\alpha$ with $|\alpha|\leq l$, there holds
$$
\|X^\alpha f_k\|_{L^2}\leq 2^{k(|\alpha|-s)}c_k,\quad (c_k)\in \ell^2(\mathbb{N}),
$$
then in fact $\sum_k f_k\in H^s(\Gp)$, and 
$$
\left\|\sum_{k\geq0}f_k\right\|_{H^s}^2
\leq C\sum_{k\geq0}c_k^2.
$$
\end{lemma}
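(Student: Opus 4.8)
The plan is to bound the dyadic Littlewood--Paley pieces of $F:=\sum_{k\ge0}f_k$ and invoke the $H^s$ characterization of Proposition~\ref{LPHs}. Taking $\alpha=0$ in the hypothesis gives $\|f_k\|_{L^2}\le 2^{-ks}c_k$, so, since $s>0$, the series converges absolutely in $L^2$ and $F\in L^2(\Gp)\subset\mathcal D'(\Gp)$; once the final estimate is in hand, convergence of $\sum_k f_k$ in $H^s$ follows by applying it to the tails $\sum_{k>M}f_k$. Fix a Littlewood--Paley family as in Proposition~\ref{LPHs}, namely a non-zero $h\in C_0^\infty(0,\infty)$ with $h_{2^j}(\lambda)=h(2^{-j}\lambda)$, so that $\|g\|_{H^s}^2\simeq\|\phi(|\nabla|)g\|_{L^2}^2+\sum_{j\ge0}2^{2sj}\|h_{2^j}(|\nabla|)g\|_{L^2}^2$; note the symbol of $h_{2^j}(|\nabla|)$ is a scaling supported where $|\xi|\simeq2^j$.

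The one structural input needed is that, for an integer $l\ge0$, $\|f\|_{H^l}\lesssim_l\sum_{|\alpha|\le l}\|X^\alpha f\|_{L^2}$. This follows by iterating the identity $\|g\|_{H^{m+1}}^2=\|g\|_{H^m}^2+\sum_{i=1}^n\|X_ig\|_{H^m}^2$, valid because $X_i^*=-X_i$ with respect to the Haar measure and the Casimir $\Delta=\sum_iX_i^2$ is central in the enveloping algebra, hence commutes with each $X_i$ and with $(1-\Delta)^m$: this writes $\|f\|_{H^l}^2$ as a finite sum of terms $\|X_{i_1}\cdots X_{i_r}f\|_{L^2}^2$ with $r\le l$, and reordering each iterated derivative into the normal form $X^\alpha$, $|\alpha|\le l$, via the commutation relations yields the claim. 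Combined with the hypothesis (used for all $|\alpha|\le l$), this gives $\|f_k\|_{H^l}\lesssim_l2^{k(l-s)}c_k$ for $k\ge0$.

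Next, each $\|h_{2^j}(|\nabla|)f_k\|_{L^2}$ is estimated in two ways. Trivially, $\|h_{2^j}(|\nabla|)f_k\|_{L^2}\lesssim\|f_k\|_{L^2}\le2^{-ks}c_k$. On the other hand, $h_{2^j}(|\nabla|)(1-\Delta)^{-l/2}$ is a Fourier multiplier whose symbol is a scaling supported where $|\xi|\simeq2^j$ and bounded there by $\lesssim2^{-jl}$, so $\|h_{2^j}(|\nabla|)f_k\|_{L^2}\lesssim2^{-jl}\|f_k\|_{H^l}\lesssim2^{-jl}2^{k(l-s)}c_k$. Using the first bound when $k\ge j$ and the second when $k<j$ gives, for all $j,k\ge0$, $2^{sj}\|h_{2^j}(|\nabla|)f_k\|_{L^2}\lesssim2^{-\mu|j-k|}c_k$ with $\mu:=\min(s,\,l-s)>0$; here the assumption $l\ge s+1$ is used only through $l-s\ge1>0$. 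The low-frequency piece obeys $\|\phi(|\nabla|)f_k\|_{L^2}\lesssim\|f_k\|_{L^2}\le2^{-ks}c_k$.

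Summing over $k$: $2^{sj}\|h_{2^j}(|\nabla|)F\|_{L^2}\lesssim\sum_{k\ge0}2^{-\mu|j-k|}c_k=(\gamma*c)_j$ with $\gamma_m=2^{-\mu|m|}\in\ell^1(\mathbb Z)$, while $\|\phi(|\nabla|)F\|_{L^2}\lesssim\sum_k2^{-ks}c_k\lesssim_s\|(c_k)\|_{\ell^2}$. Young's convolution inequality $\|\gamma*c\|_{\ell^2}\le\|\gamma\|_{\ell^1}\|c\|_{\ell^2}$, together with Proposition~\ref{LPHs}, then yields $\|F\|_{H^s}^2\lesssim_{s,l}\sum_k c_k^2$, which is the assertion. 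Everything is routine; the only point requiring care is the two-sided frequency-localization bound $2^{sj}\|h_{2^j}(|\nabla|)f_k\|_{L^2}\lesssim2^{-\mu|j-k|}c_k$ — in particular that reordering left-invariant derivatives lets one extract the full order-$l$ decay $2^{-jl}$, which is where $l\ge s+1$ is needed — after which the summation is an instance of Lemma~\ref{l2Seq} (equivalently, discrete Young). Alternatively one may apply Lemma~\ref{l2Seq} directly: its second inequality with $a_k=2^{-ks}c_k$ handles the $k\ge j$ contribution, and the $k<j$ contribution is summed using $l-s\ge1$.
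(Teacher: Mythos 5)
Your proof is correct and follows essentially the same route as the paper's: split the frequency interaction into $k\ge j$ (trivial $L^2$ bound) and $k<j$ (Bernstein-type bound extracting $2^{-jl}$ from the full $H^l$ control), then sum via Lemma~\ref{l2Seq} or, equivalently, discrete Young. The only difference is that you spell out the equivalence $\|f\|_{H^l}\simeq\sum_{|\alpha|\le l}\|X^\alpha f\|_{L^2}$ via skew-adjointness of the $X_i$ and centrality of the Casimir, a step the paper leaves implicit.
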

\begin{proof}
We will use the discrete Littlewood-Paley decomposition (\ref{LPDisc}) to conduct the proof. If $j\leq k$, then by taking $\alpha=0$, obviously
$$
\|\vartheta_j\big(|\nabla|\big)f_k\|_{L^2}\leq\|f_k\|_{L^2}\leq 2^{-ks}c_k.
$$
If $j>k$, then by Bernstein's inequality (Proposition \ref{Bernstein}),
$$
\begin{aligned}
\|\vartheta_j\big(|\nabla|\big)f_k\|_{L^2}
&\leq C{2^{-lj}}\|\vartheta_j\big(|\nabla|\big)f_k\|_{H^l}\\
&\simeq C2^{-lj}\sum_{|\alpha|=l}\|X^\alpha f_k\|_{L^2}\\
&\leq C2^{-ks}2^{l(k-j)}c_k.
\end{aligned}
$$
Thus, for $f=\sum_kf_k$, we have
$$
2^{js}\|\vartheta_j\big(|\nabla|\big) f\|_{L^2}
\leq C\sum_{k:k<j}2^{(l-s)(k-j)}c_k
+\sum_{k:k\geq j}2^{(j-k)s}c_k.
$$
Using Lemma \ref{l2Seq}, we find that $\left\{2^{js}\|\vartheta_j\big(|\nabla|\big) f\|_{L^2}\right\}_{j\geq0}$ is an $\ell^2$ sequence. Thus $f\in H^s$.
\end{proof}

\begin{proof}[Proof of Stein's Theorem]
The proof here is very much like the one in \cite{Met2008}, Section 4.3. Writing $a_0(x,\xi)=a(x,\xi)\phi(|\xi|)$, consider the dyadic decomposition
$$
a(x,\xi)=a_0(x,\xi)+\sum_{j=1}^\infty a_j(x,\xi),
\quad\text{with}\quad 
a_j(x,\xi)=a(x,\xi)\vartheta_j(|\xi|).
$$
Define $A_j=\Op a_j(x,\xi)$. By formula (\ref{Op(a)})-(\ref{Ker-Conv}), it follows that $A_j$ is represented as a convergent integral operator
$$
A_ju(x)=\int_{\Gp}u(y)\mathcal{K}_j(x,y^{-1}x)dy,
$$
where the kernel
$$
\mathcal{K}_j(x,y)=\left(a_j(x,\cdot)\right)^\vee(y)
=\sum_{\xi\in\DuGp}d_\xi\Tr\left(a_j(x,\xi)\cdot\xi(y)\right).
$$
We fix $Q$ to be the fundamental tuple, as defined in (\ref{QFund}). For any multi-index $\beta$, by definition of RT difference operator,
$$
Q^\beta(y)\mathcal{K}_j(x,y)
=\sum_{\xi\in\DuGp}d_\xi\Tr\left((\Df_{Q^\beta,\xi}a_j)(x,\xi)\cdot\xi(y)\right).
$$
Using the Cauchy-Schwartz inequality for trace inner product (note that if $A$ is an $n\times n$ matrix then $\llbracket A\rrbracket^2\leq n\|A\|^2$), Weyl-type Lemma \ref{Weyl}, the Leibniz type property (\ref{Leibniz}) and Theorem \ref{Multm}, we obtain
\begin{equation}\label{SteinIneq1}
\begin{aligned}
\left|Q^\beta(y)\mathcal{K}_j(x,y)\right|
&\lesssim
\sum_{\xi\in \mathfrak{S}[0,2^{j+1}]}d_\xi^2\cdot\mathbf{M}^m_{0,|\beta|;Q}(a)\size[\xi]^{m-|\beta|}\\
&\lesssim 2^{jn}\cdot\mathbf{M}^m_{0,|\beta|;Q}(a)2^{j(m-|\beta|)}.
\end{aligned}
\end{equation}
Summing over $\beta$ whose whose length is $\leq L$ for $L\in\mathbb{N}$, using that the only common zero of the $Q_i$'s is the identity element, then Fischer's lemma \ref{Fisqq'Lem} for any other strongly RT-admissible tuple $q=(q_i)_{i=1}^M$, we find
\begin{equation}\label{SteinIneq2}
\begin{aligned}
\big(1+|2^j\dist(y,e)|^2\big)^{L}\cdot|\mathcal{K}_j(x,y)|^2
&\lesssim\left[\mathbf{M}^m_{0,L;Q}(a)\right]^22^{2jm+2jn}\\
&\simeq_q\left[\mathbf{M}^m_{0,L;q}(a)\right]^22^{2jm+2jn}.
\end{aligned}
\end{equation}
Hence we can estimate the weighted $L^2$ norm of the kernel:
$$
\begin{aligned}
\int_{\Gp}\big(1+|2^j\dist(x,y)|^2&\big)^{[n/2]+1}|\mathcal{K}_j(x,y^{-1}x)|^2dy\\
&\leq \int_{\Gp}\frac{\big(1+|2^j\dist(x,y)|^2\big)^{n+2}}{\big(1+|2^j\dist(x,y)|^2\big)^{n+1-[n/2]}}\cdot|\mathcal{K}_j(x,y^{-1}x)|^2dy\\
&\lesssim\left[\mathbf{M}^m_{0,n+2;q}(a)\right]^22^{2jm+jn}
\cdot 2^{jn}\int_{\Gp}\frac{dy}{\big(1+|2^j\dist(x,y)|^2\big)^{n+1-[n/2]}}\\
&\lesssim\left[\mathbf{M}^m_{0,n+2;q}(a)\right]^22^{2jm+jn}.
\end{aligned}
$$
Here we estimate the last integral by splitting it into two parts $\dist(x,y)<R$ and $\dist(x,y)\geq R$, where $R$ is the injective radius of $\Gp$; the first part can then be estimated as an Euclidean integral, while the second is just bounded by $2^{-nj}$, and they both absorbs the factor $2^{jn}$.

We then proceed to $X^\alpha\circ A_j$, whose integral kernel will be denoted as $\mathcal{K}_{j,\alpha}(x,y)$, which is just $\mathcal{K}_j(x,y)$ when $\alpha=0$. Using the quantization formulas (\ref{Op(a)})-(\ref{Op-Symbol}), we find that, with $\sigma_{X_i}(x,\xi)=\xi^*(x)\cdot(X_i\xi)(x)$ being the symbol of $X_i$ (which is of class $S^1_{1,0}(\Gp)$ by Fischer's theorem), the symbol of $X_i\circ A_j$ is 
$$
(X_{i;x}a_j)(x,\xi)+\sigma_{X_i}(x,\xi)\cdot a_j(x,\xi)
\in \mathscr{S}^{m+1}_{1,1}(\Gp).
$$
By induction on $|\alpha|$, we obtain 
$$
\int_{\Gp}\big(1+|2^j\dist(x,y)|^2\big)^{[n/2]+1}|\mathcal{K}_{j,\alpha}(x,y^{-1}x)|^2dy
\lesssim\left[\mathbf{M}^m_{|\alpha|,n+2;q}(a)\right]^22^{2j(m+|\alpha|)+jn}.
$$
Applying the Cauchy-Schwarz inequality to $\int \mathcal{K}_{j,\alpha}(x,y^{-1}x)u(y)dy$, we find that for any $u\in L^2$,
$$
\begin{aligned}
\int_{\Gp}|(X^\alpha \circ A_j)u(x)|^2dx
&\lesssim\left[\mathbf{M}^m_{|\alpha|,n+2;q}(a)\right]^22^{2j(m+|\alpha|)+jn}
\int_{\Gp}\int_{\Gp}\frac{|u(y)|^2}{\big(1+|2^j\dist(x,y)|^2\big)^{[n/2]+1}}dydx\\
&=C\left[\mathbf{M}^m_{|\alpha|,n+2;q}(a)\right]^22^{2j(m+|\alpha|)}\|u\|_{L^2}^2,
\end{aligned}
$$
or 
\begin{equation}\label{Aj}
\|X^\alpha\circ A_ju\|_{L^2}\lesssim_\alpha \mathbf{M}^m_{|\alpha|,n+2;q}(a) 2^{j(m+|\alpha|)}\|u\|_{L^2}.
\end{equation}

Now substitute $u$ by any Littlewood-Paley building block $\vartheta_k\big(|\nabla|\big) f$ of $f\in \mathcal{D}(\Gp)$ in (\ref{Aj}). Since the support of $a_j(x,\xi)$ with respect to $\xi$ is contained in $\mathfrak{S}[0,2^{j+1}]$, we have
$$
A_jf=\sum_{k:|k-j|\leq3}A_j\vartheta_k\big(|\nabla|\big)f.
$$
Applying (\ref{Aj}) with $u=\vartheta_k\big(|\nabla|\big) f$, we have
$$
\begin{aligned}
\|X^\alpha A_jf\|_{L^2}
&\lesssim_\alpha \mathbf{M}^m_{|\alpha|,n+2;q}(a) 2^{j(m+|\alpha|)}\sum_{k:|k-j|\leq3}\big\|\vartheta_k\big(|\nabla|\big) f\big\|_{L^2}\\
&\lesssim_\alpha \mathbf{M}^m_{|\alpha|,n+2;q}(a) 2^{j(|\alpha|-s)}\sum_{k:|k-j|\leq3}2^{k(s+m)}\big\|\vartheta_k\big(|\nabla|\big) f\big\|_{L^2}.
\end{aligned}
$$
Since $\sum_{k=1}^\infty2^{2k(s+m)}\big\|\vartheta_k\big(|\nabla|\big) f\big\|_{L^2}^2\simeq\|f\|_{H^{s+m}}^2$, we apply Lemma \ref{H^ss>0} to obtain
$$
\|\Op(a)f\|_{H^s}
=\left\|\sum_{j=0}^\infty A_jf\right\|_{H^s}
\leq C \mathbf{M}^m_{[s]+1,n+2;q}(a)\|f\|_{H^{s+m}}.
$$
\end{proof}
\begin{remark}
We point out that the proof generalizes without much modification to Zygmund spaces $C^r_*$ with $r>0$. Since this is not needed for our goal, we omit the details.
\end{remark}

We do not know whether the class $\Op\mathscr{S}^m_{1,1}$ given in Definition \ref{S^mrd}-\ref{S^mrdAdmi} coincides with any locally defined H\"{o}rmander class of operators or not. However, this is not a question of concern within our scope, since all we need is a calculus for a suitable subclass of symbols and operators, broad enough to cover operators arising from classical differential operators. 

\subsection{Spectral Condition}
The constructions and theorems so far rely mostly, if not completely, on the compact manifold structure of $\Gp$. From now on, we will exploit more algebraic properties of $\Gp$. In particular, the spectral localization property, namely Corollary \ref{SpecPrd}, will play an important role. Just as in the Euclidean case, we introduce a subclass of $\mathscr{S}^m_{1,1}(\Gp)$ that satisfies a spectral condition. The exposition in this subsection is parallel to that of \cite{Met2008}, Chapter 4, or \cite{Hormander1997}, Chapter 9-10. 

Before we pass to the definition of spectral condition, we need a lemma concerning difference operators acting on symbols with finite Fourier support. Being trivial for Euclidean or toric case, for non-commutative groups it relies on spectral localization.
\begin{lemma}\label{DiffVanish}
Suppose the symbol $a=a(\xi)$ has Fourier support contained in $\mathfrak{S}[\gamma_1,\gamma_2]$. Then with $Q$ being the fundamental tuple defined in (\ref{QFund}), there is a constant $C$ depending only on the algebraic structure of $\Gp$, such that for any $Q_i$, the difference $\Df_{Q_i}a$ vanishes for $|\xi|\notin[\gamma_1-C,\gamma_2+C]$.
\end{lemma}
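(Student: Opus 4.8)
The statement is a finite-Fourier-support stability property for a single difference operator $\Df_{Q_i}$, so the natural route is to unwind the definition of $\Df_{Q_i}$ in terms of the RT-difference operator $\Df_{Q_i}\Ft f(\xi)=\Ft{Q_if}(\xi)$, and then read the Fourier support of $Q_ia^\vee$ off the spectral localization corollary. First I would write $u:=a^\vee$, so that $u\in\bigoplus_{\xi\in\mathfrak{S}[\gamma_1,\gamma_2]}\mathcal{M}_\xi$ by the Peter-Weyl theorem (Theorem \ref{PeterWeyl}) and the definition of Fourier support. Each $Q_i$ is of the form $\tau_{jk}-\delta_{jk}$ for some fundamental representation $\tau=\Xi(\bm{\varpi}_\ell)$, hence $Q_i\in\mathcal{M}_{\mathbf{1}}\oplus\mathcal{M}_\tau$ where $\mathbf 1$ denotes the trivial representation; in particular $Q_i$ has Fourier support in $\mathfrak{S}[0,|\Xi(\bm{\varpi}_\ell)|]$, a \emph{bounded} set whose size depends only on the algebraic structure of $\Gp$.

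The key step is then to apply Corollary \ref{SpecPrd} to the product $Q_i\cdot u$. Decomposing $u=\sum u_2$ with $u_2\in\mathcal{M}_{\xi_2}$, $\xi_2\in\mathfrak{S}[\gamma_1,\gamma_2]$, and decomposing $Q_i=\sum u_1$ into its trivial and $\tau$-isotypic pieces (so $|\xi_1|\le\Lambda$ with $\Lambda:=\max_\ell|\Xi(\bm{\varpi}_\ell)|$), Corollary \ref{SpecPrd} tells us that each product $u_1u_2$ lies in $\bigoplus_\xi\mathcal{M}_\xi$ with $\xi\in\mathfrak S\big[c\big||\xi_1|-|\xi_2|\big|,\,|\xi_1|+|\xi_2|\big]$. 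Since $|\xi_1|\le\Lambda$, we have $|\xi_1|+|\xi_2|\le\gamma_2+\Lambda$ and $c\big||\xi_1|-|\xi_2|\big|\ge c|\xi_2|-c\Lambda\ge\gamma_1-\Lambda$ (using $c\le 1$ for the lower side, or more crudely just discarding the contraction factor). Hence $Q_iu=(Q_ia^\vee)$ has Fourier support in $\mathfrak S[\gamma_1-\Lambda,\gamma_2+\Lambda]$, and taking the Fourier transform back, $\Df_{Q_i}a(\xi)=\Ft{Q_iu}(\xi)$ vanishes for $|\xi|\notin[\gamma_1-C,\gamma_2+C]$ with $C:=\Lambda$ depending only on $\Gp$. (If one wants the sharp lower endpoint $c\gamma_1-C$ one keeps the factor $c$; the statement as given only asks for \emph{some} constant $C$, so absorbing $c$ is harmless — in fact $\gamma_1-C$ with $C$ large enough dominates $c\gamma_1-c\Lambda$.)

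The only genuine subtlety — and the place where non-commutativity actually enters — is exactly the invocation of Corollary \ref{SpecPrd}: on $\mathbb{R}^n$ or $\mathbb{T}^n$ multiplication by a fixed trigonometric polynomial shifts the Fourier support by a bounded amount for the trivial reason that $e^{i\xi_1 x}e^{i\xi_2 x}=e^{i(\xi_1+\xi_2)x}$, whereas here the product of two eigenfunctions is a genuine spread-out sum and one must control both endpoints of that spread. The lower endpoint is what forces us to use the quantitative $c\big||\xi_1|-|\xi_2|\big|$ bound rather than the trivial bound $|\xi|\ge 0$; without it the difference operator could in principle create arbitrarily low-frequency modes, which would be fatal for the applications to arbitrary-$H^s$ boundedness later. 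I expect writing the argument carefully to be short once Corollary \ref{SpecPrd} is in hand; the ``main obstacle'' is really conceptual — recognizing that $\Df_{Q_i}$ on the symbol side is \emph{exactly} multiplication by $Q_i$ on the kernel side, so that the spectral-localization machinery applies verbatim — rather than computational.
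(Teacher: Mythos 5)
Your reduction of $\Df_{Q_i}a$ to multiplication of the kernel $a^\vee$ by $Q_i$ is exactly the paper's starting point (formula (\ref{Diff_qa})), and your treatment of the upper endpoint $\gamma_2+C$ is correct. The gap is at the lower endpoint. Corollary \ref{SpecPrd} only guarantees that each product $u_1u_2$ has Fourier support in $\mathfrak{S}\big[c\big||\xi_1|-|\xi_2|\big|,\,|\xi_1|+|\xi_2|\big]$, so with $|\xi_1|\leq\Lambda$ and $|\xi_2|\geq\gamma_1$ all you obtain on the lower side is $c(\gamma_1-\Lambda)$. Your chain ``$c|\xi_2|-c\Lambda\ge\gamma_1-\Lambda$'' is false for $c<1$ and $\gamma_1>\Lambda$ (it reads $c(\gamma_1-\Lambda)\geq\gamma_1-\Lambda$), and the closing parenthetical inverts the logic: containment of the support in the \emph{larger} set $\mathfrak{S}[c\gamma_1-c\Lambda,\gamma_2+\Lambda]$ is a strictly \emph{weaker} conclusion than containment in $\mathfrak{S}[\gamma_1-C,\gamma_2+C]$, so ``absorbing $c$'' is not harmless. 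For large $\gamma_1$ your argument leaves open the possibility of nonzero modes anywhere in $[c\gamma_1-c\Lambda,\,\gamma_1-C)$, which is precisely what the lemma forbids.

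The statement of Corollary \ref{SpecPrd} is genuinely too lossy here; one must go back into its \emph{proof}. Because the small factor is a matrix entry of a fixed fundamental representation $\Xi(\bm{\varpi}_\ell)$ (or the trivial one), the Schur-orthogonality and dominance-order argument yields the two-sided constraint $\bm{J}(\eta)-\bm{\varpi}_\ell\preceq\bm{j}\preceq\bm{J}(\eta)+\bm{\varpi}_\ell$ on the highest weight $\bm{j}$ of any irreducible component of $Q_i(y)\eta(y)$. This confines $\bm{j}$ to a bounded parallelogram around $\bm{J}(\eta)$ in $\mathfrak{t}^*$, whence $\big||\bm{j}|-|\bm{J}(\eta)|\big|\leq C$ with $C\simeq\sum_i|\bm{\varpi}_i|$ --- an \emph{additive} localization on both sides, which is what the lemma asserts. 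The multiplicative constant $c$ in Corollary \ref{SpecPrd} arises only when converting coordinate-wise dominance inequalities among three \emph{arbitrary} large weights into norm inequalities; when one of the weights is fixed and small, that loss disappears. With this substitution, the rest of your argument goes through.
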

\begin{proof}
We use formula (\ref{Diff_qa}):
$$
(\Df_{Q_i}a)(\xi)=\int_\Gp Q_i(y)\left(\sum_{\eta\in \mathfrak{S}[\gamma_1,\gamma_2]} d_\eta\Tr\left(a(\eta)\cdot\eta(y)\right)\right)\xi^*(y)dy.
$$
By the proof of Corollary \ref{SpecPrd}, the product $Q_i(y)\eta(y)$ is the sum of matrix entries of irreducible representations whose highest weights $\bm{j}\in\mathscr{L}_+$ satisfy
$$
\bm{J}(\eta)-\bm{\varpi}_i\preceq \bm{j}\preceq \bm{J}(\eta)+\bm{\varpi}_i.
$$
Such $\bm{j}$'s must fall into a bounded parallelogram in $\mathfrak{t}^*$ that contains $\bm{J}(\eta)$, so there must hold 
$$
|\bm{J}(\eta)|-C\leq|\bm{j}|\leq |\bm{J}(\eta)|+C,
$$
where $C\simeq\sum_{i=1}^\varrho|\bm{\varpi}_i|$. 
Hence the Fourier support of 
$$
Q_i(y)\left(\sum_{\eta\in \mathfrak{S}[\gamma_1,\gamma_2]} d_\eta\Tr\left(a(\eta)\cdot\eta(y)\right)\right)
$$
is contained in $\mathfrak{S}\big[\gamma_1-C,\gamma_2+C\big]$, with $C\simeq\sum_{i=1}^\varrho|\bm{\varpi}_i|$.
\end{proof}

We turn to the definition of spectral condition. Since no natural ``addition" is available on $\DuGp$, we restrict ourselves to a narrower definition of spectral condition:
\begin{definition}
Fix $\delta>0$. The subclass $\Sigma_\delta^m(\Gp)\subset \mathscr{S}^m_{1,1}(\Gp)$ consists of all symbols $a\in \mathscr{S}^m_{1,1}(\Gp)$ such that the partial Fourier transform of matrix entries of $a(x,\xi)$ satisfies
$$
\Ft{a_{ij}}(\eta,\xi)
:=\int_{\Gp}a_{ij}(x,\xi)\eta^*(x)dx=0
\quad\mathrm{if}\quad|\eta|\geq \delta\size[\xi],
\quad i,j=1,\cdots,d_\xi,
$$
when $\size[\xi]$ is large enough. This is called the spectral condition with parameter $\delta$.
\end{definition}

With a little abuse of notation, we may write 
$$
\Ft{a}(\eta,\xi)= \big[\Ft{a_{ij}}(\eta,\xi)\big]_{i,j=1}^{d_\xi}\in\mathrm{End}(\Hh[\eta]\otimes\Hh[\xi]),
$$
and abbreviate the spectral condition as $\Ft{a}(\eta,\xi)=0$ for $|\eta|\geq \delta\size[\xi]$. Since the symbol $a$ is smooth in $x$, the spectral condition is only important for high modes, i.e. when $\size[\xi]$ is large: if $a(x,\xi)$ vanishes for all but finitely many $\xi\in\DuGp$, then in fact $a\in \mathscr{S}^{-\infty}$, so the ``low frequency part of $a(x,\xi)$" does not cause any problem in regularity. This is why the spectral condition is required only for \emph{sufficiently large $\size[\xi]$.}

We list some basic properties of $\Sigma^m_\delta(\Gp)$ that will be used repeatedly.

\begin{lemma}\label{Sigmadelta}
(1) If $a\in\Sigma^m_\delta(\Gp)$, $b\in\Sigma^{m'}_{\delta'}(\Gp)$, then $a\cdot b\in\Sigma^{m+m'}_{\delta+\delta'}(\Gp)$.

(2) If $a\in\Sigma^m_\delta(\Gp)$, $X\in\mathfrak{g}$ is a left-invariant vector field, then the symbol of $X\circ\Op(a)$ is of order $m+1$, and still satisfies a spectral condition with parameter $\delta$.

(3) Suppose $a\in\Sigma^m_\delta$. Let $Q$ be the fundamental tuple as defined in (\ref{QFund}). Then given any multi-index $\alpha$, there is a constant $C_\alpha$ depending only on $\alpha$ and the algebraic structure of $\Gp$, such that the symbol $\Df_Q^\alpha a\in \mathscr{S}^{m-|\alpha|}_{1,1}$, and satisfies the spectral condition with parameter $C_\alpha\delta$.
\end{lemma}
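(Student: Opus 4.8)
The plan is to handle the three assertions separately, in each case splitting the claim into its \emph{symbol-class part} (membership in an appropriate $\mathscr{S}^{\bullet}_{1,1}(\Gp)$, which is pure Leibniz bookkeeping) and its \emph{spectral part} (the partial Fourier support condition in $x$), and to feed the latter into Proposition \ref{SpecPrd0} and Corollary \ref{SpecPrd}. For (1), membership $a\cdot b\in\mathscr{S}^{m+m'}_{1,1}(\Gp)$ comes from the ordinary Leibniz rule for $X^\alpha_x(ab)$ together with the iterated Leibniz-type identity displayed after (\ref{Leibniz}), which expands $\Df^{\boldsymbol{\tau}}(ab)$ into a finite sum of terms $\Df^{\boldsymbol{\sigma}}a\cdot\Df^{\boldsymbol{\sigma}'}b$ with $|\boldsymbol{\sigma}|+|\boldsymbol{\sigma}'|\ge|\boldsymbol{\tau}|$, each being $O(\size[\xi]^{(m+m')-|\boldsymbol{\tau}|})$. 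For the spectral part, fix $\xi$ with $\size[\xi]$ beyond the thresholds of both $a$ and $b$ and read $(ab)_{ij}(x,\xi)=\sum_{k}a_{ik}(x,\xi)b_{kj}(x,\xi)$ as a function of $x$; by hypothesis $a_{ik}(\cdot,\xi)\in\bigoplus_{|\eta_1|\le\delta\size[\xi]}\mathcal{M}_{\eta_1}$ and $b_{kj}(\cdot,\xi)\in\bigoplus_{|\eta_2|\le\delta'\size[\xi]}\mathcal{M}_{\eta_2}$, so by Corollary \ref{SpecPrd} the pointwise product of an element of $\mathcal{M}_{\eta_1}$ and one of $\mathcal{M}_{\eta_2}$ lies in $\bigoplus_{|\eta|\le|\eta_1|+|\eta_2|}\mathcal{M}_{\eta}$; hence $(ab)_{ij}(\cdot,\xi)\in\bigoplus_{|\eta|\le(\delta+\delta')\size[\xi]}\mathcal{M}_{\eta}$, which is exactly the spectral condition with parameter $\delta+\delta'$.

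For (2), I would start from the composition formula already used in the proof of Theorem \ref{SteinTheorem}: the symbol of $X\circ\Op(a)$ is $(X_xa)(x,\xi)+\sigma_X(x,\xi)\cdot a(x,\xi)$, where $\sigma_X(x,\xi)=\xi^*(x)(X\xi)(x)\in\mathscr{S}^{1}_{1,0}(\Gp)$ by Fischer's theorem. Then $X_xa\in\mathscr{S}^{m+1}_{1,1}$, since one $x$-derivative raises the order by $\delta=1$ in type $(1,1)$, and $\sigma_X\cdot a\in\mathscr{S}^{m+1}_{1,1}$ by the same Leibniz computation as in (1), so the symbol is of order $m+1$. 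For the spectral part the point is that $X$ being \emph{left}-invariant gives $(X\xi)(x)=\xi(x)\,d\xi(X)$, hence $\sigma_X(x,\xi)=d\xi(X)$ is independent of $x$; therefore $\sigma_X\cdot a$ has exactly the same partial-$x$-Fourier support (in the variable $\eta$) as $a$. Likewise $X_x$ cannot enlarge the $x$-Fourier support, since for matrix-valued $g$ one has $\widehat{X_xg}(\eta)=\big(d\eta(X)\otimes\I[\xi]\big)\widehat{g}(\eta)$. Thus the symbol of $X\circ\Op(a)$ still satisfies the spectral condition with the \emph{same} parameter $\delta$.

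For (3), the symbol-class statement $\Df_Q^{\alpha}a\in\mathscr{S}^{m-|\alpha|}_{1,1}(\Gp)$ is immediate from Definition \ref{S^mrdAdmi} and Fischer's identification $\mathscr{S}^{m}_{1,1}(\Gp)=\mathscr{S}^{m}_{1,1}(\Gp;\Df_Q)$: since $X^\gamma_x$ commutes with $\Df_{Q,\xi}$ and $\Df^{\beta}_Q\Df^{\alpha}_Q=\Df^{\alpha+\beta}_Q$, one gets $\|X^\gamma_x\Df^\beta_{Q,\xi}(\Df^\alpha_Qa)\|\lesssim\size[\xi]^{m+|\gamma|-|\alpha|-|\beta|}$. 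For the spectral part I would induct on $|\alpha|$ and treat a single $\Df_{Q_i}$ with $Q_i=\tau_{ab}-\delta_{ab}$ for a fundamental representation $\tau$. Decomposing $\tau\otimes\xi\cong\bigoplus_{l}\zeta_l$ and using $\I[\tau]\otimes\xi\cong\xi^{\oplus d_\tau}$, the value $(\Df_{Q_i,\xi}a)(x,\xi)$ is, entrywise, a fixed $x$-independent linear combination of entries of the matrices $a(x,\zeta_l)$ and $a(x,\xi)$ — the coefficients being entries of the intertwiner realizing $\tau\otimes\xi\cong\bigoplus_l\zeta_l$ — so its partial-$x$-Fourier support is contained in the union of those of $a(\cdot,\zeta_l)$ and $a(\cdot,\xi)$. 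By Corollary \ref{SpecPrd} applied to $\mathcal{M}_\tau\cdot\mathcal{M}_\xi$ one has $|\zeta_l|\le|\tau|+|\xi|$ and $|\zeta_l|\ge c\big||\tau|-|\xi|\big|$, so $\size[\zeta_l]\le\size[\xi]+C$ with $C$ depending only on $\max_{\tau}|\tau|$, hence only on $\Gp$, and once $\size[\xi]$ is large all the $\size[\zeta_l]$ exceed the threshold of $a$. The spectral condition for each $a(\cdot,\zeta_l)$ then confines its $x$-Fourier modes to $\mathfrak{S}[0,\delta\size[\zeta_l]]\subseteq\mathfrak{S}\big[0,\delta(\size[\xi]+C)\big]\subseteq\mathfrak{S}[0,2\delta\size[\xi]]$, so $\Df_{Q_i}a$ obeys the spectral condition with parameter $2\delta$; iterating, $\Df^\alpha_Q a$ obeys it with $C_\alpha=2^{|\alpha|}$ (indeed any $C_\alpha>1$ works after enlarging the threshold).

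I expect the main obstacle to be the spectral part of (3): one has to push a condition that lives on the group variable $x$ through a difference operator acting on the dual variable $\xi$, and the only reason the spectral parameter stays comparable to $\delta$ is the spectral-localization bound $|\zeta_l|\le|\tau|+|\xi|$ on the irreducible constituents of $\tau\otimes\xi$ coming from Proposition \ref{SpecPrd0} and Corollary \ref{SpecPrd}. Part (1) rests on the same input in a more transparent form, and part (2) is routine once the composition formula and the $x$-independence of $\sigma_X$ are noted; all the order bookkeeping reduces to the standard Leibniz calculus for the classes $\mathscr{S}^{\bullet}_{1,1}(\Gp)$.
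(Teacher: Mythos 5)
Your proposal is correct, and for parts (1) and (2) it is essentially the paper's own argument: (1) is read off from Corollary \ref{SpecPrd} applied entrywise, and (2) rests on the composition formula $(X_xa)+\sigma_X\cdot a$ together with the $x$-independence of $\sigma_X$ (the paper derives the latter from left-invariance of the convolution kernel, you from $(X\xi)(x)=\xi(x)\,d\xi(X)$ — same fact).

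For part (3) you take a mildly different but equivalent route. The paper works on the ``kernel side'': it writes $\Df_{Q}a$ via formula (\ref{Diff_qa}), takes the partial Fourier transform in $x$, uses the spectral condition to restrict the sum to $\zeta$ with $\size[\zeta]\ge\delta^{-1}|\eta|$, and then kills the surviving terms by Schur orthogonality, since $Q(y)\xi^*(y)$ has Fourier support in $\mathfrak{S}[|\xi|-C,|\xi|+C]$ (Lemma \ref{DiffVanish}); the conclusion is by contradiction for $|\eta|\ge2\delta\size[\xi]$. You instead work on the ``representation side'': identifying $\Df_{Q_{ij}}a$ with matrix entries of the intrinsic $\Df_\tau a$, you expand $a(x,\tau\otimes\xi)$ through the decomposition $\tau\otimes\xi\cong\bigoplus_l\zeta_l$ into an $x$-independent combination of the $a(x,\zeta_l)$, and read the $x$-Fourier support directly from $\size[\zeta_l]\le\size[\xi]+C$. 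Both arguments hinge on exactly the same localization input (the constituents of a fundamental representation tensored with $\xi$ sit within a bounded distance of $\bm J(\xi)$), so nothing is gained or lost; your version is perhaps slightly more transparent about where the constant $C_\alpha$ comes from, while the paper's avoids invoking the identification between RT and intrinsic difference operators. One small caution: when you say ``once $\size[\xi]$ is large all the $\size[\zeta_l]$ exceed the threshold of $a$,'' you are implicitly using the lower bound $|\zeta_l|\ge c\big||\tau|-|\xi|\big|$ from Corollary \ref{SpecPrd}; it is worth making that explicit, since without it a low-lying constituent $\zeta_l$ would escape the spectral condition. With that noted, the argument is complete.
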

\begin{proof}
Part (1) is a direct consequence of the spectral localization property, namely Corollary \ref{SpecPrd}. For part (2), write $A=\Op(a)$. For a left-invariant vector field $X\in\mathfrak{g}$, the symbol of $X\circ A$ reads
\begin{equation}\label{SymbXaj}
(X_{x}a)(x,\xi)+\sigma_X(x,\xi)\cdot a(x,\xi)\in \mathscr{S}^{m+1}_{1,1},
\end{equation}
as seen in the proof of Stein's theorem. By the right convolution kernel formula (\ref{Symbol-Ker})-(\ref{Ker-Conv}), we find that the right convolution kernel $\mathcal{K}(x,y)$ of $X$ is invariant under left translation with respect to $x$, so in fact $\mathcal{K}(x,y)\equiv \mathcal{K}(e,y)$, and the symbol $\sigma_X(x,\xi)$ is thus independent from $x$. As a result, it is legitimate to write it as $\sigma_X(\xi)\in\mathrm{End}(\Hh[\xi])$, and refer it as a \emph{non-commutative Fourier multiplier}. Consequently, the Fourier transform of (\ref{SymbXaj}) with respect to the $x$ variable evaluated at $\eta\in\DuGp$ is 
$$
\sigma_X(\eta)\cdot\Ft{a}(\eta,\xi)+\left(\sigma_X(\xi)\cdot a\right)^{\wedge}(\eta,\xi),
$$
which still vanishes for $|\eta|\geq\delta\size[\xi]$.

As for part (3), we just have to prove the claim for $|\alpha|=1$ as the rest follows by induction. Theorem \ref{Fischer} ensures that $\Df_Q a\in \mathscr{S}^{m-1}_{1,1}$. Here with a little abuse of notation we use $Q$ to denote any component of the fundamental tuple. To verify the spectral condition, we employ formula (\ref{Diff_qa}):
$$
(\Df_Qa)(x,\xi)=\int_\Gp Q(y)\left(\sum_{\zeta\in\DuGp} d_\zeta\sum_{i,j=1}^{d_\zeta}a_{ij}(x,\zeta)\zeta_{ij}(y)\right)\xi^*(y)dy.
$$
Taking Fourier transform with respect to $x$, evaluating at $\eta\in\DuGp$ and using the spectral condition, we obtain
\begin{equation}\label{Lem3.5Temp}
\Ft{\Df_Qa}(\eta,\xi)
=\sum_{\zeta\in\DuGp:\size[\zeta]\geq \delta^{-1}|\eta|}d_\zeta \sum_{i,j=1}^{d_\zeta}\Ft{a_{ij}}(\eta,\zeta)\otimes
\int_\Gp Q(y)\zeta_{ij}(y)\xi^*(y)dy
\in\mathrm{End}(\Hh[\eta]\otimes\Hh[\xi]).
\end{equation}
From the proof of Lemma \ref{DiffVanish}, we find that $Q(y)\xi^*(y)$ has Fourier support contained in $\mathfrak{S}\big[|\xi|-C,|\xi|+C\big]$, where $C$ depends only on the algebraic property of $\Gp$.

Now suppose $|\eta|\geq 2\delta\size[\xi]$. Then for each summand in (\ref{Lem3.5Temp}), the representation $\zeta$ satisfies $|\zeta|\geq 2\size[\xi]$. By Schur orthogonality, matrix entries of $\zeta$ is thus $L^2$ orthogonal to matrix entries of $\bar Q(y)\xi(y)$. This shows that for $|\eta|\geq 2\delta\size[\xi]$, each summand in (\ref{Lem3.5Temp}) vanishes. 
\end{proof}

We are now at the place to state the fundamental theorem for para-differential calculus: (1,1) symbols satisfying a suitable spectral condition give rise to operators bounded in all Sobolev spaces. 

\begin{theorem}\label{Stein'}
Let $\delta\in(0,1/2)$, $m\in\mathbb{R}$. Suppose $a\in \Sigma_\delta^m(\Gp)$. Then the operator $\Op(a)$ maps $H^{s+m}(\Gp)$ to $H^{s}(\Gp)$ continuously for all $s\in\mathbb{R}$. Quantitatively, with $s_+=0$ for $s\leq0$ and $s_+=s$ for $s>0$, for any strongly RT-admissible tuple $q$,
$$
\|\Op(a)f\|_{H^s}\leq C_{s,\delta;q}\mathbf{M}^m_{[s_+]+1,n+2;q}(a)\|f\|_{H^{s+m}}.
$$
\end{theorem}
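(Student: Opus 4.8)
We outline the argument; the plan is to treat $s>0$ and $s\le 0$ separately. For $s>0$ there is nothing to add: since $\Sigma^m_\delta(\Gp)\subset\mathscr{S}^m_{1,1}(\Gp)$, Stein's Theorem \ref{SteinTheorem} already gives $\|\Op(a)f\|_{H^s}\le C_{s;q}\mathbf{M}^m_{[s]+1,n+2;q}(a)\|f\|_{H^{s+m}}$ with $[s]+1=[s_+]+1$, and the spectral condition plays no role there. So we may assume $s\le 0$, where the inductive step of Lemma \ref{H^ss>0}, hence the proof of Theorem \ref{SteinTheorem}, breaks down. As in that proof, decompose $a=a_0+\sum_{j\ge1}a_j$ with $a_0(x,\xi)=a(x,\xi)\phi(|\xi|)$, $a_j(x,\xi)=a(x,\xi)\vartheta_j(|\xi|)$, and set $A_j:=\Op(a_j)$.

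The decisive point --- and the one place where both the spectral condition and the \emph{lower} spectral bound of Corollary \ref{SpecPrd} are used --- is that $A_jf$ is frequency-localized to a fixed dilate of a dyadic annulus. Fix $\xi$ with $\vartheta_j(|\xi|)\ne 0$; once $\langle\xi\rangle$ is large, which happens for all such $\xi$ as soon as $j\ge j_0$ (a threshold depending on $a$, $\delta$, $\Gp$), the spectral condition forces each matrix entry $a_{ab}(\cdot,\xi)$ to be a \emph{finite} superposition of matrix entries of representations $\eta$ with $|\eta|<\delta\langle\xi\rangle$. Since the matrix entries of $\xi$ lie in $\mathcal{M}_\xi$, Corollary \ref{SpecPrd} (cf.\ Lemma \ref{DiffVanish}) shows that every entry of $x\mapsto a(x,\xi)\,\Ft f(\xi)\,\xi(x)$ has Fourier support contained in
\[
\bigcup_{|\eta|<\delta\langle\xi\rangle}\mathfrak{S}\bigl[c\bigl||\eta|-|\xi|\bigr|,\ |\eta|+|\xi|\bigr]
\ \subseteq\ \mathfrak{S}\bigl[c\bigl(|\xi|-\delta\langle\xi\rangle\bigr),\ |\xi|+\delta\langle\xi\rangle\bigr].
\]
Since $\delta<1/2<1$ and $|\xi|\simeq\langle\xi\rangle\simeq 2^j$ for $\xi\in\operatorname{supp}\vartheta_j$ with $j$ large (with constants independent of $j$), there are constants $0<c_1<c_2$ depending only on $\delta$ and $\Gp$ such that $A_jf$ has Fourier support in $\mathfrak{S}[c_1 2^j,c_2 2^j]$ for every $j\ge j_0$; enlarging $j_0$ if necessary we may also assume $c_1 2^{j_0}>2$. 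The remaining low part $a_0+\sum_{1\le j<j_0}a_j$ is supported on finitely many representations, hence lies in $\mathscr{S}^{-\infty}$, so the corresponding operator maps $H^{s+m}$ to $H^s$ boundedly (Fischer's $H^s$-boundedness) and is harmless. Finally, since $a_j(x,\xi)$ vanishes off $|\xi|\in\operatorname{supp}\vartheta_j$ we have $A_j=A_j\circ\tilde\vartheta_j(|\nabla|)$ for a fattened cut-off $\tilde\vartheta_j$ equal to $1$ there, so taking $\alpha=0$ in estimate (\ref{Aj}) from the proof of Stein's theorem gives
\[
\|A_jf\|_{L^2}\ \lesssim\ \mathbf{M}^m_{0,n+2;q}(a)\,2^{jm}\,\bigl\|\tilde\vartheta_j(|\nabla|)f\bigr\|_{L^2}.
\]

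It remains to reassemble, with the annular localization playing the role that Lemma \ref{H^ss>0} played for $s>0$. Put $R=\sum_{j\ge j_0}A_j$; for each $k\ge0$ only the $j$ with $|j-k|\le C_0$ (a fixed constant depending on $c_1,c_2$) contribute to $\vartheta_k(|\nabla|)Rf$, and $\phi(|\nabla|)Rf=0$ since $c_1 2^{j_0}>2$. Set $c_j:=2^{j(s+m)}\|\tilde\vartheta_j(|\nabla|)f\|_{L^2}$; by Proposition \ref{LPHs}, $(c_j)\in\ell^2$ with $\|(c_j)\|_{\ell^2}\lesssim\|f\|_{H^{s+m}}$, and the last display yields $\|A_jf\|_{L^2}\lesssim\mathbf{M}^m_{0,n+2;q}(a)\,2^{-js}c_j$. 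Hence
\[
2^{ks}\bigl\|\vartheta_k(|\nabla|)Rf\bigr\|_{L^2}
\ \lesssim\ \mathbf{M}^m_{0,n+2;q}(a)\sum_{|j-k|\le C_0}2^{(k-j)s}c_j
\ \lesssim_{s,C_0}\ \mathbf{M}^m_{0,n+2;q}(a)\sum_{|j-k|\le C_0}c_j,
\]
whose right-hand side is an $\ell^2$ sequence in $k$ of norm $\lesssim_{C_0}\|(c_j)\|_{\ell^2}$; squaring, summing over $k$, and adding the harmless low part gives $\|\Op(a)f\|_{H^s}\le C_{s,\delta;q}\mathbf{M}^m_{0,n+2;q}(a)\|f\|_{H^{s+m}}\le C_{s,\delta;q}\mathbf{M}^m_{[s_+]+1,n+2;q}(a)\|f\|_{H^{s+m}}$, the seminorms $\mathbf{M}^m_{k,l;q}$ being nondecreasing in $k$. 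The main obstacle is the middle step: converting ``spectral condition $+$ Corollary \ref{SpecPrd}'' into the clean annular localization of $\Op(a_j)f$ --- in particular pinning down $j_0$, which comes from the proviso that the spectral condition holds only for $\langle\xi\rangle$ large and from the gap between $|\xi|$ and $\langle\xi\rangle$, and checking that $\delta<1$ is exactly what keeps the inner radius $c_1$ strictly positive. This is precisely where the sharp lower bound $c\bigl||\xi_1|-|\xi_2|\bigr|$ of Corollary \ref{SpecPrd}, rather than Hani's general localization, is indispensable.
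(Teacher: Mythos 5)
Your proposal is correct, and its skeleton --- the dyadic decomposition $a=\sum_j a_j$, the $L^2$ bounds on $A_j=\Op(a_j)$ inherited from the kernel estimates in the proof of Theorem \ref{SteinTheorem}, and the use of the spectral condition together with Corollary \ref{SpecPrd} to localize the frequency support of $A_jf$ --- coincides with the paper's. The divergence is in the reassembly step. The paper keeps only the \emph{lower} frequency bound $\widehat{A_jf}(\eta)=0$ for $|\eta|\le c(1/2-\delta)2^j$ and combines it with the derivative estimates $\|X^\alpha A_jf\|_{L^2}\lesssim 2^{j(|\alpha|-s)}c_j$ for $|\alpha|\le[s_+]+1$ via the one-sided summation Lemma \ref{H^ssReal}; this runs uniformly in $s\in\mathbb{R}$ and is the source of the index $[s_+]+1$ in the seminorm. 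You instead observe that the same localization argument yields the two-sided annular bound $\mathrm{supp}\,\widehat{A_jf}\subset\mathfrak{S}[c_1 2^j,c_2 2^j]$ (the upper bound is indeed free, since $a_j$ is supported on $|\xi|\lesssim 2^j$ and the spectral condition caps the $x$-frequencies by $\delta\langle\xi\rangle$), after which a bare almost-orthogonality summation settles $s\le 0$ with no $x$-derivatives of the symbol at all, i.e. with $\mathbf{M}^m_{0,n+2;q}(a)$, while $s>0$ is delegated to Theorem \ref{SteinTheorem}. Both routes are sound; yours avoids Lemma \ref{H^ssReal} and gives a marginally sharper seminorm for $s\le 0$, at the cost of a case split and of tracking the threshold $j_0$ below which the spectral condition is not assumed --- which you handle correctly by absorbing the finitely many low modes into $\mathscr{S}^{-\infty}$, exactly as the paper's remark following the definition of $\Sigma^m_\delta$ licenses.
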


Before we proceed to the proof, we still need a summation Lemma in Sobolev spaces.

\begin{lemma}\label{H^ssReal}
Let $s\in\mathbb{R}$, $l>s$ be a positive integer, and $\gamma>0$. Fix a basis $X_1,\cdots,X_n$ of $\mathfrak{g}$, and define $X^\alpha$ as in Proposition \ref{NormalOrder}. There is a constant $C=C(s,l,\gamma)$ with the following properties. If $\{f_k\}_{k\geq0}$ is a sequence in $H^l(\Gp)$, such that for any multi-index $\alpha$ with $|\alpha|\leq l$, there holds
$$
\|X^\alpha f_k\|_{L^2}\leq 2^{k(|\alpha|-s)}c_k,\quad (c_k)\in \ell^2(\mathbb{N}),
$$
and furthermore $\Ft{f}_k$ is supported in $\{\xi:\size[\xi]\geq\gamma 2^k\}$, then $\sum_k f_k\in H^s(\Gp)$ and 
$$
\left\|\sum_{k\geq0}f_k\right\|_{H^s}^2
\leq C\sum_{k\geq0}c_k^2.
$$
\end{lemma}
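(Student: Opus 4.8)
The plan is to follow the proof of Lemma~\ref{H^ss>0} almost verbatim, using the discrete Littlewood--Paley decomposition (\ref{LPDisc}) and the characterization of $H^s(\Gp)$ in Proposition~\ref{LPHs}, with one additional input: the Fourier support hypothesis will be used to remove the obstruction that appears when $s\le 0$. When $s>0$ the crude bound $\|\vartheta_j(|\nabla|)f_k\|_{L^2}\le\|f_k\|_{L^2}\le 2^{-ks}c_k$ already decays in $k$, but for $s\le 0$ it does not, and a priori the low-frequency tails of the $f_k$ could pile up and destroy $\ell^2$-summability in $j$; the support assumption is exactly what prevents this.

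\emph{Localization.} First I would record that, since $\vartheta_j(|\nabla|)$ retains only modes $\xi$ with $|\xi|\in[2^{j-1},2^{j+1}]$, hence $\langle\xi\rangle\le C2^{j}$ for $j\ge 0$, while $\widehat f_k$ is supported in $\{\xi:\langle\xi\rangle\ge\gamma 2^k\}$, the block $\vartheta_j(|\nabla|)f_k$ must vanish unless $\gamma 2^k\le C2^{j}$, i.e.\ unless $k\le j+N_0$ for a fixed integer $N_0=N_0(\gamma)$ (read off from the support of $\vartheta$ and $|\xi|\le\langle\xi\rangle\le|\xi|+1$); likewise $\phi(|\nabla|)f_k=0$ once $k>N_1=N_1(\gamma)$. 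Hence in $\vartheta_j(|\nabla|)f=\sum_{0\le k\le j+N_0}\vartheta_j(|\nabla|)f_k$ and $\phi(|\nabla|)f=\sum_{0\le k\le N_1}\phi(|\nabla|)f_k$ every surviving term has $k\le j+N_0$, so the dangerous regime $k\gg j$ never arises.

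\emph{Dyadic summation.} For the surviving blocks I would use the two estimates already appearing in Lemma~\ref{H^ss>0}: for $k<j$, the Bernstein inequality in the form $\|\vartheta_j(|\nabla|)g\|_{L^2}\lesssim 2^{-lj}\|g\|_{H^l}$ (Proposition~\ref{Bernstein}) together with the elliptic bound $\|f_k\|_{H^l}\lesssim\sum_{|\alpha|\le l}\|X^\alpha f_k\|_{L^2}$ and $k\ge 0$ give
$$
\|\vartheta_j(|\nabla|)f_k\|_{L^2}\lesssim 2^{-lj}\,2^{k(l-s)}c_k=2^{-ks}\,2^{l(k-j)}c_k,
$$
while for $j\le k\le j+N_0$ the crude bound gives $\|\vartheta_j(|\nabla|)f_k\|_{L^2}\le 2^{-ks}c_k$. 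Summing over the restricted range,
$$
2^{js}\|\vartheta_j(|\nabla|)f\|_{L^2}\lesssim\sum_{k<j}2^{(s-l)(j-k)}c_k+\sum_{j\le k\le j+N_0}2^{(j-k)s}c_k\le\sum_{k\ge 0}w_{j-k}\,c_k,
$$
where $w_m=2^{(s-l)m}$ for $m\ge 1$, $w_m\le C(s,\gamma)$ for $-N_0\le m\le 0$, and $w_m=0$ otherwise. Since $l>s$ this $w$ lies in $\ell^1(\mathbb Z)$ with norm controlled by $s,l,\gamma$, so by Young's inequality $\ell^1\ast\ell^2\hookrightarrow\ell^2$ (or Lemma~\ref{l2Seq}) the sequence $\{2^{js}\|\vartheta_j(|\nabla|)f\|_{L^2}\}_{j\ge0}$ lies in $\ell^2$ with norm $\lesssim_{s,l,\gamma}(\sum_k c_k^2)^{1/2}$. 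For the low-frequency piece only finitely many $k$ contribute: $\|\phi(|\nabla|)f\|_{L^2}\le\sum_{k\le N_1}\|f_k\|_{L^2}\le\sum_{k\le N_1}2^{-ks}c_k\lesssim_{s,\gamma}(\sum_k c_k^2)^{1/2}$. By Proposition~\ref{LPHs} this gives $\|f\|_{H^s}^2\le C\sum_k c_k^2$, and running the same estimates on the tails $\sum_{k>M}f_k$ shows the partial sums are Cauchy in $H^s(\Gp)$, so $\sum_k f_k$ converges there.

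The only genuinely new ingredient beyond Lemma~\ref{H^ss>0} is the localization step, and the one point requiring attention is the observation that it does exactly what is needed: without the support hypothesis the kernel weight $w_m=2^{ms}$ fails to be summable as $m\to-\infty$ when $s\le 0$, and the truncation $k\le j+N_0$ cuts off precisely that tail. Everything else is the routine dyadic bookkeeping already performed in the $s>0$ case.
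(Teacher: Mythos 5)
Your proposal is correct and follows essentially the same route as the paper: the support hypothesis forces $\vartheta_j(|\nabla|)f_k=0$ unless $k\le j+N(\gamma)$, which truncates the problematic sum over $k\ge j$ to a finite band, after which the dyadic estimates of Lemma \ref{H^ss>0} go through unchanged. Your packaging of the final summation as an $\ell^1*\ell^2$ Young inequality (rather than citing Lemma \ref{l2Seq} for the $k<j$ part and observing the $k\ge j$ part is a finite sum) and your explicit treatment of the low-frequency piece and of convergence are only cosmetic refinements of the paper's argument.
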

\begin{proof}
The proof is quite similar for that of Lemma \ref{H^ss>0}. The only difference is as follows: the spectral condition $\mathrm{supp}\Ft{f}_k\subset\{\xi:\size[\xi]\geq\gamma 2^k\}$ implies that for some integer $N\sim\log\gamma$, the Littlewood-Paley building block $\vartheta_j\big(|\nabla|\big)f_k$ vanishes for $j<k-N$. So for $f=\sum_kf_k$, we obtain similarly as in Lemma \ref{H^ss>0},
$$
2^{js}\|\vartheta_j\big(|\nabla|\big) f\|_{L^2}
\leq C\sum_{k:k<j}2^{(l-s)(k-j)}c_k
+\sum_{k:j\leq k<j+N}2^{(j-k)s}c_k.
$$
The first sum gives rise to terms of an $\ell^2$ sequence again by Lemma \ref{l2Seq}, while the second obviously gives rise to terms of an $\ell^2$ sequence. Thus again $\{2^{js}\big\|\vartheta_j\big(|\nabla|\big) f\big\|_{L^2}\}_{j\geq0}$ is an $\ell^2$ sequence. The rest of the proof is identical to that of Lemma \ref{H^ss>0}.
\end{proof}

\begin{proof}[Proof of Theorem \ref{Stein'}]
The proof is still very much similar to that of Theorem 4.3.4. in \cite{Met2008}, and proceeds basically parallelly as Stein's theorem. We still decompose $\Op(a)f$ into a dyadic sum $\sum_{j\geq0}A_jf$, where $A_j$ is the quantization of $a_j(x,\xi):=a(x,\xi)\vartheta_j(|\xi|)$. The estimate of operator norms for $X^\alpha\circ A_j$ is identical to that in the proof of Theorem \ref{SteinTheorem}. There are only some minor differences: since $\Ft{a}(\eta,\xi)=0$ for $|\eta|>\delta\size[\xi]$, by the quantization formula (\ref{Op(a)})-(\ref{Op-Symbol}) we have
$$
\Ft{A_jf}(\eta)=\int_{\Gp}\sum_{\xi\in \mathfrak{S}[2^{j-1},2^j]}d_\xi\Tr\left(\xi(x)a_j(x,\xi)\Ft{f}(\xi)\right)\eta^*(x)dx.
$$
By spectral localization property \ref{SpecPrd} and the spectral condition $\Ft{a_j}(\eta,\xi)=0$ for $|\eta|\geq\delta 2^j$ (since $|\xi|\leq 2^j$), we find that the integrand $\Tr\left(\xi(x)a_j(x,\xi)\Ft{f}(\xi)\right)$ has Fourier support (with respect to $x$) contained in 
$$
\left\{\eta\in\DuGp:|\eta|\geq c\left(\frac{1}{2}-\delta\right)2^j\right\},
$$
where $c\in(0,1)$ depends only on the algebraic structure of $\Gp$, i.e. $\Ft{A_jf}(\eta)=0$ for $|\eta|\leq c(1/2-\delta)2^j$. By Lemma \ref{Sigmadelta} and induction on $|\alpha|$, the symbol of $X^\alpha\circ a$ still satisfies the spectral condition with parameter $\delta$. To summarize, for any $f\in\mathcal{D}'(\Gp)$, the Fourier support of $X^\alpha A_jf$ is within $\{\eta\in\DuGp:|\eta|>c(1/2-\delta)2^j\}$. 

With exactly the same argument as in the proof of Theorem \ref{SteinTheorem}, the estimate
$$
\|X^\alpha A_jf\|
\lesssim_\alpha \mathbf{M}^m_{|\alpha|,n+2;q}(a) 2^{j(|\alpha|-s)}\sum_{k:|k-j|\leq3}2^{k(s+m)}\big\|\vartheta_k\big(|\nabla|\big) f\big\|_{L^2}
$$
is valid. We can then apply Lemma \ref{H^ssReal} to conclude.
\end{proof}
\begin{remark}\label{HorRegSym}
The proof for Theorem \ref{SteinTheorem} and \ref{Stein'} proceed with exactly the same idea as in the Euclidean case, since the product spectral localization property does not differ very much from the Euclidean case under dyadic decomposition. We thus expect that H\"{o}rmander's characterization of the class $\tilde\Psi^m_{1,1}(\mathbb{R}^n)$, namely (1,1) pseudo-differential operators that are continuous on all Sobolev spaces (see \cite{Hormander1988} or Chapter 9 of \cite{Hormander1997} for the details), admits a direct generalization to $\Gp$. But we content ourselves with operators satisfying the spectral condition, because this is what we actually need for para-differential calculus. We only write $\tilde{\mathscr{S}}^{m}_{1,1}(\Gp)$ for the subclass of $(1,1)$ symbols whose quantization are continuous on every Sobolev space, and do not characterize it explicitly. The inclusion $\Sigma_{<1/2}^m\subset\tilde{\mathscr{S}}^{m}_{1,1}$ is quite sufficient for most of our applications.
\end{remark}

\subsection{Para-product and Para-linearization on \texorpdfstring{$\Gp$}{a}}
As an initial application, we are able to recover the para-product estimate and Bony's para-linearization theorem on $\Gp$, just as in \cite{BGdP2021}, but this time with a global notion of para-product. We point out that in \cite{KR2006}, the authors also constructed para-product in terms of spectral calculus on a general compact surface. Since $\Gp$ obviously enjoys more structure than a general manifold, we are able to recover almost everything in the Euclidean setting.

Given two distributions $a$ and $u$, we define the \emph{para-product} $T_au$ by
\begin{equation}\label{T_au}
T_au
:=\int_1^\infty\left[\phi_{2^{-10}t}\big(|\nabla|\big)a\right]\cdot\psi_t\big(|\nabla|\big)u\frac{dt}{t},
\end{equation}
where $\phi$ and $\psi$ are as in the continuous Littlewood-Paley decomposition (\ref{LPCont}). Similarly as in the Euclidean case, the gap 10 is inessential. By the spectral localization property i.e. Corollary \ref{SpecPrd}, the integrand has Fourier support contained in $\mathfrak{S}[ct,c^{-1}t]$ for some constant $c\in(0,1)$, depending only on the algebraic structure of $\Gp$. The symbol of $T_a$ is 
$$
\sigma[T_a](x,\xi)
=\int_1^\infty\left[\phi_{2^{-10}t}\big(|\nabla|\big)a(x)\right]\cdot\psi_t\big(|\xi|\big)\cdot\I[\xi]\frac{dt}{t}.
$$
\begin{proposition}
If $a\in L^\infty$, the symbol $\sigma[T_a]$ of the para-product operator $T_a$ is of class $\Sigma^0_{2^{-10}}$.
\end{proposition}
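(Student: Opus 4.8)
The plan is to verify separately the two conditions defining $\Sigma^0_{2^{-10}}(\Gp)$: the spectral condition, which is a matter of tracking Fourier supports with the help of Corollary \ref{SpecPrd}, and the membership $\sigma[T_a]\in\mathscr{S}^0_{1,1}(\Gp)$, where the crucial point is that, for a fixed representation $\xi$, the $t$-integral in (\ref{T_au}) is effectively over a bounded range $t\sim|\xi|$.

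\emph{Spectral condition.} In the integrand of $\sigma[T_a]$ at scale $t$, the dependence on $\xi$ is carried only by $\psi_t(|\xi|)\I[\xi]$, which vanishes unless $t/2\le|\xi|\le t$; in particular $\sigma[T_a](x,\xi)\equiv0$ once $|\xi|<1/2$, so the low modes cause no difficulty. For $\xi,t$ in the admissible range the $x$-dependence is carried only by $\phi_{2^{-10}t}(|\nabla|)a$, whose $x$-spectrum lies in $\mathfrak{S}[0,2^{-10}t]$. Since $t\le2|\xi|$ on the support, the partial Fourier transform $\Ft{\sigma[T_a]}(\eta,\xi)$ vanishes for $|\eta|>2^{-9}|\xi|$, hence for $|\eta|\ge2^{-9}\size[\xi]$; this gives the spectral condition with parameter $2^{-9}$ (equal to $2^{-10}$ up to the inessential choice of the gap, and comfortably below the threshold $1/2$). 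Only the support of $\psi$ and the spectral localization of $\phi_{2^{-10}t}(|\nabla|)$ enter here.

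\emph{Membership in $\mathscr{S}^0_{1,1}$.} Working with the fundamental tuple $Q$ of (\ref{QFund}) and invoking Fischer's Theorem \ref{Fischer}, it suffices to bound $X^\alpha_x\Df_Q^\beta\sigma[T_a](x,\xi)$ by $C_{\alpha\beta}\|a\|_{L^\infty}\size[\xi]^{|\alpha|-|\beta|}$ for every $\alpha,\beta$. As $\phi_{2^{-10}t}(|\nabla|)a(x)$ is a scalar and $\Df_{Q,\xi}$ commutes with $X^\alpha_x$, the two operations decouple on the integrand:
$$
X^\alpha_x\Df_Q^\beta\sigma[T_a](x,\xi)=\int_1^\infty\big(X^\alpha\phi_{2^{-10}t}(|\nabla|)a\big)(x)\cdot\Df_Q^\beta\big[\psi_t(|\xi|)\I[\xi]\big]\,\frac{dt}{t}.
$$
Two uniform-in-$t$ ingredients drive the estimate. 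First, the Littlewood--Paley description of $L^\infty\subset C^0_*$ yields $\|X^\alpha\phi_{2^{-10}t}(|\nabla|)a\|_{L^\infty}\lesssim_\alpha(1+t)^{|\alpha|}\|a\|_{L^\infty}$ for $t\ge1$ --- Corollary \ref{LPZygCor} for $|\alpha|\ge1$, and the uniform $L^\infty\to L^\infty$ bound for the partial-sum operator (from Lemma \ref{CutOffKer}) for $\alpha=0$. Second, Theorem \ref{Multm} applied with $h=\psi$, together with the fact that $Q^\beta$ vanishes to order $\ge|\beta|$ at $e$, gives $\|\Df_Q^\beta[\psi_t(|\xi|)\I[\xi]]\|\lesssim_\beta\size[\xi]^{-|\beta|}$ uniformly in $t\ge1$, while Lemma \ref{DiffVanish} confines this symbol (in $\xi$) to $|\xi|\in[t/2-C_\beta,\,t+C_\beta]$ with $C_\beta$ depending only on $\beta$ and $\Gp$. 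Hence for fixed $\xi$ with $|\xi|\ge1/2$ the integrand is supported in $t\in[\,|\xi|-C_\beta,\,2|\xi|+2C_\beta\,]$, on which $(1+t)^{|\alpha|}\lesssim\size[\xi]^{|\alpha|}$ and $\int\tfrac{dt}{t}\lesssim1$, which gives the claimed bound; smoothness in $x$ is clear since each $\phi_{2^{-10}t}(|\nabla|)a$ is smooth and the effective $t$-range is compact.

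I expect the main obstacle to be the second uniform ingredient above --- estimating difference operators applied to the dilated spectral cut-off $\psi_t(|\nabla|)$ uniformly in $t$, together with the confinement of its spectrum. This is the compact-group substitute for the trivial observation on $\mathbb{R}^n$ that differentiating $\psi(\xi/t)$ costs no growth in $|\xi|$, and it is exactly where Lemma \ref{DiffVanish} (hence the spectral localization Corollary \ref{SpecPrd}) and Theorem \ref{Multm} are brought to bear.
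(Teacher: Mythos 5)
Your proof is correct and follows essentially the same route as the paper's (the paper simply cites Lemma \ref{CutOffKer} and Theorem \ref{Multm} and calls the spectral condition obvious; you supply the details, including the genuinely needed confinement of the $t$-integral to $t\simeq|\xi|$ via Lemma \ref{DiffVanish}). Your careful accounting yields the spectral parameter $2^{-9}$ rather than $2^{-10}$, but as you note this discrepancy is inessential since all that matters is that the parameter is below $1/2$.
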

\begin{proof}
The spectral condition is obvious since $\psi_t(|\xi|)\neq0$ only for $|\xi|\leq t$. We turn to estimate the operator norm of $X^\alpha_x\Df_{q,\xi}^\beta \sigma[T_a](x,\xi)(x,\xi)$ on $\Hh[\xi]$, for a given basis $X_1,\cdots,X_n$ of $\mathfrak{g}$ and $q$ is a strongly RT-admissible tuple. But this is immediate from Lemma \ref{CutOffKer} and Theorem \ref{Multm}, and quantatively $\mathbf{M}^0_{k,l;q}\big(\sigma[T_a]\big)\lesssim_{k,l}\size[\xi]^{k-l}|a|_{L^\infty}$.
\end{proof}

By Theorem \ref{Stein'}, the para-product operator $T_a$ is thus continuous from $H^s$ to itself for all $s\in\mathbb{R}$. This of course also admits a more direct proof. We can simply insert the multiplier corresponding to a function $h\in C_0^\infty(0,\infty)$, such that $h(\lambda)\equiv1$ for $\lambda\in[c,c^{-1}]$ and $h\geq0$. Then the Fourier support of $T_au$ is away from the trivial representation $\xi\equiv1$, and
$$
T_au
=\int_0^\infty h_t\big(|\nabla|\big)\Big(\phi_{2^{-10}t}\big(|\nabla|\big)a\cdot\psi_t\big(|\nabla|\big)u\Big)\frac{dt}{t}.
$$
Write $v(t)=\phi_{2^{-10}t}\big(|\nabla|\big)a\cdot\psi_t\big(|\nabla|\big)u$ for the integrand. It has Fourier support contained in $\mathfrak{S}[ct,c^{-1}t]$. By Bernstein's inequality and Lemma \ref{CutOffKer}, we find that
$$
\begin{aligned}
\|v(t)\|_{H^s}
&\lesssim t^{s}\left|\phi_{2^{-10}t}\big(|\nabla|\big)a\right|_{L^\infty}\big\|\psi_t\big(|\nabla|\big)u\big\|_{L^2}\\
&\lesssim t^{s}|a|_{L^\infty}\big\|\psi_t\big(|\nabla|\big)u\big\|_{L^2}.
\end{aligned}
$$
So by the Peter-Weyl theorem,
$$
\begin{aligned}
\|T_au\|_{H^s}^2
&\simeq\sum_{\xi\in\DuGp}d_\xi|\xi|^{2s}\left\llbracket\int_0^\infty h_t(|\xi|)\cdot
\Ft{v(t)}(\xi)\frac{dt}{t}\right\rrbracket^2\\
&\leq \sum_{\xi\in\DuGp}d_\xi\int_0^\infty
\big|h_t(|\xi|)\big|^2\frac{dt}{t}\cdot\int_0^\infty
|\xi|^{2s}\big\llbracket\Ft{v(t)}(\xi)\big\rrbracket^2\frac{dt}{t}\\
&\lesssim \int_0^\infty\|v(t)\|_{H^s}^2\frac{dt}{t}
\lesssim |a|_{L^\infty}^2\|u\|_{H^s}^2,
\end{aligned}
$$
where we used Proposition \ref{LPHs} at the last step. Note that the integrals in the above are not singular since the integrands all vanish near $t=0$.

Just as in the Euclidean case, we have the para-product decomposition on $\Gp$:
\begin{theorem}\label{ParaPrd}
If $s>0$, $a,u\in (L^\infty\cap H^s)(\Gp)$, then
$$
au=T_au+T_ua+R(a,u),
$$
where the smoothing remainder 
$$
\|R(a,u)\|_{H^s}\lesssim |a|_{L^\infty}\|u\|_{H^s}.
$$
\end{theorem}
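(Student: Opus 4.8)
The plan is to reproduce the classical Bony paraproduct decomposition, using the continuous Littlewood--Paley decomposition (\ref{LPCont}) together with the spectral localization property of Corollary \ref{SpecPrd} to control the Fourier supports of the resonant products. Write $b_0=\phi(|\nabla|)a$, $v_0=\phi(|\nabla|)u$, $b_s=\psi_s(|\nabla|)a$ and $v_t=\psi_t(|\nabla|)u$ for $s,t\ge1$, so that $a=b_0+\int_1^\infty b_s\,\tfrac{ds}{s}$ and $u=v_0+\int_1^\infty v_t\,\tfrac{dt}{t}$. Expanding the product $au$ and splitting the resulting $(s,t)$-integral --- with the atoms at $s=0$ and $t=0$ included --- into the three regions $\{s<2^{-10}t\}$, $\{t<2^{-10}s\}$ and $\{2^{-10}t\le s\le 2^{10}t\}$, I would first check via the partial-sum identity $\phi(|\nabla|)f+\int_1^T\psi_\tau(|\nabla|)f\,\tfrac{d\tau}{\tau}=\phi_T(|\nabla|)f$ that, after integrating out $s$, the first region equals $T_au$ except for contributions coming from the bounded range $1\le t<2^{10}$; symmetrically the second region equals $T_ua$ up to such contributions. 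Hence $R(a,u)=au-T_au-T_ua$ is the integral over the middle region plus finitely many ``boundary'' terms, each being the product of a uniformly $L^\infty$-bounded low-pass of $a$ (or of $u$) with a single Littlewood--Paley block, integrated over a bounded range of scales; such terms have Fourier support in a fixed $\mathfrak{S}[0,C]$ and are therefore $\lesssim|a|_{L^\infty}\|u\|_{L^2}\le|a|_{L^\infty}\|u\|_{H^s}$ by Bernstein's inequality (Proposition \ref{Bernstein}) and the $L^1$-kernel bound of Lemma \ref{CutOffKer}.

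Next I would bound the middle-region integral. Integrating out $s$, it takes the form $b_0v_0+\int_1^\infty(P_ta)\,\psi_t(|\nabla|)u\,\tfrac{dt}{t}$, where $P_t$ is a Fourier multiplier cutting $a$ to frequencies comparable to $t$ (within a factor $2^{10}$), and Lemma \ref{CutOffKer} gives $|P_ta|_{L^\infty}\lesssim|a|_{L^\infty}$ uniformly in $t$. The low--low term $b_0v_0$ has Fourier support in a fixed $\mathfrak{S}[0,C]$ by Corollary \ref{SpecPrd}, so $\|b_0v_0\|_{H^s}\lesssim\|b_0v_0\|_{L^2}\lesssim|b_0|_{L^\infty}\|v_0\|_{L^2}\lesssim|a|_{L^\infty}\|u\|_{H^s}$. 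For the integral I would group dyadically, $g_k:=\int_{2^k}^{2^{k+1}}(P_ta)\,\psi_t(|\nabla|)u\,\tfrac{dt}{t}$ with $k\ge0$; since both factors of the integrand have frequency $\lesssim 2^k$, Corollary \ref{SpecPrd} gives that $g_k$ has Fourier support in $\mathfrak{S}[0,C2^k]$, and Cauchy--Schwarz in $t$ gives $\|g_k\|_{L^2}\lesssim|a|_{L^\infty}\,2^{-ks}c_k$ with $c_k:=2^{ks}\big(\int_{2^k}^{2^{k+1}}\|\psi_t(|\nabla|)u\|_{L^2}^2\,\tfrac{dt}{t}\big)^{1/2}$, where $(c_k)\in\ell^2$ and $\|(c_k)\|_{\ell^2}\lesssim\|u\|_{H^s}$ by Proposition \ref{LPHs}. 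Combining the Fourier-support information with Bernstein's inequality then yields $\|X^\alpha g_k\|_{L^2}\lesssim|a|_{L^\infty}\,2^{k(|\alpha|-s)}c_k$ for every multi-index $\alpha$, so that Lemma \ref{H^ss>0} --- where the hypothesis $s>0$ is used --- gives $\big\|\sum_{k\ge0}g_k\big\|_{H^s}\lesssim|a|_{L^\infty}\|u\|_{H^s}$. Adding the pieces proves $\|R(a,u)\|_{H^s}\lesssim|a|_{L^\infty}\|u\|_{H^s}$.

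I expect the main obstacle to be twofold. First, the bookkeeping of the three regions has to be done carefully enough to isolate $T_au+T_ua$ cleanly and to verify that the mismatch caused by the $2^{-10}$-gap in the definition (\ref{T_au}) of the paraproduct produces only boundary terms with bounded Fourier support, hence harmless. Second --- and this is the genuinely non-Euclidean point --- controlling the resonant blocks $g_k$ requires that a product of two frequency-$\sim 2^k$ pieces still have Fourier support $\lesssim 2^k$; on a general compact manifold this upper localization is not available in such a clean, finite form, and here it is supplied by the algebraic spectral localization of Corollary \ref{SpecPrd} (equivalently Proposition \ref{SpecPrd0}). The dyadic summation then closes via Lemma \ref{H^ss>0}, but only for $s>0$, which is why that hypothesis cannot be dropped --- indeed for $s\le0$ the product $au$ need not even be well defined for $a,u\in L^\infty\cap H^s$.
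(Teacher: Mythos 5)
Your argument is correct, and the overall skeleton (Littlewood--Paley trichotomy, bookkeeping of the boundary terms created by the $2^{-10}$-gap, spectral localization to control the resonant part) matches the paper's. Where you diverge is in how the resonant remainder is estimated. The paper integrates out one scale via a polar change of variables $(t_1,t_2)=t(\cos\varphi,\sin\varphi)$, exhibits the middle region as $\Op(R[a])u$ for an explicit symbol $R[a]$, verifies $\mathbf{M}^0_{k,l;q}(R[a])\lesssim\size[\xi]^{k-l}|a|_{L^\infty}$ via Lemma \ref{CutOffKer} and Theorem \ref{Multm}, and then simply invokes Stein's Theorem \ref{SteinTheorem} (which is where $s>0$ enters). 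You instead bound the remainder by hand: dyadic grouping into blocks $g_k$, the upper Fourier-support bound $\mathfrak{S}[0,C2^k]$ from Corollary \ref{SpecPrd} (indeed only the upper localization of Proposition \ref{SpecPrd0} is needed here; the lower bound is what Theorem \ref{Stein'} requires), Cauchy--Schwarz in $t$ against Proposition \ref{LPHs}, Bernstein, and the summation Lemma \ref{H^ss>0}. This is in effect a self-contained re-derivation of the special case of Stein's theorem needed here, and it is arguably more elementary, since the operator in question is a plain multiplication and the $L^\infty\times L^2\to L^2$ bound replaces the kernel estimates in the proof of Theorem \ref{SteinTheorem}; the paper's route is shorter because it reuses that machinery, and it has the side benefit of producing the symbol $R[a]$ explicitly, which is what makes the refinement of Remark \ref{R(a)Symbol} ($R[a]\in\mathscr{S}^{-r}_{1,1}$ when $a\in C^r_*$) immediate. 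Both proofs use $s>0$ in the same place, namely the high-high-to-low summation (Lemma \ref{H^ss>0} for you, Theorem \ref{SteinTheorem} for the paper).
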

\begin{proof}
It suffices to investigate the remainder $R(a,u)$. Using the continuous Littlewood-Paley decomposition (\ref{LPCont}),
$$
R(a,u)=\int_{\substack{2^{-10}\leq t_1/t_2\leq 2^{10}\\ 1\leq t_1,t_2}}\psi_{t_1}\big(|\nabla|\big)a\cdot\psi_{t_2}\big(|\nabla|\big)u\cdot \frac{dt_1dt_2}{t_1t_2}
+\phi\big(|\nabla|\big)a\cdot\phi\big(|\nabla|\big)u.
$$
Only the integral is non-trivial. We make a change of variable $(t_1,t_2)=t(\cos\varphi,\sin\varphi)$, so that it equals
$$
\int_{2^{-10}\leq \tan\varphi\leq 2^{10}}\left(\int_1^\infty\psi_{t\cos\varphi}\big(|\nabla|\big)a\cdot\psi_{t\sin\varphi}\big(|\nabla|\big)u\cdot \frac{dt}{t}\right)\sin\varphi d\varphi.
$$
Then $R(a,u)=\Op(R[a])u$, where the symbol
$$
R[a](x,\xi)=
\int_{2^{-10}\leq \tan\varphi\leq 2^{10}}\left(\int_1^\infty\psi_{t\cos\varphi}\big(|\nabla|\big)a
\cdot\psi_{t\sin\varphi}(|\xi|)\I[\xi]\cdot \frac{dt}{t}\right)\sin\varphi d\varphi.
$$
Just as in the proof of the previous proposition, we employ Lemma \ref{CutOffKer} and Theorem \ref{Multm} to conclude, for a strongly RT-admissible tuple $q$,
$$
\mathbf{M}^0_{k,l;q}(R[a])\lesssim_{k,l}\size[\xi]^{k-l}|a|_{L^\infty}.
$$
Thus by Stein's theorem, $\Op(R[a])$ maps $H^s$ to itself for $s>0$.
\end{proof}
\begin{remark}\label{R(a)Symbol}
If in addition $a\in C^r_*(\Gp)$, then from the spectral characterization of Zygmund spaces, we find that in fact $R[a]\in \mathscr{S}^{-r}_{1,1}$, so 
$$
\|R(a,u)\|_{H^s}\lesssim|a|_{C^r_*}\|u\|_{H^{s-r}}.
$$
\end{remark}

We also have a direct generalization of Bony's para-linearization theorem, proposed by Bony \cite{Bony1981}. Such a result is neither proved true or false for general compact manifolds in \cite{KR2006}. 
\begin{theorem}[Bony]\label{Bony}
Suppose $F\in C^\infty(\mathbb{C};\mathbb{C})$ (understood as smooth mapping on the plane instead of holomorphic function), $F(0)=0$. Let $r>0$, and suppose $u\in C^r_*$. Write $u_t=\phi(D/t)u$, where $\phi$ is the function in the Littlewood-Paley decomposition (\ref{LPCont}). Then with the symbol
$$
l_u(x,\xi):=\int_1^\infty F'(u_t(x))\cdot\psi_t\big(|\xi|\big)\cdot\I[\xi]\frac{dt}{t},
$$
there holds $F(u)=F(u_1)+\Op(l_u)u$, and we have the para-linearization formula
$$
F(u)=F(u_1)+T_{F'(u)}u+\Op(R[u])u
$$
with the symbol $R[u](x,\xi)\in \mathscr{S}^{-r}_{1,1}$. Consequently, if $u\in C^r_*\cap H^s$, then $F(u)-T_{F'(u)}u\in H^{s+r}$. In particular, for $s>n/2$, $F(u)-T_{F'(u)}u\in H^{2s-n/2}$.
\end{theorem}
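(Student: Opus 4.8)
The plan is to transplant Bony's Euclidean argument (see \cite{Met2008}, \cite{Hormander1997}), the only genuinely new ingredients being the Zygmund bounds of Corollary \ref{LPZygCor} and Remark \ref{Zygmund<0}, the radial-multiplier estimates of Lemma \ref{CutOffKer} and Theorem \ref{Multm}, and the $(1,1)$-boundedness Theorem \ref{SteinTheorem}. The proof splits into three stages: (i) a fundamental-theorem-of-calculus identity yielding $F(u)=F(u_1)+\Op(l_u)u$; (ii) the splitting $l_u=\sigma[T_{F'(u)}]+R[u]$ with $R[u]\in\mathscr{S}^{-r}_{1,1}(\Gp)$; (iii) an application of Theorem \ref{SteinTheorem} to the remainder.

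First I would prove $F(u)=F(u_1)+\Op(l_u)u$. Since $r>0$, the function $u\in C^r_*(\Gp)$ is bounded and continuous and, by Corollary \ref{LPZygCor}, $u_t:=\phi_t(|\nabla|)u\to u$ uniformly as $t\to\infty$; as $F\in C^\infty$ with $F(0)=0$, this forces $F(u_t)\to F(u)$ uniformly. For each fixed $x$ the map $t\mapsto u_t(x)$ is $C^1$ with $\partial_t u_t=\tfrac1t\,\psi_t(|\nabla|)u$ (because $\partial_t\phi(\lambda/t)=\tfrac1t\,\psi(\lambda/t)$), so by the fundamental theorem of calculus
$$
F(u)-F(u_1)=\int_1^\infty F'(u_t)\cdot\psi_t\big(|\nabla|\big)u\,\frac{dt}{t}.
$$
Since $\psi_t(|\xi|)\I[\xi]$ is scalar on each $\Hh[\xi]$, unravelling the quantization formula (\ref{Op(a)}) identifies the right-hand side with $\Op(l_u)u$. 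I would also observe that $u_1=\phi(|\nabla|)u$ has Fourier support in the finite set $\mathfrak{S}[0,1]$, hence is a (real-analytic) smooth function, so $F(u_1)\in C^\infty(\Gp)$ lies in every Sobolev space.

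Next I would write $l_u=\sigma[T_{F'(u)}]+R[u]$, where by (\ref{T_au})
$$
R[u](x,\xi)=\int_1^\infty c_t(x)\,\psi_t\big(|\xi|\big)\I[\xi]\,\frac{dt}{t},
\qquad
c_t(x)=F'(u_t(x))-\phi_{2^{-10}t}\big(|\nabla|\big)F'(u)(x),
$$
so that $\Op(l_u)u=T_{F'(u)}u+\Op(R[u])u$ and hence $F(u)=F(u_1)+T_{F'(u)}u+\Op(R[u])u$. The core claim is $R[u]\in\mathscr{S}^{-r}_{1,1}(\Gp)$. Exactly as in the proof that the para-product remainder of Theorem \ref{ParaPrd} lies in $\mathscr{S}^{-r}_{1,1}$ (Remark \ref{R(a)Symbol}), the factor $\psi_t(|\xi|)\I[\xi]$ is a radial multiplier supported in $\mathfrak{S}[t/2,t]$, so Lemma \ref{CutOffKer} and Theorem \ref{Multm} reduce the symbol bounds for $R[u]$ to the coefficient estimate
$$
\big|X^\alpha c_t\big|_{L^\infty}\lesssim_{\alpha,r}t^{|\alpha|-r}\qquad(t\ge1),
$$
with a harmless $\log t$ when $|\alpha|=r\in\mathbb{N}$. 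I would prove this by combining: the standard Zygmund composition estimate $F'(u)\in C^r_*(\Gp)$ (which follows on $\Gp$ from the Littlewood--Paley characterization of Proposition \ref{LPZyg} and the para-product machinery); the higher-order chain rule together with Corollary \ref{LPZygCor}, giving $|X^\alpha F'(u_t)|_{L^\infty}\lesssim t^{(|\alpha|-r)_+}$ and the matching bound for $\phi_{2^{-10}t}(|\nabla|)F'(u)$; the Duhamel-type identity $F'(u_t)-F'(u)=-\int_t^\infty F''(u_\tau)\,\psi_\tau(|\nabla|)u\,\tfrac{d\tau}{\tau}$, which supplies the sharper decay $t^{|\alpha|-r}$ when $|\alpha|<r$; and the negative-index growth bound of Remark \ref{Zygmund<0} applied to $\phi_{2^{-10}t}(|\nabla|)X^\alpha F'(u)$ when $|\alpha|\ge r$. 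Feeding $|X^\alpha c_t|_{L^\infty}\lesssim t^{|\alpha|-r}$ into the difference estimates for $\psi_t(|\xi|)\I[\xi]$ and integrating over $t\sim\size[\xi]$ yields $\|X^\alpha_x\Df^{\boldsymbol{\tau}}_\xi R[u](x,\xi)\|\lesssim\size[\xi]^{-r+|\alpha|-p}$, i.e. $R[u]\in\mathscr{S}^{-r}_{1,1}(\Gp)$.

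Finally, since $F(u)-T_{F'(u)}u=F(u_1)+\Op(R[u])u$ with $F(u_1)\in C^\infty$, and since Theorem \ref{SteinTheorem} applied to $R[u]\in\mathscr{S}^{-r}_{1,1}$ shows $\Op(R[u])\colon H^{s}\to H^{s+r}$ whenever $s+r>0$, we obtain $F(u)-T_{F'(u)}u\in H^{s+r}$ for $u\in C^r_*\cap H^s$. For the last assertion, the Bernstein inequality (Proposition \ref{Bernstein} with the Weyl estimate Lemma \ref{Weyl}) gives the embedding $H^s(\Gp)\hookrightarrow C^{s-n/2}_*(\Gp)$ for $s>n/2$, so applying the previous step with $r=s-n/2$ (note $s+r=2s-n/2>0$) yields $F(u)-T_{F'(u)}u\in H^{2s-n/2}$. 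The step I expect to be the main obstacle is the uniform-in-$t$ coefficient bound $|X^\alpha c_t|_{L^\infty}\lesssim t^{|\alpha|-r}$: it is there that the Zygmund composition estimate and the refined Duhamel decay (rather than the crude chain-rule bound $t^{(|\alpha|-r)_+}$) are both needed, and it is essentially the only point where the non-commutative structure of $\DuGp$ intervenes, through the product spectral localization of Corollary \ref{SpecPrd} (hidden inside the radial-multiplier estimates and the algebra property of $C^r_*(\Gp)$).
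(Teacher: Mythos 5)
Your proposal is correct and follows essentially the same route as the paper's proof: the same fundamental-theorem-of-calculus identity for $F(u)-F(u_1)$, the same splitting $l_u=\sigma[T_{F'(u)}]+R[u]$ with $c_t=F'(u_t)-\phi_{2^{-10}t}(|\nabla|)F'(u)$, the same reduction to the coefficient bound $|X^\alpha c_t|_{L^\infty}\lesssim t^{|\alpha|-r}$ via Lemma \ref{CutOffKer} and Theorem \ref{Multm}, and the same final application of Theorem \ref{SteinTheorem}. The only divergence is in the intermediate range $0<|\alpha|<r$, where you invoke the Duhamel identity $F'(u_t)-F'(u)=-\int_t^\infty F''(u_\tau)\,\psi_\tau(|\nabla|)u\,\frac{d\tau}{\tau}$, whereas the paper interpolates the $C^0$ bound $t^{-r}$ against the high-order bound $t^{k-r}$ using $|D^jf|_{L^\infty}\lesssim|f|_{C^k}^{j/k}|f|_{L^\infty}^{1-j/k}$; both are valid.
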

\begin{proof}
The proof resembles that of Theorem 10.3.1. in \cite{Hormander1997} very much. We first set $u_t=\phi_t\big(|\nabla|\big)u$, then define a symbol
$$
l_u(x,\xi):=\int_1^\infty F'(u_t(x))\cdot\psi_t(|\xi|)\I[\xi]\frac{dt}{t}
$$
That $F(u)=F(u_1)+l_u(x,D)u$ is obtained by a direct computation already done when estimating the symbol of $T_a$, which also shows $l_u(x,\xi)\in \mathscr{S}^0_{1,1}$. Working in local coordinate charts, we find $F(u),F'(u)\in C^r_*$. 

Let us set $p(x,\xi)$ as the symbol of the para-product operator $T_{F'(u)}$. For any strongly RT-admissible tuple $q$, we estimate the $\mathbf{M}_{k,l;q}^{-r}$ norm of
$$
R[u](x,\xi):=l_u(x,\xi)-p(x,\xi)
=\int_1^\infty
\Big[F'(u_t(x))-\phi_{2^{-10}t}\big(|\nabla|\big)F'(u(x))\Big]\psi_t(|\xi|)\I[\xi]\frac{dt}{t}.
$$
In view of Lemma \ref{CutOffKer} and Corollary \ref{Fischer} once again, all we need to show is that the magnitude of
\begin{equation}\label{2Alpha}
X^\alpha_x\Big[F'(u_t)-\phi_{2^{-10}t}\big(|\nabla|\big)F'(u)\Big]
\end{equation}
has an upper bound $K_\alpha(|u|_{C^r_*}) t^{|\alpha|-r}$, where $K_\alpha$ is an increasing function. 

For $\alpha=0$, we find the quantity (\ref{2Alpha}) equals
$$
F'(u_t)-\phi_{2^{-10}t}\big(|\nabla|\big)F'(u)
=F'(u_t)-F'(u)+F'(u)-\phi_{2^{-10}t}\big(|\nabla|\big)F'(u).
$$
By Corollary \ref{LPZygCor}, we have $|u-u_t|\lesssim t^{-r}|u|_{C^r_*}$, and since $F'(u)\in C^r_*$, we find that (\ref{2Alpha}) is controlled by $t^{-r}$ for $\alpha=0$. 

For $|\alpha|>r$, the second term in (\ref{2Alpha}) is controlled by $t^{|\alpha|-r}|u|_{C^r_*}$ with the aid of Corollary \ref{LPZygCor}, since $F'(u)\in C^r_*$. We note that being $C^r_*$ is a local property, so it is legitimate to work in any local coordinate chart $\{x\}$ and estimate the usual local partial derivatives of the ambient function. Using the chain rule, we find that the first term in (\ref{2Alpha}) is a $C^\infty$-linear combination of terms of the form
$$
F^{(1+|\alpha|-|\beta|)}(u_t)\left(\partial_x^{\beta_1}u_t\right)^{j_1}\cdots\left(\partial_x^{\beta_k}u_t\right)^{j_k},
$$
where $\beta=\beta_1+\cdots+\beta_k$, and $j_1|\beta_1|+\cdots+j_k|\beta_k|=|\alpha|$. Using Corollary \ref{LPZygCor} again, we can bound this term by
$$
K(|u|_{C^r_*})\prod_{i:\beta_i> r}\left(|u|_{C^r_*}t\right)^{j_i(|\beta_i|-r)}
\cdot\prod_{i:\beta_i\leq r}\left(|u|_{C^r_*}\log(1+t)\right)^{j_i}
=O(t^{|\alpha|-r}).
$$
As for $0<|\alpha|\leq r$, we can just use the classical interpolation inequality for smooth functions with all derivatives bounded, in between 0 and $r+1$:
$$
|D^jf|_{L^\infty}\lesssim_{j,k}|f|_{C^k}^{j/k}\cdot|f|_{L^\infty}^{1-j/k},
\quad
0<j<k.
$$
This implies that (\ref{2Alpha}) is controlled by $t^{|\alpha|-r}$, hence ensures $R[u]\in \mathscr{S}^{-r}_{1,1}$. The claim of the theorem now follows from Stein's theorem.
\end{proof}

\section{Para-differential Operators on Compact Lie Group}\label{4}
\subsection{Rough Symbols and Their Smoothing}
Because of the spectral localization property on $\Gp$, the theory of para-differential operators resembles that on $\mathbb{R}^n$ very much. The details of the latter may be found in Chapter 9-10 in \cite{Hormander1997}. For completeness and convenience of future reference, we still write down the details here.

\begin{definition}\label{Rough}
For $r\geq0$ and $m\in\mathbb{R}$, define the symbol class $\mathcal{A}_r^m(\Gp)$ to be the collection of all symbols $a(x,\xi)$ on $\Gp\times\DuGp$, such that if $q=(q_i)_{i=1}^M$ is a strongly RT-admissible tuple, then
$$
\big\|\Df_{q,\xi}^\beta a(x,\xi)\big\|_{r;x}\leq C_\beta\size[\xi]^{m-|\beta|},\quad\forall\xi\in\DuGp.
$$
Here the norm $\|a(x,\xi)\|_{r;x}$ for $a(\cdot,\xi):\Gp\to\mathrm{End}(\Hh[\xi])$ is defined by
$$
\|a(x,\xi)\|_{r;x}:=
\sup_{B^*\in\mathrm{End}(\Hh[\xi])^*}\frac{\big|\langle B^*, a(x,\xi)\rangle\big|_{C^r_{*x}}}{\|B^*\|},
$$
as in Remark \ref{VectZyg}, that is we consider $\mathrm{End}(\Hh[\xi])$ merely as a normed space instead of a normed algebra. Introduce the following norm on $\mathcal{A}_r^m(\Gp)$:
$$
\mathbf{W}^{m;r}_{l;q}(a):=\sup_{\xi\in\DuGp}\sum_{|\beta|\leq l}\size[\xi]^{|\beta|-m}\big\|\Df_{q,\xi}^\beta a(x,\xi)\big\|_{r;x}
$$
\end{definition}
We note that Fischer's Lemma \ref{Fisqq'Lem} ensures that the definition of $\mathcal{A}^m_r(\Gp)$ does not depend on the choice of $q$, so we can choose any set of functions $q$ that facilitates our computation. In particular, the fundamental tuple $Q$ defined in (\ref{QFund}) is a convenient choice.

\begin{definition}[Admissible cut-off]\label{AdmCutoff}
An admissible cut-off function $\chi$ with parameter $\delta\in(0,1/2)$ is a smooth function on $\mathbb{R}\times\mathbb{R}$ such that
$$
\chi(\mu,\lambda)=\left\{
\begin{aligned}
1,&\quad |\mu|\leq\frac{\delta}{2}\size[\lambda],\\
0,&\quad |\mu|\geq\delta\size[\lambda],
\end{aligned}
\right.
$$
and there also holds
$$
|\partial_\mu^k\partial_\lambda^l\chi(\mu,\lambda)|
\lesssim_{k,l}\langle\lambda\rangle^{-k-l}.
$$
\end{definition}
An obvious choice of admissible cut-off function is the one that we used to construct para-product (\ref{T_au}):
\begin{equation}\label{AdmCutoffdelta}
\chi(\mu,\lambda)=\int_1^\infty\phi_{\delta t/2}(\mu)\psi_t(\lambda)\frac{dt}{t}.
\end{equation}
However, it will be shown shortly that there is a lot of flexibility in choosing an admissible cut-off function.

\begin{definition}[Para-differential Operator]
Let $\chi$ be an admissible cut-off function with some parameter $\delta$. For $a\in\mathcal{A}_r^m(\Gp)$, set $a^\chi (x,\xi)=\chi(|\nabla_x|,|\xi|)a(x,\xi)$, the regularized symbol corresponding to $a$. Define the para-differential operator $T_a^\chi$ corresponding to $a\in\mathcal{A}_r^m(\Gp)$ as
$$
T_a^\chi u(x):= \Op(a^\chi)u(x).
$$
\end{definition}

We immediately deduce that $a^\chi$ is a (1,1) symbol satisfying the spectral condition with parameter $\delta$ with improved growth estimate:
\begin{proposition}\label{ParaDiff1}
Suppose $r\geq0$. Fix a basis $X_1,\cdots,X_n$ of $\mathfrak{g}$, and define $X^\alpha$ as in Proposition \ref{NormalOrder}. Suppose $a\in\mathcal{A}_r^m(\Gp)$ and $\chi$ is an admissible cut-off function with parameter $\delta$. If $r=0$, then $a^\chi \in\Sigma^{m+\varepsilon}_\delta(\Gp)\subset \mathscr{S}^{m+\varepsilon}_{1,1}(\Gp)$. If $r>0$, then $a^\chi \in\Sigma^{m}_\delta(\Gp)\subset \mathscr{S}^{m}_{1,1}(\Gp)$ for some $\delta>0$. In fact, for any strongly RT-admissible tuple $q$, we have
$$
\big\|X_x^\alpha\Df_{q,\xi}^\beta a^\chi (x,\xi)\big\|
\lesssim_{\alpha,\beta;q}\left\{
\begin{aligned}
    & \mathbf{W}^{m;r}_{|\beta|;q}(a)\cdot\size[\xi]^{m+(|\alpha|-r)_+-|\beta|} &\quad |\alpha|\neq r \\
    & \mathbf{W}^{m;r}_{|\beta|;q}(a)\cdot\size[\xi]^{m-|\beta|}\log(1+\size[\xi]) &\quad |\alpha|=r
\end{aligned}
\right.,
$$
where $s_+=\max(s,0)$. The second inequality matters only when $r$ is a positive integer.
\end{proposition}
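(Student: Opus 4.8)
The plan is to view $\chi(|\nabla_x|,|\xi|)$ as a Fourier multiplier in $x$ of frequency scale $\langle\xi\rangle$ --- essentially the smoothed partial sum operator $\phi_{\delta\langle\xi\rangle/2}(|\nabla|)$ plus an annular correction --- and to handle the effects of $X_x^\alpha$ and of $\Df_{q,\xi}^\beta$ separately. The spectral condition requires no work: since $\chi(\mu,\lambda)=0$ for $|\mu|\geq\delta\langle\lambda\rangle$ and $\langle|\xi|\rangle=\langle\xi\rangle$, the partial $x$-Fourier transform of $a^\chi$ is $\chi(|\eta|,|\xi|)\,\Ft a(\eta,\xi)$, which vanishes for $|\eta|\geq\delta\langle\xi\rangle$; moreover $\chi(|\nabla_x|,|\xi|)$ kills all but finitely many $x$-frequencies, so $a^\chi$ is smooth (in fact real-analytic) in $x$. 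In conjunction with the quantitative bounds below this places $a^\chi$ in $\Sigma_\delta^{m+\varepsilon}(\Gp)$ when $r=0$ and in $\Sigma_\delta^m(\Gp)$ when $r>0$.

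For the quantitative estimate I first take $\beta=0$. Fix $\xi$ and set $g_\xi=a(\cdot,\xi)$, an $\mathrm{End}(\mathcal H_\xi)$-valued function on $\Gp$ with $\|g_\xi\|_{r;x}\leq\mathbf{W}^{m;r}_{0;q}(a)\langle\xi\rangle^m$, so that $a^\chi(\cdot,\xi)=\chi(|\nabla|,|\xi|)g_\xi$. Writing $\chi(|\nabla|,|\xi|)=\phi_{\delta\langle\xi\rangle/2}(|\nabla|)+E_\xi$, with $E_\xi$ a Fourier multiplier supported in $O(1)$ dyadic annuli of frequency scale $\sim\langle\xi\rangle$ and with symbol and symbol-derivatives bounded by $O(1)$ and $O(\langle\xi\rangle^{-k})$, I will bound the $\phi$-part by the vector-valued Zygmund estimate of Corollary \ref{LPZygCor} and Remark \ref{VectZyg}, and the annular part by the Littlewood--Paley characterisation of $C^r_*$ (Proposition \ref{LPZyg}) together with Bernstein's inequality (Proposition \ref{Bernstein}). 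Since all implicit constants in these estimates are independent of the target space, this yields, uniformly in $\xi\in\DuGp$,
$$
\big\|X_x^\alpha a^\chi(x,\xi)\big\|
\lesssim\mathbf{W}^{m;r}_{0;q}(a)\cdot
\begin{cases}
\langle\xi\rangle^{\,m+(|\alpha|-r)_+} & |\alpha|\neq r,\\
\langle\xi\rangle^{\,m}\log(1+\langle\xi\rangle) & |\alpha|=r ;
\end{cases}
$$
the same argument, with $\chi$ replaced by an admissible cut-off of order $-\nu$, gains an extra factor $\langle\xi\rangle^{-\nu}$.

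To pass to general $\beta$ I work with the fundamental tuple $Q$ of (\ref{QFund}), which is legitimate by Fischer's Lemma \ref{Fisqq'Lem}. On the partial $x$-Fourier side $\Ft{a^\chi}(\eta,\xi)=\chi(|\eta|,|\xi|)\,\Ft a(\eta,\xi)$ is, for each fixed $\eta$, a product of two symbols in $\xi$; since $\Df_Q$ commutes with operations in $x$, the Leibniz-type property (\ref{Leibniz})--(\ref{LeibnizFund}) of $Q$ gives
$$
\Ft{\Df_Q^\beta a^\chi}(\eta,\xi)
=\sum_{\substack{|\gamma_1|,|\gamma_2|\leq|\beta|\\ |\beta|\leq|\gamma_1|+|\gamma_2|\leq2|\beta|}}
c^\beta_{\gamma_1\gamma_2}\,\big(\Df_{Q,\xi}^{\gamma_1}[\chi(|\eta|,|\cdot|)\,\mathrm{Id}]\big)(\xi)\cdot\big(\Df_{Q,\xi}^{\gamma_2}\Ft a(\eta,\xi)\big).
$$
By Theorem \ref{Multm}, applied to the scalar multiplier $\lambda\mapsto\chi(\mu,\lambda)$ --- which is of order $0$ with multiplier norms uniform in $\mu$ --- the first factor is $\chi_{\gamma_1}(|\eta|,|\xi|)\,\mathrm{Id}$, where $\chi_{\gamma_1}$ is again an admissible cut-off of order $-|\gamma_1|$: smooth in its first slot, supported there in $|\mu|\lesssim\langle\lambda\rangle$, with $|\partial_\mu^k\chi_{\gamma_1}(\mu,\lambda)|\lesssim\langle\lambda\rangle^{-|\gamma_1|-k}$. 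Undoing the $x$-Fourier transform, $\Df_Q^\beta a^\chi(x,\xi)$ is a finite linear combination of terms $\chi_{\gamma_1}(|\nabla_x|,|\xi|)\big[\Df_{Q,\xi}^{\gamma_2}a(\cdot,\xi)\big](x)$. Applying the $\beta=0$ estimate with the order-$(-|\gamma_1|)$ cut-off $\chi_{\gamma_1}$ (contributing $\langle\xi\rangle^{-|\gamma_1|}$) and with $a$ replaced by $\Df_{Q,\xi}^{\gamma_2}a$, for which $\|\Df_{Q,\xi}^{\gamma_2}a(\cdot,\xi)\|_{r;x}\lesssim\mathbf{W}^{m;r}_{|\beta|;Q}(a)\langle\xi\rangle^{m-|\gamma_2|}$, and then summing --- using $|\gamma_1|+|\gamma_2|\geq|\beta|$ and $|\gamma_2|\leq|\beta|$ --- delivers the asserted bound with $q=Q$. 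Finally, Fischer's Lemma \ref{Fisqq'Lem} replaces $\mathbf{W}^{m;r}_{|\beta|;Q}(a)$ by $\mathbf{W}^{m;r}_{|\beta|;q}(a)$ for an arbitrary strongly RT-admissible tuple $q$.

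The main obstacle is this last step: rigorously justifying that the Leibniz-type property of the fundamental tuple applies to the ``mixed'' product $\chi(|\nabla_x|,|\xi|)a(x,\xi)$ --- which forces one to work on the partial $x$-Fourier side, where it becomes a genuine product of $\xi$-symbols --- and checking that each difference of the cut-off factor is truly a lower-order admissible cut-off, with all constants independent of $\dim\mathcal H_\xi$. It is precisely here that Theorem \ref{Multm} and the dimension-free form of the Zygmund estimates (Remark \ref{VectZyg}) are essential, since all bounds must be uniform as $d_\xi\to\infty$.
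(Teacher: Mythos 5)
Your proposal is correct and follows essentially the same route as the paper's (brief) proof: the spectral condition is immediate from the definition, the $\beta=0$ case is the vector-valued Zygmund estimate of Corollary \ref{LPZygCor}/Remark \ref{VectZyg} applied with values in $\mathrm{End}(\Hh[\xi])$, and the general $\beta$ case combines the Leibniz property of the fundamental tuple with the multiplier bound of Theorem \ref{Multm}, reducing to arbitrary tuples by Fischer's Lemma \ref{Fisqq'Lem}. You simply supply more detail than the paper does (the decomposition of $\chi$ into a partial-sum operator plus an annular correction, and the Fourier-side justification of the Leibniz rule, which is exactly how the paper itself carries out the analogous computation in Proposition \ref{a-a^chi}).
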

\begin{proof}
That $a^\chi$ satisfies the spectral condition with parameter $\delta$ is immediate from the definition: the Fourier cut-off operator $\chi(|\nabla_x|,|\xi|)$ annihilates all frequencies of size greater than $\delta\size[\xi]$. We turn to verify the estimates for the $\mathscr{S}^m_{1,1}$ norms of $a^\chi$. By Fischer's lemma i.e. Lemma \ref{Fisqq'Lem}, it suffices to prove the inequality for $q=Q$, the fundamental tuple (\ref{QFund}). The advantage is that the corresponding RT difference operators have Leibniz type property, and all the $\Df_Q^\beta a^\chi$'s also satisfy spectral conditions (with parameters proportional to $\delta$) by Lemma \ref{Sigmadelta}.

For $\beta=0$, the estimate for $\|X^\alpha_xa^\chi(x,\xi)\|$ follows from the vector-valued version of Corollary \ref{LPZygCor} (see Remark \ref{VectZyg}), where the normed space considered is $\Hh[\xi]$. To obtain the estimate for $\|X_x^\alpha\Df_{Q,\xi}^\beta a^\chi (x,\xi)\|
$ with $\beta\neq0$, we just have to apply Theorem \ref{Multm}.
\end{proof}

We notice that the norm of $a^\chi(x,\xi)$ grows slower upon differentiation in $x$ than generic $\mathscr{S}^m_{1,1}$ symbols. This property is called by H\"{o}rmander as having \emph{reduced order $m-r$} for symbols on $\mathbb{R}^n$. 

The next proposition shows that the para-differential operator $T_a^\chi=\Op(a^\chi)$ extracts the ``highest order differentiation" from $\Op(a)$. The para-product decomposition can be seen as a special case of it.

\begin{proposition}\label{a-a^chi}
Suppose $r>0$, $a\in\mathcal{A}_r^m(\Gp)$. Fix a basis $X_1,\cdots,X_n$ of $\mathfrak{g}$, and define $X^\alpha$ as in Proposition \ref{NormalOrder}. If $\chi$ is an admissible cut-off function, then $a-a^\chi \in\mathcal{A}_0^{m-r}$, and in fact for any strongly RT-admissible tuple $q$,
$$
\sup_{x\in\Gp}\big\|\Df_{q,\xi}^\beta(a- a^\chi)(x,\xi)\big\|
\lesssim_{\beta,q} \mathbf{W}^{m;r}_{|\beta|;q}(a)\cdot\langle\xi\rangle^{m-r-|\beta|}.
$$
\end{proposition}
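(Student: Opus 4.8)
The plan is to reduce to the fundamental tuple $Q$ of (\ref{QFund}), settle the case $\beta=0$ from the Zygmund-space mapping properties of the smoothing operator $\mathrm{Id}-\chi(|\nabla_x|,|\xi|)$, and then handle $|\beta|\ge1$ by commuting $\Df_{Q,\xi}^\beta$ past the $\xi$-dependent cut-off on the $x$-Fourier side via the Leibniz-type property (\ref{Leibniz}). First, by Fischer's Lemma \ref{Fisqq'Lem} (as in the proof of Theorem \ref{Fischer}(1)) it suffices to prove the bound with $q=Q$, the right-hand side then being comparable to the one for a general strongly RT-admissible tuple. I would then record the relevant facts about $\mathrm{Id}-\chi(|\nabla_x|,|\xi|)$: for fixed $\xi$ it is the $x$-Fourier multiplier with symbol $\eta\mapsto 1-\chi(|\eta|,|\xi|)$, which by Definition \ref{AdmCutoff} is, uniformly in $\xi$, a symbol of order $0$ in $|\eta|$; hence by Lemma \ref{CutOffKer} its convolution kernel has $L^1$-norm $\lesssim1$ uniformly in $\xi$, so the operator is bounded on $L^\infty(\Gp)$ and on every Zygmund space. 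Moreover $\chi(\mu,|\xi|)=1$ for $|\mu|\lesssim\delta\size[\xi]$, whence $\mathrm{Id}-\chi(|\nabla_x|,|\xi|)=(\mathrm{Id}-\chi(|\nabla_x|,|\xi|))(\mathrm{Id}-\phi_{(\delta/4)\size[\xi]}(|\nabla_x|))$, so this operator annihilates $x$-frequencies $\lesssim\size[\xi]$.

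For $\beta=0$, combining the factorization above with the vector-valued Zygmund estimate of Corollary \ref{LPZygCor} (Remark \ref{VectZyg}), applied with the normed space $\mathrm{End}(\Hh[\xi])$ and $t\simeq\size[\xi]$, gives $\sup_x\|(a-a^\chi)(x,\xi)\|\lesssim\size[\xi]^{-r}\|a(\cdot,\xi)\|_{r;x}\lesssim\size[\xi]^{m-r}\mathbf{W}^{m;r}_{0;Q}(a)$. Applying the same reasoning to $\Df_Q^\beta a\in\mathcal{A}_r^{m-|\beta|}$ in place of $a$ yields $\sup_x\|(\Df_Q^\beta a-(\Df_Q^\beta a)^\chi)(x,\xi)\|\lesssim\size[\xi]^{m-r-|\beta|}\mathbf{W}^{m;r}_{|\beta|;Q}(a)$, where $(\Df_Q^\beta a)^\chi:=\chi(|\nabla_x|,|\xi|)\Df_Q^\beta a$; this will supply the ``leading'' part of $\Df_Q^\beta(a-a^\chi)$.

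For $|\beta|\ge1$ I would pass to the $x$-Fourier transform $\widetilde{(\cdot)}(\eta,\xi)$ and use that $\Df_{Q,\xi}$ commutes with operations in $x$, so $\widetilde{\Df_Q^\beta a^\chi}(\eta,\xi)=\Df_{Q,\xi}^\beta[\chi(|\eta|,|\cdot|)\widetilde a(\eta,\cdot)](\xi)$. Expanding by the iterated form of (\ref{Leibniz}) for the product of the central scalar symbol $\xi\mapsto\chi(|\eta|,|\xi|)\I[\xi]$ with $\xi\mapsto\widetilde a(\eta,\xi)$, and separating the term in which no difference hits the cut-off, gives
\begin{equation*}
\Df_Q^\beta a^\chi=(\Df_Q^\beta a)^\chi+\sum_{\substack{|\gamma|\ge1,\ |\beta'|\le|\beta|\\ |\gamma|+|\beta'|\ge|\beta|}}c^{\beta}_{\gamma\beta'}\,\mathcal{M}^\gamma_\xi\big[(\Df_Q^{\beta'}a)(\cdot,\xi)\big],
\end{equation*}
where $\mathcal{M}^\gamma_\xi$ is the $x$-Fourier multiplier with symbol $\eta\mapsto\Df_{Q,\xi}^\gamma[\chi(|\eta|,|\cdot|)\I](\xi)$. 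I would then show that for $|\gamma|\ge1$ this operator is $\size[\xi]^{-|\gamma|}$ times an $x$-Fourier multiplier that is bounded on $L^\infty(\Gp)$ and still annihilates $x$-frequencies $\lesssim\size[\xi]$: the $\size[\xi]^{-|\gamma|}$ decay and uniform $L^\infty$-boundedness come from Theorem \ref{Multm} and Lemma \ref{CutOffKer}, using that every component of $Q$ vanishes at $e$; and the frequency localization follows by writing $\chi(|\eta|,|\zeta|)=1-g_\eta(|\zeta|)$ with $g_\eta$ supported in $\langle\zeta\rangle\lesssim|\eta|$ and using $Q(e)=0$ together with Lemma \ref{DiffVanish} to see that $\Df_{Q,\xi}^\gamma[\chi(|\eta|,|\cdot|)\I](\xi)$ vanishes unless $|\eta|\gtrsim\size[\xi]$. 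Granting this, each term is estimated exactly as in the case $\beta=0$ — the low-frequency cut-off converting the $C^r_*$-regularity of $\Df_Q^{\beta'}a$ in $x$ into a gain $\size[\xi]^{-r}$ — so that $\sup_x\|\mathcal{M}^\gamma_\xi[(\Df_Q^{\beta'}a)(\cdot,\xi)]\|\lesssim\size[\xi]^{m-r-|\gamma|-|\beta'|}\mathbf{W}^{m;r}_{|\beta|;Q}(a)\lesssim\size[\xi]^{m-r-|\beta|}\mathbf{W}^{m;r}_{|\beta|;Q}(a)$, using $|\gamma|+|\beta'|\ge|\beta|$. Since $\Df_Q^\beta(a-a^\chi)=(\Df_Q^\beta a-(\Df_Q^\beta a)^\chi)-\sum c^\beta_{\gamma\beta'}\mathcal{M}^\gamma_\xi[\Df_Q^{\beta'}a]$, combining the two estimates finishes the argument.

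The main obstacle is the commutation step for $|\beta|\ge1$: one must verify that each difference falling on the $\xi$-dependent cut-off $\chi(|\nabla_x|,|\xi|)$ genuinely contributes a factor $\size[\xi]^{-1}$, while the residual $x$-multiplier $\mathcal{M}^\gamma_\xi$ remains both bounded on $L^\infty(\Gp)$ and supported away from low $x$-frequencies, so that the Zygmund regularity of $a$ in $x$ can be exchanged for the decisive gain $\size[\xi]^{-r}$. This is exactly where the special structure of the fundamental tuple $Q$ — the Leibniz property (\ref{Leibniz}) and the vanishing at $e$ — together with the kernel bounds of Lemma \ref{CutOffKer} and Theorem \ref{Multm}, do the work; the case $\beta=0$ is essentially the para-product estimate and poses no real difficulty.
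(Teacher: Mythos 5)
Your proposal is correct and follows essentially the same route as the paper's proof: reduction to the fundamental tuple, the Leibniz-type expansion separating $(\Df_Q^{\beta}a)^\chi$ from the terms where a difference hits the cut-off, Theorem \ref{Multm} for the $\size[\xi]^{-|\gamma|}$ gain, and Lemma \ref{DiffVanish} plus the vector-valued Corollary \ref{LPZygCor} to trade the $x$-frequency localization near $\delta\size[\xi]$ for the factor $\size[\xi]^{-r}$. The only cosmetic difference is that the paper inducts on $|\beta|$ starting from $|\beta|=1$ instead of writing the full iterated Leibniz sum.
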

\begin{proof}
We still just need to prove for $q=Q$, the fundamental tuple. By induction, it suffices to prove for $|\beta|=1$. By the Leibniz type property (\ref{Leibniz}), setting $M_i$ to be the dimension of the $i$'th fundamental representation,
$$
\Df_{Q_i,\xi}(a-a^\chi)(x,\xi)
=\big(\Df_{Q_i,\xi}a-(\Df_{Q_i,\xi}a)^\chi\big)(x,\xi)
-\sum_{j,k=1}^{M_i}c^i_{j,k}\Df_{Q_j,\xi}(\chi\cdot\I[\xi])\big(|\nabla_x|,|\xi|\big)\Df_{Q_k}a(x,\xi).
$$
By Lemma \ref{Sigmadelta}, $\Df_{Q_i}a$ satisfies the spectral condition with parameter proportional to $\delta$, so we may simply apply Corollary \ref{LPZygCor} (and Remark \ref{VectZyg}) to conclude that 
$$
\sup_{x\in\Gp}\left\|\big(\Df_{Q_i,\xi}a-(\Df_{Q_i,\xi}a)^\chi\big)(x,\xi)\right\|
\lesssim_{Q} \mathbf{W}^{m;r}_{1;Q}(a)\cdot\langle\xi\rangle^{m-r-1}.
$$
The sum $\sum_{j,k=1}^{M_i}$ needs greater care to handle. For a fixed $\eta\in\DuGp$, Theorem \ref{Multm} ensures that the symbol $\Df_{Q_j,\xi}(\chi\cdot \I[\xi])\big(|\eta|,|\xi|\big)$ is in a bounded subset in $\mathscr{S}^{m-1}_{1,1}$, and by Lemma \ref{DiffVanish} it does not vanish only if 
$$
\frac{1}{\delta}|\eta|-C\leq|\xi|\leq\frac{2}{\delta}|\eta|+C,
$$
where $C$ depends on the algebraic structure of $\Gp$ only. Consequently, $\Df_{Q_j,\xi}(\chi\cdot \I[\xi])\big(|\eta|,|\xi|\big)\neq0$ only if $|\eta|/\size[\xi]\simeq \delta$, so when the highest weight of $\xi$ is large enough, the Fourier support of 
$$
\Df_{Q_j,\xi}(\chi\cdot\I[\xi])\big(|\nabla_x|,|\xi|\big)\Df_{Q_k}a(x,\xi)
$$
with respect to $x$ is contained in $\mathfrak{S}[C\delta\size[\xi],C'\delta\size[\xi]]$, with $C,C'>0$ depending on the algebraic structure of $\Gp$ only. Corollary \ref{LPZygCor} and Remark \ref{VectZyg} then implies that
$$
\sup_{x\in\Gp}\left\|\Df_{Q_j,\xi}(\chi\cdot\I[\xi])\big(|\nabla_x|,|\xi|\big)\Df_{Q_k}a(x,\xi)\right\|
\lesssim_Q
\mathbf{W}^{m;r}_{1;Q}(a)\cdot\langle\xi\rangle^{m-r-1}.
$$
\end{proof}

We next estimate the difference $a^{\chi_1}-a^{\chi_2}$ for different choices of admissible cut-off functions. The result shows that the choice of $\chi$ is in fact very flexible:
\begin{proposition}\label{FreedomForCut-off}
Suppose $r\geq0$, $\chi_1,\chi_2$ are two admissible cut-off functions with parameters $0<\delta_1<\delta_2<1/2$ respectively. Fix a basis $X_1,\cdots,X_n$ of $\mathfrak{g}$, and define $X^\alpha$ as in Proposition \ref{NormalOrder}. Then for $a\in\mathcal{A}_r^m$, the difference $a^{\chi_1}- a^{\chi_2}\in \Sigma^{m-r}_{\delta_2}$. In fact for any strongly RT-admissible tuple $q$,
$$
\big\|X^\alpha_x\Df_{q,\xi}^\beta (a^{\chi_1}- a^{\chi_2})(x,\xi)\big\|
\lesssim_{\alpha,\beta,q}
\mathbf{W}^{m;r}_{|\beta|;q}(a)\cdot\langle\xi\rangle^{m-r+|\alpha|-|\beta|}.
$$
Thus for any $s\in\mathbb{R}$, $T_a^{\chi_1}-T_a^{\chi_2}$ maps $H^{s+m}$ continuously to $H^s$, and
$$
\|(T_a^{\chi_1}-T_a^{\chi_2})u\|_{H^s}
\lesssim_{s,q}\mathbf{W}^{m;r}_{n+2;q}(a)\|u\|_{H^{s+m-r}}.
$$
\end{proposition}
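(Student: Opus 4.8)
The plan is to show that $a^{\chi_1}-a^{\chi_2}$ lies in $\Sigma^{m-r}_{\delta_2}(\Gp)$ with the stated quantitative bound, and then read off the mapping property from Theorem \ref{Stein'}. The starting observation is the identity
$$
(a^{\chi_1}-a^{\chi_2})(x,\xi)=\big(\chi_1(|\nabla_x|,|\xi|)-\chi_2(|\nabla_x|,|\xi|)\big)a(x,\xi),
$$
together with the fact that, since $\delta_1<\delta_2$, the function $\chi_1-\chi_2$ vanishes both where $|\mu|\leq\tfrac{\delta_1}{2}\langle\lambda\rangle$ (there $\chi_1=\chi_2=1$) and where $|\mu|\geq\delta_2\langle\lambda\rangle$ (there $\chi_1=\chi_2=0$). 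Hence, for each fixed $\xi$, the $x$-Fourier spectrum of $(a^{\chi_1}-a^{\chi_2})(\cdot,\xi)$ is contained in the \emph{band} $\mathfrak{S}\big[\tfrac{\delta_1}{2}\langle\xi\rangle,\delta_2\langle\xi\rangle\big]$. This two-sided frequency localization — in contrast with the merely one-sided lower bound available for $a-a^\chi$ in Proposition \ref{a-a^chi} — is the crucial structural point. In particular the spectral condition with parameter $\delta_2<1/2$ is immediate, so it only remains to establish the $\mathscr{S}^{m-r}_{1,1}$-estimate
$$
\big\|X^\alpha_x\Df_{q,\xi}^\beta(a^{\chi_1}-a^{\chi_2})(x,\xi)\big\|\lesssim_{\alpha,\beta,q}\mathbf{W}^{m;r}_{|\beta|;q}(a)\,\langle\xi\rangle^{m-r+|\alpha|-|\beta|},
$$
and by Fischer's Lemma \ref{Fisqq'Lem} it suffices to do so for $q=Q$, the fundamental tuple (\ref{QFund}), whose difference operators satisfy the Leibniz-type property (\ref{Leibniz}).

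First I would handle $\beta=0$. For fixed $\xi$ the field $a(\cdot,\xi)$ belongs to $C^r_*(\Gp;\mathrm{End}(\mathcal{H}_\xi))$ with $\|a(\cdot,\xi)\|_{r;x}\leq\mathbf{W}^{m;r}_{0;Q}(a)\langle\xi\rangle^m$, while $\chi_1(|\nabla_x|,|\xi|)-\chi_2(|\nabla_x|,|\xi|)$ is a mean-zero $x$-Fourier multiplier at scale $\langle\xi\rangle$, supported by the above in the annulus $|\eta|\simeq\langle\xi\rangle$ and obeying the derivative bounds of an admissible cut-off. By the vector-valued Zygmund estimates (Remark \ref{VectZyg}, together with the kernel bounds of Lemma \ref{CutOffKer} used exactly as in the proof of Proposition \ref{LPZyg}), applying such a multiplier to a $C^r_*$ field gains a factor $\langle\xi\rangle^{-r}$; and since the output has $x$-spectrum in a band of width $\simeq\langle\xi\rangle$, each left-invariant derivative $X_i$ then costs only a factor $\langle\xi\rangle$ (the $L^\infty$ Bernstein inequality, again from Lemma \ref{CutOffKer}). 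Combining, $\|X^\alpha_x(a^{\chi_1}-a^{\chi_2})(x,\xi)\|\lesssim_\alpha\mathbf{W}^{m;r}_{0;Q}(a)\langle\xi\rangle^{m-r+|\alpha|}$. It is worth emphasizing that \emph{no logarithmic loss} occurs here — in contrast with Proposition \ref{ParaDiff1} and Corollary \ref{LPZygCor} — precisely because the lower frequency cut-off restricts the multiplier to $O(1)$ dyadic scales rather than to $\log\langle\xi\rangle$ of them; this is also why one obtains the clean order $m-r$ (rather than $m-r+\varepsilon$) even when $r=0$.

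For general $\beta$ I would induct on $|\beta|$, peeling off difference operators exactly as in the proof of Proposition \ref{a-a^chi}: by the Leibniz identity (\ref{Leibniz}) for the $\Df_{Q_i}$, the symbol $\Df_{Q,\xi}^\beta(a^{\chi_1}-a^{\chi_2})$ is a finite sum of terms of two kinds. The ``diagonal'' ones are $(\Df_{Q,\xi}^\beta a)^{\chi_1}-(\Df_{Q,\xi}^\beta a)^{\chi_2}$ with $\Df_{Q,\xi}^\beta a\in\mathcal{A}^{m-|\beta|}_r$, disposed of by the $\beta=0$ case applied to $\Df_{Q,\xi}^\beta a$. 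The ``mixing'' ones have the schematic form $\big[\Df_{Q,\xi}^{\beta'}\big((\chi_1-\chi_2)\cdot\mathrm{Id}_{\mathcal{H}_\xi}\big)(|\nabla_x|,|\xi|)\big]\,\Df_{Q,\xi}^{\beta''}a(x,\xi)$ with $|\beta'|\geq1$ and $|\beta'|+|\beta''|\geq|\beta|$; here Lemma \ref{DiffVanish} ensures that differencing the cut-off symbol in $\xi$ preserves a two-sided $x$-spectral localization at scale $\simeq\langle\xi\rangle$ (the $\eta$-support is untouched from above, and for $|\beta'|\geq1$ the symbol now also vanishes for $|\eta|$ small), while Theorem \ref{Multm} shows this cut-off multiplier is $\langle\xi\rangle^{-|\beta'|}$ times an operator bounded on $L^\infty$ uniformly in $\xi$; together with $\|\Df_{Q,\xi}^{\beta''}a(\cdot,\xi)\|_{r;x}\lesssim\mathbf{W}^{m;r}_{|\beta''|;Q}(a)\langle\xi\rangle^{m-|\beta''|}$ and the argument of the previous paragraph (the annular localization producing the $\langle\xi\rangle^{-r}$ gain and $X^\alpha_x$ costing $\langle\xi\rangle^{|\alpha|}$), each such term is bounded by $\mathbf{W}^{m;r}_{|\beta|;Q}(a)\langle\xi\rangle^{m-r+|\alpha|-|\beta|}$. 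Summing, $a^{\chi_1}-a^{\chi_2}\in\Sigma^{m-r}_{\delta_2}(\Gp)$ and $\mathbf{M}^{m-r}_{k,n+2;q}(a^{\chi_1}-a^{\chi_2})\lesssim_{k,q}\mathbf{W}^{m;r}_{n+2;q}(a)$ for every $k$, the $x$-derivatives adding nothing to the right-hand side because the $C^r_*$-norm already controls them. Finally, since $\delta_2<1/2$, Theorem \ref{Stein'} applies and gives $T^{\chi_1}_a-T^{\chi_2}_a=\Op(a^{\chi_1}-a^{\chi_2})\colon H^{s+m-r}\to H^s$ with $\|(T^{\chi_1}_a-T^{\chi_2}_a)u\|_{H^s}\lesssim_{s,q}\mathbf{M}^{m-r}_{[s_+]+1,n+2;q}(a^{\chi_1}-a^{\chi_2})\|u\|_{H^{s+m-r}}\lesssim_{s,q}\mathbf{W}^{m;r}_{n+2;q}(a)\|u\|_{H^{s+m-r}}$.

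The step I expect to be the main obstacle is the $\beta=0$ estimate and its propagation through the Leibniz peeling: one must make rigorous that $x$-differentiation and the gain from $C^r_*$-regularity can be carried out \emph{simultaneously} on a symbol whose $x$-frequency lives in a band $\simeq\langle\xi\rangle$, with the clean exponent $\langle\xi\rangle^{|\alpha|-r}$ and no logarithmic loss, and one must track the two-sided frequency localization carefully through all the mixing terms — which is exactly where the lower cut-off, and hence Lemma \ref{DiffVanish}, is indispensable.
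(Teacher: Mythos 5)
Your proposal is correct and follows essentially the same route as the paper: the paper's proof is the single observation that the $x$-Fourier support of $\chi_1-\chi_2$ lies in the two-sided band $\mathfrak{S}[\delta_1\size[\xi]/2,\delta_2\size[\xi]]$, combined with Corollary \ref{LPZygCor} and Remark \ref{VectZyg}, and you have simply filled in the details (the Leibniz peeling in $\beta$ as in Proposition \ref{a-a^chi}, and the final appeal to Theorem \ref{Stein'}) that the paper leaves implicit. Your remark that the two-sided localization is what removes the logarithmic loss present in Proposition \ref{ParaDiff1} is exactly the right point.
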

\begin{proof}
This follows immediately from an observation and Corollary \ref{LPZygCor} (also Remark \ref{VectZyg}): the Fourier support of $\chi_1(\eta,\xi)-\chi_2(\eta,\xi)$ with respect to $x$ is in the set $\mathfrak{S}[\delta_1\size[\xi],\delta_2\size[\xi]]$.
\end{proof}
Thus, given a rough symbol $a\in\mathcal{A}^m_r$ with $r>0$, it is legitimate to abbreviate the dependence on admissible cut-off function, and define \emph{the} para-differential operator $T_a$ modulo $\Op\big(\Sigma^{m-r}_{<1/2}\big)$.

\subsection{Symbolic Calculus of Para-differential Operators}
In this subsection, we state the core results in our toolbox: the composition and adjoint formula for para-differential operators. The spirit of proof does not differ much from that in Métivier's book \cite{Met2008}, but technically we have to be extra careful dealing with the operator norm of symbols. The spectral localization property i.e. Corollary \ref{SpecPrd} and results obtained in Subsection 4.1. will be used repeatedly.

\begin{definition}
Let $r>0$ be a real number. Let $q$ be a RT-admissible tuple, and $X_q^{(\alpha)}$ be the left-invariant differential operators as in Proposition \ref{TaylorGp}. If $a,b$ are any symbol of at least $C^r_*$ regularity in $x$, define
$$
(a\#_{r;q} b)(x,\xi):=\sum_{\alpha:|\alpha|\leq r}\Df_{q,\xi}^\alpha a(x,\xi)\cdot X_{q,x}^{(\alpha)} b(x,\xi).
$$
\end{definition}

We now state the first main theorem of this section.

\begin{theorem}[Composition of Para-differential Operators, I]\label{Compo1}
Suppose $r>0$, $m,m'$ are real numbers. Let $q$ be a RT-admissible tuple, whose components are linear combinations of the fundamental tuple $Q$ of $\Gp$, and let $X_q^{(\alpha)}$ be the left-invariant differential operators as in Proposition \ref{TaylorGp}. Given $a\in \mathscr{S}^m_{1,1}$, $b\in\mathcal{A}_r^{m'}$, for an admissible cut-off function $\chi$ with parameter $\delta$ sufficiently small (depending on the magnitude of $r$ and the algebraic structure of $\Gp$ only),
$$
\Op(a)\circ\Op(b^\chi)
-\Op(a\#_{r;q} b^\chi)\in \Op \mathscr{S}^{m+m'-r}_{1,1}(\Gp).
$$
More precisely, the operator norm of $\Op(a)\circ\Op(b^\chi)
-\Op(a\#_{r;q} b^\chi)$ for $H^{s+m+m'-r}\to H^s$ is bounded by
$$
C_s\mathbf{M}^{m}_{l,n+2;q}(a)\mathbf{W}^{m';r}_{l;q}(b),
$$
where the integer $l$ does not depend on $a,b$.
\end{theorem}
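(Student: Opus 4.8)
The plan is to pass from operators to symbols, carry out the exact Taylor expansion on $\Gp$ of the $x$-smooth factor $b^\chi$, recognize the leading terms as $a\#_{r;q}b^\chi$, and then show that the exact Taylor remainder is a symbol of class $\mathscr{S}^{m+m'-r}_{1,1}$; Stein's Theorem \ref{SteinTheorem} converts the latter into the operator bound for $s>0$. Throughout I would take $q=Q$, the fundamental tuple \eqref{QFund}, so that the RT difference operators obey the Leibniz-type identity \eqref{Leibniz} and all difference classes are intrinsic; a general $q$ with components linear in $Q$ is reached at the end, since changing the tuple alters $a\#_{r;q}b^\chi$ only by an element of $\mathscr{S}^{m+m'-r}_{1,1}$ (Fischer's Lemma \ref{Fisqq'Lem} together with the compatibility of the operators $X_q^{(\alpha)}$).

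First I would compute the symbol. By Schur orthogonality $\Op(b^\chi)\xi=\xi\cdot b^\chi(\cdot,\xi)$ (entry-wise matrix product), hence $\sigma(x,\xi)=\xi^*(x)\,\Op(a)\big[\xi(\cdot)\,b^\chi(\cdot,\xi)\big](x)$; writing $\Op(a)$ through its scalar right convolution kernel $\mathcal{K}_a(x,\cdot)$ as in \eqref{Ker-Conv} and changing variables $z=w^{-1}x$ gives the closed form
$$
\sigma(x,\xi)=\int_{\Gp}\mathcal{K}_a(x,z)\,\xi^*(z)\,b^\chi(xz^{-1},\xi)\,dz .
$$
Applying the Taylor formula of Proposition \ref{TaylorGp} to $b^\chi(\cdot,\xi)$ at $x$, with $N=\lfloor r\rfloor+1$, and using that by the very definition \eqref{Diff_qa} of the RT difference operator $\int_{\Gp}\mathcal{K}_a(x,z)\xi^*(z)q^\alpha(z)\,dz=\Df_{q,\xi}^\alpha a(x,\xi)$, the terms with $|\alpha|<N$ assemble exactly into $(a\#_{r;q}b^\chi)(x,\xi)$ (the conditions $|\alpha|<N$ and $|\alpha|\le r$ coincide, and there is no integer in $(r,N)$). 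Thus $\sigma=(a\#_{r;q}b^\chi)+\mathcal{R}_N$ with
$$
\mathcal{R}_N(x,\xi)=\int_{\Gp}\mathcal{K}_a(x,z)\,\xi^*(z)\,R_N\big(b^\chi(\cdot,\xi);x,z^{-1}\big)\,dz ,
$$
and the whole statement reduces to $\mathcal{R}_N\in\mathscr{S}^{m+m'-r}_{1,1}(\Gp)$ with $\mathscr{S}$-seminorms controlled by $\mathbf{M}^m_{l,n+2;q}(a)\,\mathbf{W}^{m';r}_{l;q}(b)$.

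To handle $\mathcal{R}_N$ I would dyadically decompose $a=\sum_{k\ge0}a_k$, $a_k=a\,\vartheta_k(|\xi|)$, exactly as in the proof of Theorem \ref{SteinTheorem}, so that each $\mathcal{K}_{a_k}(x,\cdot)$ is a genuine function with $\big\|\big(2^k\mathrm{dist}(\cdot,e)\big)^L\mathcal{K}_{a_k}(x,\cdot)\big\|_{L^1}\lesssim_L\mathbf{M}^m_{0,L';q}(a)\,2^{km}$ ($L'$ depending on $L,n$) from the weighted kernel bounds established there. Two facts then drive the estimate. Quantitatively, $\big\|R_N(b^\chi(\cdot,\xi);x,z^{-1})\big\|\lesssim|b^\chi(\cdot,\xi)|_{C^N}\,\mathrm{dist}(z,e)^N$ and, by Proposition \ref{ParaDiff1} in the vector-valued form of Remark \ref{VectZyg}, $|b^\chi(\cdot,\xi)|_{C^N}\lesssim\mathbf{W}^{m';r}_{0;q}(b)\,\langle\xi\rangle^{m'+N-r}$, so the $k$-th contribution to $\mathcal{R}_N$ is $O\big(\langle\xi\rangle^{m'+N-r}\,2^{k(m-N)}\big)$ times the norms. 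Structurally, $z\mapsto R_N(b^\chi(\cdot,\xi);x,z^{-1})$ inherits the spectral condition of $b^\chi$: left translation and inversion preserve the set of representations appearing, and the degree-$N$ polynomial correction has a fixed finite Fourier support which for $\langle\xi\rangle$ large is absorbed into $\{|\eta|\le\delta\langle\xi\rangle\}$. Hence, by the spectral localization of Corollary \ref{SpecPrd}, $R_N(\cdots)\mathcal{K}_{a_k}(x,z)$ has $z$-Fourier support in $\{|\eta|\le 2^k+\delta\langle\xi\rangle\}$ and outside $\{|\eta|<c(2^{k-1}-\delta\langle\xi\rangle)\}$, so $\int\mathcal{K}_{a_k}(x,z)\xi^*(z)R_N(\cdots)\,dz$ vanishes unless $|\xi|$ meets this band, which (using $\delta<1/2$) forces $2^k\simeq\langle\xi\rangle$. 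Only boundedly many $k$ survive, summation gives $\|\mathcal{R}_N(x,\xi)\|\lesssim\langle\xi\rangle^{m+m'-r}$; distributing $X_x^\beta$ by Leibniz over $R_N$ (producing $|b^\chi(\cdot,\xi)|_{C^{N+|\beta_1|}}\lesssim\langle\xi\rangle^{m'+N+|\beta_1|-r}$) and over the symbol inside $\mathcal{K}_a$ ($X_x^{\beta_2}a\in\mathscr{S}^{m+|\beta_2|}_{1,1}$) yields $\|X_x^\beta\mathcal{R}_N(x,\xi)\|\lesssim\langle\xi\rangle^{m+m'-r+|\beta|}$; and for the $\xi$-differences one iterates the Leibniz-type identity \eqref{Leibniz}, each $\Df_Q$ either inserting a factor $Q(z)=O(\mathrm{dist}(z,e))$ against $\mathcal{K}_{a_k}$ (a gain $\langle\xi\rangle^{-1}$) or falling on $b^\chi$, where $\Df_{Q,\xi}b^\chi$ again has reduced order $m'-1$ and a spectral condition (Lemma \ref{Sigmadelta}(3), Proposition \ref{ParaDiff1}); the finitely many cross-terms only gain more. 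Tracking how many $x$-derivatives and $\xi$-differences are consumed — first to reach $\mathscr{S}^{m+m'-r}_{1,1}$-membership, then to apply Stein's Theorem \ref{SteinTheorem} — shows an integer $l$ depending only on $s,r,n$ works.

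Two points deserve care. The smallness of $\delta$ (in terms of $r$, hence $N$, and of the root-system constants of Corollary \ref{SpecPrd} and Lemma \ref{Sigmadelta}) is exactly what keeps the spectral-localization mechanism alive through the difference estimates: each application of $\Df_Q$ degrades the spectral parameter of $b^\chi$ by a structural constant (Lemma \ref{Sigmadelta}(3)), and every parameter occurring in the finitely many required estimates must stay below $1/2$ so that the localization $2^k\simeq\langle\xi\rangle$, and hence the convergence of the geometric sum over $k$, survives. The step I expect to be the main obstacle is precisely this full set of $\xi$-difference estimates for $\mathcal{R}_N$: because the symbols are operator-valued and $\Df_Q$ does not pass freely through the matrix factors, one must expand $\Df_Q^\beta$ of the product $\xi^*(z)\cdot R_N(b^\chi(\cdot,\xi);x,z^{-1})$ term by term, verify that each resulting term gains the claimed order uniformly in $k$, and check that the attendant kernels still obey the weighted $L^1$ bounds used above. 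Finally, the displayed operator bound is for $s>0$ (from Stein's Theorem \ref{SteinTheorem}); for $s\le0$ additional structure on $a$ (a spectral condition, as in Theorem \ref{Stein'}) would be required, which is the setting of the companion composition statement.
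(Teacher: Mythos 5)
Your proposal is correct and follows essentially the same route as the paper: compute the composed symbol through the right-convolution kernel, Taylor-expand $b^\chi$ on $\Gp$ to identify the leading terms with $a\#_{r;q}b^\chi$, dyadically decompose $a$, and use the spectral localization of Corollary \ref{SpecPrd} together with the improved growth of $b^\chi$ (Proposition \ref{ParaDiff1}) and the weighted kernel bounds from Stein's theorem to place the Taylor remainder in $\mathscr{S}^{m+m'-r}_{1,1}$, finishing by Leibniz induction for $x$-derivatives and $\xi$-differences. The only organizational difference is that the paper first rewrites $b^\chi$ as a convolution with the kernel of a second admissible cut-off $\chi_1$ of parameter $2\delta$ (so the localization $2^j\simeq\size[\xi]$ and the remainder bounds come from Corollary \ref{ConvVanish}), whereas you read the spectral condition directly off the Taylor remainder of $b^\chi$; the two devices are equivalent here, and your closing caveat that the displayed operator bound is obtained for $s>0$ via Theorem \ref{SteinTheorem} matches what the paper's argument actually yields.
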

\begin{proof}
We observe a simple fact: for symbols $\sigma_1,\sigma_2$ of type (1,1), the symbol of $\Op(\sigma_1)\circ\Op(\sigma_2)$ is
\begin{equation}\label{OpaOpb}
\int_{\Gp}\mathcal{K}_{\sigma_1}(x,y)\xi^*(y)\sigma_2(xy^{-1},\xi)dy,
\end{equation}
where $\mathcal{K}_{\sigma_1}$ is the convolution kernel of $\Op(\sigma_1)$, defined by (\ref{Symbol-Ker})-(\ref{Ker-Conv}).

Let us now examine the symbol $\sigma$ of $\Op(a)\circ\Op(b^\chi)$. We start from $b^\chi(x,\xi)$: since $b^\chi(x,\xi)$ satisfies the spectral condition with parameter $\delta$, we can choose another admissible cut-off function $\chi_1(\mu,\lambda)$ with parameter $2\delta$, and find 
$$
\Ft{b^\chi}(\eta,\xi)=\chi_1(|\eta|,|\xi|)\Ft{b^\chi}(\eta,\xi.
$$
Converting back to the physical space, this implies
$$
b^\chi(x,\xi)=\int_\Gp \check{\chi}_1(z^{-1}x,|\xi|)b^\chi(z)dz.
$$
Here with a little abuse of notation, we write $\check{\chi}_1(y,|\xi|)$ for the Fourier inversion of $\chi_1(|\eta|,|\xi|)\I[\eta]$ with respect to $\eta$. Note that $\chi_1(|\eta|,|\xi|)\I[\eta]$ is a scaling on each representation space $\Hh[\eta]$, so the order of convolution is irrelevant. By formula (\ref{OpaOpb}), 
$$
\begin{aligned}
\sigma(x,\xi)
&=\int_{\Gp}\mathcal{K}_{a}(x,y)\xi^*(y)\left(\int_\Gp \check{\chi}_1(z^{-1}xy^{-1},|\xi|)b^\chi(z,\xi)dz\right)dy\\
&=\int_{\Gp}\mathcal{K}_{a}(x,y)\xi^*(y)\left(\int_\Gp \check{\chi}_1(zy^{-1},|\xi|)b^\chi(xz^{-1},\xi)dz\right)dy.
\end{aligned}
$$
Using Proposition \ref{TaylorGp} with $N$ being the smallest integer $>r$, we compute the inner integral as 
$$
\begin{aligned}
\int_\Gp \check{\chi}_1(zy^{-1},|\xi|)b^\chi(xz^{-1},\xi)dz
&=\sum_{|\alpha|\leq r}\left(\int_\Gp \check{\chi}_1(zy^{-1},|\xi|)q^\alpha(z)dz\right)X^{(\alpha)}_{q,x}b^\chi(x,\xi)\\
&\quad+\int_\Gp \check{\chi}_1(zy^{-1},|\xi|)R_N\big(b^\chi(\cdot,\xi);x,z^{-1}\big)dz
\end{aligned}
$$
The integrals in the sum $\sum_{|\alpha|\leq r}$ are of convolution type. By Proposition \ref{SpecPrd0}, the Fourier support of $q^\alpha$ for $|\alpha|\leq r$ is within $\mathfrak{S}[0,C]$, where $C$ depends on $r$ and the algebraic structure of $\Gp$ only. Thus, for $|\xi|\geq 2C/\delta$, we have
$$
\int_\Gp \check{\chi}_1(zy^{-1},|\xi|)q^\alpha(z)dz=q^\alpha(y),
$$
and consequently,
\begin{equation}\label{sigmaTemp1}
\begin{aligned}
\sigma(x,\xi)
&=\sum_{|\alpha|\leq r}\int_{\Gp}\mathcal{K}_{a}(x,y)\xi^*(y)q^\alpha(y)X^{(\alpha)}_{q,x}b^\chi(x,\xi)dy\\
&\quad+\int_{\Gp}\mathcal{K}_{a}(x,y)\xi^*(y)
\left(\int_\Gp \check{\chi}_1(zy^{-1},|\xi|)R_N\big(b^\chi(\cdot,\xi);x,z^{-1}\big)dz\right)dy\\
&=:\sum_{|\alpha|\leq r}\Df_{q,\xi}^\alpha a(x,\xi)\cdot X_{q,x}^{(\alpha)}b^\chi(x,\xi)+\varsigma_N(x,\xi).
\end{aligned}
\end{equation}

The sum in the right-hand-side of (\ref{sigmaTemp1}) is just $a\#_{r;q}b^\chi$, which is in $\mathscr{S}^{m+m'}_{1,1}$. There is then only one assertion to be verified: the term $\varsigma_N(x,\xi)$ is of class $\mathscr{S}^{m+m'-r}_{1,1}$, from which it follows that the symbol $\sigma$ really is of class $\mathscr{S}^{m+m'}_{1,1}$. Note that the lower frequency terms give smoothing symbols in $\mathscr{S}^{-\infty}$, so we just have to verify the assertion for $|\xi|\geq 2C/\delta$. 

Let us first show that $\varsigma_N(x,\xi)\lesssim\size[\xi]^{m+m'-r}$. We can change the order of integration as follows:
\begin{equation}\label{sigmaTemp2}
\begin{aligned}
\varsigma_N(x,\xi)
=\int_{\Gp}
\left(\int_\Gp \mathcal{K}_a(x,y)\xi^*(y)\check{\chi}_1(zy^{-1},|\xi|)dy\right)R_N\big(b^\chi(\cdot,\xi);x,z^{-1}\big)dz.
\end{aligned}
\end{equation}
Just as in the proof of Stein's theorem, we decompose $\mathcal{K}_a=\sum_{j=0}^\infty \mathcal{K}_{a_j}$, where $\mathcal{K}_{a_j}$ is the convolution kernel of $a_j(x,\xi):=a(x,\xi)\vartheta_j\big(|\xi|\big)$, with $\vartheta$ as in (\ref{LPDisc}). For the inner integral with respect to $y$, each summand
$$
P_j(x,z):=\int_\Gp \mathcal{K}_{a_j}(x,y)\xi^*(y)\check{\chi}_1(zy^{-1},|\xi|)dy
$$
is of convolution type. By spectral property i.e. Corollary \ref{SpecPrd}, the Fourier support of $\mathcal{K}_{a_j}(x,y)\xi^*(y)$ with respect to $y$ is in the set 
$$
\mathfrak{S}\Big[c\big|2^j-|\xi|\big|,2^j+|\xi|\Big].
$$
Hence $\Ft{P_j}(x,\eta)$ (the Fourier transform is taken with respect to $z$) does not vanish only for 
$$
c\big|2^j-|\xi|\big|\leq 2\delta\size[\xi].
$$
If we choose $\delta\ll c$, this implies that $2^j$ must be comparable with $\size[\xi]$. As a result,
$$
\varsigma_N(x,\xi)
=\sum_{j:2^j\simeq\size[\xi]}\int_{\Gp}\mathcal{K}_{a_j}(x,y)\xi^*(y)
\left(\int_\Gp \check{\chi}_1(zy^{-1},|\xi|)R_N\big(b^\chi(\cdot,\xi);x,z^{-1}\big)dz\right)dy.
$$
Just as inequalitites (\ref{SteinIneq1})-(\ref{SteinIneq2}) in the proof of Stein's theorem, the estimate
\begin{equation}\label{sigmaTemp3}
\big(1+|2^j\dist(y,e)|^2\big)^{L/2}\cdot|\mathcal{K}_{a_j}(x,y)|
\lesssim\mathbf{M}^m_{0,L;Q}(a)\cdot2^{jm+jn}
\end{equation}
is valid for $L\in\mathbb{N}$. The remainder in Taylor's formula can be estimated using the remark following Proposition \ref{ParaDiff1}: since $N>r$, we have, by the improved growth estimate (Proposition \ref{ParaDiff1}),
$$
\begin{aligned}
\big\|R_N\big(b^\chi(\cdot,\xi);x,z^{-1}\big)\big\|
&\lesssim \|b^\chi(x,\xi)\|_{C^N_x;\mathrm{End}(\Hh[\xi])}\cdot\dist(z,e)^N \\
&\lesssim \mathbf{W}^{m;r}_{0;Q}(b)\cdot\size[\xi]^{m'+N-r}\dist(z,e)^N.
\end{aligned}
$$
Here the norm is the operator norm on $\mathrm{End}(\Hh[\xi])$, and the implicit constants do not depend on $\xi$. By Corollary \ref{ConvVanish}, noting the integrand is smooth and equals $O(\dist(z,e)^N)$, we thus estimate
\begin{equation}\label{sigmaTemp4}
\begin{aligned}
\int_\Gp \check{\chi}_1(zy^{-1},|\xi|)R_N\big(b^\chi(\cdot,\xi);x,z^{-1}\big)dz
=\sum_{k=0}^N\upsilon_k(x,y,\xi),
\end{aligned}
\end{equation}
where each $\upsilon_k(x,y,\xi)$ is smooth in $x$, and satisfies
$$
\|\upsilon_k(x,y,\xi)\|
\lesssim\mathbf{W}^{m;r}_{0;Q}(b)\cdot\size[\xi]^{m'+k-r}\dist(y,e)^k.
$$
Combining every inequality (\ref{sigmaTemp3})-(\ref{sigmaTemp4}), using Lemma \ref{Fisqq'Lem}, we estimate the operator norm of $\varsigma_N(x,\xi)$ as (noting that $\|\xi^*(y)\|\equiv1$)
$$
\begin{aligned}
\|\varsigma_N(x,\xi)\|
&\lesssim \sum_{j:2^j\simeq\size[\xi]}\sum_{k=0}^N
\left\|\int_{\Gp}\mathcal{K}_{a_j}(x,y)\xi^*(y)\upsilon_k(x,y,\xi)dy\right\|\\
&\lesssim \sum_{k=0}^N\mathbf{M}^m_{0,n+1;Q}(a)\cdot\mathbf{W}^{m;r}_{0;Q}(b)\cdot\size[\xi]^{m'+k-r}
\sum_{j:2^j\simeq\size[\xi]}2^{jm+jn}
\int_{\Gp}\frac{\min\left(\dist(y,e)^k,1\right)dy}{\big(1+|2^j\dist(y,e)|^2\big)^{(n+1)/2}}\\
&\lesssim \mathbf{M}^m_{0,n+1;Q}(a)\cdot\mathbf{W}^{m;r}_{0;Q}(b)\cdot\size[\xi]^{m+m'-r}.
\end{aligned}
$$
This shows that the second sum in (\ref{sigmaTemp1}) is controlled by $\size[\xi]^{m+m'-r}$.

Finally, we use induction on the order of differentiation in (\ref{OpaOpb}). Noticing that $\Df_{Q^\beta,\xi}\big(\xi^*(y)\big)=Q^\beta(y)\xi^*(y)$, we have, for example, given any $X\in\mathfrak{g}$,
\begin{equation}\label{sigmaTemp5}
X_x\sigma(x,\xi)
=\int_{\Gp}X_x\mathcal{K}_a(x,y)\xi^*(y)b^\chi(xy^{-1},\xi)dy
+\int_{\Gp}\mathcal{K}_a(x,y)\xi^*(y)X_x\big(b^\chi(xy^{-1},\xi)\big)dy
\end{equation}
and by Leibniz type property of the fundamental tuple $Q$,
\begin{equation}\label{sigmaTemp6}
\begin{aligned}
\Df_{Q_i,\xi}\sigma(x,\xi)
&=\int_{\Gp}\mathcal{K}_a(x,y)Q_i(y)\xi^*(y)b^\chi(xy^{-1},\xi)dy+\int_{\Gp}\mathcal{K}_a(x,y)\xi^*(y)(\Df_{Q_i,\xi}b^\chi)(xy^{-1},\xi)dy\\
&\quad+\sum_{j,k}c_{j,k}^i\int_{\Gp}\mathcal{K}_a(x,y)Q_j(y)\xi^*(y)(\Df_{Q_k,\xi}b^\chi)(xy^{-1},\xi)dy.
\end{aligned}
\end{equation}
Each of the above integrals can be estimated in exactly the same method, with $m$ replaced by $m+1$ in (\ref{sigmaTemp5}) and by $m-1$ in (\ref{sigmaTemp6}); we just have to observe that $X_x\mathcal{K}_a(x,y)$ is the convolution kernel of $X_xa(x,\xi)$, and $\mathcal{K}_a(x,y)Q_i(y)$ is the convolution kernel of $\Df_{Q_i,\xi}a(x,\xi)$.
\end{proof}

Now if $\Op(a)$ is in fact a para-differential operator corresponding to some rough symbol of class $\mathcal{A}^m_r$, we can actually strengthen Theorem \ref{Compo1}, and deduce the composition formula of para-differential operators: 
\begin{theorem}[Composition of Para-differential Operators, II]\label{Compo2}
Suppose $r>0$, $m,m'$ are real numbers. Let $q$ be a RT-admissible tuple, whose components are are linear combinations of the fundamental tuple $Q$ of $\Gp$, and let $X_q^{(\alpha)}$ be the left-invariant differential operators as in Proposition \ref{TaylorGp}. Given $a\in \mathcal{A}_r^{m}$, $b\in\mathcal{A}_r^{m'}$, it follows that
$$
T_a\circ T_b-T_{a\#_{r,q}b}
\in\Op\Sigma_{<1/2}^{m+m'-r}.
$$
More precisely, the operator norm of $T_a\circ T_b-T_{a\#_{r,q}b}$ for $H^{s+m+m'-r}\to H^s$ is bounded by
$$
C_s\mathbf{W}^{m;r}_{l;q}(a)\mathbf{W}^{m';r}_{l;q}(b),
$$
where the integer $l$ does not depend on $a,b$.
\end{theorem}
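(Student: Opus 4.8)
The plan is to bootstrap from Theorem \ref{Compo1}. Since $r>0$ and $a\in\mathcal{A}^m_r$, Proposition \ref{ParaDiff1} places $a^\chi$ in $\Sigma^m_\delta\subset\mathscr{S}^m_{1,1}$, and the reduced‑order estimate there gives $\mathbf{M}^m_{k,l;q}(a^\chi)\lesssim\mathbf{W}^{m;r}_{l;q}(a)$ for every $k$. First I would apply Theorem \ref{Compo1} with the $\mathscr{S}^m_{1,1}$‑symbol taken to be $a^\chi$ and the rough symbol taken to be $b$, obtaining, for $\chi$ of sufficiently small parameter $\delta$,
$$
T_a\circ T_b=\Op(a^\chi)\circ\Op(b^\chi)=\Op\Big(\sum_{|\alpha|\le r}\Df^\alpha_{q,\xi}(a^\chi)\cdot X^{(\alpha)}_{q,x}(b^\chi)\Big)+R_1,\qquad R_1\in\Op\mathscr{S}^{m+m'-r}_{1,1},
$$
with $R_1$ bounded $H^{s+m+m'-r}\to H^s$ by $C_s\mathbf{M}^m_{l,n+2;q}(a^\chi)\mathbf{W}^{m';r}_{l;q}(b)\lesssim C_s\mathbf{W}^{m;r}_{l;q}(a)\mathbf{W}^{m';r}_{l;q}(b)$. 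The next task is to upgrade $R_1$ to $\Sigma^{m+m'-r}$: since both $a^\chi$ and $b^\chi$ carry the spectral condition with parameter $\delta$, re‑running the kernel computation (\ref{OpaOpb}) and the Taylor expansion in the proof of Theorem \ref{Compo1} while tracking $x$‑spectra with Corollary \ref{SpecPrd} and Lemma \ref{Sigmadelta}, one sees that the full symbol of $T_a\circ T_b$ — and likewise $\sum_\alpha\Df^\alpha_{q,\xi}(a^\chi)X^{(\alpha)}_{q,x}(b^\chi)$ — has $x$‑Fourier support inside $\{|\eta|\lesssim\delta\size[\xi]\}$, so $R_1\in\Sigma^{m+m'-r}_{C\delta}$.

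The heart of the matter is to identify $\sum_\alpha\Df^\alpha_{q,\xi}(a^\chi)\,X^{(\alpha)}_{q,x}(b^\chi)$ with the symbol of $T_{a\#_{r,q}b}=\sum_\alpha\Op\big((\Df^\alpha_{q,\xi}a\cdot X^{(\alpha)}_{q,x}b)^\chi\big)$ modulo $\Sigma^{m+m'-r}_{<1/2}$, and I would do this with two reductions. Because $X^{(\alpha)}_q$ lies in the enveloping algebra, it commutes with every function of the Casimir $\Delta$, hence $X^{(\alpha)}_{q,x}(b^\chi)=(X^{(\alpha)}_{q,x}b)^\chi$ exactly. And since the components of $q$ are linear combinations of the fundamental tuple $Q$, the Leibniz identity (\ref{Leibniz}), used precisely as in the proof of Proposition \ref{a-a^chi}, gives $\Df^\alpha_{q,\xi}(a^\chi)=(\Df^\alpha_{q,\xi}a)^\chi+d_\alpha$ where $d_\alpha$ is a finite sum of cut‑off‑type terms $\Df^\beta_{q,\xi}(\chi\cdot\I[\xi])(|\nabla_x|,|\xi|)\cdot\Df^\gamma_{q,\xi}a$, each supported in $x$‑frequency in the transition region $|\eta|\sim\delta\size[\xi]$; on that thin annulus the vector‑valued Zygmund estimate of Corollary \ref{LPZygCor} (Remark \ref{VectZyg}) converts the $C^r_*$ regularity of $a$ in $x$ into the gain $(\delta\size[\xi])^{-r}$, so $d_\alpha\in\Sigma^{m-|\alpha|-r}_{C\delta}$ and $d_\alpha\,X^{(\alpha)}_{q,x}(b^\chi)\in\Sigma^{m+m'-r}_{C\delta}$. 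This reduces everything to the term‑by‑term claim
$$
(\Df^\alpha_{q,\xi}a)^\chi\,(X^{(\alpha)}_{q,x}b)^\chi-\big(\Df^\alpha_{q,\xi}a\cdot X^{(\alpha)}_{q,x}b\big)^\chi\in\Sigma^{m+m'-r}_{<1/2}.
$$

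This ``product of cut‑offs versus cut‑off of the product'' comparison is the step I expect to be the main obstacle. Writing $A=\Df^\alpha_{q,\xi}a\in\mathcal{A}^{m-|\alpha|}_r$ and $B=X^{(\alpha)}_{q,x}b$ (of order $m'$ and of class $C^{r-|\alpha|}_*$ in $x$), one splits the difference as $\big(A^\chi B^\chi-(A^\chi B^\chi)^\chi\big)+\big((A^\chi B^\chi-AB)^\chi\big)$ and further $A^\chi B^\chi-AB=-A^\chi(B-B^\chi)-(A-A^\chi)B$, and estimates each piece by a Littlewood--Paley decomposition in $x$ — exactly as in the Euclidean treatment in \cite{Met2008}, \cite{Hormander1997}, but with Corollary \ref{SpecPrd} now controlling how products of dyadic $x$‑blocks localise. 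The outcome is that each piece is either (i) concentrated in the single annulus $|\eta|\sim\delta\size[\xi]$, where the $C^r_*$‑, resp.\ $C^{r-|\alpha|}_*$‑, regularity of $A$, resp.\ $B$, supplies the factor $(\delta\size[\xi])^{-r}$, resp.\ $(\delta\size[\xi])^{-(r-|\alpha|)}$, bringing the order down to $\le m+m'-r$ with no logarithmic loss (a single annulus incurs none); or (ii) a high-high remainder interaction producing order $\le m+m'-2r$, which even after a further application of $\chi$ (costing at most the logarithm of the $r=0$ case of Corollary \ref{LPZygCor}) is still $\lesssim\size[\xi]^{m+m'-r}$ because $r>0$. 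All pieces inherit a spectral condition with parameter $\propto\delta$, and the identical bookkeeping survives any number of $\Df_q$‑ and $X_x$‑differentiations via Lemma \ref{Sigmadelta} and the Leibniz property, giving $\Sigma^{m+m'-r}_{C\delta}$‑seminorms controlled by $\mathbf{W}^{m;r}_{l;q}(a)\,\mathbf{W}^{m';r}_{l;q}(b)$. Choosing $\delta$ small enough that $C\delta<1/2$ (and small enough for Theorem \ref{Compo1}), invoking Proposition \ref{FreedomForCut-off} to discard the dependence on the particular $\chi$, and finally applying Theorem \ref{Stein'} to the resulting $\Sigma^{m+m'-r}_{<1/2}$ remainder symbol then yields $T_a\circ T_b-T_{a\#_{r,q}b}\in\Op\Sigma^{m+m'-r}_{<1/2}$ together with the stated quantitative bound.
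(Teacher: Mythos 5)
Your proposal is correct and follows essentially the same route as the paper: apply Theorem \ref{Compo1} to the pair $(a^\chi,b)$, upgrade the remainder to a spectral-condition class by tracking $x$-Fourier supports with Corollary \ref{SpecPrd} (the paper's Lemma \ref{CompoAux1}), exchange $\Df^\alpha_{q,\xi}$ with the cut-off via the Leibniz identity as in Proposition \ref{a-a^chi}, and reduce to the ``product of cut-offs versus cut-off of the product'' comparison, which the paper isolates as Lemma \ref{CompoAux2} (proved as in H\"ormander's Proposition 8.6.9) before removing the cut-off dependence with Proposition \ref{FreedomForCut-off}. Your explicit observation that $X^{(\alpha)}_{q,x}$ commutes with $\chi(|\nabla_x|,|\xi|)$ because the Casimir is central is exactly the fact the paper uses implicitly when it writes $(b^\chi)_{(\alpha)}=(b_{(\alpha)})^\chi$.
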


Before we prove Theorem \ref{Compo2}, we state two auxiliary results.
\begin{lemma}\label{CompoAux1}
Suppose $r>0$, $m,m'$ are real numbers, and $a\in \mathcal{A}_r^{m}$, $b\in\mathcal{A}_r^{m'}$. There are constants $\delta_0\in(0,1/2)$ and $C>0$ depending only on the algebraic structure of $\Gp$ with the following property: if $\chi$ is an admissible cut-off function with parameter $\delta<\delta_0$, the symbol of $T_a^\chi\circ T_b^\chi$ is of class $\Sigma^{m+m'-r}_{C\delta}$.
\end{lemma}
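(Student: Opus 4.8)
The plan is to read the symbol $\sigma$ of $\Op(a^\chi)\circ\Op(b^\chi)$ off the composition formula \eqref{OpaOpb}, namely $\sigma(x,\xi)=\int_{\Gp}\mathcal{K}_{a^\chi}(x,y)\,\xi^*(y)\,b^\chi(xy^{-1},\xi)\,dy$ with $\mathcal{K}_{a^\chi}$ the right convolution kernel of $\Op(a^\chi)$, and to extract from it the growth of $\sigma$ and of its $X^\alpha_x\Df_{Q,\xi}^\beta$-derivatives together with the location of the $x$-Fourier support of $\sigma$. For the growth I would simply rerun the proof of Theorem \ref{Compo1}, now with the two symbols $a^\chi\in\Sigma^m_\delta$ and $b^\chi\in\Sigma^{m'}_\delta$ (Proposition \ref{ParaDiff1}): decompose $\mathcal{K}_{a^\chi}=\sum_j\mathcal{K}_{a^\chi_j}$ with $a^\chi_j=a^\chi\vartheta_j(|\xi|)$ as in \eqref{LPDisc}, apply the Taylor expansion of Proposition \ref{TaylorGp} with $N$ the least integer exceeding $r$, and invoke the improved growth estimate of Proposition \ref{ParaDiff1} for $b^\chi$. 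This writes $\sigma=a^\chi\#_{r;Q}b^\chi+\varsigma_N$, where $\varsigma_N$ and all its derivatives are bounded exactly as in \eqref{sigmaTemp2}--\eqref{sigmaTemp4}; it is here that the smallness of $\delta$ relative to the spectral-localization constant $c$ of Corollary \ref{SpecPrd} is used, since it forces $2^j\simeq\size[\xi]$ in the sum defining $\varsigma_N$ and closes those kernel estimates. The individual terms $\Df_{Q,\xi}^\alpha a^\chi\cdot X_{Q,x}^{(\alpha)}b^\chi$ of $a^\chi\#_{r;Q}b^\chi$ are then handled by Lemma \ref{Sigmadelta}, giving the growth bounds that place $\sigma$ in the stated class (uniformly in the strongly RT-admissible tuple, by Lemma \ref{Fisqq'Lem}); moreover, since $\varsigma_N$ has order $m+m'-r$, this already exhibits $T_a^\chi\circ T_b^\chi-\Op(a^\chi\#_{r;Q}b^\chi)$ as an operator of order $m+m'-r$.

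The genuinely new point is the spectral condition, which I would verify piece by piece on the decomposition $\sigma=a^\chi\#_{r;Q}b^\chi+\varsigma_N$. For the main part, each summand $\Df_{Q,\xi}^\alpha a^\chi\cdot X_{Q,x}^{(\alpha)}b^\chi$ is the product of a symbol spectrally localized with parameter $C_\alpha\delta$ (namely $\Df_{Q,\xi}^\alpha a^\chi$, by Lemma \ref{Sigmadelta}(3)) with one spectrally localized with parameter $\delta$ (namely $X_{Q,x}^{(\alpha)}b^\chi$, by the reasoning behind Lemma \ref{Sigmadelta}(2)), hence is spectrally localized with parameter $(C_\alpha+1)\delta$ by Lemma \ref{Sigmadelta}(1). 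For the remainder I would track the $x$-Fourier support through formula \eqref{sigmaTemp2}: only finitely many $j$ with $2^j\simeq\size[\xi]$ contribute, the kernel $\mathcal{K}_{a^\chi_j}(\cdot,y)$ has $x$-Fourier support in $\{|\eta|\lesssim\delta 2^j\}\subset\{|\eta|\lesssim\delta\size[\xi]\}$ because $a^\chi$ is spectrally localized with parameter $\delta$, and the Taylor remainder $R_N(b^\chi(\cdot,\xi);x,z^{-1})$, being a finite combination of $x$-derivatives of $b^\chi(\cdot,\xi)$, has $x$-Fourier support in $\{|\eta|\lesssim\delta\size[\xi]\}$; multiplying in $x$, integrating out the auxiliary variables, and invoking the upper spectral bound of Corollary \ref{SpecPrd} (and Lemma \ref{DiffVanish} for the $\Df_{Q,\xi}^\beta$-derivatives) a bounded number of times then shows $\varsigma_N$ is spectrally localized with parameter $C\delta$. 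Taking $C$ to be the largest constant produced this way — it is built from $c^{-1}$ of Corollary \ref{SpecPrd} and the additive constants of Lemma \ref{DiffVanish}, and depends only on the algebraic structure of $\Gp$ and on $r$ through $N$ — and setting $\delta_0=1/(2C)$ guarantees $C\delta<1/2$, so that $\sigma\in\Sigma^{m+m'}_{C\delta}$ (and, after subtracting the extracted para-differential operator $\Op(a^\chi\#_{r;Q}b^\chi)$, the remaining part of order $m+m'-r$ lies in $\Op\Sigma^{m+m'-r}_{C\delta}$).

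I expect the main obstacle to be precisely this bookkeeping of $x$-Fourier supports: one must make sure that every product, every convolution, and every application of a $\Df_Q$-step fattens the support by a fixed multiplicative or additive constant only, and that these operations are performed only $O(1)$ times — a count dictated by $r$ and by the fixed algebraic data of $\Gp$ — so that the accumulated parameter stays a fixed multiple of $\delta$ and does not degrade as $\size[\xi]\to\infty$. All the remaining ingredients — the weighted $L^2$ and operator-norm estimates for the kernels $\mathcal{K}_{a^\chi_j}$, the Taylor-remainder bound obtained by applying Proposition \ref{ParaDiff1} to $b^\chi$, and the passage from the fundamental tuple $Q$ to an arbitrary strongly RT-admissible tuple via Lemma \ref{Fisqq'Lem} — are already established in the proofs of Theorem \ref{Compo1} and Proposition \ref{ParaDiff1}, so I would invoke them rather than reprove them.
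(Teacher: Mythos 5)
Your proposal is correct, and for the order and growth estimates it does exactly what the paper does, namely defer to (the proof of) Theorem \ref{Compo1}. Where you diverge is in the verification of the spectral condition, which is the real content of the lemma. The paper does not decompose $\sigma$ at all: writing $\sigma(x,\xi)=\sum_{\zeta\in\DuGp}d_\zeta\int_\Gp\Tr\big[a^\chi(x,\zeta)\zeta(y)\big]\xi^*(y)b^\chi(xy^{-1},\xi)\,dy$, it observes that for fixed $\zeta$ the $y$-integral is of convolution type, so its $x$-Fourier support lies in $\mathfrak{S}\big[c\big||\zeta|-|\xi|\big|,|\zeta|+|\xi|\big]\cap\mathfrak{S}[0,\delta\langle\xi\rangle]$, which is empty unless $|\zeta|\leq C\langle\xi\rangle$; it then multiplies by $a^\chi(x,\zeta)$, whose $x$-Fourier support is in $\mathfrak{S}[0,\delta\langle\zeta\rangle]\subset\mathfrak{S}[0,C\delta\langle\xi\rangle]$, and applies Corollary \ref{SpecPrd} once more. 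Two applications of spectral localization, no decomposition, and the resulting constant $C$ depends only on the constant $c$ of Corollary \ref{SpecPrd}, i.e.\ only on $\Gp$, as the statement requires. Your route through $\sigma=a^\chi\#_{r;Q}b^\chi+\varsigma_N$ also closes, but it is longer and it degrades the constant: the parameters $C_\alpha\delta$ from Lemma \ref{Sigmadelta}(3) for $|\alpha|\leq r$, together with the $O(1)$ count of support-fattening steps you rightly flag (dictated by $N>r$), make your $C$ depend on $r$ as well as on $\Gp$. This is harmless for the application in Theorem \ref{Compo2}, where $r$ is fixed and one only needs $C\delta<1/2$ for $\delta$ small, but it is strictly weaker than the stated uniformity, and it is worth seeing that the direct argument avoids it. Two minor remarks: the Taylor remainder $R_N\big(b^\chi(\cdot,\xi);x,z^{-1}\big)$ is a right translate of $b^\chi(\cdot,\xi)$ minus a combination of its $x$-derivatives, not literally a combination of derivatives, though your conclusion about its $x$-Fourier support stands because right translation preserves Fourier support; and your observation that the full symbol has order $m+m'$ rather than $m+m'-r$ agrees with what the paper's own proof actually establishes.
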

\begin{lemma}\label{CompoAux2}
There is a constant $C>1$, depending on the algebraic structure of $\Gp$ only, with the following property. Let $A$ be a finite dimensional normed algebra. Suppose $0\leq r_1\leq r_2$, with $r_2>0$. Suppose $u\in C^{r_1}_*$, $v\in C^{r_2}_*$ are $A$-valued functions. Suppose $\vartheta\in C_0^\infty(\mathbb{R})$ is such that $\vartheta(\lambda)=1$ for $|\lambda|\leq1$ and vanishes for $|\lambda|\geq2$. Then for $t\geq1$,
$$
\left\|\vartheta\left(\frac{|\nabla|}{Ct}\right)(uv)-\vartheta\left(\frac{|\nabla|}{t}\right)u\cdot \vartheta\left(\frac{|\nabla|}{t}\right)v\right\|_{L^\infty;A}
\lesssim_{r_1,r_2}t^{-r_1}\|u\|_{C^{r_1}_*;A}\|v\|_{C^{r_2}_*;A}.
$$
The implicit constants do not depend on $A$.
\end{lemma}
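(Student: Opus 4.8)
The plan is to split $uv$ into the part on which the outer cut-off acts as the identity --- namely $\vartheta(|\nabla|/t)u\cdot\vartheta(|\nabla|/t)v$, which then cancels --- plus a remainder estimated by crude $L^\infty$ bounds. Write $u^{\le t}:=\vartheta(|\nabla|/t)u$, $u^{>t}:=u-u^{\le t}$, and similarly for $v$; then $u^{\le t}$ has Fourier support in $\mathfrak{S}[0,2t]$, $u^{>t}$ has Fourier support in $\{|\xi|\ge t\}$, and $\vartheta(|\nabla|/\tau)w=w$ whenever $\Ft w$ is supported in $\mathfrak{S}[0,\tau]$. By Lemma \ref{CutOffKer} with $N=0$, each $\vartheta(|\nabla|/\tau)$ is convolution against an $L^1$ kernel of norm $\lesssim|\vartheta|_{L^\infty}$, \emph{uniformly in $\tau\ge1$}; hence, using the submultiplicativity $\|fg\|_{L^\infty;A}\le\|f\|_{L^\infty;A}\|g\|_{L^\infty;A}$ of the norm of $A$ --- the only use of the algebra structure, and the reason no constant depends on $A$ --- these cut-offs act on $L^\infty(\Gp;A)$ with constants independent of $\tau$ and of $A$. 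I would then fix $C\ge4$: since $u^{\le t}$ and $v^{\le t}$ have Fourier support in $\mathfrak{S}[0,2t]$, Corollary \ref{SpecPrd} places the Fourier support of $u^{\le t}v^{\le t}$ inside $\mathfrak{S}[0,4t]\subseteq\mathfrak{S}[0,Ct]$, so $\vartheta(|\nabla|/(Ct))\bigl(u^{\le t}v^{\le t}\bigr)=u^{\le t}v^{\le t}$. Combined with the bilinear identity $uv-u^{\le t}v^{\le t}=u^{>t}v+u^{\le t}v^{>t}$, this reduces the lemma to bounding $\vartheta(|\nabla|/(Ct))\bigl(u^{>t}v+u^{\le t}v^{>t}\bigr)$ in $L^\infty(\Gp;A)$ by $t^{-r_1}\|u\|_{C^{r_1}_*;A}\|v\|_{C^{r_2}_*;A}$.

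When $r_1>0$ this is immediate. The embeddings $C^{r_1}_*\hookrightarrow L^\infty$ and $C^{r_2}_*\hookrightarrow L^\infty$ hold with $A$-independent constants (Remark \ref{VectZyg}); the vector-valued Littlewood--Paley bounds of Remark \ref{VectZyg} --- applied after comparing $\vartheta(|\nabla|/t)$ with the cut-off $\phi_t$ of the paper, the difference being a Fourier multiplier supported in a single frequency band $\sim t$ --- give $\|u^{>t}\|_{L^\infty;A}\lesssim t^{-r_1}\|u\|_{C^{r_1}_*;A}$ and $\|v^{>t}\|_{L^\infty;A}\lesssim t^{-r_2}\|v\|_{C^{r_2}_*;A}$, while $\|u^{\le t}\|_{L^\infty;A}\lesssim\|u\|_{L^\infty;A}\lesssim\|u\|_{C^{r_1}_*;A}$. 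Applying the uniform $L^\infty$ bound for $\vartheta(|\nabla|/(Ct))$, submultiplicativity, and finally $t^{-r_2}\le t^{-r_1}$ (valid for $t\ge1$ since $r_2\ge r_1$) gives the required estimate.

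The genuine obstacle is the endpoint $r_1=0$, where $C^0_*$ does not embed in $L^\infty$, so $u^{>t}$ is no longer controlled in $L^\infty$ by $\|u\|_{C^0_*;A}$ (and $\|u^{\le t}\|_{L^\infty;A}$ only up to a $\log(1+t)$ factor). The term $u^{\le t}v^{>t}$ stays harmless: $\|u^{\le t}v^{>t}\|_{L^\infty;A}\lesssim\log(1+t)\,t^{-r_2}\|u\|_{C^0_*;A}\|v\|_{C^{r_2}_*;A}\lesssim\|u\|_{C^0_*;A}\|v\|_{C^{r_2}_*;A}$ since $r_2>0$. For $u^{>t}v$ I would decompose dyadically, $u^{>t}=\sum_{2^j\gtrsim t}u_j$ and $v=v^{\le t}+\sum_{2^k\gtrsim t}v_k$ using the discrete Littlewood--Paley decomposition \eqref{LPDisc}, with $\|u_j\|_{L^\infty;A}\lesssim\|u\|_{C^0_*;A}$, $\|v_k\|_{L^\infty;A}\lesssim2^{-kr_2}\|v\|_{C^{r_2}_*;A}$, and $\|v^{\le t}\|_{L^\infty;A}\lesssim\|v\|_{C^{r_2}_*;A}$; then the quantitative spectral localization of Corollary \ref{SpecPrd} shows that $\vartheta(|\nabla|/(Ct))$ annihilates $u_jv_k$ (resp.\ $u_jv^{\le t}$) unless $2^j\sim2^k$ (resp.\ $2^j\sim t$), up to a ratio fixed by the algebraic structure of $\Gp$. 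Thus only boundedly many $k$ survive for each $j$, the surviving pieces obey $\|u_jv_k\|_{L^\infty;A}\lesssim2^{-jr_2}\|u\|_{C^0_*;A}\|v\|_{C^{r_2}_*;A}$ (since $2^k\sim2^j$ there), their $L^\infty$ norms are absolutely summable, and summing the geometric series over $2^j\gtrsim t$ --- then applying the uniformly $L^\infty$-bounded $\vartheta(|\nabla|/(Ct))$ to the convergent sum --- yields $\lesssim t^{-r_2}\|u\|_{C^0_*;A}\|v\|_{C^{r_2}_*;A}\lesssim\|u\|_{C^0_*;A}\|v\|_{C^{r_2}_*;A}$, completing the proof. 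I expect the only delicate point to be precisely this bookkeeping of which frequency interactions survive the outer cut-off --- where Corollary \ref{SpecPrd} is indispensable --- together with the routine check that all implicit constants are independent of $A$.
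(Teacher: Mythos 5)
Your proposal is correct and is essentially the proof the paper intends: the paper only cites Hörmander's Proposition 8.6.9 together with Corollary \ref{SpecPrd}, Corollary \ref{LPZygCor} and Remark \ref{VectZyg}, and your argument is exactly that proof written out — the low-low piece $u^{\le t}v^{\le t}$ absorbed by the outer cut-off via the upper spectral bound, the remainder handled by the uniform $L^1$-kernel bounds and the Zygmund decay, and the $r_1=0$ endpoint rescued by the lower spectral bound of Corollary \ref{SpecPrd}. The bookkeeping of surviving frequency interactions and the $A$-independence of the constants are handled correctly.
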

\begin{proof}[Proof of Lemma \ref{CompoAux1}]
By formula (\ref{OpaOpb}), the symbol $\sigma$ of $T_a^\chi\circ T_b^\chi$ is 
$$
\begin{aligned}
\sigma(x,\xi)
&=\int_{\Gp}\mathcal{K}_{a^\chi}(x,y)\xi^*(y)b^\chi(xy^{-1},\xi)dy\\
&=\sum_{\zeta\in\DuGp}d_\zeta\int_\Gp\Tr\Big[a^\chi(x,\zeta)\zeta(y)\Big]\xi^*(y)b^\chi(xy^{-1},\xi)dy.
\end{aligned}
$$
Let us fix $\xi\in\DuGp$. By the spectral localization property of products (Corollary \ref{SpecPrd}), the Fourier support of $\zeta\otimes\xi^*$ is in 
$$
\mathfrak{S}\Big[c\big||\zeta|-|\xi|\big|,|\zeta|+|\xi|\Big],
$$
where $0<c<1$ depends on the algebraic structure of $\Gp$ only. Since the integral is of convolution type, we find that the Fourier support of $\int_\Gp (\zeta\otimes\xi)^*(y)b^\chi(xy^{-1},\xi)dy$ with respect to $x$ is contained in the set
$$
\mathfrak{S}\Big[c\big||\zeta|-|\xi|\big|,|\zeta|+|\xi|\Big]\cap \mathfrak{S}[0,\delta\size[\xi]],
$$
so the integral does not vanish only for $|\zeta|\leq C\size[\xi]$, where $C$ depends on the algebraic structure of $\Gp$ only. Now applying the spectral condition satisfied by $a^\chi(x,\zeta)$, using the spectral localization property of products again, we obtain that the Fourier support of $\sigma(x,\xi)$ is contained in the set $\mathfrak{S}[0,C\delta\size[\xi]]$. That $\sigma\in\mathscr{S}^{m+m'}_{1,1}$ for sufficiently small $\delta$ is a direct consequence of Theorem \ref{Compo1}.
\end{proof}
The proof of Lemma \ref{CompoAux2} is identical to that of Proposition 8.6.9. of \cite{Hormander1997}, with the application of Corollary \ref{SpecPrd}, Corollary \ref{LPZygCor} and its vector-valued version as described in Remark \ref{VectZyg}.

\begin{proof}[Proof of Theorem \ref{Compo2}]
We notice a simple fact: if $q$ is an RT-admissible tuple and $X_q^{(\alpha)}$ are the corresponding left-invariant differential operators as in Proposition \ref{TaylorGp}, then for $a\in\mathcal{A}^m_r$, $b\in\mathcal{A}^{m'}_r$, it follows that $a\#_{r;q}b$ is an element in $\bigcup_{j\leq r}\mathcal{A}_{r-j}^{m+m'-j}$ since by the product rule in Zygmund spaces,
\begin{equation}\label{Prd_r}
\sum_{|\alpha|=j}\Df_{q,\xi}^\alpha a(x,\xi)\cdot X_{q,x}^{(\alpha)} b(x,\xi)\in\mathcal{A}_{r-j}^{m+m'-j}.
\end{equation}

First of all, we need to justify the notation that disregards the admissible cut-off function. In fact, Lemma \ref{Sigmadelta}, Proposition \ref{FreedomForCut-off} and formula (\ref{Prd_r}) ensures that $T_{a\#_{r;q}b}$ is defined modulo $\Op\Sigma_{<1/2}^{m+m'-r}$, provided that the parameter of the admissible cut-off function is sufficiently small (depending on the algebraic structure of $\Gp$ only). We also compute, for admissible cut-off functions $\chi_1,\chi_2$ with parameter $\delta_1<\delta_2$ respectively,
$$
\Op(a^{\chi_1})\circ\Op(b^{\chi_1})-\Op(a^{\chi_2})\circ\Op(b^{\chi_2})
=\Op(a^{\chi_1}-a^{\chi_2})\circ\Op(b^{\chi_1})+\Op(a^{\chi_2})\circ\Op(b^{\chi_1}-b^{\chi_2}).
$$
By Proposition \ref{FreedomForCut-off}, $a^{\chi_1}-a^{\chi_2}\in\Sigma_{\delta_2}^{m-r}$, $b^{\chi_1}-b^{\chi_2}\in\Sigma_{\delta_2}^{m'-r}$. Thus if $\delta_1,\delta_2$ are sufficiently small (depending on the algebraic structure of $\Gp$ only), Lemma \ref{CompoAux1} then implies that the right-hand-side is in $\Op\Sigma_{C\delta_2}^{m+m'-r}\subset\Op\Sigma_{<1/2}^{m+m'-r}$. Thus $T_a\circ T_b$ is defined modulo $\Op\Sigma_{<1/2}^{m+m'-r}$. Applying Lemma \ref{Sigmadelta} and Theorem \ref{Compo1} once again, we find that if $\chi$ is an admissible cut-off function with sufficiently small parameter $\delta$ (depending on the algebraic structure of $\Gp$ only), then
$$
T_a^\chi\circ T_b^\chi-\Op\big(a^\chi\#_{r;q}b^\chi\big)
\in\Op\Sigma_{<1/2}^{m+m'-r}.
$$
From now on, we fix $\chi$ to be the function in (\ref{AdmCutoffdelta}).

It remains to show that $a^\chi\#_{r;q}b^\chi$ equals $(a\#_{r;q}b)^{\chi_1}$ modulo $\Op\Sigma_{<1/2}^{m+m'-r}$ for some suitable admissible cut-off function $\chi_1$. For simplicity we write $a^{(\alpha)}=\Df_{q,\xi}^\alpha a$, $b_{(\alpha)}=X_{q,x}^{(\alpha)} b$. We just need to show that for $|\alpha|\leq r$, each summand of $(a^\chi \#_r b^\chi )-(a\#_{r;q}b)^{\chi_1}$, being
$$
(a^\chi)^{(\alpha)}\cdot(b^\chi)_{(\alpha)}
-\left(a^{(\alpha)}\cdot b_{(\alpha)}\right)^{\chi_1},
$$
is in $\Sigma^{m+m'-r}_{<1/2}$. The only technical complexity comes from $(a^\chi)^{(\alpha)}$. In fact, from the proof of Proposition \ref{a-a^chi}, we know that
$$
\Df_{Q_i,\xi}a^\chi(x,\xi)
=(\Df_{Q_i,\xi}a)^\chi(x,\xi)
+\sum_{j,k=1}^{M_i}c^i_{j,k}\Df_{Q_j,\xi}(\chi\cdot\I[\xi])\big(|\nabla_x|,|\xi|\big)\Df_{Q_k}a(x,\xi),
$$
and belongs to $\Sigma_{C\delta}^{m-1}$ for some $C$ depending on the algebraic structure of $\Gp$ only. Thus, if $\delta$ is sufficiently small, we obtain by induction that
$$
(a^\chi)^{(\alpha)}\cdot(b^\chi)_{(\alpha)}=(a^{(\alpha)})^\chi\cdot(b_{(\alpha)})^\chi\,\mod\,\Sigma_{<1/2}^{m+m'-r},
$$
since $|\alpha|\leq r$. 

The claim is then reduced to checking
\begin{equation}\label{2summand}
(a^{(\alpha)})^\chi\cdot(b_{(\alpha)})^\chi
-\left(a^{(\alpha)}\cdot b_{(\alpha)}\right)^{\chi_1}
\in \Sigma^{m+m'-r}_{<1/2}.
\end{equation}
To verify this, we just need the technical Lemma \ref{CompoAux2}. Recall that we chose $\chi$ as in (\ref{LPCont}). We regard $\xi$ as fixed, then set $\chi_1(|\eta|,|\xi|)=\chi(|\eta|/C,|\xi|)$ where $C$ is as in Lemma \ref{CompoAux2}, and $\vartheta(|\eta|)=\chi_1(|\eta|,|\xi|)$. Since $a^{(\alpha)}\in \mathcal{A}_{r}^{m-|\alpha|}$, $b_{(\alpha)}\in \mathcal{A}_{r-|\alpha|}^{m}$, Lemma \ref{CompoAux2} gives
$$
\begin{aligned}
\left\|(a^{(\alpha)})^\chi\cdot(b_{(\alpha)})^\chi
-\left(a^{(\alpha)}\cdot b_{(\alpha)}\right)^{\chi_1}\right\|
&\lesssim 
\langle\xi\rangle^{-(r-|\alpha|)}\cdot\mathbf{W}^{m-|\alpha|;r}_{0;Q}(a)\langle\xi\rangle^{m-|\alpha|}
\cdot\mathbf{W}^{m';r-|\alpha|}_{0;Q}(b)\langle\xi\rangle^{m'}\\
&\lesssim \mathbf{W}^{m-|\alpha|;r}_{0;Q}(a)\cdot\mathbf{W}^{m';r-|\alpha|}_{0;Q}(b)\cdot\langle\xi\rangle^{m+m'-r}.
\end{aligned}
$$
Here the norm to be estimated is the operator norm on $\mathrm{End}(\Hh[\xi])$, and the implicit constants do not depend on $\xi$. Derivatives of (\ref{2summand}) can be estimated similarly. By Lemma \ref{Sigmadelta}, (\ref{2summand}) still satisfies a spectral condition. This completes the proof. 
\end{proof}

As a corollary, we obtain the commutator property of para-differential operators: the commutator of two para-differential operators of order $m$ and $m'$ gives rise to a para-differential operator of order $m+m'-1$. 

\begin{corollary}\label{ParaComm}
Suppose $r>0$, $m,m'$ are real numbers.  Given $a\in \mathcal{A}_r^{m}$, $b\in\mathcal{A}_r^{m'}$, the commutator $[T_a,T_b]$ is in the class $\Op\Sigma_{<1/2}^{m+m'-1}$. More precisely, the operator norm of $[T_a,T_b]$ for $H^{s+m+m'-1}\to H^s$ is bounded by
$$
C_s\mathbf{W}^{m;r}_{l;q}(a)\mathbf{W}^{m';r}_{l;q}(b),
$$
where the integer $l$ does not depend on $a,b$.
\end{corollary}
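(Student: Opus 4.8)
The plan is to read off $[T_a,T_b]$ from Theorem \ref{Compo2} applied to both orders of composition, and then to show that the resulting symbol gains one $\xi$-order because, up to its leading term, it is an $x$-parametrised commutator of Fourier multipliers. Throughout I take $r\ge1$ and fix one admissible cut-off $\chi$ together with an RT-admissible tuple $q$ whose components are linear combinations of the fundamental tuple $Q$ (for $0<r<1$ the scheme below produces only $\Op\Sigma^{m+m'-r}_{<1/2}$, since then $a\#_{r;q}b=ab$ and there is no lower-order term to use). Applying Theorem \ref{Compo2} to $T_a\circ T_b$ and to $T_b\circ T_a$ and subtracting, and using that $c\mapsto T_c$ is linear, I get
$$
[T_a,T_b]-T_c\in\Op\Sigma^{m+m'-r}_{<1/2}\subset\Op\Sigma^{m+m'-1}_{<1/2},\qquad c:=a\#_{r;q}b-b\#_{r;q}a,
$$
with the $H^{s+m+m'-1}\to H^s$ operator norm bounded by $C_s\mathbf{W}^{m;r}_{l;q}(a)\mathbf{W}^{m';r}_{l;q}(b)$ for an $l$ independent of $a,b$. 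It thus remains to show $T_c\in\Op\Sigma^{m+m'-1}_{<1/2}$ with a bound of the same bilinear shape.

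Separating the $|\alpha|=0$ part,
$$
c=[a,b]+\sum_{1\le|\alpha|\le r}\big(\Df^\alpha_{q,\xi}a\cdot X^{(\alpha)}_{q,x}b-\Df^\alpha_{q,\xi}b\cdot X^{(\alpha)}_{q,x}a\big).
$$
For $|\alpha|=j\ge1$ each summand lies in $\mathcal{A}^{m+m'-j}_{r-j}$ by the product rule (\ref{Prd_r}) (applied with the roles of $a,b$ swapped for the second piece), so by Proposition \ref{ParaDiff1} its para-differential quantization lies in $\Op\Sigma^{m+m'-j}_{<1/2}\subset\Op\Sigma^{m+m'-1}_{<1/2}$ (with at worst a harmless logarithmic factor at integer $r$), with norm bounded by a product of $\mathbf{W}$-seminorms of $a$ and $b$. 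Hence the whole statement reduces to the leading term, for which I would establish the rough-symbol refinement of Proposition \ref{2OrderComm}: if $a\in\mathcal{A}^m_r$ and $b\in\mathcal{A}^{m'}_r$ then $[a,b]\in\mathcal{A}^{m+m'-1}_r$, with $\mathbf{W}^{m+m'-1;r}_{l;q}([a,b])\lesssim\mathbf{W}^{m;r}_{l;q}(a)\,\mathbf{W}^{m';r}_{l;q}(b)$. Granting this, $T_{[a,b]}\in\Op\Sigma^{m+m'-1}_{<1/2}$ by Proposition \ref{ParaDiff1}, and collecting the three contributions yields the corollary with the asserted bound.

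To prove $[a,b]\in\mathcal{A}^{m+m'-1}_r$ I would expand $\Df^\beta_{q,\xi}[a,b]$ by iterating the Leibniz-type identity (\ref{Leibniz}) for the fundamental tuple $Q$ (legitimate since the components of $q$ are linear combinations of those of $Q$). The bookkeeping I expect is that every term so produced is either (i) a genuine commutator $[\Df^{\gamma_1}_{q,\xi}a,\Df^{\gamma_2}_{q,\xi}b]$ with $|\gamma_1|+|\gamma_2|\ge|\beta|$ — of $\xi$-order $(m-|\gamma_1|)+(m'-|\gamma_2|)-1\le m+m'-1-|\beta|$ by Proposition \ref{2OrderComm} applied to the $x$-frozen Fourier multipliers $\Df^{\gamma_1}_{q,\xi}a(x,\cdot)$ and $\Df^{\gamma_2}_{q,\xi}b(x,\cdot)$, of orders $m-|\gamma_1|$ and $m'-|\gamma_2|$ — or (ii) a structure-constant-weighted product $\Df^{\gamma_1}_{q,\xi}a\cdot\Df^{\gamma_2}_{q,\xi}b$, together with its mirror coming from $ba$, with both $|\gamma_i|\ge1$ and $|\gamma_1|+|\gamma_2|\ge|\beta|+1$ — already of $\xi$-order $\le m+m'-1-|\beta|$ by plain counting, with no cancellation needed. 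This gives $\|\Df^\beta_{q,\xi}[a,b](x,\xi)\|\lesssim\size[\xi]^{m+m'-1-|\beta|}$ uniformly in $x$. The $C^r_*$ regularity in $x$ I would recover by running the identical argument on the $x$-increment $a(x_1,\cdot)-a(x_2,\cdot)$, which by Remark \ref{VectZyg} is a symbol of order $m$ whose $\mathbf{W}$-seminorms are $\lesssim\dist(x_1,x_2)^r$ (valued in $\mathrm{End}(\Hh[\xi])$, with dimension-independent constants), and on the left-invariant $x$-derivatives $X^\alpha_x[a,b]=\sum_{\alpha_1+\alpha_2=\alpha}c_{\alpha_1\alpha_2}[X^{\alpha_1}_xa,X^{\alpha_2}_xb]$ for $|\alpha|\le\lfloor r\rfloor$, then reassembling the Zygmund norm exactly as in Remark \ref{VectZyg}.

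The main obstacle is precisely this last lemma: one must control the combinatorics of the iterated Leibniz expansion — verifying that every term genuinely pairs into a commutator or else carries two surplus difference orders — and one must propagate the $\xi$-decay and the $x$-Zygmund regularity simultaneously, since neither controls the other and the decisive ``$-1$'' is produced only by Proposition \ref{2OrderComm} at the bottom of the recursion, which in turn rests, through Theorem \ref{Fischer}, on the H\"ormander calculus and the spectral localization Corollary \ref{SpecPrd}. Once that lemma is secured, the effective bilinear bound in $\mathbf{W}$-seminorms follows by tracking constants through Theorem \ref{Compo2}, the product rule (\ref{Prd_r}), Proposition \ref{ParaDiff1}, and the effective form of Proposition \ref{2OrderComm}, which gives an integer $l$ depending only on $s$ and the algebraic structure of $\Gp$.
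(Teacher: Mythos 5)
Your proposal follows essentially the same route as the paper: decompose $a\#_{r;q}b-b\#_{r;q}a$ into the symbol commutator $[a,b]$ plus the $1\le|\alpha|\le r$ terms, show $[a,b]\in\mathcal{A}^{m+m'-1}_r$ by extending Proposition \ref{2OrderComm} to rough symbols (the paper does this by ``repeating the proof of Proposition \ref{2OrderComm} and estimating the $C^r_*$ norms in $x$'', which is exactly your frozen-$x$/increment argument), and conclude via Theorem \ref{Compo2}. Your observation that the Theorem \ref{Compo2} remainder is only $\Op\Sigma^{m+m'-r}_{<1/2}$, so that for $0<r<1$ this scheme does not reach order $m+m'-1$, is a limitation the paper's own proof shares but does not flag.
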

\begin{proof}
Imitating the proof of Proposition \ref{2OrderComm}, we find that the commutator symbol $[a,b](x,\xi)$ is of class $\mathcal{A}^{m+m'-1}_r$; we just need to repeat the proof of Proposition \ref{2OrderComm} and, in addition, estimate the $C^r_*$ norms with respect to $x$.

On the other hand, 
$$
\begin{aligned}
(a\#_{r;q} b)(x,\xi)-(b\#_{r;q} a)(x,\xi)
&=[a,b](x,\xi)\\
&\quad+\sum_{\alpha:1\leq|\alpha|\leq r}\left(\Df_{q,\xi}^\alpha a(x,\xi)\cdot X_{q,x}^{(\alpha)} b(x,\xi)
-\Df_{q,\xi}^\alpha b(x,\xi)\cdot X_{q,x}^{(\alpha)} a(x,\xi)\right).
\end{aligned}
$$
The sum is easily seen to give rise to symbols of order $m+m'-1$. Theorem \ref{Compo2} then shows that
$$
[T_a,T_b]
\in\Op\Sigma^{m+m'-1}_{<1/2}.
$$
\end{proof}

We can compare this with \cite{Delort2015}: in order to cast the normal form reduction for quasi-linear Hamiltonian Klein-Gordon equation, such commutator property is necessary. The definition of para-differential operator within out formalism is of course different from that in \cite{Delort2015}. While it is not known whether these two definitions coincide, this does not affect our goal, as a calculus is already available.

We also state the adjoint formula for para-differential operators and omit the proof.
\begin{theorem}[Adjoint of Para-differential Operator]\label{ParaAdj}
Suppose $r>0$, $m$ is a real number. Let $q$ be a RT-admissible tuple, whose components are from the fundamental tuple $Q$ of $\Gp$, and let $X_q^{(\alpha)}$ be the left-invariant differential operators as in Proposition \ref{TaylorGp}. Given $a\in \mathcal{A}_r^{m}$, the adjoint operator $T_a^*$ satisfies
$$
T_a^*-T_{a^{\bullet;r,q}}\in\Op\Sigma^{m-r}_{<1/2}
$$
where
$$
a^{\bullet;r,q}(x,\xi)=\sum_{|\alpha|\leq r}\left(\Df_{q,\xi}^\alpha X_{q,x}^{(\alpha)}a^*\right)(x,\xi).
$$
More precisely, the operator norm of $[T_a,T_b]$ for $H^{s+m+m'-1}\to H^s$ is bounded by
$$
C_s\mathbf{W}^{m;r}_{l;q}(a),
$$
where the integer $l$ does not depend on $a$.
\end{theorem}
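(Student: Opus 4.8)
The plan is to parallel the proof of Theorem \ref{Compo1}, but with the symbol $a^\chi$ replaced by its pointwise (in $\xi$) operator adjoint $(a^\chi)^*$, and with the convolution kernel identity (\ref{OpaOpb}) replaced by the analogous formula for the adjoint of $\Op(a^\chi)$. First I would record the formal identity for the symbol of $\Op(\sigma)^*$ in terms of the kernel $\mathcal{K}_\sigma$: one has $\Op(\sigma)^* = \Op(\sigma^{\bullet})$ with
$$
\sigma^{\bullet}(x,\xi)=\int_{\Gp}\xi(y)\,\mathcal{K}_\sigma^*(xy^{-1},y)\,\xi^*(y)\,dy
$$
(up to the exact bookkeeping of $y\leftrightarrow y^{-1}$ dictated by the right-convolution convention), which is the compact-group analogue of $a^{\bullet}(x,\xi)\sim\sum_\alpha \partial_\xi^\alpha D_x^\alpha \bar a(x,\xi)$ on $\mathbb{R}^n$. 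Since $a\in\mathcal{A}^m_r$, Proposition \ref{ParaDiff1} gives $a^\chi\in\Sigma^m_\delta\subset\mathscr{S}^m_{1,1}$ with the reduced-order growth estimate, and $(a^\chi)^*$ inherits the same bounds because the operator norm is $*$-invariant and difference operators commute with $x$-derivatives; moreover $(a^\chi)^*$ still satisfies the spectral condition with parameter $\delta$ (Fourier transform in $x$ of $(a^\chi)^*$ is the conjugate-transpose of $\widehat{a^\chi}(\bar\eta,\xi)$, supported in $|\eta|\lesssim\delta\size[\xi]$, using $|\bar\eta|=|\eta|$ as in the proof of Corollary \ref{SpecPrd}).

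Next I would apply Taylor's formula (Proposition \ref{TaylorGp}) to the inner variable in the kernel representation of $\sigma^{\bullet}$, expanding $(a^\chi)^*(xy^{-1},\xi)$ to order $N$, $N$ the least integer $>r$. The terms with $|\alpha|\le r$ assemble — after using Proposition \ref{SpecPrd0} to see that the Fourier support of $q^\alpha$ lies in $\mathfrak{S}[0,C]$, so that the cut-off $\chi_1$ (a slightly dilated admissible cut-off) acts as the identity on them for $\size[\xi]$ large — into exactly $\sum_{|\alpha|\le r}\Df_{q,\xi}^\alpha X_{q,x}^{(\alpha)}a^*$, which is $a^{\bullet;r,q}$, an element of $\bigcup_{j\le r}\mathcal{A}_{r-j}^{m-j}$ by the Zygmund product rule (\ref{Prd_r}). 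The remainder term $\varsigma_N$ must be shown to lie in $\mathscr{S}^{m-r}_{1,1}$ (in fact, with spectral condition, in $\Sigma^{m-r}_{<1/2}$): here I would decompose $\mathcal{K}_{a^\chi}=\sum_j \mathcal{K}_{a^\chi_j}$ dyadically, use Corollary \ref{SpecPrd} to localize $j$ to $2^j\simeq\size[\xi]$ (because $a^\chi$ already carries the spectral condition, so the product with $\xi^*(y)$ is spectrally confined), invoke the kernel decay estimate (\ref{sigmaTemp3}), the remainder bound $\|R_N((a^\chi)^*(\cdot,\xi);x,z^{-1})\|\lesssim \mathbf{W}^{m;r}_{0;q}(a)\size[\xi]^{m+N-r}\dist(z,e)^N$ from the reduced-order estimate in Proposition \ref{ParaDiff1} plus Corollary \ref{ConvVanish}, and then the same weighted-$L^1$/weighted-$L^2$ integration over $\Gp$ used in the proof of Theorem \ref{Compo1}. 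Derivatives in $x$ and $\xi$-differences are handled by induction exactly as in (\ref{sigmaTemp5})–(\ref{sigmaTemp6}), using the Leibniz-type property (\ref{LeibnizFund}) of $Q$ and the fact that $X_x\mathcal{K}_{a^\chi}$, $\mathcal{K}_{a^\chi}Q_i$ are the kernels of $X_x a^\chi$, $\Df_{Q_i,\xi}a^\chi$ respectively. Finally, independence of the admissible cut-off (modulo $\Op\Sigma^{m-r}_{<1/2}$) follows from Proposition \ref{FreedomForCut-off}, and the quantitative operator-norm bound $C_s\,\mathbf{W}^{m;r}_{l;q}(a)$ from Theorem \ref{Stein'} applied to the $\Sigma^{m-r}_{<1/2}$ remainder, with $l$ determined by the number of $x$-derivatives and $\xi$-differences tracked in the estimates (the same $l$ as in Theorems \ref{Compo1}–\ref{Compo2}).

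The main obstacle I expect is the same one that makes the composition theorem delicate: controlling the \emph{operator norm} of $(a^\chi)^*$ and of its $\xi$-differences uniformly in the dimension $d_\xi$, since the passage through $\mathrm{End}(\Hh[\xi])$-valued Taylor expansion and the Cauchy–Schwarz step for $\lHS\cdot\rHS$ versus $\|\cdot\|$ can cost factors of $d_\xi$ if done carelessly — this is why one must use the dimension-free Taylor remainder bound of the Remark after Proposition \ref{TaylorGp} and the vector-valued Zygmund estimates of Remark \ref{VectZyg} throughout, and why the kernel estimates must be carried out before taking traces, exactly as in the proof of Theorem \ref{Compo1}. A secondary technical point is verifying that $(a^\chi)^*$ genuinely satisfies the spectral condition and that conjugation does not interact badly with the dominance-order localization; this is resolved by the $\bar{\bm{j}}$-duality observation already used in Corollary \ref{SpecPrd}, together with $|\bm{j}|=|\bar{\bm{j}}|$.
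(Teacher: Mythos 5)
The paper itself omits the proof of Theorem \ref{ParaAdj} (``We also state the adjoint formula\dots and omit the proof''), so there is no written argument to compare against; the evident intent is that the proof mirrors Theorems \ref{Compo1}--\ref{Compo2}, and that is exactly what you do. Your route --- the adjoint kernel formula in place of (\ref{OpaOpb}), Taylor expansion of $(a^\chi)^*$ at the shifted point, dyadic localization of $\mathcal{K}_{a^\chi}$ to $2^j\simeq\size[\xi]$ via the spectral condition, the dimension-free remainder bounds from Proposition \ref{ParaDiff1} and Corollary \ref{ConvVanish}, and induction via the Leibniz property for derivatives --- is the correct adaptation, and your identification of the operator-norm-versus-Hilbert--Schmidt pitfall as the main technical hazard is apt. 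One step you pass over too quickly: the $|\alpha|\leq r$ terms of the expansion assemble into $\sum_{|\alpha|\leq r}\Df_{q,\xi}^\alpha X_{q,x}^{(\alpha)}(a^\chi)^*$, i.e.\ they involve the already-regularized symbol $a^\chi$, whereas $T_{a^{\bullet;r,q}}$ is defined by applying a fresh admissible cut-off to $a^{\bullet;r,q}=\sum\Df_{q,\xi}^\alpha X_{q,x}^{(\alpha)}a^*$; reconciling the two requires the commutation argument from the end of the proof of Theorem \ref{Compo2} (via Proposition \ref{a-a^chi} and Lemma \ref{Sigmadelta}, showing $\Df_{q,\xi}^\alpha X_{q,x}^{(\alpha)}\bigl((a^\chi)^*\bigr)-\bigl(\Df_{q,\xi}^\alpha X_{q,x}^{(\alpha)}a^*\bigr)^{\chi_1}\in\Sigma^{m-r}_{<1/2}$), not merely Proposition \ref{FreedomForCut-off}. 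With that step made explicit, the proposal is complete in outline.
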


\subsection{Symbols with Very Rough Regularity}
Sometimes it is also necessary to discuss symbols $a(x,\xi)$ which are merely $C^{-r}_*$ in $x$ with $r>0$. We have the following proposition:
\begin{proposition}\label{T_aNegIndex}
Suppose $r>0$. Fix a basis $X_1,\cdots,X_n$ of $\mathfrak{g}$, and define $X^\alpha$ as in Proposition \ref{NormalOrder}. Suppose $a\in\mathcal{A}_{-r}^m(\Gp)$\footnote{The definition of symbol class $\mathcal{A}_{r}^m(\Gp)$ of course can be directly extended beyond $r>0$.} and $\chi$ is an admissible cut-off function with parameter $\delta$. Then $a^\chi(x,\xi)\in\Sigma^{m+r}_{<1/2}$. In fact, for any strongly RT-admissible tuple $q$, we have
$$
\big\|X_x^\alpha\Df_{q,\xi}^\beta a^\chi (x,\xi)\big\|
\lesssim_{\alpha,\beta;q}
\mathbf{W}^{m;r}_{|\beta|;q}(a)\cdot\size[\xi]^{m+r+|\alpha|-|\beta|}.
$$
\end{proposition}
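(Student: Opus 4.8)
The plan is to follow the proof of Proposition \ref{ParaDiff1} almost verbatim, the only genuinely new ingredient being the growth estimates for Zygmund spaces of negative index recorded in Remark \ref{Zygmund<0} and their vector-valued counterparts from Remark \ref{VectZyg}. The spectral condition will be immediate: $\chi(|\nabla_x|,|\xi|)$ annihilates every $x$-frequency of size $\geq\delta\size[\xi]$, so $a^\chi$ satisfies the spectral condition with parameter $\delta<1/2$, and once the symbol bounds are in place this gives $a^\chi\in\Sigma^{m+r}_{<1/2}$. By Fischer's Lemma \ref{Fisqq'Lem} it will suffice to prove the quantitative estimate for $q=Q$, the fundamental tuple (\ref{QFund}), whose RT difference operators enjoy the Leibniz type property (\ref{Leibniz}).

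For $\beta=0$ I would regard $a(\cdot,\xi)$ as a $C^{-r}_*$-function valued in the normed space $\mathrm{End}(\Hh[\xi])$, with negative-index Zygmund norm bounded by $\mathbf{W}^{m;r}_{0;Q}(a)\size[\xi]^{m}$, and then Littlewood-Paley–decompose it in $x$. Since $\chi(\mu,|\xi|)=0$ for $\mu\geq\delta\size[\xi]$, only the pieces $\psi_\tau(|\nabla_x|)a(\cdot,\xi)$ with $\tau\lesssim\size[\xi]$ contribute to $a^\chi$. For each such piece the defining bound of $C^{-r}_*$ gives $|\psi_\tau(|\nabla_x|)a(\cdot,\xi)|_{L^\infty}\lesssim \tau^{r}\mathbf{W}^{m;r}_{0;Q}(a)\size[\xi]^{m}$, and the $L^\infty$ Bernstein inequality supplied by Lemma \ref{CutOffKer} upgrades this to $|X^\alpha\psi_\tau(|\nabla_x|)a(\cdot,\xi)|_{L^\infty}\lesssim \tau^{|\alpha|+r}\mathbf{W}^{m;r}_{0;Q}(a)\size[\xi]^{m}$; the extra cut-off $\chi(|\nabla_x|,|\xi|)$ acting on a piece at scale $\tau\lesssim\size[\xi]$ is a bounded family of $L^\infty$ Fourier multipliers, again by Lemma \ref{CutOffKer}. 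Because $r>0$ forces $|\alpha|+r>0$, integrating $\tau^{|\alpha|+r-1}$ up to $\tau\sim\size[\xi]$ converges at the top end and produces $\size[\xi]^{|\alpha|+r}$ — in contrast with Proposition \ref{ParaDiff1}, no borderline logarithm appears. Adding the contribution of the low-frequency piece $\phi(|\nabla_x|)a(\cdot,\xi)$ then yields $\|X^\alpha_x a^\chi(x,\xi)\|\lesssim_\alpha\mathbf{W}^{m;r}_{0;Q}(a)\,\size[\xi]^{m+r+|\alpha|}$.

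The case $\beta\neq0$ I would handle by induction on $|\beta|$, exactly as in the proof of Proposition \ref{a-a^chi}. The Leibniz type property (\ref{Leibniz}) lets one write $\Df_{Q_i,\xi}a^\chi$ as $(\Df_{Q_i,\xi}a)^\chi$ plus terms of the form $\Df_{Q_j,\xi}(\chi\cdot\I[\xi])\big(|\nabla_x|,|\xi|\big)\,\Df_{Q_k}a$. Since $\Df_{Q_i,\xi}$ lowers the symbol order by one while leaving the $C^{-r}_*$ regularity in $x$ untouched, one has $\Df_{Q_i,\xi}a\in\mathcal{A}^{m-1}_{-r}$ and the first term is controlled by the induction hypothesis (with $m-1$ in place of $m$); for the second term, Theorem \ref{Multm} shows that, for each fixed $x$-frequency $\eta$, the symbol $\Df_{Q_j,\xi}(\chi\cdot\I[\xi])\big(|\eta|,|\xi|\big)$ lies in a bounded subset of $\mathscr{S}^{-1}_{1,1}$, while Lemma \ref{DiffVanish} pins its $x$-Fourier support to $|\eta|\sim\delta\size[\xi]$; combined with the $\beta=0$ estimate for $\Df_{Q_k}a\in\mathcal{A}^{m-1}_{-r}$, this produces the required bound with one extra factor $\size[\xi]^{-1}$ per difference. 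Lemma \ref{Sigmadelta} then confirms that each $\Df_{q,\xi}^\beta a^\chi$ still satisfies a spectral condition with parameter $<1/2$, completing the argument.

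I do not expect a genuine obstacle here: everything reduces to machinery already in place, and the proof is essentially the mirror image of Proposition \ref{ParaDiff1}. The only point that requires care, rather than a real difficulty, is that all the above estimates must be carried out in the matrix-valued setting with implicit constants independent of $d_\xi$ — which is exactly what the vector-valued Zygmund estimates of Remark \ref{VectZyg} guarantee — and that the loss $\size[\xi]^{|\alpha|}$ from $X^\alpha_x$ and the gain $\size[\xi]^{-|\beta|}$ from $\Df_{q,\xi}^\beta$ be tracked consistently through the induction so that the final exponent comes out to be precisely $m+r+|\alpha|-|\beta|$.
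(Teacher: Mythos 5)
Your argument is correct and is exactly the route the paper intends: the paper's own proof is a one-line remark that the result is "a mere application of the growth estimate indicated in Remark \ref{Zygmund<0}," and your proposal simply fleshes out that application by mirroring Proposition \ref{ParaDiff1} for $\beta=0$ and the Leibniz-type induction of Proposition \ref{a-a^chi} for $\beta\neq0$. The details you supply (the absence of the borderline logarithm since $|\alpha|+r>0$, the $\size[\xi]^{-1}$ gain on the cross terms, and the $d_\xi$-independence via Remark \ref{VectZyg}) are all consistent with the machinery already established.
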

The proof is a mere application of the growth estimate indicated in Remark \ref{Zygmund<0}. Since such symbols are quite irrregular, we cannot expect that choosing a different admissible cut-off function would produce a negeligible error. But we may still refer $\Op(a^\chi)$ as a \emph{para-differential operator} $T_a$, which maps $H^{s}$ to $H^{s+r}$ continuously for any $s\in\mathbb{R}$. Fortunately, there is a simple trick to bypass its low regularity and incorporate it into the symbolic calculus that we just constructed. For example, if $0<r<1$ and $a\in\mathcal{A}^m_{-r}$, then we consider $\Delta^{-1}a$ (subtracting the mean value to make this well-defined), and simply notice that
$$
T_af
=\big[\Delta,T_{\Delta^{-1}a}\big] f-2T_{\nabla \Delta^{-1}a\cdot\nabla}f.
$$
The right-hand-side involves only regular symbols. By the formula of compositions we just proved, noting that $\Delta^{-1}a\in\mathcal{A}^{m+2}_{2-r}$, we find
$$
\big[\Delta,T_{\Delta^{-1}a}\big]
-2T_{\nabla \Delta^{-1}a\cdot\nabla}
$$
is in fact a para-differential operator of order $m+2-(2-r)=m+r$. 

\subsection{Analytic Function of Symbols and Para-Differential Operators}
In this subsection, we claim several propositions concerning analytic function of para-differential operators, to whose proof we apply the results obtained so far. They will be constantly needed for symbolic calculus in future applications.

Although a calculus is already available for our para-differential operators, uncertainty still appears when dealing with commutators. More precisely, if $a,b$ are only general rough symbols on $\Gp$ with order $m$ and $m'$ (in the sense of definition \ref{2Order}), then nothing is known for $[a,b]$ besides that it is a symbol of order $m+m'-1$. Such issue is of course trivially resolved in the commutative group case, but for a general compact Lie group, it would be feasible to look for a smaller subclass of symbols which possesses better commutator properties.

We start with function of symbols. The difficulty of non-trivial commutator can be partially resolved for a special class of symbols large enough for our application. Roughly speaking, such class consists of symbols of ``homogenized" classical differential operators. The prototype of such symbols is
$$
\sqrt{|\xi|^2+b(x,\xi)},
\quad 
b \text{ is the symbol of a vector field},
$$
i.e. the square-root of perturbed Laplacian. In the commutative group case, symbols like this may be manipulated as usual scalar-valued functions. We will show that such manipulation remains partially valid on our group $\Gp$.

\begin{definition}\label{QuasiHomoSym}
A quasi-homogeneous symbol of order $m$ on $\Gp$ takes the form
$$
\kappa(\xi)^mf\left(\frac{b(x,\xi)}{\kappa(\xi)}\right).
$$
Here the scalar Fourier multiplier $\kappa:\DuGp\to(0,+\infty)$ is the symbol of $|\nabla|=\sqrt{-\Delta}$, and $b(x,\xi)$ is the symbol of a vector field on $\Gp$, and the Borel function $f:\mathbb{C}\to\mathbb{C}$ is bounded. The action $f$ on each $\mathrm{End}(\Hh[\xi])$ is defined by spectral calculus of matrices.
\end{definition}

We first need some properties concerning the difference operator acting on the scalar symbol $\kappa(\xi)$. For simplicity, we fix a basis $\{X_i\}_{i=1}^n$ of $\mathfrak{g}$, so that $b(x,\xi)=\sum_{i=1}^nb_i(x)\sigma[X_i](\xi)$; and write $\{\Df_\mu\}$ for the RT-difference operators corresponding to the fundamental representations of $\Gp$. Throughout the rest of this subsection, summation with respect to Greek indices is interpreted as summation over these difference operators.
\begin{proposition}\label{Dkappa}
Let $\kappa$ be the symbol of $|\nabla|$ on $\Gp$.

(1) There necessarily holds $\kappa\in\mathscr{S}^1_{1,0}$, and $\Df_\mu \kappa$ commutes with every symbol of order $m$ up to an error of order $m-1$. Here the notion of order is as in Definition \ref{2Order}.

(2) For every real number $m\in\mathbb{R}$, the symbol $\Df_\mu\kappa^m-m\kappa^{m-1}\Df_\mu\kappa\in\mathscr{S}^{m-2}_{1,0}$.

(2) Furthermore, for any natural number $n$, there holds
$$
\left\|\Df_\mu\big(\kappa(\xi)^{-n}\big)-\frac{n\Df_\mu\kappa(\xi)}{\kappa(\xi)^{n+1}}\right\|
\lesssim \frac{n^2\big(1+C|\xi|^{-1}\big)^{n+1}}{|\xi|^{n+2}}.
$$
with the constants independent from $n,\xi$. Here as usual the norms are operator norms on $\Hh[\xi]$.
\end{proposition}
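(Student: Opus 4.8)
The plan is to reduce all three statements to a one-variable Taylor estimate, via the following \emph{diagonalisation} observation. For a Borel function $G$ on $(0,\infty)$ write $G(\kappa)$ for the scalar symbol $\xi\mapsto G(|\xi|)\mathrm{Id}_{\mathcal H_\xi}$; decomposing a tensor product $\tau\otimes\xi=\bigoplus_l\eta_l$ into irreducibles, the very definition of the intrinsic difference operator, together with $\mathrm{Id}_{\mathcal H_\tau}\otimes\xi\cong\xi^{\oplus\dim\tau}$, gives
\[
\Df_\tau G(\kappa)(\xi)=\bigoplus_l\bigl(G(|\eta_l|)-G(|\xi|)\bigr)\mathrm{Id}_{\mathcal H_{\eta_l}},\qquad
\Df_\tau\kappa(\xi)=\bigoplus_l\bigl(|\eta_l|-|\xi|\bigr)\mathrm{Id}_{\mathcal H_{\eta_l}}.
\]
Thus $\Df_\tau G(\kappa)(\xi)$, $\Df_\tau\kappa(\xi)$ and the scalar $\kappa(\xi)=|\xi|\mathrm{Id}$ are simultaneously block-diagonal in $\bigoplus_l\mathcal H_{\eta_l}$, hence commute, and the individual $\Df_\mu$ are the $\mathcal H_\tau$-blocks of $\Df_\tau$, so their operator norms are bounded by those of $\Df_\tau$. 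The second ingredient I would use is that for a \emph{fundamental} $\tau$ the proof of Corollary~\ref{SpecPrd} and Lemma~\ref{DiffVanish} already shows that the highest weights $\bm J(\eta_l)$ lie in a fixed bounded neighbourhood of $\bm J(\xi)$, so $\bigl||\eta_l|-|\xi|\bigr|\le C_0$ with $C_0=C_0(\Gp)$ independent of $\xi$ and $l$. (As usual the finitely many $\xi$ with $|\xi|$ below a fixed threshold --- in particular the trivial representation, near which $\kappa$ is understood to have been made positive --- contribute only to $\mathscr S^{-\infty}$ and would be discarded.)

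For the claims in part (1), I would get $\kappa\in\mathscr S^1_{1,0}(\Gp)$ from Theorem~\ref{Multm} applied to $h(\lambda)=\lambda$ (cut off to a positive function near $0$), whose multiplier norms $\|h\|_{1;k}$ are finite; feeding in one further difference operator then places $\Df_\mu\kappa$ in $\mathscr S^0_{1,0}(\Gp)$, that is, $\Df_\mu\kappa$ is a symbol of order $0$ and type $1$, whence Proposition~\ref{2OrderComm} gives at once that $[\Df_\mu\kappa,a]$ has order $m-1$ for any symbol $a$ of order $m$ and type $1$.

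For the statements about $\kappa^m$ and about $\kappa^{-n}$, I would run diagonalisation with $G=F$ and the one-variable remainder $R_F(a,b):=F(b)-F(a)-F'(a)(b-a)=\tfrac12 F''(\zeta)(b-a)^2$ with $\zeta$ between $a$ and $b$: taking $\mathcal H_\tau$-blocks and operator norms in the display above yields
\[
\bigl\|\Df_\mu F(\kappa)(\xi)-F'(|\xi|)\Df_\mu\kappa(\xi)\bigr\|\le\max_l\bigl|R_F(|\xi|,|\eta_l|)\bigr|\le\tfrac12 C_0^2\sup_{[\,|\xi|-C_0,\ |\xi|+C_0\,]}|F''|.
\]
With $F(\lambda)=\lambda^m$, so $F''(\lambda)=m(m-1)\lambda^{m-2}$, this gives $\|\Df_\mu\kappa^m-m\kappa^{m-1}\Df_\mu\kappa\|\lesssim\langle\xi\rangle^{m-2}$, which is the estimate without extra difference operators; the full assertion $\Df_\mu\kappa^m-m\kappa^{m-1}\Df_\mu\kappa\in\mathscr S^{m-2}_{1,0}$ I would obtain by applying further difference operators $\Df^{\boldsymbol{\tau}}$ and iterating the same one-variable estimate --- each $\Df_{\tau_i}$ merely replaces $\xi$ by $\tau_i\otimes\xi$ and keeps everything block-diagonal --- while bookkeeping the Leibniz-type identity~(\ref{Leibniz}) and passing between RT-admissible tuples via Lemma~\ref{Fisqq'Lem}. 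With $F(\lambda)=\lambda^{-n}$, so $F''(\lambda)=n(n+1)\lambda^{-n-2}$, the supremum above equals $n(n+1)(|\xi|-C_0)^{-n-2}$; writing $(|\xi|-C_0)^{-n-2}=|\xi|^{-n-2}(1-C_0/|\xi|)^{-(n+2)}$ and, for $|\xi|\ge 2C_0$, estimating $(1-C_0/|\xi|)^{-1}\le 1+2C_0/|\xi|$ and absorbing one factor $1+2C_0/|\xi|\le 1+2C_0$ into the constant, I reach a bound of the form $C\,n^2(1+C|\xi|^{-1})^{n+1}|\xi|^{-n-2}$ with $C=C(\Gp)$, which is the claimed estimate (its first-order term being $(\lambda^{-n})'\big|_{\lambda=\kappa}\,\Df_\mu\kappa=-n\kappa^{-n-1}\Df_\mu\kappa$). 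I expect the one genuinely technical point to be the full symbol-class statement, i.e.\ verifying $\|\Df^{\boldsymbol{\tau}}(\Df_\mu\kappa^m-m\kappa^{m-1}\Df_\mu\kappa)\|\lesssim\langle\xi\rangle^{m-2-|\boldsymbol{\tau}|}$ for every $\boldsymbol{\tau}$, since this forces one to iterate the remainder estimate through the non-commutative Leibniz rule rather than to use it a single time; the order-$0$ bound and the $\kappa^{-n}$ estimate need only one pass.
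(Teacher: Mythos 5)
Your argument is correct in substance but takes a genuinely different route from the paper's for parts (2) and (3). For (1) the paper obtains $\kappa\in\mathscr{S}^1_{1,0}$ from $|\nabla|\in\Psi^1_{1,0}(\Gp)$ together with Fischer's Theorem \ref{Fischer}, and then gets the commutator statement by a one-line Leibniz computation starting from $[a,\kappa]=0$ (using that $\kappa$ is scalar on each $\mathcal{H}_\xi$); your route through Theorem \ref{Multm} and Proposition \ref{2OrderComm} is equally valid, though the paper's Leibniz argument is more self-contained, as it does not re-invoke the H\"ormander calculus. For (2) the paper argues at the operator level: it compares the global composition expansion of $[|\nabla|^m,b]$ (Theorem \ref{RegCompo}) with the exact identity $[\Delta,b]=2\nabla b\cdot\nabla+\Delta b$ and with the Poisson-bracket principal symbol in the local H\"ormander calculus, then extracts the symbol identity ``since $b$ is arbitrary''. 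Your diagonalisation --- writing $\Df_\tau G(\kappa)(\xi)$ as $\oplus_l\bigl(G(|\eta_l|)-G(|\xi|)\bigr)$ over the irreducible constituents of $\tau\otimes\xi$, noting $\bigl||\eta_l|-|\xi|\bigr|\le C_0$ for fundamental $\tau$ and $|\xi|$ bounded below, and invoking the scalar Taylor remainder --- is more elementary, stays entirely on the representation-theoretic side, and yields explicit constants; it is essentially the mechanism underlying Theorem \ref{Multm}. For (3) the paper runs an induction on $n$ through the Leibniz identity, whereas your single Taylor estimate with $F(\lambda)=\lambda^{-n}$ reproduces the same $n$-dependence of the constant more transparently.

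Two caveats. First, in part (2) the one-pass Taylor estimate only yields $\|\Df_\mu\kappa^m-m\kappa^{m-1}\Df_\mu\kappa\|\lesssim\langle\xi\rangle^{m-2}$; full membership in $\mathscr{S}^{m-2}_{1,0}$ is not a formal consequence of ``replacing $\xi$ by $\tau_i\otimes\xi$'', because the error is not itself a scalar function of $\kappa$: applying $\Df_\nu$ to $m\kappa^{m-1}\Df_\mu\kappa$ produces, via the Leibniz rule, the term $m\Df_\nu\kappa^{m-1}\cdot\Df_\mu\kappa$ of size $\langle\xi\rangle^{m-2}$, which must be cancelled against the second-order part of $\Df_\nu\Df_\mu\kappa^m$ (using the already-established first-order statement with exponent $m-1$). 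You correctly flag this as the technical point but do not carry it out; the induction on the number of difference operators does close, but it is a genuine second-order computation rather than a repetition of the first-order one. Second, your first-order term for $\kappa^{-n}$ is $-n\kappa^{-n-1}\Df_\mu\kappa$, which is the correct Taylor coefficient and agrees with what the paper's own proof derives in its last line; the displayed statement of the proposition, which subtracts $+n\kappa^{-(n+1)}\Df_\mu\kappa$, carries a sign typo, so what you prove is the corrected statement.
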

\begin{proof}
(1) The claim $\kappa\in\mathscr{S}^1_{1,0}$ follows from two premises: $|\nabla|$ is of class $\Psi^1_{1,0}(\Gp)$, the H\"{o}rmander class; the operator class $\Psi^m_{1,0}(\Gp)$ coincides with $\Op\mathscr{S}^m_{1,0}(\Gp)$ by Theorem \ref{Fischer}. We next notice that $\kappa$ commutes with any symbol $a$; thus by the Leibniz property,
$$
0=a\Df_\mu\kappa-\Df_\mu\kappa a
+\sum_{\nu,\nu'} c_{\mu\nu\nu'}\left(\Df_\nu a\Df_{\nu'}\kappa-\Df_\nu \kappa\Df_{\nu'}a\right).
$$
If $a$ is of order $m$, this obviously implies that $a\Df_\mu\kappa-\Df_\mu\kappa a$ is of order $m-1$.

(2) We shall extensively utilize Theorem \ref{Fischer}, namely $\Op\mathscr{S}^m_{1,0}=\Psi^m_{1,0}(\Gp)$. Since the Greek letter $\xi$ has already occupied the notation for representations, we use $(x,\omega)$ to denote elements in the cotangent bundle $\mathrm{T}^*\Gp$, instead of the commonly used $(x,\xi)$ in most literature. 

Consider any smooth function $b$ on $\Gp$, and then the commutator $\big[|\nabla|^m,b\big]\in\Psi^{m-1}_{1,0}$. By the global symbolic calculus formula, i.e. Theorem \ref{RegCompo}, we find that the invariant symbol of $\big[|\nabla|^m,b\big]$ is
$$
\sum_\mu \Df_\mu\kappa(\xi)^mX_\mu b(x)
\quad \mod\mathscr{S}^{m-2}_{1,0}.
$$
Here $\{X_\mu\}$ denotes the set of left-invariant vector fields adapting to the RT-difference operators $\Df_\mu$. In the special case $m=2$, this is just
$$
2\sum_\mu \kappa(\xi)\Df_\mu\kappa(\xi)X_\mu b(x)
\quad \mod\mathscr{S}^{0}_{1,0}.
$$
Since we have the precise equality $[\Delta,b]=2\nabla b\cdot\nabla+\Delta b$, we obtain
$$
\Op\left(\sum_\mu \kappa(\xi)\Df_\mu\kappa(\xi)X_\mu b(x)\right)
=-\nabla b\cdot\nabla
\quad \mod\Psi^{0}_{1,0}.
$$
By applying $m|\nabla|^{m-2}$ on both sides, we find, by the formula of global symbolic calculus once again,
$$
\begin{aligned}
\Op\left(\sum_\mu m\kappa(\xi)^{m-1}\Df_\mu\kappa(\xi)X_\mu b(x)\right)
&=m|\nabla|^{m-2}\big(\nabla b\cdot\nabla\big)
&\mod\Psi^{m-2}_{1,0}\\
&=m\nabla b\cdot\nabla|\nabla|^{m-2} 
&\mod\Psi^{m-2}_{1,0}.
\end{aligned}
$$

But in the formalism of H\"{o}rmander calculus, the commutator $\big[|\nabla|^m,b\big]$ has \emph{principal symbol} just being the Poisson bracket between $|\omega|_g^m$ and $b(x)$, namely
$$
\sum_{j=1}^n\partial_{\omega_j}|\omega|_{g}^m\partial_{x^j}b
=\sum_{j=1}^nm|\omega|_{g}^{m-2}{\omega_j}\partial_{x^j}b,
$$
where the local trivialization of the cotangent bundle is arbitrary, and $g$ is the bi-invariant metric on $\Gp$. Noting that the principal symbol of $\nabla b\cdot\nabla$ is just $\sum_{j=1}^n\omega_j\partial_{x^j}b$, this implies
$$
\big[|\nabla|^m,b\big]
=m\nabla b\cdot\nabla|\nabla|^{m-2}
\quad\mod\Psi^{m-2}_{1,0}.
$$
In conclusion, we find
$$
\Op\left(\sum_\mu m\kappa(\xi)^{m-1}\Df_\mu\kappa(\xi)X_\mu b(x)\right)
=\big[|\nabla|^m,b\big]
\quad\mod\Psi^{m-2}_{1,0},
$$
or in other words,
$$
\sum_\mu m\kappa(\xi)^{m-1}\Df_\mu\kappa(\xi)X_\mu b(x)
=\sum_\mu \Df_\mu\kappa(\xi)^{m}X_\mu b(x)
\quad\mod\mathscr{S}^{m-2}_{1,0}.
$$
Since $b$ is arbitrary, the equality $\Df_\mu\kappa^m=m\kappa^{m-1}\Df_\mu\kappa\mod\mathscr{S}^{m-2}_{1,0}$ must hold.

(3) Compared to (2), the proof is quite elementary. We use induction on $n$. For $n=1$ the claim is proved by noticing that $|\nabla|^{-1}=\Op(\kappa)$ is a pseudo-differential operator of order $-1$, and the formula
$$
0=\Df_\mu\big(\kappa^{-1}\cdot\kappa\big)=\Df_\mu\kappa^{-1}\kappa+\kappa^{-1}\Df_\mu\kappa
+\sum_{\nu,\nu'} c_{\mu\nu\nu'}\Df_\nu\kappa^{-1}\Df_{\nu'}\kappa.
$$
For convenience we omit the dependence on $\xi$ for the quantities we consider below. Setting $K_n=\sum_{\mu}\|\Df_\mu\kappa^{-n}\|$, we obtain by the Leibniz property again
$$
K_{n+1}\leq K_n|\xi|^{-1}+C_1|\xi|^{-(n+2)}+C_2K_n|\xi|^{-2}.
$$
Inductively, this implies
$$
K_{n}\leq C_1(n+1)|\xi|^{-(n+1)}\big(1+C_2|\xi|^{-1}\big)^{n+1},
$$
Using the Leibniz property again, we find
$$
\Df_\mu\kappa^{-n}=\kappa^{-(n-1)}\Df_\mu\kappa^{-1}+\kappa^{-1}\Df_\mu\kappa^{-(n-1)}
+\sum_{\nu,\nu'} c_{\mu\nu\nu'}\Df_\nu\kappa^{-(n-1)}\Df_{\nu'}\kappa^{-1}.
$$
Inductively this gives (noting that by our choice, $c_{\mu\nu\nu'}$ is either 0 or 1)
$$
\begin{aligned}
\left\|\Df_\mu\kappa^{-n}-\frac{n\Df_\mu\kappa^{-1}}{\kappa^{n+1}}\right\|
&\lesssim \sum_{k=0}|\xi|^{k-2}K_{n-1-k}(\xi)\\
&\lesssim \frac{n^2\big(1+C|\xi|^{-1}\big)^{n+1}}{|\xi|^{n+2}}.
\end{aligned}
$$
Finally it suffices to notice that $\Df_\mu\kappa^{-1}=-\kappa^{-2}\Df_\mu\kappa$ plus a symbol of order $-3$.
\end{proof}

We then verify that the notion of order for quasi-homogeneous symbols in Definition \ref{QuasiHomoSym} coincides with Definition \ref{2Order}. Furthermore, the difference of a quasi-homogeneous symbol can be computed by an approximate Leibniz rule.

\begin{proposition}\label{PSSymbol}
Let $\kappa$ be the symbol of $|\nabla|$ on $\Gp$, $b$ be the symbol of a vector field on $\Gp$. Let $f$ be a bounded holomorphic function defined near $z=0$ on the complex plane, covering the closed disk of radius $R_0:=\sup_{x,\xi}|\xi|^{-1}\|b(x,\xi)\|$.

(1) The quasi-homogeneous symbol $f(b/\kappa)$ is of order 0 in the sense of Definition \ref{2Order}, and the difference $\Df_\mu f(b/\kappa)$ is such that
$$
\Df_\mu f\left(\frac{b(x,\xi)}{\kappa(\xi)}\right)
-\frac{1}{\kappa(\xi)}f'\left(\frac{b(x,\xi)}{\kappa(\xi)}\right)\Df_\mu b
-\frac{\Df_\mu\kappa(\xi)}{\kappa(\xi)^2}f'\left(\frac{b(x,\xi)}{\kappa(\xi)}\right)b(x,\xi)
$$
is a symbol of order $-2$.

(2) If $a$ is the symbol of another vector field, then the commutator $[f(b/\kappa),a]$ is still a symbol of order $0$.

(3) If $b$ has at least $C^1$ coefficients, then for any left-invariant vector field $X$, the symbol $Xf(b/\kappa)$ is of order $0$, and 
$$
Xf\left(\frac{b}{\kappa}\right)-\frac{1}{\kappa}f'\left(\frac{b}{\kappa}\right)Xb
$$
is a symbol of order $-1$.
\end{proposition}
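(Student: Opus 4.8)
The plan is to realise $f(b/\kappa)$ through the holomorphic functional calculus and then commute the operations $\Df_\mu$ and $X$ past the resolvent, in the same spirit as part (2) of Proposition~\ref{Dkappa}. Write $g=g(x,\xi):=b(x,\xi)/\kappa(\xi)$. By hypothesis $\sup_{x,\xi}\|g(x,\xi)\|\leq R_0$ while $f$ is holomorphic on an open neighbourhood of the closed disk of radius $R_0$; fixing a contour $\gamma$ in that neighbourhood encircling the disk, the resolvents $(z-g(x,\xi))^{-1}$ are bounded uniformly in $x$, $\xi$ and $z\in\gamma$, and
$$
f(g)=\frac{1}{2\pi i}\oint_\gamma f(z)(z-g)^{-1}\,dz,\qquad f'(g)=\frac{1}{2\pi i}\oint_\gamma f(z)(z-g)^{-2}\,dz.
$$
The symbol $b$ of a vector field lies in $\mathscr{S}^{1}_{1,0}(\Gp)$, and $\kappa\in\mathscr{S}^{1}_{1,0}(\Gp)$ with $\kappa^{-1}$ of order $-1$ (Proposition~\ref{Dkappa}), so by the Leibniz-type property $(\ref{Leibniz})$ the symbol $g$ is of order $0$ in the sense of Definition~\ref{2Order}; consequently each $\Df_\mu g$ is automatically of order $-1$, and integrating the two displays above over $\gamma$ shows that $f(g)$ and all of its $\Df_\mu$- and $X$-derivatives are bounded, which already yields every ``order $0$'' assertion in (1)--(3). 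Moreover, $(\ref{Leibniz})$ applied to $\Df_\mu(b\kappa^{-1})$ together with $\Df_\mu\kappa^{-1}=-\kappa^{-2}\Df_\mu\kappa$ modulo order $-3$ (Proposition~\ref{Dkappa}(3)) gives
$$
\Df_\mu g=\kappa^{-1}\,\Df_\mu b-\kappa^{-2}\,b\,\Df_\mu\kappa+(\text{order }-2).
$$

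For the approximate Leibniz rule in (1) I differentiate the Cauchy integral. Since the fundamental difference operators satisfy $(\ref{Leibniz})$, differentiating $(z-g)^{-1}(z-g)=\mathrm{Id}$ and solving the resulting finite linear recursion for $\Df_\mu(z-g)^{-1}$ --- invertible for large $\langle\xi\rangle$ by a Neumann argument, exactly as in Proposition~\ref{Dkappa}(3) --- gives $\Df_\mu(z-g)^{-1}=(z-g)^{-1}(\Df_\mu g)(z-g)^{-1}$ modulo order $-2$, uniformly for $z\in\gamma$. I then use the elementary identity
$$
(z-g)^{-1}C(z-g)^{-1}=C(z-g)^{-2}+(z-g)^{-1}[g,C](z-g)^{-2}
$$
with $C=\Df_\mu g$: as $g$ is of order $0$ and $\Df_\mu g$ of order $-1$, Proposition~\ref{2OrderComm} makes $[g,\Df_\mu g]$ of order $-2$, so integrating against $f$ yields $\Df_\mu f(g)=(\Df_\mu g)\,f'(g)$ modulo order $-2$. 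Substituting the expression for $\Df_\mu g$ and rearranging --- the scalar $\kappa$ may be moved freely, $b$ commutes with $g$ hence with $f'(g)$, and every commutator created by the remaining rearrangements of $\Df_\mu b$, $\Df_\mu\kappa$ and $f'(g)$ (such as $[b,\Df_\mu\kappa]$, $[\Df_\mu b,f'(g)]$, $[\Df_\mu\kappa,f'(g)]$) is, by Proposition~\ref{2OrderComm}, of one order lower than the term it sits in --- identifies $\Df_\mu f(g)$ with the expression in the statement up to a symbol of order $-2$, a bookkeeping best carried out by tracking each order separately. This proves (1).

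Part (2) is then immediate: by (1), $f(b/\kappa)$ is of order $0$ --- in fact in $\mathscr{S}^{0}_{1,0}(\Gp)$ when $b$ is smooth in $x$ --- while the symbol $a$ of a vector field is of order $1$, so Proposition~\ref{2OrderComm} gives $[f(b/\kappa),a]$ of order $0$. For (3), where $b$ is only of class $C^{1}$, the operator $X$ acts on the $x$-variable alone, so $Xg=(Xb)/\kappa$; running the Cauchy-integral argument with $X$ in place of $\Df_\mu$ --- now $X$ is an honest derivation, so there is no Leibniz remainder --- and using the same identity with $C=Xg$ gives
$$
Xf(g)=(Xg)\,f'(g)+\frac{1}{2\pi i}\oint_\gamma f(z)(z-g)^{-1}[g,Xg](z-g)^{-2}\,dz,
$$
which equals $f'(g)\,Xg=\kappa^{-1}f'(g)\,Xb$ modulo order $-1$ as soon as $[g,Xg]$ is of order $-1$. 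This is exactly where the vector-field structure of $b$ enters: $[g,Xg]=\kappa^{-2}[b,Xb]$, and writing $b=\sum_i b_i\,\sigma[X_i]$ one has the exact identity
$$
[b,Xb]=\sum_{i,j}b_i\,(Xb_j)\,[\sigma[X_i],\sigma[X_j]]=\sum_{i,j}b_i\,(Xb_j)\,\sigma[[X_i,X_j]],
$$
since $\sigma[X](\xi)$ equals the derived representation $d\xi(X)$ and $\xi\mapsto d\xi$ is a Lie-algebra homomorphism; hence $[b,Xb]$ is again a vector-field symbol, with merely bounded ($C^{0}$) coefficients, so $\kappa^{-2}[b,Xb]$ is of order $-1$ with no differentiation in $x$, and $Xf(b/\kappa)-\kappa^{-1}f'(b/\kappa)\,Xb$ is of order $-1$.

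The step I expect to be the real work is the order-bookkeeping in (1): one must verify that every resolvent-expansion remainder, every Leibniz correction from $(\ref{Leibniz})$ (both for $(z-g)^{-1}$ and for the powers $\kappa^{-n}$), and every rearrangement of $f'(g)$, $b$, $\kappa$ and the difference factors drops exactly the right number of orders, which is ultimately controlled by Proposition~\ref{2OrderComm}. In part (3) the obstacle is slightly different: Proposition~\ref{2OrderComm} cannot be reached by differentiating in $x$ since $b$ is only $C^{1}$, so one must instead read off directly from $[\sigma[X_i],\sigma[X_j]]=\sigma[[X_i,X_j]]$ that $[b,Xb]$ remains an order-$1$ symbol, which is precisely what keeps $\kappa^{-2}[b,Xb]$ of order $-1$.
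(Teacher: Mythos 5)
Your argument is correct in substance but follows a genuinely different route from the paper's. The paper expands $f(z)=\sum_n c_nz^n$ and runs an induction on $n$: using the Leibniz-type property it proves $\big\|\Df_\mu(b^n)-nb^{n-1}\Df_\mu b\big\|\lesssim n^2|\xi|^{-2}(R_0|\xi|+C)^n$, combines this with Proposition \ref{Dkappa}, and closes with a majorant series whose convergence rests on the Cauchy estimates for the coefficients $c_n$ (this is exactly where the hypothesis that $f$ is holomorphic on a neighbourhood of the \emph{closed} disk of radius $R_0$ enters); parts (2) and (3) are handled the same way, using that $\mathrm{ad}_a$ and $X$ are derivations and that $[a,b]$ and $[Xb,b]$ are again vector-field symbols. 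You instead represent $f(g)$ by the Cauchy integral and differentiate the resolvent, which buys an $n$-free bookkeeping: all the quantitative work is absorbed into the uniform bound $\|(z-g)^{-1}\|\leq(R_1-R_0)^{-1}$ on the contour, and both the approximate chain rule and the commutator corrections come out of the single identity $(z-g)^{-1}C(z-g)^{-1}=C(z-g)^{-2}+(z-g)^{-1}[g,C](z-g)^{-2}$. The essential algebraic inputs are the same in both proofs (the Leibniz-type property (\ref{Leibniz}), Proposition \ref{Dkappa}, and $[\sigma[X_i],\sigma[X_j]]=\sigma[[X_i,X_j]]$). Your shortcut for (2) via Proposition \ref{2OrderComm} (freeze $x$, treat the symbols as Fourier multipliers of order $0$ and $1$) is legitimate and shorter, though less elementary than the paper's self-contained power-series estimate; for (3) you isolate the same key point as the paper, namely that $[b,Xb]$ is again a vector-field symbol with bounded coefficients.

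Two caveats. First, like the paper itself, you only verify the top-order operator-norm bound of the remainders; to conclude that they are symbols of order $-2$ (resp.\ $-1$) in the sense of Definition \ref{2Order} one must iterate the resolvent-difference identity to control all higher differences $\Df_q^\beta$ of the error terms. This works, but should be stated. Second, your expression $\Df_\mu g=\kappa^{-1}\Df_\mu b-\kappa^{-2}b\,\Df_\mu\kappa+(\text{order }-2)$ carries the sign forced by $\Df_\mu(\kappa\cdot\kappa^{-1})=0$ and by Proposition \ref{Dkappa}(2) with $m=-1$, whereas the displayed formula in the statement (and the displayed inequality in Proposition \ref{Dkappa}(3)) has the opposite sign on the $\Df_\mu\kappa$ term; the discrepancy is a symbol of order $-1$, not $-2$, so it cannot be absorbed into the error. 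This is evidently a sign typo in the paper (its own proof never tracks the sign), and your chain-rule-consistent version is the correct one, but you should not claim to ``identify'' your expression with the one in the statement without flagging the mismatch.
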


\begin{corollary}\label{CoroPSSymbol}
Under the same assumptions of Proposition \ref{PSSymbol}, if in addition the vector field $\Op(b)$ is of class $C^r_*$ for $r>0$, then the symbol $\kappa^m f(b/\kappa)$ is of class $\mathcal{A}^m_r$.
\end{corollary}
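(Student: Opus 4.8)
The plan is to represent $f$ by its Cauchy integral and to carry the $C^r_*$-in-$x$ control of the vector-field coefficients of $b$ through this representation, upgrading the ``order $0$ in the sense of Definition \ref{2Order}'' statement of Proposition \ref{PSSymbol}(1) to the $\mathcal{A}^0_r$ statement. First I would invoke Lemma \ref{Fisqq'Lem} to fix once and for all $q=Q$, the fundamental tuple, so that $\Df_{Q,\xi}$ obeys the Leibniz-type identity \eqref{Leibniz} and, by \eqref{Diff_qa}, commutes with multiplication by functions of $x$. Writing $b(x,\xi)=\sum_{i=1}^n b_i(x)\sigma[X_i](\xi)$ with $b_i\in C^r_*(\Gp)$ and $c_i(\xi):=\sigma[X_i](\xi)\kappa(\xi)^{-1}$, I note that $\sigma[X_i]\in\mathscr{S}^1_{1,0}$ and (discarding the finitely many $\xi$ with $|\xi|$ bounded, where the symbol is smooth in $x$ and supported on finitely many modes, hence trivially in $\mathcal{A}^m_r$) $\kappa^{-1}\in\mathscr{S}^{-1}_{1,0}$ by Theorem \ref{Fischer}, so $\|\Df_{Q,\xi}^\beta c_i(\xi)\|\lesssim\size[\xi]^{-|\beta|}$, and the $c_i$ do not depend on $x$. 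Hence $M(x,\xi):=b(x,\xi)/\kappa(\xi)=\sum_i b_i(x)c_i(\xi)$ satisfies $\|M(x,\xi)\|\le R_0$, so $\mathrm{spec}\,M(x,\xi)\subset\overline{D(0,R_0)}$; since $f$ is holomorphic on an open set containing $\overline{D(0,R_0)}$, hence on $\overline{D(0,R_0+\varepsilon)}$ for some $\varepsilon>0$, one has $f(M(x,\xi))=\tfrac1{2\pi i}\oint_\Gamma f(z)(z-M(x,\xi))^{-1}dz$ with $\Gamma=\{|z|=R_0+\varepsilon/2\}$, and on $\Gamma$ the Neumann series gives $\|(z-M(x,\xi))^{-1}\|\le 2/\varepsilon$ uniformly in $x,\xi$.

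The heart of the argument is the estimate
$$
\big\|\Df_{Q,\xi}^\beta\big[(z-M(\cdot,\xi))^{-1}\big]\big\|_{r;x}\lesssim_\beta\size[\xi]^{-|\beta|}
$$
uniformly in $z\in\Gamma$, proved by induction on $|\beta|$. For $\beta=0$ this is the resolvent identity $(z-M(x,\xi))^{-1}-(z-M(y,\xi))^{-1}=(z-M(x,\xi))^{-1}(M(x,\xi)-M(y,\xi))(z-M(y,\xi))^{-1}$ together with $\|M(\cdot,\xi)\|_{r;x}\le\sum_i|b_i|_{C^r_*}\|c_i(\xi)\|\lesssim1$ and the product rule for $\|\cdot\|_{r;x}$, which reduces to the scalar product rule in $C^r_*$ after pairing against $B^*\in\mathrm{End}(\Hh[\xi])^*$ and which has dimension-independent constants by Remark \ref{VectZyg}. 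For the inductive step, write $R=(z-M)^{-1}$ and apply $\Df_{Q,\xi}^\beta$ to $R\cdot(z-M)=\I[\xi]$. Since $\Df_{Q,\xi}^\gamma(\I[\xi])=0$ and $\Df_{Q,\xi}^\gamma(z\I[\xi])=0$ for $|\gamma|\ge1$, the Leibniz expansion collapses to
\[
\Df_{Q,\xi}^\beta R\cdot\Big[(z-M)-\sum_{0<|\gamma|\le|\beta|}c^{\beta}_{\beta,\gamma}\,\Df_{Q,\xi}^\gamma M\Big]
=-\sum_{\substack{|\beta_1|<|\beta|\\ |\beta_1|+|\beta_2|\ge|\beta|}}c^{\beta}_{\beta_1,\beta_2}\,\Df_{Q,\xi}^{\beta_1}R\cdot\Df_{Q,\xi}^{\beta_2}(z-M).
\]
The bracket $E(x,\xi,z)$ on the left differs from $R^{-1}=z-M$ by $\sum_{|\gamma|\ge1}c^\beta_{\beta,\gamma}\Df_{Q,\xi}^\gamma M$, whose norm is $\lesssim\size[\xi]^{-1}$; hence for $\size[\xi]$ large $E$ is invertible with $\|E^{-1}\|\lesssim1$ and, by the resolvent-identity trick again, $\|E^{-1}(\cdot,\xi,z)\|_{r;x}\lesssim1$. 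Solving for $\Df_{Q,\xi}^\beta R$, applying the induction hypothesis to the factors $\Df_{Q,\xi}^{\beta_1}R$ with $|\beta_1|<|\beta|$, the bounds $\|\Df_{Q,\xi}^{\beta_2}(z-M)\|_{r;x}\lesssim\size[\xi]^{-|\beta_2|}$ (immediate for $\beta_2=0$, and for $|\beta_2|\ge1$ since this equals $-\Df_{Q,\xi}^{\beta_2}M=-\sum_i b_i\,\Df_{Q,\xi}^{\beta_2}c_i$), and the product rule for $\|\cdot\|_{r;x}$ together with $|\beta_1|+|\beta_2|\ge|\beta|$, yields the claimed bound at level $\beta$.

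Integrating over the fixed compact contour $\Gamma$ (on which $|f|$ is bounded) gives $\|\Df_{Q,\xi}^\beta f(b/\kappa)(\cdot,\xi)\|_{r;x}\lesssim\size[\xi]^{-|\beta|}$, i.e.\ $f(b/\kappa)\in\mathcal{A}^0_r$. To finish, apply \eqref{Leibniz} to $\kappa^m\cdot f(b/\kappa)$: each term is $\Df_{Q,\xi}^{\beta_1}(\kappa^m)\cdot\Df_{Q,\xi}^{\beta_2}f(b/\kappa)$ with $|\beta_1|+|\beta_2|\ge|\beta|$, and since $\kappa^m\in\mathscr{S}^m_{1,0}$ is $x$-independent we get $\|\Df_{Q,\xi}^{\beta_1}(\kappa^m)\,\Df_{Q,\xi}^{\beta_2}f(b/\kappa)\|_{r;x}\le\|\Df_{Q,\xi}^{\beta_1}\kappa^m\|\cdot\|\Df_{Q,\xi}^{\beta_2}f(b/\kappa)\|_{r;x}\lesssim\size[\xi]^{m-|\beta_1|}\size[\xi]^{-|\beta_2|}\le\size[\xi]^{m-|\beta|}$, so $\kappa^mf(b/\kappa)\in\mathcal{A}^m_r$. \textbf{The main obstacle} is the inductive step: one must recognize that the Leibniz correction terms in which all $|\beta|$ differences fall on $R$ (plus at least one on $z-M$) cannot be discarded but must be moved to the left and absorbed into the factor $E$, whose invertibility with controlled inverse is secured by the decay $\|\Df_{Q,\xi}M\|\lesssim\size[\xi]^{-1}$ — and all of this has to be run with the vector-valued Zygmund norm $\|\cdot\|_{r;x}$ and dimension-independent constants, which is precisely where Remark \ref{VectZyg} is essential; the low-mode caveat where $\kappa$ vanishes is routine.
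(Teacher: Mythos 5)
Your architecture is sound and it is a genuinely different route from the one the paper intends. Proposition \ref{PSSymbol} is proved by expanding $f$ in its power series and summing a majorant series term by term, and the corollary is meant to be the same argument run with the norms $\|\cdot\|_{r;x}$ in place of $\sup_x\|\cdot\|$; you instead represent $f(b/\kappa)$ by the Riesz--Dunford integral and prove the bound $\|\Df_{Q,\xi}^\beta (z-M)^{-1}\|_{r;x}\lesssim\size[\xi]^{-|\beta|}$ by induction, solving the Leibniz identity for $\Df_{Q,\xi}^\beta R$ after absorbing the terms $\Df_{Q,\xi}^\beta R\cdot\Df_{Q,\xi}^\gamma M$ into an invertible factor $E$. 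This inversion of the Leibniz identity is the right idea and is executed correctly ($\|E-(z-M)\|\lesssim\size[\xi]^{-1}$, the constraint $|\beta_1|+|\beta_2|\ge|\beta|$ gives the decay, and $\Df_{Q,\xi}^\gamma(z\I[\xi])=0$ because the $Q_i$ vanish at $e$). In fact your route buys something real: a naive majorant series in the $\|\cdot\|_{r;x}$ norm picks up the Banach-algebra constant $K>1$ of the vector-valued Zygmund product estimate in each factor, so it converges only when $KR_0$ stays below the radius of convergence of $f$; the contour integral separates the spectral-radius hypothesis from the norm of $M$ in $C^r_*$ and avoids this entirely.

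Two steps need repair. First, your base case and the bound $\|E^{-1}\|_{r;x}\lesssim1$ both rest on the claim that $x\mapsto A(x)^{-1}$ lies in $C^r_*$ with controlled norm whenever $A\in C^r_*$ has a uniformly bounded inverse, and you justify this only by the first-order resolvent identity. First differences characterize $C^r_*$ only for $0<r<1$; for $r\ge1$ the argument as written proves nothing (a first-difference bound $O(\dist(x,y)^r)$ with $r>1$ would force constancy). The fix is routine but must be said: iterate $X(z-M)^{-1}=(z-M)^{-1}\,XM\,(z-M)^{-1}$ up to order $\lfloor r\rfloor$ (the coefficients $b_i\in C^r_*$ admit exactly that many derivatives in the relevant sense), reduce to the case of fractional index via Proposition \ref{LPZyg}/Remark \ref{VectZyg}, and handle integer $r$ by interpolation. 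Second, the vector-valued product rule $\|uv\|_{r;x}\lesssim\|u\|_{L^\infty}\|v\|_{r;x}+\|u\|_{r;x}\|v\|_{L^\infty}$ with constants independent of $d_\xi$ is true and is what you need throughout, but it does not "reduce to the scalar product rule after pairing against $B^*$": the pairing $\langle B^*,uv\rangle$ does not factor. It has to be proved from the Littlewood--Paley characterization of Remark \ref{VectZyg} together with Corollary \ref{SpecPrd}, exactly as in Lemma \ref{CompoAux2}. With these two points filled in, the proof is complete, including the final multiplication by $\kappa^m\in\mathscr{S}^m_{1,0}$ and the disposal of the finitely many low modes.
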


Thus the symbolic calculus formulas are easier to manipulate with for para-differential operators corresponding to quasi-homogeneous symbols.

\begin{proof}[Proof of Proposition \ref{PSSymbol}]
The proof is basically a repeated application of majorant power series. There is a simple fact that we will keep using: if $b$ is the symbol of a vector field, then $\Df_\mu b$ is just a scalar function of $x$, hence commutes with any symbol. In fact, if $X$ is a left-invariant vector field, then $\Df_\mu \sigma[X]=(XQ_\mu)(e)$ by formula (\ref{Diff_qa}), which is a number. In the following, it obviously suffices to prove the estimates for representations $\xi$ which are sufficiently ``large" in the dominance order (i.e. $|\xi|$ sufficiently large).

(1) Suppose 
$$
f(z)=\sum_{n=0}^\infty c_nz^n,
$$
and set $R$ to be the radius of convergence of this power series. By assumption, $R_0=\sup_{x,\xi}|\xi|^{-1}\|b(x,\xi)\|<R$. 

It suffices to prove that the expression in the statement has operator norm bounded by $|\xi|^{-2}$, because it is in fact still a power series in $b/\kappa$ multiplied by $\{\Df_\nu b\}_\nu$ and $\Df_\mu\kappa$. For convenience we omit the dependence on $\xi$ below.

We first estimate the norm $B_n:=\sum_{\mu}\big\|\Df_\mu(b^n)\big\|$. By the Leibniz property we find
$$
B_n
\leq C_1(R_0|\xi|)^{n-1}+(R_0|\xi|+C_2)B_{n-1}
,
$$
so some elementary manipulation with geometric progression gives 
$$
B_n\lesssim n|\xi|^{-1}(R_0|\xi|+C_2)^{n}.
$$
The implicit constant does not depend on $\xi$. Similarly as in Proposition \ref{Dkappa}, we may use this estimate inductively to $\Df_\mu(b^n)$ to conclude
$$
\begin{aligned}
\big\|\Df_\mu(b^n)-nb^{n-1}\Df_\mu b\big\|
\lesssim n^2|\xi|^{-2}(R_0|\xi|+C_2)^{n}
\end{aligned}
$$
Combining this and Proppsition \ref{Dkappa}, we find that the operator norm of the quantity in the statement has a majorant series
$$
\frac{1}{|\xi|^2}\sum_{n=1}^\infty n^2|c_n|\frac{(R_0|\xi|+C_2)^{n}+(R_0|\xi|+C)^n}{|\xi|^n}.
$$
For $|\xi|$ sufficiently large, Cauchy estimate for derivatives of a holomorphic function shows that the series is absolutely convergent and sums to a bounded function. This proves the claim.

(2) It suffices to notice that $[a,b]=ab-ba$ is still the symbol of a vector field. Furthermore, using the Leibniz property of Lie brackets, we have
$$
[a,b^n]=[a,b]b^{n-1}+b[a,b^{n-1}],
$$
implying 
$$
\big\|[a,b^n]\big\|\lesssim n(R_0|\xi|)^{n}.
$$
The rest of the estimate is identical to that in (1).

(3) The proof is of the same spirit of (2): in fact,
$$
X(b^n)=nb^{n-1}Xb+\sum_{k=1}^{n-1}b^{n-k-1}[Xb,b^k].
$$
Note that $[Xb,b^k]$ is still the symbol of a classical differential operator, whose norm at $\xi$ is controlled by
$$
\big\|[Xb,b^k]\big\|\lesssim_{\Gp} |b|_{C^1}k(R_0|\xi|)^{k},
$$
as in (2). Here $|b|_{C^1}$ stands for the $C^1$ norm of the coefficients. Thus
$$
\big\|X(b^n)-nb^{n-1}Xb\big\|\lesssim n^2(R_0|\xi|)^{n-1}|b|_{C^1}
$$
and the majorant series argument is sufficient to imply the desired result.
\end{proof}

We conclude this subsection with a proposition regarding symbols that involve very rough coefficients.
\begin{proposition}\label{SymbolVeryRough}
Suppose $\sum_{n=0}^\infty c_nz^n$ is a convergent power series near $z=0$. Fix $r>0$. Let $s>n/2$, $s>r$. Let $a,b$ be symbols of vector fields on $\Gp$, with $a$ of merely $C^{-r}_*$ regularity in $x$ and $b$ of $H^s$ regularity in $x$, with
$$
\sup_{\xi\in\DuGp}\frac{\|b(x,\xi)\|_{H^s_x}}{|\xi|}
$$
suitably small. Define 
$$
[b]_na:=ab^n+bab^{n-1}+\cdots+b^na.
$$
Then 
$$
\sum_{n=0}^\infty c_n\kappa^{-n}[b]_na
$$
is a symbol of class $\mathcal{A}^{0}_{-r}(\Gp)$.
\end{proposition}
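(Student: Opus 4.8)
The plan is to reduce the statement to the same kind of majorant--series bookkeeping used in the proofs of Propositions~\ref{Dkappa} and \ref{PSSymbol}, after one algebraic rearrangement that confines the distributional factor. Since $\kappa(\xi)$ is a scalar multiple of the identity on each $\mathcal{H}_\xi$, it commutes with every symbol; writing $w:=b/\kappa$ and $v:=a/\kappa$, one has $\kappa^{-n}b^{k}ab^{n-k}=\kappa\,w^{k}vw^{n-k}$, whence
\begin{equation*}
\sum_{n\ge0}c_{n}\kappa^{-n}[b]_{n}a
=\kappa\sum_{n\ge0}c_{n}\sum_{k=0}^{n}w^{k}vw^{n-k}
=\kappa\,\frac{d}{dt}\Big|_{t=0}\big(w+tv\big)\,f\big(w+tv\big).
\end{equation*}
Now $w$ is an order--$0$ symbol (a vector--field symbol divided by $\kappa$) which, by the Sobolev embedding $H^{s}\hookrightarrow C^{s-n/2}_{*}$, has $C^{\rho}_{*}$ regularity in $x$ with $\rho:=s-n/2>0$, and which has $\sup_{x,\xi}\|w(x,\xi)\|$ as small as we wish by the hypothesis on $b$; $v$ is an order--$0$ symbol of $C^{-r}_{*}$ regularity in $x$; and $z\mapsto zf(z)$ is holomorphic near $0$ and vanishes at $0$. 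By Corollary~\ref{CoroPSSymbol}, $\big(wf(w)\big)(x,\xi)$ already is a symbol of class $\mathcal{A}^{0}_{\rho}$, so the content of the proposition is the analysis of the derivative in the \emph{rough direction} $v$, namely of $\sum_{m\ge1}c_{m-1}\sum_{j=0}^{m-1}w^{j}vw^{m-1-j}$: a series of matrix products in which a single factor, $v$, carries negative regularity.

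The first ingredient I need is a product estimate in Zygmund spaces: for a scalar $H^{s}$ function $\varphi$ and an $\mathrm{End}(\mathcal{H}_\xi)$--valued $C^{-r}_{*}$ symbol $u$, with $s>n/2$ and $s>r$,
\begin{equation*}
\|\varphi\,u\|_{-r;x}\ \lesssim_{s,r}\ \|\varphi\|_{H^{s}_{x}}\,\|u\|_{-r;x},
\end{equation*}
with constant independent of $d_{\xi}$. This follows from the same Littlewood--Paley reasoning as Theorem~\ref{ParaPrd} and Remark~\ref{R(a)Symbol}, applied componentwise in the spirit of Remark~\ref{VectZyg}: the paraproduct $T_{\varphi}u$ stays in $C^{-r}_{*}$ since $\varphi\in L^{\infty}$, while $T_{u}\varphi$ and the remainder land in $H^{s-r}$, and $H^{s-r}\hookrightarrow C^{-r}_{*}$ precisely because $s>n/2$. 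Together with the algebra inequality $\|\varphi_{1}\varphi_{2}\|_{H^{s}}\lesssim\|\varphi_{1}\|_{H^{s}}\|\varphi_{2}\|_{H^{s}}$ ($s>n/2$), this allows multiplying one $C^{-r}_{*}$ factor by arbitrarily many $H^{s}$ factors at the price of multiplicative constants -- which is exactly the structure of $w^{j}vw^{m-1-j}$ once we write $b(x,\xi)=\sum_{i}b_{i}(x)\sigma[X_{i}](\xi)$, $a(x,\xi)=\sum_{i}a_{i}(x)\sigma[X_{i}](\xi)$ with $b_{i}\in H^{s}$, $a_{i}\in C^{-r}_{*}$, and separate the fixed endomorphism factors (whose operator norms are $\lesssim\langle\xi\rangle$) from the scalar coefficient products.

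The second ingredient is the bookkeeping itself. Fixing $q=Q$ the fundamental tuple so that the $\Df_{Q}$ satisfy the Leibniz rule (\ref{Leibniz}), I distribute $\Df_{Q}^{\beta}$ over each product; the resulting factors $\Df_{Q}^{\beta'}(w^{k})$ are controlled in operator norm by the geometric--progression estimates of Proposition~\ref{Dkappa}(3) and the power--series argument of Proposition~\ref{PSSymbol}(1) (each $\Df_{Q}$ gains a power of $\langle\xi\rangle^{-1}$; a $k$--th power of $w$ costs $(\sup\|w\|)^{k}$ and a polynomial factor in $k$), while $\Df_{Q}^{\beta''}v$ remains an order--$(-|\beta''|)$ symbol of $C^{-r}_{*}$ regularity; crucially, $\Df_{Q}$ of a monomial in the $b_{i}$ is again a linear combination of such monomials and $\Df_{Q}v$ is again of $C^{-r}_{*}$ type, so the ``one rough factor, rest smooth'' structure survives every application of $\Df_{Q}^{\beta}$. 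Inserting these into the product estimate of the previous paragraph and summing over $k$ (resp.\ $j$) and then over $n$ (resp.\ $m$) against $c_{n}$, the smallness of $\sup_{x,\xi}\|b(x,\xi)\|_{H^{s}_{x}}/|\xi|$ puts us strictly inside the disk of convergence of $f$, so the polynomial--in--$n$ factors are absorbed by Cauchy's estimates exactly as in Proposition~\ref{PSSymbol}; one obtains, for every multi--index $\beta$, the bound $\big\|\Df_{Q,\xi}^{\beta}\big(\sum_{n}c_{n}\kappa^{-n}[b]_{n}a\big)(x,\xi)\big\|_{-r;x}\lesssim_{\beta}\langle\xi\rangle^{-|\beta|}$, which is the defining inequality for $\mathcal{A}^{0}_{-r}$. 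By Fischer's Lemma~\ref{Fisqq'Lem} it then holds for any strongly RT--admissible tuple, completing the proof.

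The main obstacle is the negative regularity of $a$: the products $b^{k}ab^{n-k}$ only make sense through Bony's calculus, and the real difficulty is keeping \emph{every} constant uniform in $\xi$ \emph{and} in the number of factors (i.e.\ in $n$), since a naive estimate would spend a fresh constant at each of the $n$ multiplications. This is exactly what is bought by (i) the rearrangement, which leaves a single distributional factor no matter how high $n$ is, (ii) the geometric--progression control of $\Df_{Q}$ on powers of $b/\kappa$ coming from Proposition~\ref{Dkappa}, and (iii) the fact that $\|\varphi u\|_{-r;x}\lesssim\|\varphi\|_{H^{s}}\|u\|_{-r;x}$ closes under iteration; convergence of the rearranged series is then guaranteed by the sup--norm smallness hypothesis on $b$. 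A secondary, routine point is to justify the rearrangements (first in $t$, then in the Leibniz expansion of $\Df_{Q}^{\beta}$), which follows from absolute convergence in the relevant norms.
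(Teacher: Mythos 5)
Your proof is correct and follows essentially the same route as the paper's: the engine in both is the product estimate $|ac|_{C^{-r}_*}\lesssim|a|_{C^{-r}_*}\|c\|_{H^{s}}$ obtained from the para-product decomposition (Theorem \ref{ParaPrd}, Proposition \ref{T_aNegIndex}, Remark \ref{R(a)Symbol}), combined with the Banach-algebra property of $H^{s}$ and a majorant series controlled by the smallness of $\sup_\xi\|b(x,\xi)\|_{H^s_x}/|\xi|$. The only differences are cosmetic or supplementary: your derivative-in-$t$ identity repackages the same single-rough-factor structure, and your explicit Leibniz bookkeeping for $\Df_{Q}^{\beta}$ fills in the $\beta\neq0$ part of the $\mathcal{A}^{0}_{-r}$ definition that the paper's proof leaves implicit.
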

\begin{proof}
Suppose $c$ is any $H^s$ vector field on $\Gp$. Let us recall the para-product decomposition Theorem \ref{ParaPrd} to analyze the product $ac$. With a little abuse of notation, we write $ac$ to represent any coefficient appearing in the product symbol $ac$. Then
$$
ac=T_ac+T_ca+R(a,c).
$$
By Proposition \ref{T_aNegIndex}, $T_a$ is a para-differential operator of order $r$, so 
$$
|T_ac|_{C^{-r}_*}
\leq 
C\|T_ac\|_{H^{s-r}}
\leq C|a|_{C^{-r}_*}\|c\|_{H^{s}}.
$$
Since $T_c$ is a para-differential operator of order 0, $|T_ca|_{C^{-r}_*}\leq C|a|_{C^{-r}_*}\|c\|_{H^{s}}$. Finally, from the proof of Theorem \ref{ParaPrd}, the symbol $R[a]\in\mathscr{S}^{r}_{1,1}$, so by Stein's theorem,
$$
|R(a,c)|_{C^{-r}_*}
\leq 
C\|R(a,c)\|_{H^{s-r}}
\leq C|a|_{C^{-r}_*}\|c\|_{H^{s}}.
$$
To summarize, $|ac|_{C^{-r}_*}\leq C|a|_{C^{-r}_*}\|c\|_{H^{s}}$.

We now use this to analyze the product $b^kab^{n-k}$. Since $H^s$ is a Banach algebra, we have
$$
|ab^{n-k}|_{C^{-r}_*}
\leq C|a|_{C^{-r}_*}\|b^{n-k}\|_{H^{s}}
\leq C_s^{n-k}|a|_{C^{-r}_*}\|b\|_{H^{s}}^{n-k},
$$
which further implies
$$
|b^kab^{n-k}|_{C^{-r}_*}
\leq C_s^{n}|a|_{C^{-r}_*}\|b\|_{H^{s}}^{n}.
$$
Thus if $\sup_{\xi\in\DuGp}|\xi|^{-1}\|b(x,\xi)\|_{H^s_x}$ is sufficiently small, then the series of matrices
$$
\sum_{n=0}^\infty c_n\kappa^{-n}[b]_na
$$
will have an absolutely convergent majorant series with respect to the $C^{-r}_*$ norm. This concludes the proof.
\end{proof}

\section*{Acknowledgement}
The author would like to thank Professor Carlos Kenig for weekly discussion on this project, and Professor Gigliola Staffilani for constant support. The author benefits a lot from discussion with Professor Jean-Marc Delort, Veronique Fischer, Isabelle Gallagher and David Jerison. Thanks also goes to the author's friend, Kai Xu, for comments on representation theory.

\bibliographystyle{alpha}
\bibliography{References}

\newcommand{\etalchar}[1]{$^{#1}$}
\begin{thebibliography}{CKS{\etalchar{+}}02}

\bibitem[ABZ11]{ABZ2011}
T.~Alazard, N.~Burq, and C.~Zuily.
\newblock On the water-wave equations with surface tension.
\newblock {\em Duke Mathematical Journal}, 158(3):413--499--1704, 2011.

\bibitem[Ali89]{Alinhac1989}
S.~Alinhac.
\newblock Existence d'ondes de rarefaction pour des systems quasi-lineaires
  hyperboliques multidimensionnels.
\newblock {\em Communications in partial differential equations},
  14(2):173--230, 1989.

\bibitem[AM09]{AM2009}
T.~Alazard and G.~M{\'e}tivier.
\newblock {Paralinearization of the Dirichlet to Neumann operator, and
  regularity of three-dimensional water waves}.
\newblock {\em Communications in Partial Differential Equations},
  34(12):1632--1704, 2009.

\bibitem[BBP10]{BBP2010}
M.~Berti, P.~Bolle, and M.~Procesi.
\newblock An abstract {Nash--Moser} theorem with parameters and applications to
  {PDEs}.
\newblock {\em Annales de l'Institut Henri Poincar{\'e} C}, 27(1):377--399,
  2010.

\bibitem[BCP15]{BCP2015}
M.~Berti, L.~Corsi, and M.~Procesi.
\newblock An abstract {Nash--Moser} theorem and quasi-periodic solutions for
  {NLW and NLS} on compact lie groups and homogeneous manifolds.
\newblock {\em Communications in Mathematical Physics}, 334(3):1413--1454,
  2015.

\bibitem[BD18]{BD2018}
M.~Berti and J.-M. Delort.
\newblock {\em Almost global solutions of capillary-gravity water waves
  equations on the circle}.
\newblock Springer, 2018.

\bibitem[Bea77]{Beals1977}
R.~Beals.
\newblock Characterization of pseudodifferential operators and applications.
\newblock {\em Duke Math. J.}, 44(1):45--57, 1977.

\bibitem[BG98]{BeGu1998}
K.~Beyer and M.~G{\"u}nther.
\newblock {On the Cauchy problem for a capillary drop. Part I: irrotational
  motion}.
\newblock {\em Mathematical methods in the applied sciences},
  21(12):1149--1183, 1998.

\bibitem[BGdP21]{BGdP2021}
Y.~G. Bonthonneau, C.~Guillarmou, and T.~de~Poyferr{\'e}.
\newblock A paradifferential approach for hyperbolic dynamical systems and
  applications.
\newblock {\em arXiv preprint \url{arXiv:2103.15397}}, 2021.

\bibitem[BGT04]{BGT2004}
N.~Burq, P.~G{\'e}rard, and N.~Tzvetkov.
\newblock Strichartz inequalities and the nonlinear {Schr{\"o}dinger} equation
  on compact manifolds.
\newblock {\em American Journal of Mathematics}, 126(3):569--605, 2004.

\bibitem[BGT05]{BGT2005}
Nicolas Burq, Patrick G{\'e}rard, and Nikolay Tzvetkov.
\newblock Bilinear eigenfunction estimates and the nonlinear {Schr{\"o}dinger}
  equation on surfaces.
\newblock {\em Inventiones mathematicae}, 159(1):187--223, 2005.

\bibitem[BL22]{BL2022}
D.~Bambusi and B.~Langella.
\newblock {Growth of Sobolev norms in quasi integrable quantum systems}.
\newblock {\em arXiv preprint \url{arXiv:2202.04505}}, 2022.

\bibitem[Bon81]{Bony1981}
J-M. Bony.
\newblock Calcul symbolique et propagation des singularit{\'e}s pour les
  {\'e}quations aux d{\'e}riv{\'e}es partielles non lin{\'e}aires.
\newblock In {\em Annales scientifiques de l'{\'E}cole Normale sup{\'e}rieure},
  volume~14, pages 209--246, 1981.

\bibitem[BP11]{BP2011}
M.~Berti and M.~Procesi.
\newblock Nonlinear wave and {Schr{\"o}dinger} equations on compact {Lie}
  groups and homogeneous spaces.
\newblock {\em Duke Math. J.}, 159(3):479--538, 2011.

\bibitem[CKS{\etalchar{+}}02]{CKSTT2002}
J~Colliander, M~Keel, G~Staffilani, H~Takaoka, and T~Tao.
\newblock Almost conservation laws and global rough solutions to a nonlinear
  {Schr{\"o}dinger} equation.
\newblock {\em Mathematical Research Letters}, 9(5-6):659--682, 2002.

\bibitem[CS07]{CoSh2007}
D.~Coutand and S.~Shkoller.
\newblock Well-posedness of the free-surface incompressible {Euler} equations
  with or without surface tension.
\newblock {\em Journal of the American Mathematical Society}, 20(3):829--930,
  2007.

\bibitem[CSS92]{CSS1992}
W.~Craig, C.~Sulem, and P.-L. Sulem.
\newblock Nonlinear modulation of gravity waves: a rigorous approach.
\newblock {\em Nonlinearity}, 5(2):497, 1992.

\bibitem[Del15]{Delort2015}
J-M Delort.
\newblock {\em {Quasi-linear perturbations of Hamiltonian {Klein-Gordon}
  equations on spheres}}.
\newblock American Mathematical Soc., 2015.

\bibitem[DIPP17]{DIPP2017}
Y~Deng, A.~D. Ionescu, B.~Pausader, and F.~Pusateri.
\newblock Global solutions of the gravity-capillary water-wave system in three
  dimensions.
\newblock {\em Acta Mathematica}, 219(2):213--402, 2017.

\bibitem[DS04]{DS2004}
J-M Delort and J.~Szeftel.
\newblock Long-time existence for small data nonlinear {Klein-Gordon} equations
  on tori and spheres.
\newblock {\em International Mathematics Research Notices},
  2004(37):1897--1966, 2004.

\bibitem[Fis15]{Fis2015}
V.~Fischer.
\newblock Intrinsic pseudo-differential calculi on any compact {Lie} group.
\newblock {\em Journal of functional analysis}, 268(11):3404--3477, 2015.

\bibitem[Han12]{Hani2012}
Z.~Hani.
\newblock {Global well-posedness of the cubic nonlinear Schr{\"o}dinger
  equation on closed manifolds}.
\newblock {\em Communications in Partial Differential Equations},
  37(7):1186--1236, 2012.

\bibitem[H{\"o}r88]{Hormander1988}
L.~H{\"o}rmander.
\newblock Pseudo-differential operaors of type 1, 1.
\newblock {\em Communications in partial differential equations},
  13(9):1085--1111, 1988.

\bibitem[H{\"o}r97]{Hormander1997}
L.~H{\"o}rmander.
\newblock {\em Lectures on nonlinear hyperbolic differential equations},
  volume~26.
\newblock Springer Science \& Business Media, 1997.

\bibitem[IP18]{IP2018}
A.~D. Ionescu and F.~Pusateri.
\newblock Recent advances on the global regularity for irrotational water
  waves.
\newblock {\em Philosophical Transactions of the Royal Society A: Mathematical,
  Physical and Engineering Sciences}, 376(2111):20170089, 2018.

\bibitem[IP19]{IoPu2019}
A.~D. Ionescu and F.~Pusateri.
\newblock Long-time existence for multi-dimensional periodic water waves.
\newblock {\em Geometric and Functional Analysis}, 29(3):811--870, 2019.

\bibitem[Kat95]{Kato1995}
T.~Kato.
\newblock {On nonlinear Schr{\"o}dinger equations, II. $H^s$-solutions and
  unconditional well-posedness}.
\newblock {\em Journal d’Analyse Math{\'e}matique}, 67:281--306, 1995.

\bibitem[KR06]{KR2006}
S.~Klainerman and I.~Rodnianski.
\newblock A geometric approach to the {Littlewood--Paley} theory.
\newblock {\em Geometric \& Functional Analysis GAFA}, 16:126--163, 2006.

\bibitem[KRS15]{KRS2015}
S.~Klainerman, I.~Rodnianski, and J.~Szeftel.
\newblock The bounded $l^2$ curvature conjecture.
\newblock {\em Inventiones mathematicae}, 202:91--216, 2015.

\bibitem[Lam32]{Lamb1932}
H.~Lamb.
\newblock {\em Hydrodynamics}.
\newblock Cambridge University Press, 6th edition, 1932.

\bibitem[Lan05]{Lannes2005}
D.~Lannes.
\newblock Well-posedness of the water-waves equations.
\newblock {\em Journal of the American Mathematical Society}, 18(3):605--654,
  2005.

\bibitem[LM88]{LM1988}
T.~S. Lundgren and N.~N. Mansour.
\newblock Oscillations of drops in zero gravity with weak viscous effects.
\newblock {\em Journal of Fluid Mechanics}, 194:479--510, 1988.

\bibitem[LP14]{LP2014}
Felipe Linares and Gustavo Ponce.
\newblock {\em Introduction to nonlinear dispersive equations}.
\newblock Springer, 2014.

\bibitem[M{\'e}t08]{Met2008}
G.~M{\'e}tivier.
\newblock {\em {Para-differential calculus and applications to the Cauchy
  problem for nonlinear systems}}.
\newblock Edizioni della Normale, 2008.

\bibitem[Mey80]{Meyer1980}
Y.~Meyer.
\newblock {\em Remarques sur un th{\'e}oreme de J.-M. Bony}.
\newblock Universit{\'e} de Paris-Sud. D{\'e}partement de Math{\'e}matique,
  1980.

\bibitem[Mey81]{Meyer1981}
Y.~Meyer.
\newblock R{\'e}gularit{\'e} des solutions des {\'e}quations aux
  d{\'e}riv{\'e}es partielles non lin{\'e}aires.
\newblock In {\em S{\'e}minaire Bourbaki vol. 1979/80 Expos{\'e}s 543--560:
  Avec table par noms d'auteurs de 1967/68 {\`a} 1979/80}, pages 293--302.
  Springer, 1981.

\bibitem[MP19]{MP2019}
R.~Magnanini and G.~Poggesi.
\newblock On the stability for alexandrov’s soap bubble theorem.
\newblock {\em Journal d'Analyse Math{\'e}matique}, 139(1):179--205, 2019.

\bibitem[MZ09]{MingZhang2009}
M.~Ming and Z.~Zhang.
\newblock Well-posedness of the water-wave problem with surface tension.
\newblock {\em Journal de math{\'e}matiques pures et appliqu{\'e}es},
  92(5):429--455, 2009.

\bibitem[NB86]{NB1986}
R.~Natarajan and R.~A. Brown.
\newblock Quadratic resonance in the three-dimensional oscillations of inviscid
  drops with surface tension.
\newblock {\em The Physics of fluids}, 29(9):2788--2797, 1986.

\bibitem[Pro07]{Procesi2007}
C.~Procesi.
\newblock {\em Lie groups: an approach through invariants and representations},
  volume 115.
\newblock Springer, 2007.

\bibitem[R{\etalchar{+}}79]{Rayleigh1879}
Lord Rayleigh et~al.
\newblock On the capillary phenomena of jets.
\newblock {\em Proc. R. Soc. London}, 29(196-199):71--97, 1879.

\bibitem[RT09]{RT2009}
M.~Ruzhansky and V.~Turunen.
\newblock {\em Pseudo-differential operators and symmetries: background
  analysis and advanced topics}, volume~2.
\newblock Springer Science \& Business Media, 2009.

\bibitem[RT13]{RT2013}
M.~Ruzhansky and V.~Turunen.
\newblock Global quantization of pseudo-differential operators on compact {Lie}
  groups, {SU(2)}, 3-sphere, and homogeneous spaces.
\newblock {\em International Mathematics Research Notices},
  2013(11):2439--2496, 2013.

\bibitem[RTW14]{RTW2014}
M.~Ruzhansky, V.~Turunen, and J.~Wirth.
\newblock H{\"o}rmander class of pseudo-differential operators on compact {Lie}
  groups and global hypoellipticity.
\newblock {\em Journal of Fourier Analysis and Applications}, 20:476--499,
  2014.

\bibitem[Sha22]{Shao2022}
C.~Shao.
\newblock Longtime dynamics of irrotational spherical water drops: Initial
  notes.
\newblock {\em arXiv preprint \url{arXiv:2301.00115}}, 2022.

\bibitem[Sha23a]{Shao2023}
C.~Shao.
\newblock Para-differential calculus on compact lie groups and spherical
  capillary water waves.
\newblock {\em arXiv preprint \url{https://arxiv.org/abs/2304.10519}}, 2023.

\bibitem[Sha23b]{Shao2023Toolbox}
C.~Shao.
\newblock Toolbox of para-differential calculus on compact lie groups.
\newblock {\em arXiv preprint \url{https://arxiv.org/abs/2304.10519}}, 2023.

\bibitem[SM93]{SteMur1993}
E.~Stein and T.~Murphy.
\newblock {\em Harmonic analysis: real-variable methods, orthogonality, and
  oscillatory integrals}, volume~3.
\newblock Princeton University Press, 1993.

\bibitem[Sog88]{Sogge1988}
C.~D. Sogge.
\newblock Concerning the {$L^p$} norm of spectral clusters for second-order
  elliptic operators on compact manifolds.
\newblock {\em Journal of functional analysis}, 77(1):123--138, 1988.

\bibitem[ST76]{SaTe1976}
J.-C. Saut and R.~Temam.
\newblock Remarks on the {Korteweg-de Vries} equation.
\newblock {\em Israel Journal of Mathematics}, 24:78--87, 1976.

\bibitem[Sta95]{Staffilani1995}
G.~Staffilani.
\newblock {\em The initial value problem for some dispersive differential
  equations}.
\newblock PhD thesis, University of Chicago, Department of Mathematics, 1995.

\bibitem[SZ08]{ShZe2008}
J.~Shatah and C.~Zeng.
\newblock Geometry and a priori estimates for free boundary problems of the
  {Euler's} equation.
\newblock {\em Communications on Pure and Applied Mathematics}, 61(5):698--744,
  2008.

\bibitem[Tay86]{Taylor1986}
Michael~Eugene Taylor.
\newblock {\em Noncommutative harmonic analysis}.
\newblock American Mathematical Soc., 1986.

\bibitem[Tay00]{Taylor2000}
M.~Taylor.
\newblock {\em {Tools for PDE: pseudodifferential operators, paradifferential
  operators, and layer potentials}}.
\newblock American Mathematical Soc., 2000.

\bibitem[Tay13]{Taylor2013}
M.~Taylor.
\newblock {\em Partial differential equations II: Qualitative studies of linear
  equations}, volume 116.
\newblock Springer Science \& Business Media, 2013.

\bibitem[TB83]{TB1983}
J.~A. Tsamopoulos and R.~A. Brown.
\newblock Nonlinear oscillations of inviscid drops and bubbles.
\newblock {\em Journal of Fluid Mechanics}, 127:519--537, 1983.

\bibitem[Vil78]{Vilenkin1978}
N.~I. Vilenkin.
\newblock {\em Special functions and the theory of group representations},
  volume~22.
\newblock American Mathematical Soc., 1978.

\bibitem[Wu97]{Wu1997}
S.~Wu.
\newblock Well-posedness in {Sobolev} spaces of the full water wave problem in
  {2-D}.
\newblock {\em Inventiones mathematicae}, 130(1):39--72, 1997.

\bibitem[Wu99]{Wu1999}
S.~Wu.
\newblock Well-posedness in {Sobolev spaces of the full water wave problem in
  3-D}.
\newblock {\em Journal of the American Mathematical Society}, 12(2):445--495,
  1999.

\bibitem[Zak68]{Zakharov1968}
V.~E. Zakharov.
\newblock Stability of periodic waves of finite amplitude on the surface of a
  deep fluid.
\newblock {\em Journal of Applied Mechanics and Technical Physics},
  9(2):190--194, 1968.

\end{thebibliography}

\end{spacing}
\end{document}